\newtheorem{theorem}{Theorem}
\newtheorem{corollary}[theorem]{Corollary}
\newtheorem{lemma}[theorem]{Lemma}
\newtheorem{proposition}[theorem]{Proposition}
\numberwithin{theorem}{section}
\numberwithin{figure}{section}
\numberwithin{equation}{section}
\DeclareMathOperator{\SLE}{SLE}
\DeclareMathOperator{\GFF}{GFF}
\DeclareMathOperator{\diam}{diam}
\DeclareMathOperator{\LP}{LP}
\DeclareMathOperator{\DP}{DP}
\DeclareMathOperator{\Pair}{PP}
\DeclareMathOperator{\mgff}{mGFF}
\newcommand*\samethanks[1][\value{footnote}]{\footnotemark[#1]}
\begin{document}

\title{Scaling Limits of Crossing Probabilities in Metric Graph GFF}

\author{Mingchang Liu\thanks{Supported by Chinese Thousand Talents Plan for Young Professionals and Beijing Natural Science Foundation (Z180003).}}
\affil{Tsinghua University, China\\Email: liumc18@mails.tsinghua.edu.cn}

\author{Hao Wu\samethanks}
\affil{Tsinghua University, China\\Email: hao.wu.proba@gmail.com.}

\date{}
\maketitle
\vspace{-1cm}
\begin{center}
\begin{minipage}{0.85\textwidth}
\abstract{We consider metric graph Gaussian free field (GFF) defined on polygons of $\delta\mathbb{Z}^2$ with alternating boundary data. The crossing probabilities for level-set percolation of metric graph GFF have scaling limits. When the boundary data is well-chosen, the scaling limits of crossing probabilities can be explicitly constructed as ``fusion" of multiple SLE$_4$ pure partition functions. 
\smallbreak
\noindent\textbf{Keywords}: Gaussian free field, crossing probability, Schramm Loewner evolution\\
\noindent\textbf{MSC:} 60G15, 60G60, 60J67. }
\end{minipage}
\end{center}

\tableofcontents
\newpage

\newcommand{\eps}{\epsilon}
\newcommand{\ov}{\overline}
\newcommand{\U}{\mathbb{U}}
\newcommand{\T}{\mathbb{T}}
\newcommand{\HH}{\mathbb{H}}
\newcommand{\LA}{\mathcal{A}}
\newcommand{\LB}{\mathcal{B}}
\newcommand{\LC}{\mathcal{C}}
\newcommand{\LD}{\mathcal{D}}
\newcommand{\LF}{\mathcal{F}}
\newcommand{\LK}{\mathcal{K}}
\newcommand{\LE}{\mathcal{E}}
\newcommand{\LG}{\mathcal{G}}
\newcommand{\LL}{\mathcal{L}}
\newcommand{\LM}{\mathcal{M}}
\newcommand{\LO}{\mathcal{O}}
\newcommand{\LQ}{\mathcal{Q}}
\newcommand{\CP}{\mathcal{P}}
\newcommand{\LT}{\mathcal{T}}
\newcommand{\LS}{\mathcal{S}}
\newcommand{\LU}{\mathcal{U}}
\newcommand{\LV}{\mathcal{V}}
\newcommand{\PartF}{\mathcal{Z}}
\newcommand{\LH}{\mathcal{H}}
\newcommand{\R}{\mathbb{R}}
\newcommand{\C}{\mathbb{C}}
\newcommand{\N}{\mathbb{N}}
\newcommand{\Z}{\mathbb{Z}}
\newcommand{\E}{\mathbb{E}}
\newcommand{\PP}{\mathbb{P}}
\newcommand{\QQ}{\mathbb{Q}}
\newcommand{\A}{\mathbb{A}}
\newcommand{\one}{\mathbbm{1}}
\newcommand{\bn}{\mathbf{n}}
\newcommand{\MR}{MR}
\newcommand{\cond}{\,\big|\,}
\newcommand{\la}{\langle}
\newcommand{\ra}{\rangle}
\newcommand{\tree}{\Upsilon}
\global\long\def\chamber{\mathfrak{X}}
\newcommand{\KWle}{\overset{()}{\longleftarrow}}
\newcommand{\ABP}[2]{\protect\rotatebox[origin=c]{180}{$#1\A$}}
\newcommand{\AB}{{\mathpalette\ABP\relax}}

\section{Introduction}
This article concerns crossing probability of level-set percolation of Gaussian free field (GFF) on the square lattice $\Z^2$. For $L>0$, consider the rectangle 
$R_L=\{z: 0<\Re{z}< L, 0< \Im{z}< 1\}$.
Let $y_1, y_2, y_3, y_4$ be its four corners, listed in counterclockwise order with $y_2=0$. For $\delta>0$, let $V_{\delta}=R_L\cap \delta\Z^2$ and let $y_1^{\delta}, y_2^{\delta}, y_3^{\delta}, y_4^{\delta}$ be its four corners, listed in counterclockwise order such that $y_2^{\delta}$ is closest to $y_2$. For two vertices $u, v\in\partial V_{\delta}$, we denote by $(uv)$ the arc of $\partial V_{\delta}$ from $u$ to $v$ in counterclockwise order. Let $\Gamma^{\delta}$ be a discrete GFF (see Section~\ref{subsec::dGFFmGFF}) on $V_{\delta}$ with alternating boundary data: 
\begin{equation*}
\mu \text{ on }(y_1^{\delta}y_2^{\delta})\cup(y_3^{\delta}y_4^{\delta}),\quad -\mu\text{ on }(y_2^{\delta}y_3^{\delta})\cup(y_4^{\delta}y_1^{\delta}),
\end{equation*}
where $\mu>0$ is a positive constant. Let $\tilde{\Gamma}^{\delta}$ be the GFF on the metric graph $\tilde{V}_{\delta}$ (see Section~\ref{subsec::dGFFmGFF}) with the same boundary data.  We are interested in the event that there exists a path in $V_{\delta}$ (resp. $\tilde{V}_{\delta}$) from $(y_1^{\delta}y_2^{\delta})$ to $(y_3^{\delta}y_4^{\delta})$ such that $\Gamma^{\delta}$ (resp. $\tilde{\Gamma}^{\delta}$) is non-negative on this path. We denote this event by 
\[\left\{(y_1^{\delta}y_2^{\delta})\overset{\Gamma^{\delta}\ge 0}{\longleftrightarrow}(y_3^{\delta}y_4^{\delta})\right\}\quad\text{and}\quad \left\{(y_1^{\delta}y_2^{\delta})\overset{\tilde{\Gamma}^{\delta}\ge 0}{\longleftrightarrow}(y_3^{\delta}y_4^{\delta})\right\}\]
for $\Gamma^{\delta}$ and $\tilde{\Gamma}^{\delta}$ respectively. Although, both discrete GFF $\Gamma^{\delta}$ and metric graph GFF $\tilde{\Gamma}^{\delta}$ converge as distributions to the continuum GFF as $\delta\to 0$, the probabilities for such crossing events have distinct scaling limits, as proved in~\cite[Theorem~1.2]{DingWirthWuGFF}. It is then natural to ask whether we are able to give explicit formula for scaling limits of such crossing probabilities.  

The answer to this question relies on Schramm-Sheffield's famous work on level lines of GFF. 
We call $(\Omega; x, y)$ a Dobrushin domain if $\Omega\nsubseteq\C$ is non-empty simply connected and $x, y$ are distinct boundary points. 
In~\cite{SchrammSheffieldDiscreteGFF}, the authors prove that there exists $\lambda=\lambda(\Z^2)>0$ such that the zero level line of discrete GFF on Dobrushin domains of $\delta\Z^2$ with boundary data $\pm \lambda$ converges in distribution to Schramm-Loewner Evolution ($\SLE_4$, see Section~\ref{subsec::sle}). Based on this result, one is able to show that~\cite[Theorem~1.3]{DingWirthWuGFF}, when $\mu=\lambda$,
\begin{equation}\label{eqn::crossingproba_dgff}
\lim_{\delta\to 0}\PP\left[(y_1^{\delta}y_2^{\delta})\overset{\Gamma^{\delta}\ge 0}{\longleftrightarrow}(y_3^{\delta}y_4^{\delta})\right]=q,
\end{equation}  
where $q$ is the cross-ratio of the rectangle: let $\varphi$ be any conformal map from $R_L$ onto the upper-half plane $\HH$ with $\varphi(y_1)<\varphi(y_2)<\varphi(y_3)<\varphi(y_4)$, then 
\begin{equation}\label{eqn::crossratio}
q=\frac{(\varphi(y_2)-\varphi(y_1))(\varphi(y_4)-\varphi(y_3))}{(\varphi(y_3)-\varphi(y_1))(\varphi(y_4)-\varphi(y_2))}.
\end{equation}
This gives the answer to the case of discrete GFF. The authors in~\cite{DingWirthWuGFF} derive~\eqref{eqn::crossingproba_dgff} by showing that the scaling limit of the crossing probability in discrete GFF is the same as the one for continuum GFF whose crossing probability is calculated in~\cite[Theorem~1.4]{PeltolaWuGlobalMultipleSLEs}. 

The goal of this article is to derive explicit formula for scaling limits of crossing probability in metric graph GFF. We will show that, when $\mu=2\lambda$, 
\begin{equation}\label{eqn::crossingproba_mgff}
\lim_{\delta\to 0}\PP\left[(y_1^{\delta}y_2^{\delta})\overset{\tilde{\Gamma}^{\delta}\ge 0}{\longleftrightarrow}(y_3^{\delta}y_4^{\delta})\right]=q^4,
\end{equation}
where $q$ is the cross-ratio of the rectangle as in~\eqref{eqn::crossratio}. Our method relies on the following two ingredients: a). Aru-Lupu-Sep\'{u}lveda's work on the convergence of first passage sets of metric graph GFF~\cite{AruLupuSepulvedaFPSGFFCVGISO}, and b). a generalization of Peltola and the second author's work~\cite{PeltolaWuGlobalMultipleSLEs} on crossing probabilities in continuum GFF. 

In fact, we are able to give answer in a more general setting: we can calculate the scaling limits of crossing probabilities for the metric graph GFF with alternating boundary data on a polygon with $2N$ marked points on the boundary. To state our main result, we first introduce some notations about planar link patterns.

\medbreak
For $p\in\Z_{>0}$, we call $(\Omega; x_1, \ldots, x_{p})$ a polygon if $\Omega\nsubseteq\C$ is non-empty simply connected and $x_1, \ldots, x_{p}$ are $p$ boundary points in counterclockwise order lying on locally connected boundary segments. 
We first introduce planar pair partitions. Suppose $p=2N$ is even and suppose there are $N$ non-intersecting simple curves in $\Omega$ connecting the $2N$ boundary points pairwise. These $N$ curves form a planar pair partition that we denote by $\alpha=\{\{a_1, b_1\}, \ldots \{a_N, b_N\}\}$ with $\{a_1, b_1, \ldots, a_N, b_N\}=\{1,2, \ldots, 2N\}$. We call the pairs $\{a, b\}$ in $\alpha$ links. We denote by $\Pair_N$ the set of planar pair partitions with $2N$ points and set $\Pair=\bigsqcup_{N\ge 0}\Pair_N$. 

Next, we introduce general planar link patterns. The planar pair partitions  then arise as a special case. Suppose $(\Omega; x_1, \ldots, x_{p})$ is a polygon. Fix a multiindex $\varsigma=(s_1, \ldots, s_p)\in\Z_{>0}^p$ such that $\sum_{i=1}^p s_i$ is even and denote by 
\[\ell=\frac{1}{2}\sum_{i=1}^p s_i\in\Z_{>0}. \]
Suppose there are $\ell$ simple curves in $\Omega$ connecting the $p$ boundary points pairwise such that they do not intersect except at their common end points. These $\ell$ curves form a planar link pattern that we call planar $\ell$-link patterns of $p$ points. Precisely, we call planar $\ell$-link patterns of $p$ points with index valences $\varsigma=(s_1, \ldots, s_p)$ as collections $\omega=\{\{a_1, b_1\}, \ldots, \{a_{\ell}, b_{\ell}\}\}$ of $\ell$-links $\{a, b\}$ which connect a pair of distinct indices $a, b\in\{1, 2, \ldots, p\}$ such that, for any $i\in\{1, 2, \ldots, p\}$, the index $i$ is an endpoint of exactly $s_i$ links and that none of the links intersect except at their common endpoints. We denote the collection of $\ell$-link patterns of $p$ points with index valences $\varsigma$ by $\LP_{\varsigma}$. 
With such definition, when $p=2N$ is even, the planar $N$-link pattern of $2N$ points with index valences $\varsigma=(1, \ldots, 1)$ is a planar pair partition and $\LP_{(1,\ldots, 1)}=\Pair_N$. 

\medbreak
In this article, we are interested in planar $2N$-link patterns of $2N$ points with index valences $\varsigma=(2, \ldots, 2)$, see Figure~\ref{fig::mgfflinkpatterns} for $N=2$. With the above definition, the collection of such planar link patterns is denoted by $\LP_{(2,\ldots, 2)}$ where the index has length $2N$. 
 
\begin{figure}[ht!]
\begin{center}
\includegraphics[width=0.24\textwidth]{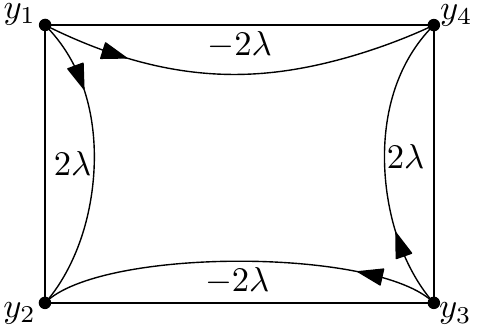}$\quad$
\includegraphics[width=0.24\textwidth]{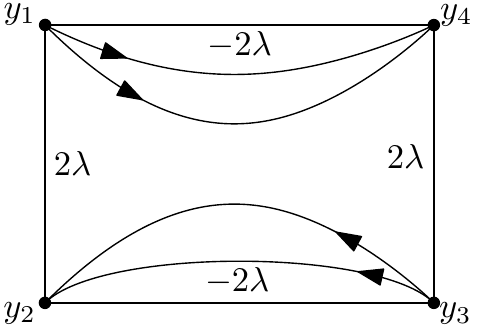}$\quad$
\includegraphics[width=0.24\textwidth]{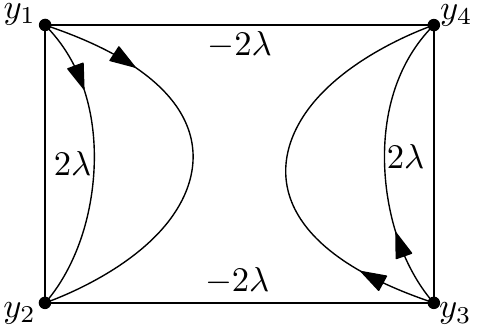}
\end{center}
\caption{\label{fig::mgfflinkpatterns} Consider metric graph GFF in rectangle with alternating boundary data when $\mu=2\lambda$. Consider the positive first passage sets attached to $(y_1^{\delta}y_2^{\delta})$ and to $(y_3^{\delta}y_4^{\delta})$ and consider the negative first passage sets attached to $(y_2^{\delta}y_3^{\delta})$ and to $(y_4^{\delta}y_1^{\delta})$. Their frontier form a planar 4-link pattern of 4 points with  index valences $\varsigma=(2,2,2,2)$. There are three possibilities as indicated in the figure. In the right panel, there is negative vertical crossing of the metric graph GFF. In the middle panel, there is positive horizontal crossing.  In the left panel, there is neither positive horizontal crossing nor negative vertical crossing. }
\end{figure}

\medbreak

Fix a polygon $(\Omega; y_1, \ldots, y_{2N})$ such that $\Omega\subset [-C, C]^{2}$ for some $C>0$. 
Suppose $\{\left(\Omega^{\delta};y_{1}^{\delta},\ldots,y_{2N}^{\delta}\right)\}_{\delta>0}$ are polygons such that $\Omega^{\delta}\subset [-C, C]^{2}$ for all $\delta>0$. Suppose $\left(\Omega^{\delta};y_{1}^{\delta},\ldots,y_{2N}^{\delta}\right)$ converges to $\left(\Omega;y_{1},\ldots,y_{2N}\right)$ as $\delta\to 0$ in the following sense: 
\begin{align}\label{eqn::topology}
[-C, C]^{2}\setminus\Omega^{\delta}\, \text{converges to } [-C, C]^{2}\setminus\Omega\, \text{in Hausdorff metric and } y_{i}^{\delta}\to y_{i}\, \text{for each } 1\le i\le 2N. 
\end{align}
Consider metric graph GFF $\tilde{\Gamma}^{\delta}$ in $(\Omega^{\delta}; y_1^{\delta}, \ldots, y_{2N}^{\delta})$ with alternating boundary data:
\begin{equation}\label{eqn::mgff_boundaryconditions}
2\lambda \text{ on }(y_{2j-1}^{\delta}y_{2j}^{\delta}), \quad \text{and}\quad -2\lambda\text{ on }(y_{2j}^{\delta}y_{2j+1}^{\delta}), \quad\text{for }j\in\{1,\ldots, N\},
\end{equation}
where $y_{2N+1}=y_1$ by convention. 
Consider positive first passage set (see Section~\ref{subsec::fps}) of $\tilde{\Gamma}^{\delta}$ attached to the boundary segments $(y_{2j-1}^{\delta}y_{2j}^{\delta})$, and negative first passage set attached to the boundary segments $(y_{2j}^{\delta}y_{2j+1}^{\delta})$, for $j\in\{1, \ldots, N\}$. The frontier of these first passage sets is a collection of $2N$ curves connecting the $2N$ boundary points so that their end points form a planar $2N$-link pattern of $2N$ points with index valences $\varsigma=(2, 2, \ldots, 2)$. See Figure~\ref{fig::mgfflinkpatterns}. We denote the link pattern by $\LA^{\delta}$. Our main result is the following. 

\begin{theorem}\label{thm::main}
Fix $N\ge 1$ and the index valences $\varsigma=(2,\ldots, 2)$ of length $2N$. 
Consider the frontier of first passage sets of metric graph GFF in $\Omega^{\delta}$ with alternating boundary data~\eqref{eqn::mgff_boundaryconditions}. 
The frontier is a collection of $2N$ curves connecting the $2N$ boundary points whose end points form a planar link pattern $\LA^{\delta}\in\LP_{\varsigma}$. 
We have
\begin{equation*}
\lim_{\delta\to 0}\PP[\LA^{\delta}=\hat{\alpha}]=\LM_{\omega, \tau(\hat{\alpha})}\frac{\PartF_{\hat{\alpha}}(\Omega; y_1, \ldots, y_{2N})}{\PartF_{\mgff}^{(N)}(\Omega; y_1, \ldots, y_{2N})}, \quad \text{for all }\hat{\alpha}\in\LP_{\varsigma}, 
\end{equation*}
where the coefficient $\LM_{\omega, \tau(\hat{\alpha})}$ is given by Lemma~\ref{lem::thmmaincoefficient}, the function $\PartF_{\hat{\alpha}}$ is  given by Proposition~\ref{prop::purepartitionfusionall}, and 
\[\PartF_{\mgff}^{(N)}(\Omega; y_1, \ldots, y_{2N})=\sum_{\hat{\alpha}\in\LP_{\varsigma}}\LM_{\omega, \tau(\hat{\alpha})}\PartF_{\hat{\alpha}}(\Omega; y_1, \ldots, y_{2N}). \]
\end{theorem}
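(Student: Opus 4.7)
The plan is to combine the two ingredients indicated after~\eqref{eqn::crossingproba_mgff}: the convergence of first passage sets of the metric graph GFF from~\cite{AruLupuSepulvedaFPSGFFCVGISO} and a fusion version of the multiple-$\SLE_4$ crossing probability formulas of~\cite{PeltolaWuGlobalMultipleSLEs}.

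First, I would pass to the continuum. By~\cite{AruLupuSepulvedaFPSGFFCVGISO}, the positive first passage sets of $\tilde\Gamma^\delta$ attached to $(y_{2j-1}^\delta y_{2j}^\delta)$ and the negative first passage sets attached to $(y_{2j}^\delta y_{2j+1}^\delta)$ converge jointly, in Hausdorff topology, to the corresponding continuum first passage sets of the GFF on $\Omega$ with boundary data~\eqref{eqn::mgff_boundaryconditions}. Because $\LA^\delta$ is determined by the frontier of these sets, this identifies $\lim_{\delta\to 0}\PP[\LA^\delta=\hat\alpha]$ with the analogous probability in the continuum, provided one rules out degenerate limit configurations such as triple arm points or pinched boundary excursions---a step which should follow from standard two-arm estimates for continuum GFF level sets.

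The heart of the argument is then to express this continuum probability via fusion. Since $2\lambda=\lambda+\lambda$, the boundary data~\eqref{eqn::mgff_boundaryconditions} should be interpreted as the coalescence limit, along pairs of nearby points $y_i^\pm\to y_i$, of alternating $\pm\lambda$ boundary data on $4N$ marked points. For the latter system, Schramm--Sheffield level-line theory together with~\cite{PeltolaWuGlobalMultipleSLEs} yields that the pair-partition link pattern $\omega\in\LP_{(1,\ldots,1)}$ formed by the $4N$ level lines has probability proportional to the pure partition function $\PartF_\omega$. Every $4N$-point pair partition $\omega$ projects canonically to a $2N$-point link pattern in $\LP_{(2,\ldots,2)}$, and as $y_i^\pm\to y_i$ consecutive pairs of level lines collapse onto the frontier of the first passage sets in the theorem. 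Summing over the preimage of $\hat\alpha$ under this projection and taking the fusion limit of $\PartF_\omega$ should produce the function $\PartF_{\hat\alpha}$ of Proposition~\ref{prop::purepartitionfusionall}, weighted by the combinatorial multiplicity $\LM_{\omega,\tau(\hat\alpha)}$ of Lemma~\ref{lem::thmmaincoefficient}; the denominator $\PartF_{\mgff}^{(N)}$ is the corresponding sum over $\hat\alpha$.

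The hard part will be the fusion step itself: one must show that each $\PartF_\omega$ admits a well-defined leading asymptotic as paired insertion points coalesce, and that this fusion limit commutes with $\delta\to 0$. This amounts to controlling the asymptotic behaviour of the null-vector/BPZ solutions when two boundary insertions merge, and is presumably the technical content of Proposition~\ref{prop::purepartitionfusionall}. A secondary but nontrivial issue is to upgrade Hausdorff convergence of the first passage sets to link-pattern convergence of their frontier, which requires ruling out many-arm degeneracies in the limiting continuum GFF.
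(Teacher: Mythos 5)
Your overall two-ingredient strategy (ALS convergence of first passage sets, plus a fusion of the multiple $\SLE_4$ crossing formulas) matches the paper's, but the core continuum step as you describe it has a genuine gap. The boundary data~\eqref{eqn::mgff_boundaryconditions} is \emph{not} a coalescence limit of alternating $\pm\lambda$ data on $4N$ points: piecewise data taking only the values $\pm\lambda$ can never converge to $\pm 2\lambda$, and the level lines of the standard alternating $\pm\lambda$ system of~\cite{PeltolaWuGlobalMultipleSLEs} are not the objects that collapse onto the frontier of the first passage sets. The correct ``unfused'' system has the three-valued data $-2\lambda,\,0,\,2\lambda$ encoded by the Dyck path $\omega$ of Lemma~\ref{lem::thmmaincoefficient} (each $4\lambda$ jump at $y_i$ splits into two $2\lambda$ jumps with an intermediate value $0$). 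For that system the connection probabilities are \emph{not} $\PartF_\beta/\sum_\gamma\PartF_\gamma$; they are $\LM_{\omega,\beta}\,\PartF_\beta/\LU_\omega$, which is exactly the content of Theorem~\ref{thm::GFFconnectionproba} --- a genuine generalization of the PW result that your proposal never supplies, and whose coefficient $\LM_{\omega,\beta}$ is what produces the factor $\LM_{\omega,\tau(\hat\alpha)}$ in the statement (it kills the pair partitions containing links $\{2j-1,2j\}$, i.e.\ precisely those that have no counterpart in $\LP_{(2,\ldots,2)}$). With your $\pm\lambda$ scheme the selection rule and the normalization $\PartF^{(N)}_{\mgff}=\LU_{(\omega)_2}^4$ would both come out wrong.

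A second gap is structural: even with the correct intermediate data, your plan needs a continuity statement for the law of the level-line link pattern as the $4N$ marked points merge pairwise, and a justification that this coalescence limit agrees with the FPS-frontier model --- an interchange of limits you only assert. The paper avoids this entirely: it first identifies the continuum FPS frontier with the height-$\pm\lambda$ level lines of the $\pm 2\lambda$ field (Lemma~\ref{lem::construction}, which itself needs the FPS characterization and a metric-graph approximation, and Lemma~\ref{lem::convergence} to upgrade Hausdorff convergence of Proposition~\ref{prop::mgffconvergence} to link-pattern convergence in Proposition~\ref{prop::convergence} --- your ``two-arm estimates'' are not how this is done), and then, for each fixed $\eps>0$, conditions on the initial segments $\eta_j[0,T_j^\eps]$ of these level lines. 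In the complement the field has exactly the $\omega$-type boundary data at $4N$ points, so Theorem~\ref{thm::GFFconnectionproba} gives an \emph{exact} identity for every $\eps$, and the only limit taken is $\eps\to 0$ inside an expectation, handled by Proposition~\ref{prop::purepartitionfusionall}, Lemma~\ref{lem::conformalblockfusionall} and dominated convergence. If you want to salvage your coalescing-boundary-points route, you would have to both prove the Dyck-path connection-probability theorem and establish the convergence of the $4N$-point level-line ensemble under merging of the marked points; the conditioning argument is what lets the paper bypass the latter.
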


The conclusion in~\eqref{eqn::crossingproba_mgff} is the special case of Theorem~\ref{thm::main} when $N=2$, see Corollary~\ref{cor::crossingproba_mgff}. 
The definition for $\LM_{\omega, \tau(\hat{\alpha})}$ and $\PartF_{\hat{\alpha}}$ is quite involved, and we omit it from the introduction. Nevertheless, let us mention in the introduction the formula for $\PartF_{\mgff}^{(N)}$ and nice properties that  $\PartF_{\hat{\alpha}}$ enjoys. When $\Omega=\HH$ and $y_1<\cdots<y_{2N}$, we write 
\[\PartF_{\hat{\alpha}}(y_1, \ldots, y_{2N})=\PartF_{\hat{\alpha}}(\HH; y_1, \ldots, y_{2N}),\quad \PartF_{\mgff}^{(N)}(y_1, \ldots, y_{2N})=\PartF_{\mgff}^{(N)}(\HH; y_1, \ldots, y_{2N}).\]
Then, we have
\begin{equation}\label{eqn::thmmainZtotal}
\PartF_{\mgff}^{(N)}(y_1, \ldots, y_{2N})=\prod_{1\le i<j\le 2N}(y_j-y_i)^{2(-1)^{j-i}}. 
\end{equation}

\begin{proposition}\label{prop::mGFFpdecov}
Fix $N\ge 1$ and the index valences $\varsigma=(2,\ldots, 2)$ of length $2N$. 
For any $\hat{\alpha}\in\LP_{\varsigma}$, the function $\PartF_{\hat{\alpha}}: \chamber_{2N}\to \R_{>0}$ given by Proposition~\ref{prop::purepartitionfusionall} satisfies the following PDE system and conformal covariance with $\kappa=4$ and $H=1$. 
\begin{itemize}
\item Partial differential equations of third order: for all $j\in\{1, \ldots, 2N\}$, 
\begin{align}\label{eqn::mGFFpde3rd}
&\left[\frac{\partial^3}{\partial y_j^3}-\frac{16}{\kappa}\LL_{-2}^{(j)}\frac{\partial}{\partial y_j}+\frac{8(8-\kappa)}{\kappa^2}\LL_{-3}^{(j)}\right]
\PartF_{\hat{\alpha}}(y_1, \ldots, y_{2N})=0, \\
&\text{where }\LL_{-2}^{(j)}:=\sum_{i\neq j}\left(\frac{H}{(y_i-y_j)^2}-\frac{1}{y_i-y_j}\frac{\partial}{\partial y_i}\right),\quad 
\LL_{-3}^{(j)}:=\sum_{i\neq j}\left(\frac{2H}{(y_i-y_j)^3}-\frac{1}{(y_i-y_j)^2}\frac{\partial}{\partial y_i}\right). \notag
\end{align}
\item M\"{o}bius covariance: for all M\"{o}bius maps $\varphi$ of $\HH$ such that $\varphi(y_1)<\cdots <\varphi(y_{2N})$, 
\begin{equation}\label{eqn::mGFFcov}
\PartF_{\hat{\alpha}}(y_1, \ldots, y_{2N})=\prod_{i=1}^{2N}\varphi'(y_i)^H \times \PartF_{\hat{\alpha}}(\varphi(y_1), \ldots, \varphi(y_{2N})). 
\end{equation}
\end{itemize} 
\end{proposition}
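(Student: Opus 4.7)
The plan is to transfer the second-order BPZ equations and weight-$h_{1,2}(4)=1/4$ M\"obius covariance of the standard SLE$_4$ pure partition functions $\PartF_\alpha$, $\alpha\in\Pair_{2N}$, to $\PartF_{\hat\alpha}$ through the fusion construction of Proposition~\ref{prop::purepartitionfusionall}. Each $\PartF_\alpha(x_1,\ldots,x_{4N})$ satisfies the level-$2$ null-vector equations at every one of its $4N$ arguments and is M\"obius covariant with weight $1/4$ at each; these are the standard facts used in~\cite{PeltolaWuGlobalMultipleSLEs}. Proposition~\ref{prop::purepartitionfusionall} constructs $\PartF_{\hat\alpha}(y_1,\ldots,y_{2N})$ by collapsing each pair $(x_{2i-1},x_{2i})$ of arguments of $\PartF_\alpha$ to a single point $y_i$ and extracting the subleading channel of the resulting OPE $\Phi_{1,2}\times\Phi_{1,2}=\Phi_{1,1}+\Phi_{1,3}$, namely the $\Phi_{1,3}$-channel of exponent $\Delta:=h_{1,3}(4)-2h_{1,2}(4)=1/2$ and conformal weight $h_{1,3}(4)=1=H$.

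The M\"obius covariance~\eqref{eqn::mGFFcov} is immediate from this representation: applying a M\"obius map $\varphi$ to the pre-fusion identity, the factors $\varphi'(x_{2i-1})^{1/4}\varphi'(x_{2i})^{1/4}$ coming from the weight-$1/4$ covariance of $\PartF_\alpha$ tend to $\varphi'(y_i)^{1/2}$ as $x_{2i-1},x_{2i}\to y_i$, while $(\varphi(x_{2i})-\varphi(x_{2i-1}))^{-\Delta}=\varphi'(y_i)^{-\Delta}(x_{2i}-x_{2i-1})^{-\Delta}+\cdots$ contributes an additional $\varphi'(y_i)^{\Delta}$ from the fusion normalization. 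In the limit these combine to $\varphi'(y_i)^{1/2+\Delta}=\varphi'(y_i)^{H}$ at each $y_i$, giving~\eqref{eqn::mGFFcov} with $H=1$.

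For the third-order PDE~\eqref{eqn::mGFFpde3rd} at a point $y_j$, I would apply the two level-$2$ operators $\LD^{(2j-1)}$ and $\LD^{(2j)}$ to the pre-fusion identity and expand in $\epsilon:=x_{2j}-x_{2j-1}$. Writing the asymptotic $\PartF_\alpha=\epsilon^{-\Delta}G_0+\epsilon^{\Delta}\bigl(\PartF_{\hat\alpha}+\epsilon F_1+\epsilon^2 F_2+\cdots\bigr)$, in which the first term is the $\Phi_{1,1}$-channel removed by the subtraction built into Proposition~\ref{prop::purepartitionfusionall}, the vanishing of $\LD^{(2j-1)}\PartF_\alpha$ and $\LD^{(2j)}\PartF_\alpha$ at the first two orders in $\epsilon$ fixes $F_1, F_2$ in terms of $\PartF_{\hat\alpha}$, while the next order produces a single linear differential operator acting only on $\PartF_{\hat\alpha}$. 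After Taylor expanding the far-field terms $(x_k-x_{2j-1})^{-1}$ and $(x_k-x_{2j})^{-1}$ about $y_j$ and substituting for $F_1, F_2$, this operator is exactly $\partial_{y_j}^3-\tfrac{16}{\kappa}\LL_{-2}^{(j)}\partial_{y_j}+\tfrac{8(8-\kappa)}{\kappa^2}\LL_{-3}^{(j)}$, which is the level-$3$ Virasoro singular vector for a weight-$H$ primary in the $\Phi_{1,3}$ module. The corresponding equation at every other $y_k$ follows by the same computation applied to the pair at that index.

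The main obstacle is the two-channel bookkeeping in this expansion: one must control both the removal of the $\Phi_{1,1}$ contribution $G_0$, which carries exponent $-\Delta$, and the existence and uniqueness of the subleading coefficients $F_1, F_2$ as forced by the pre-fusion BPZ equations. Because Proposition~\ref{prop::purepartitionfusionall} is constructed precisely to isolate the $\Phi_{1,3}$ leading coefficient, this cancellation is structurally in place, and the third-order equation then drops out from the explicit order-$\epsilon^{\Delta}$ coefficient of $(\LD^{(2j-1)}+\LD^{(2j)})\PartF_\alpha=0$.
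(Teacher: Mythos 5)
Your Möbius covariance argument is fine and is essentially the paper's (the weights $\tfrac14+\tfrac14$ from the two collapsing points plus $\tfrac12$ from the normalization give $H=1$); note only that there is no ``subtraction'' built into Proposition~\ref{prop::purepartitionfusionall}: since $\{2j-1,2j\}\notin\tau(\hat\alpha)$, the $\Phi_{1,1}$-channel is absent by the asymptotics~\eqref{eqn::multipleSLEsASY}, and the proposition records only the existence of the leading $\Phi_{1,3}$ coefficient. The trouble is in the PDE part, where your frontal-fusion expansion hides exactly the two points the paper has to work for. First, at $\kappa=4$ the two indicial exponents $-\tfrac12$ and $+\tfrac12$ differ by the integer $H=1$: this is the resonant case, and the claim that the first two orders of $\LD^{(2j-1)}\PartF_\alpha=\LD^{(2j)}\PartF_\alpha=0$ ``fix $F_1,F_2$ in terms of $\PartF_{\hat\alpha}$'' (with a log-free, termwise-differentiable expansion) is precisely what fails to follow from generic CFT/Frobenius reasoning at rational $\kappa$; this is why \cite{PeltolaBasisPDE} is restricted to $\kappa\in(0,8)\setminus\QQ$ and why the paper avoids this route altogether, proving the once-fused third-order PDE (Proposition~\ref{prop::purepartitionFusion}) by direct computation on the explicit $\kappa=4$ functions $\LU_{\beta/\times_j}$ and $\LV_{\beta/\vee_j}$, and justifying all expansions by writing the relevant functions as finite sums of powers, holomorphic in $\eps=x_{2j}-x_{2j-1}$, with Cauchy estimates. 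Proposition~\ref{prop::purepartitionfusionall} gives you only the leading coefficient, not the subleading control your order-by-order elimination needs, so the ``structurally in place'' cancellation is an assumption, not a consequence.

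Second, even granting the expansion, fusing the single pair $(x_{2j-1},x_{2j})$ only produces the analogue of~\eqref{eqn::pdefusion3rd}: a third-order equation at $y_j$ whose $\LL_{-2}^{(j)},\LL_{-3}^{(j)}$ coefficients at the other points still carry the weight $h=\tfrac14$ and the separate derivatives $\partial_{x_{2k-1}},\partial_{x_{2k}}$. To reach~\eqref{eqn::mGFFpde3rd} you must still send $x_{2k-1},x_{2k}\to y_k$ for every $k\ne j$ inside this third-order identity, showing that the pair of weight-$\tfrac14$ terms together with the derivatives hitting the normalization $(x_{2k}-x_{2k-1})^{1/2}$ recombine into the weight-$H=1$ coefficients, and that all these limits commute with the differential operator. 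This is the content of the paper's induction on the operators $\LD_j$ and the convergence statements~\eqref{eqn::3rdpdeaux2}--\eqref{eqn::3rdpdeaux4}, which occupy most of its proof of the proposition; your closing remark that ``the corresponding equation at every other $y_k$ follows by the same computation applied to the pair at that index'' addresses the symmetry in $j$ but not this step, and the same computation cannot literally be repeated because after the first fusion the function no longer satisfies second-order null-vector equations at the fused point. So the proposal, as written, has a genuine gap both in the analytic justification of the resonant expansion and in the passage from the once-fused PDE to the fully fused one.
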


\medbreak
\noindent\textbf{Outline.}
In Section~\ref{sec::pre}, we will give preliminaries on planar link patterns and SLEs. 
In Section~\ref{sec::partitionfunctions}, we will introduce multiple SLE partition functions and prove a preliminary result about ``fusion" of partition functions---Proposition~\ref{prop::purepartitionFusion}. 
In Section~\ref{sec::continuumGFF}, we will introduce continuum GFF and a result on connection probabilities---Theorem~\ref{thm::GFFconnectionproba}. 
In Section~\ref{sec::mgfffps}, we will introduce metric graph GFF and combine the results from preceding sections to complete the proof of Theorem~\ref{thm::main} and Proposition~\ref{prop::mGFFpdecov}. 
The functions $\{\PartF_{\hat{\alpha}}: \hat{\alpha}\in\LP_{\varsigma}\}$ in Theorem~\ref{thm::main} are ``fusion" of SLE pure partition functions, see Proposition~\ref{prop::purepartitionfusionall}. 
The scaling limit of the crossing probabilities in Theorem~\ref{thm::main} can be interpreted as certain connection probabilities in continuum GFF which can be explicitly constructed from Theorem~\ref{thm::GFFconnectionproba} and Proposition~\ref{prop::purepartitionfusionall}.
Then, we will check the PDE in Proposition~\ref{prop::mGFFpdecov} from Proposition~\ref{prop::purepartitionFusion}. Note that the third order PDE and the conformal covariance in Proposition~\ref{prop::mGFFpdecov} are not surprising. SLE partition functions can be understood as certain correlation functions in terms of conformal field theory. Then the third order PDE and the conformal covariance can be obtained by specific fusion channel, see~\cite{PeltolaBasisPDE, PeltolaCFTSLE}.

\medbreak
\noindent\textbf{Acknowledgment.}
We acknowledge Jian Ding, Titus Lupu, and Mateo Wirth  for helpful discussion on GFF. We acknowledge Eveliina Peltola for stimulating discussion about partition functions for multiple SLEs.

\section{Preliminaries}
\label{sec::pre}
\subsection{Planar pair partitions and Dyck paths}
\label{subsec::pairpartitionDyckpath}
In this section, we will give a one-to-one correspondence between planar pair partitions and Dyck paths. 
A Dyck path is a walk on $\Z_{\ge 0}$ with steps of length one, starting and ending at zero: $\alpha : \{ 0,1, \ldots,2N \} \to \Z_{\ge 0}$ such that $\alpha(0) = \alpha(2N) = 0,$ and $| \alpha(k) - \alpha(k-1) | = 1$ for all $k\in \{1, 2, \ldots, 2N\}$. 
For $N \ge 1$, we denote the set of all Dyck paths of $2N$ steps by
$\DP_N$. There is a natural partial order on Dyck paths: 
\begin{equation}\label{eqn::Dychpathspartialorder}
\alpha\preceq\beta\quad\text{if and only if}\quad\alpha(k)\le\beta(k),\ \text{for all}\ k\in\{0,1,\ldots, 2N\}.
\end{equation}
We set $\DP=\bigsqcup_{N\ge 0}\DP_N$. 

\medbreak
To each planar pair partition $\alpha \in \Pair_N$, we write it as  
\begin{align} \label{eqn::LPtoDP_order}
\alpha = \{ \{a_1, b_1\}, \ldots, \{a_N, b_N\} \} ,
\quad \text{where } a_1 < a_2 < \cdots < a_N \text{ and }
a_j < b_j , \text{ for all } j \in \{ 1, \ldots, N \} .
\end{align}
We associate it with a Dyck path, also denoted by 
$\alpha \in \DP_N$, as follows. We set $\alpha(0) = 0$ and, for all $k \in \{1, \ldots, 2N\}$, we set
\begin{align} \label{eqn::LPtoDP} 
\alpha(k) = \begin{cases}
\alpha(k-1) + 1 , & \text{if } k\in\{a_1, a_2, \ldots, a_N\}, \\
\alpha(k-1) - 1 , & \text{if } k\in\{b_1, b_2, \ldots, b_N\} .
\end{cases}
\end{align}
One may check, this defines a Dyck path $\alpha \in \DP_N$. Conversely, for any Dyck path 
$\alpha : \{ 0,1, \ldots,2N \} \to \Z_{\ge 0}$, we associate a planar pair partition $\alpha$
by giving to each up-step (i.e., step away from zero) an index $a_r$, for $r = 1,2,\ldots, N$, and 
to each down-step (i.e., step towards zero) an index $b_s$, for $s = 1,2,\ldots, N$, and setting 
$\alpha := \{ \{a_1, b_1\}, \ldots, \{a_N, b_N\} \}$. 
These two mappings define a bijection between $\Pair_N$ and $\DP_N$. We thus identify the elements $\alpha$ of these two sets and use the indistinguishable notation
$\alpha \in \Pair_N$ and $\alpha \in \DP_N$ for both.

\medbreak
For a Dyck path $\alpha\in\DP_N$, we say that $\alpha$ has a local maximum at $j$ if $\alpha(j)-\alpha(j-1)=1$ and $\alpha(j+1)-\alpha(j)=-1$, and we denote $\wedge^j\in\alpha$; we say that $\alpha$ has a local minimum at $j$ if $\alpha(j)-\alpha(j-1)=-1$ and $\alpha(j+1)-\alpha(j)=1$, and we denote $\vee_j\in\alpha$; we say that $\alpha$ has a slope at $j$ if otherwise, and we denote $\times_j\in\alpha$. 
We say that $\alpha$ has a local extremum at $j$ if $\alpha$ has a local minimum or maximum at $j$, and we denote $\lozenge_j\in\alpha$. 

If a planar pair partition $\alpha \in \Pair_N$ has a link $\{j, j+1\} \in \alpha$, then $\wedge^j\in\alpha$. 
Let $\alpha/\{j, j+1\}$ denote the planar pair partition by removing from $\alpha$ the link $\{j, j+1\}$ and relabelling the remaining indices by $1, 2, \ldots, 2N-2$. In terms of Dyck path, we denote this operation by $\alpha/\wedge^j\in \DP_{N-1}$. We define operation $\alpha/\vee_j\in\DP_{N-1}$ analogously when $\alpha$ has a local minimum at $j$. When $\alpha$ has a local extremum at $j$, we denote such operation by $\alpha/\lozenge_j$. If $\alpha$ has a local minimum at $j$, we associate $\alpha$ with another Dyck path by converting the local minimum at $j$ to local maximum, and denote this operation by $\alpha\uparrow\lozenge_{j}$. 

\subsection{From planar link pattern to planar pair partition}
\label{subsec::linkpatterntopairpartition}
Fix an index valences $\varsigma=(s_1, \ldots, s_p)\in\Z_{>0}^p$ such that $\sum_{i=1}^p s_i$ is even and we denote this even number by $2\ell$. Recall that $\LP_{\varsigma}$ is the collection of all planar $\ell$-link patterns of $p$ points with index valences $\varsigma$. We define a natural map which associates to each planar link pattern a planar pair partition. This map, denoted by 
\[\tau: \LP_{\varsigma}\to \Pair_{\ell}, \qquad \hat{\alpha}\mapsto \tau(\hat{\alpha}),\]
is defined as following: 
in $\hat{\alpha}$, for each $j\in\{1, 2, \ldots, p\}$, we split the $j$th point to $s_j$ distinct points and attach the $s_j$ links of $\hat{\alpha}$ ending there to these new $s_j$ points so that each of them has valence one. See Figure~\ref{fig::linkpatterntopairpartition}. 

\begin{figure}[ht!]
\begin{center}
\includegraphics[width=0.5\textwidth]{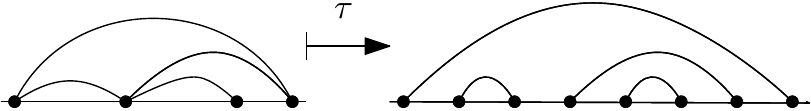}
\end{center}
\caption{\label{fig::linkpatterntopairpartition} In this figure, we have a planar link pattern with index valences $\varsigma=(2,3,1,2)$. It is associated to a planar pair partition by splitting the four points into eight points according to the valences and attaching the corresponding links.}
\end{figure}

\subsection{Loewner chain and SLE}
\label{subsec::sle}
We call a compact subset $K$ of $\overline{\HH}$ an $\HH$-hull if $\HH\setminus K$ 
is simply connected. Riemann's mapping theorem asserts that there exists a unique conformal map 
$g_K$ from $\HH\setminus K$ onto $\HH$ with the property that $\lim_{z\to\infty}|g_K(z)-z|=0$.
We say that $g_K$ is normalized at $\infty$.

Consider families of conformal maps $(g_{t}, t\ge 0)$ 
obtained by solving the Loewner equation: for each $z\in\mathbb{H}$,
\begin{align*}
\partial_{t}{g}_{t}(z)=\frac{2}{g_{t}(z)-W_{t}}, \qquad \qquad  g_{0}(z)=z,
\end{align*}
where $(W_t, t\ge 0)$ is a real-valued continuous function, which we call the driving function. 
Let $T_z$ be the swallowing time of $z$ defined as $\sup\{t\ge 0 \colon \inf_{s\in[0,t]}|g_{s}(z)-W_{s}|>0\}$.
Denote $K_{t}:=\overline{\{z\in\mathbb{H}: T_{z}\le t\}}$.
Then, $g_{t}$ is the unique conformal map from $H_{t}:=\mathbb{H}\setminus K_{t}$ onto $\mathbb{H}$ normalized at $\infty$. 
The collection of $\HH$-hulls $(K_{t}, t\ge 0)$ associated with such maps is called a Loewner chain.

\medbreak
Fix $\kappa > 0$. The Schramm-Loewner Evolution $\SLE_{\kappa}$ in $\HH$ from $0$ to $\infty$ 
is the random Loewner chain $(K_{t}, t\ge 0)$ driven by $W_t=\sqrt{\kappa}B_t$, where $(B_t, t\ge 0)$ is 
the standard one-dimensional 
Brownian motion. Rohde-Schramm prove
in~\cite{RohdeSchrammSLEBasicProperty} 
that $(K_{t}, t\ge 0)$ is almost surely generated by a continuous transient curve, i.e., there almost 
surely exists a continuous curve $\eta$ such that for each $t\ge 0$, $H_{t}$ is the unbounded connected 
component of $\HH \setminus \eta[0,t]$ and $\lim_{t\to\infty}|\eta(t)|=\infty$. 
This random curve is called 
the $\SLE_\kappa$ trace in $\HH$ from $0$ to $\infty$.
When $\kappa\in (0,4]$, 
the $\SLE_{\kappa}$ curves are simple; when $\kappa\in (4,8)$, they have self-touchings;  when $\kappa \ge 8$, they are space-filling. 
In this article, we focus on $\kappa=4$ as $\SLE_4$ is the level line of Gaussian free field, see Section~\ref{sec::continuumGFF}. 
\section{Partition functions for multiple SLEs}
\label{sec::partitionfunctions}
In this section, we use the following real parameters: 
\begin{align*}
\kappa\in (0,8),\qquad h = \frac{6-\kappa}{2\kappa}, \qquad H=\frac{8-\kappa}{\kappa}. 
\end{align*}

A multiple $\SLE_\kappa$ partition function is a positive smooth function $
\PartF \colon \chamber_{2N} \to \R_{>0}$
defined on the configuration space
$\chamber_{2N} :=\; \{ (x_{1},\ldots,x_{2N}) \in \R^{2N} \colon x_{1} < \cdots < x_{2N} \} $
satisfying the following two properties: 
\begin{itemize}
\item Partial differential equations of second order (PDE): for all $j\in\{1, \ldots, 2N\}$, 
\begin{align}\label{eqn::multipleSLEsPDE}
\left[ \frac{\kappa}{2}\frac{\partial^2}{\partial x_j^2} + \sum_{i\neq j}\left(\frac{2}{x_{i}-x_{j}}\frac{\partial}{\partial x_i} - 
\frac{2h}{(x_{i}-x_{j})^{2}}\right) \right]
\PartF(x_1,\ldots,x_{2N}) =  0 .
\end{align}
\item M\"obius covariance (COV): 
For all M\"obius maps 
$\varphi$ of $\HH$ 
such that $\varphi(x_{1}) < \cdots < \varphi(x_{2N})$, 
\begin{align}\label{eqn::multipleSLEsCOV}
\PartF(x_{1},\ldots,x_{2N}) = 
\prod_{i=1}^{2N} \varphi'(x_{i})^{h} 
\times \PartF(\varphi(x_{1}),\ldots,\varphi(x_{2N})) .
\end{align}
\end{itemize}

In the seires~\cite{FloresKlebanPDE1, FloresKlebanPDE2, FloresKlebanPDE3, FloresKlebanPDE4}, the authors investigate the solution space of the above PDE system. Let $\LS_N$ be the collection of smooth functions $F: \chamber_{2N}\to \C$ satisfying PDE~\eqref{eqn::multipleSLEsPDE}, COV~\eqref{eqn::multipleSLEsCOV}, and the following power law bound: there exist constants $C>0$ and $p>0$ such that, for all $(x_1, \ldots, x_{2N})\in\chamber_{2N}$, we have
\begin{equation}\label{eqn::multipleSLEPLB}
|F(x_1, \ldots, x_{2N})|\le C\prod_{1\le i<j\le 2N}(x_j-x_i)^{\mu_{ij}(p)},\quad\text{where}\quad \mu_{ij}(p)=\begin{cases}
p,&\text{if }|x_j-x_i|>1,\\
-p,&\text{if }|x_j-x_i|\le 1.
\end{cases}
\end{equation} 
They prove that the solution space $\LS_N$ has dimension $C_N:=\#\Pair_N=\frac{1}{N+1}\binom{2N}{N}$ which is the $N$-th Catalan number. We next introduce pure partition functions which give a basis for $\LS_N$. 

\medbreak
The pure partition functions $\PartF_{\alpha} \colon \chamber_{2N} \to \R_{>0}$ are indexed by
planar pair partitions $\alpha \in \Pair_N$. They are positive solutions to PDE~\eqref{eqn::multipleSLEsPDE} and COV~\eqref{eqn::multipleSLEsCOV} and the following  boundary conditions: 
\begin{itemize}
\item Asymptotics (ASY): 
For all $\alpha \in \Pair_N$ and for all $j \in \{1, \ldots, 2N-1 \}$ and $\xi \in (x_{j-1}, x_{j+2})$, we have
\begin{align}\label{eqn::multipleSLEsASY}
\lim_{x_j , x_{j+1} \to \xi} 
\frac{\PartF_\alpha(x_1 , \ldots , x_{2N})}{(x_{j+1} - x_j)^{-2h}} 
=\begin{cases}
0, \quad &
    \text{if } \{j, j+1\} \notin \alpha, \\
\PartF_{\alpha /\{j, j+1\}}(x_{1},\ldots,x_{j-1},x_{j+2},\ldots,x_{2N}), &
    \text{if } \{j, j+1\} \in \alpha ,
\end{cases} 
\end{align}
where $\alpha/\{j,j+1\}\in\Pair_{N-1}$ denotes the link pattern obtained from $\alpha$ by removing the link $\{j, j+1\}$ and relabelling the remaining indices by $1,2, \ldots, 2N-2$, as defined in Section~\ref{subsec::pairpartitionDyckpath}. 
\end{itemize}

It is proved in~\cite[Theorem~1.7]{WuHyperSLE} that, for $\kappa\in (0,6]$, there exists a unique collection $\{\PartF_{\alpha}: \alpha\in\Pair\}$ of smooth functions satisfying the normalization $\PartF_{\emptyset}=1$ and PDE~\eqref{eqn::multipleSLEsPDE}, COV~\eqref{eqn::multipleSLEsCOV}, ASY~\eqref{eqn::multipleSLEsASY}, and the power law bound~\eqref{eqn::multipleSLEPLB}. 
Furthermore, they also satisfy the following refined upper bound~\cite[Theorem~1.7]{WuHyperSLE}: 
For all $\alpha=\{\{a_1, b_1\}, \ldots, \{a_N, b_N\}\}\in\Pair_N$, 
\begin{equation}\label{eqn::multipleSLEPLBoptimal}
0<\PartF_{\alpha}(x_1, \ldots, x_{2N})\le \prod_{i=1}^N |x_{b_i}-x_{a_i}|^{-2h}. 
\end{equation}
The collection $\{\PartF_{\alpha}: \alpha\in\Pair_N\}$ forms a basis for the $C_N$-dimensional solution space $\LS_N$, see~\cite[Proposition~4.5]{PeltolaWuGlobalMultipleSLEs}. See \cite{KytolaPeltolaPurePartitionFunctions, PeltolaWuGlobalMultipleSLEs} for earlier works on pure partition functions. 

\medbreak
The values $\kappa=2$ and $\kappa=4$ are special for pure partition functions: in these two cases, there are known explicit expressions for pure partition functions, see~\cite{KarrilaKytolaPeltolaCorrelationsLERWUST} for $\kappa=2$ and~\cite{PeltolaWuGlobalMultipleSLEs} for $\kappa=4$. In this article, we are interested in the case $\kappa=4$. In this case, pure partition functions are given by conformal block functions which we will introduce in Section~\ref{subsec::conformalblocks}. This collection of functions gives another basis for the solution space $\LS_N$ when $\kappa=4$. In the rest of the article, we assume $\kappa=4$. 

\medbreak
In~\eqref{eqn::multipleSLEsASY}, we see that, if $\{j, j+1\}\in\alpha$, we normalize the function $\PartF_{\alpha}$ by $(x_{j+1}-x_j)^{-2h}$ and we obtain the limiting function $\PartF_{\alpha/\{j,j+1\}}$. The goal of this section is to investigate the correct normalization of $\PartF_{\alpha}$ when $\{j, j+1\}\not\in\alpha$ and to analyze the limiting function. 

\begin{proposition}\label{prop::purepartitionFusion}
Fix $\kappa=4$. For $\alpha\in\Pair_N$ and for $j\in\{1, 2, \ldots, 2N-1\}$, we assume $\{j, j+1\}\not\in\alpha$. For all $\xi\in (x_{j-1}, x_{j+2})$, the following limit exists: 
\begin{equation}\label{eqn::purepartitionFusion}
\PartF_{\alpha/\amalg_j}(x_1, \ldots, x_{j-1}, \xi, x_{j+2}, \ldots, x_{2N}):=\lim_{x_j, x_{j+1}\to \xi} \frac{\PartF_{\alpha}(x_1, \ldots, x_{2N})}{(x_{j+1}-x_j)^{2/\kappa}}. 
\end{equation}
Furthermore, the limiting function $\PartF_{\alpha/\amalg_j}$ satisfies the following system of $(2N-1)$ PDEs \footnote{Note that, the operators $\LL_{-2}^{(j)}$ and $\LL_{-3}^{(j)}$ in Proposition~\ref{prop::purepartitionFusion} are distinct from the ones in Proposition~\ref{prop::mGFFpdecov}. In fact, this kind of operators also depends on the index valences of planar link patterns. To simplify notations, we omit the dependence. } and the conformal covariance with $\kappa=4$. 
\begin{itemize}
\item Partial differential equations of second order (PDE): for $n\in\{1, \ldots, 2N\}\setminus\{j, j+1\}$, we have
\begin{align}\label{eqn::pdefusion2nd}
&\left[\frac{\partial^2}{\partial x_n^2}-\frac{4}{\kappa}\LL_{-2}^{(n)}\right]\PartF_{\alpha/\amalg_j}(x_1, \ldots, x_{j}, x_{j+2}, \ldots, x_{2N})=0, \\
&\text{where } \LL_{-2}^{(n)}=\sum_{\substack{1\le i\le 2N, \\ i\neq j, j+1, n}}\left(\frac{h}{(x_i-x_n)^2}-\frac{1}{x_i-x_n}\frac{\partial}{\partial x_i}\right)+\left(\frac{H}{(x_j-x_n)^2}-\frac{1}{x_j-x_n}\frac{\partial}{\partial x_j}\right). \notag
\end{align}
\item Partial differential equation of third order (PDE): 
\begin{align}\label{eqn::pdefusion3rd}
&\left[\frac{\partial^3}{\partial x_j^3}-\frac{16}{\kappa}\LL_{-2}^{(j)}\frac{\partial}{\partial x_j}+\frac{8(8-\kappa)}{\kappa^2}\LL_{-3}^{(j)}\right]\PartF_{\alpha/\amalg_j}(x_1, \ldots, x_j, x_{j+2}, \ldots, x_{2N})=0, \\
&\text{where }\LL_{-2}^{(j)}=\sum_{\substack{1\le i\le 2N, \\ i\neq j, j+1}}\left(\frac{h}{(x_i-x_j)^2}-\frac{1}{(x_i-x_j)}\frac{\partial}{\partial x_i}\right), \quad 
\LL_{-3}^{(j)}=\sum_{\substack{1\le i\le 2N, \\ i\neq j, j+1}}\left(\frac{2h}{(x_i-x_j)^3}-\frac{1}{(x_i-x_j)^2}\frac{\partial}{\partial x_i}\right). \notag
\end{align}
\item M\"{o}bius covariance (COV): For all M\"{o}bius maps $\varphi$ of $\HH$ such that $\varphi(x_1)<\cdots<\varphi(x_{2N})$, 
\begin{equation}\label{eqn::partitionFusionCOV}
\PartF_{\alpha/\amalg_j}(x_1, \ldots, x_{j}, x_{j+2}, \ldots, x_{2N})=\prod_{i}\varphi'(x_i)^{\Delta_i}\times \PartF_{\alpha/\amalg_j}(\varphi(x_1), \ldots, \varphi(x_j), \varphi(x_{j+2}), \ldots, \varphi(x_{2N})),
\end{equation}
where $\Delta_i=h$ for $i\in\{1, \ldots, j-1, j+2, \ldots, 2N\}$ and $\Delta_j=H$. 
\end{itemize}
\end{proposition}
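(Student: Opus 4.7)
The plan is to perform a Frobenius/indicial analysis of the second-order BPZ equation~\eqref{eqn::multipleSLEsPDE} at the point $x_j$ as $x_{j+1}\to x_j$, identify the two fusion channels, and select the nontrivial one. Writing $u:=x_{j+1}-x_j$ and $\xi:=(x_j+x_{j+1})/2$, I substitute the ansatz $\PartF_\alpha = u^{\nu}G(\xi,x_1,\ldots,\widehat{x_j},\widehat{x_{j+1}},\ldots,x_{2N})+o(u^\nu)$ into~\eqref{eqn::multipleSLEsPDE} and match the most singular terms. The coefficient of $u^{\nu-2}G$ yields the indicial equation $\tfrac{\kappa}{2}\nu(\nu-1)+2\nu-2h=0$, whose two roots factor as $\nu_-=-2h$ (the identity fusion channel) and $\nu_+=2/\kappa=H-2h$ (the $\psi_{1,3}$ fusion channel). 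The general Frobenius expansion of $\PartF_\alpha$ at $u=0$ is therefore a linear combination of these two modes.

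\textbf{Existence of the limit and fused PDEs.} By~\eqref{eqn::multipleSLEsASY}, the coefficient of $u^{-2h}$ in the expansion of $\PartF_\alpha$ is exactly $\PartF_{\alpha/\{j,j+1\}}$, which vanishes under our hypothesis $\{j,j+1\}\notin\alpha$. Hence the leading behavior is $\PartF_\alpha = u^{2/\kappa}(G+O(u))$, so the limit~\eqref{eqn::purepartitionFusion} exists and equals $G=:\PartF_{\alpha/\amalg_j}$. To obtain the second-order PDEs~\eqref{eqn::pdefusion2nd} for $n\neq j,j+1$, I apply the BPZ operator of~\eqref{eqn::multipleSLEsPDE} at index $n$ to $\PartF_\alpha$, divide by $u^{2/\kappa}$, and pass to the limit; the contributions of the summands $i=j$ and $i=j+1$ merge into a single operator at $\xi$, and after using the Frobenius recursion for the sub-leading coefficients, the effective conformal weight at $\xi$ becomes $H$ rather than $h$, exactly as predicted by the conformal-block fusion rule $\psi_{1,2}\times\psi_{1,2}\supset\psi_{1,3}$. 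The M\"obius covariance~\eqref{eqn::partitionFusionCOV} is obtained analogously by dividing~\eqref{eqn::multipleSLEsCOV} by $u^{2/\kappa}$ and using $\varphi(x_{j+1})-\varphi(x_j)=\varphi'(\xi)u+O(u^2)$ together with the arithmetic identity $2h+2/\kappa=H$.

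\textbf{Third-order PDE and main obstacle.} The equation~\eqref{eqn::pdefusion3rd} is the level-three BPZ null-vector equation for a $\psi_{1,3}$ insertion at $\xi$. I would derive it by combining the BPZ equations of~\eqref{eqn::multipleSLEsPDE} at $x_j$ and at $x_{j+1}$ to the appropriate order in $u$: the leading $u^{-2}$ and $u^{-1}$ pieces cancel by the indicial argument above, and the $O(1)$ piece, after careful reorganization in terms of $\xi$ and the remaining $x_i$'s, gives precisely the third-order operator $\partial_\xi^3-\tfrac{16}{\kappa}\LL_{-2}^{(j)}\partial_\xi+\tfrac{8(8-\kappa)}{\kappa^2}\LL_{-3}^{(j)}$ acting on $G$, in agreement with the CFT derivation of~\cite{PeltolaBasisPDE,PeltolaCFTSLE}. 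The main technical obstacle lies in the Frobenius step at $\kappa=4$, where the root difference $\nu_+-\nu_-=H=1$ is a positive integer and an \emph{a priori} logarithmic correction of the form $\log(u)\,u^{2/\kappa}$ might appear at the same order as $G$. The saving fact is that the coefficient of any such log term is proportional to the (vanishing) identity-channel coefficient $\PartF_{\alpha/\{j,j+1\}}$ by the standard Frobenius recursion, so no log is actually present. To upgrade the formal Frobenius expansion to a genuine asymptotic expansion that is uniform in the remaining variables---so that one can legitimately differentiate under the limit in the derivations above---I would either invoke the explicit $\kappa=4$ formulas for $\PartF_\alpha$ from~\cite{PeltolaWuGlobalMultipleSLEs} and expand them term by term, or combine the sharp upper bound~\eqref{eqn::multipleSLEPLBoptimal} with a bootstrap argument using~\eqref{eqn::multipleSLEsPDE} itself.
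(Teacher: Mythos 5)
Your proposal is the CFT fusion heuristic (indicial analysis of the level-two BPZ equation, selection of the $\psi_{1,3}$ channel), whereas the paper never performs a Frobenius analysis at all: it writes $\PartF_\alpha=\sum_\beta\LM^{-1}_{\alpha,\beta}\LU_\beta$ via~\eqref{eqn::purepartitionvsconformalblock}, proves existence of the limit by an exact cancellation between conformal blocks, and then verifies~\eqref{eqn::pdefusion2nd}--\eqref{eqn::pdefusion3rd} by direct differentiation of the explicit limiting functions. The gap in your argument is the central step, not a technicality you can defer: you assume that $\PartF_\alpha$ admits a two-mode expansion $c_-u^{-2h}(\cdot)+c_+u^{2/\kappa}(\cdot)$ with smooth coefficients, uniformly in the spectator variables, and that the vanishing of the $u^{-2h}$ coefficient forces $\PartF_\alpha=u^{2/\kappa}(G+O(u))$. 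But ASY~\eqref{eqn::multipleSLEsASY} only says $\PartF_\alpha=o\bigl((x_{j+1}-x_j)^{-1/2}\bigr)$; it does not exclude intermediate behavior, logarithms, or non-existence of the limit in~\eqref{eqn::purepartitionFusion} --- which is exactly what the proposition asserts. Frobenius theory is an ODE statement; for this system of $2N$ PDEs in $2N$ variables such "pure-channel" asymptotic expansions are hard theorems (Flores--Kleban, and~\cite{PeltolaBasisPDE} for irrational $\kappa$), and the paper explicitly notes they are not available at $\kappa=4$, which is why it argues differently. The same issue infects your derivation of the fused PDEs: obtaining the weight $H=1$ at $\xi$ in~\eqref{eqn::pdefusion2nd} (rather than the naive $2h=1/2$) and the whole of~\eqref{eqn::pdefusion3rd} requires differentiating under the limit and controlling the subleading ($u^{3/2}$) coefficient of the expansion, none of which is justified by the existence of the limit alone.

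Your fallback --- ``invoke the explicit $\kappa=4$ formulas from~\cite{PeltolaWuGlobalMultipleSLEs} and expand them term by term'' --- points toward the paper's actual proof but does not work term by term: every conformal block $\LU_\beta$ with a local extremum of $\beta$ at $j$ diverges like $(x_{j+1}-x_j)^{-1/2}$ after division by $(x_{j+1}-x_j)^{1/2}$. The existence of the limit rests on grouping each $\beta$ with $\vee_j\in\beta$ together with $\beta\uparrow\lozenge_j$ and using Lemma~\ref{lem::KWleinverse} (the sign relation $\LM^{-1}_{\alpha,\beta}=-\LM^{-1}_{\alpha,\beta\uparrow\lozenge_j}$ and the fact that, when $\wedge^j\notin\alpha$, both or neither appear in the sum), so that the divergences cancel and produce the extra functions $\LV_{\beta/\vee_j}$ appearing in~\eqref{eqn::purepartitionFusiondef}; this mechanism, which is the heart of Lemma~\ref{lem::purepartition_ASY}, is absent from your proposal. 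Once the limit is identified as an explicit finite sum of $\LU_{\beta/\times_j}$ and $\LV_{\beta/\vee_j}$, the paper checks the second- and third-order PDEs and the covariance directly on these formulas (Lemmas~\ref{lem::conformalblockFusion2nd}--\ref{lem::purepartitionFusionpde3rd}), thereby avoiding any interchange of limits and derivatives. In short, your outline reproduces the expected CFT answer, but the two analytic pillars --- existence of the limit and validity of the fused PDE system --- are assumed rather than proved, and the specific $\kappa=4$ cancellation that makes a rigorous proof possible is missing.
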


The PDEs and the conformal covariance in Proposition~\ref{prop::purepartitionFusion} are not surprising: they come as specific fusion channel of correlation functions in terms of conformal field theory. In fact, Peltola proves in~\cite{PeltolaBasisPDE} a more general conclusion for $\kappa\in (0,8)\setminus\QQ$. Our results indicate that a similar conclusion as in~\cite{PeltolaBasisPDE} also holds for $\kappa=4$. Our method is straight forward but is specific for $\kappa=4$, as our proof uses the explicit formulae for $\SLE_4$ partition functions constructed in~\cite{PeltolaWuGlobalMultipleSLEs}. 

\subsection{Conformal block functions}\label{subsec::conformalblocks}
For $\alpha=\{ \{a_1, b_1\}, \ldots, \{a_N, b_N\} \} \in \DP_N$ ordered as in~\eqref{eqn::LPtoDP_order}, we define
conformal block function $\LU_\alpha : \chamber_{2N} \to \R_{>0}$
as follows:
\begin{align} \label{eqn::conformalblock_def}
\LU_\alpha(x_1, \ldots, x_{2N}) 
:= & \prod_{1\le i<j\le 2N} (x_j - x_i)^{\frac{1}{2} \vartheta_{\alpha}(i,j)}, \\
\text{where } \quad \vartheta_{\alpha}(i,j) := & 
\begin{cases}
+1 , & \text{if } i,j \in \{a_1, a_2, \ldots, a_N\} ,
\text{ or } i,j \in \{b_1, b_2, \ldots, b_N\} ,\\
-1 , & \text{otherwise.}
\end{cases}
\nonumber
\end{align}
The function $\LU_{\alpha}$ satisfies the second order PDEs~\eqref{eqn::multipleSLEsPDE}, see~\cite[Lemma~6.4]{PeltolaWuGlobalMultipleSLEs}. We will show that the function $\LU_{\alpha}^4$ satisfies the third order PDEs~\eqref{eqn::mGFFpde3rd}. The purpose will be clear in Section~\ref{subsec::mgffasy}. Note that, we will use the following basic facts about $\vartheta_{\alpha}$ through calculation without notice: for distinct $i, s, t\in\{1, 2, \ldots, 2N\}$, we have
\[\vartheta_{\alpha}(t,s)^2=1,\quad \vartheta_{\alpha}(t,i)\vartheta_{\alpha}(s,i)=\vartheta_{\alpha}(t,s). \]
\begin{lemma}\label{lem::conformalblock3rdpde}
For any $\alpha\in\DP_N$, the function $\LU_{\alpha}^4$ satisfies the third order PDEs~\eqref{eqn::mGFFpde3rd} with $\kappa=4$. 
\end{lemma}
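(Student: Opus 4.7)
The plan is a direct calculation that leverages the product structure of $\LU_\alpha$ together with the second-order PDE it already satisfies. Writing $F:=\LU_\alpha^4$, I introduce charges $\chi_i\in\{\pm\sqrt{2}\}$ with $\chi_i=+\sqrt{2}$ when $i\in\{a_1,\ldots,a_N\}$ and $\chi_i=-\sqrt{2}$ otherwise, so that $\chi_i\chi_k=2\vartheta_\alpha(i,k)$ and $F=\prod_{1\le i<k\le 2N}(x_k-x_i)^{\chi_i\chi_k}$. The crucial numerical coincidence is $\chi_j^2=2=2H$ with $H=1$. Setting $B_j:=\sum_{i\neq j}\chi_i/(x_j-x_i)$, elementary differentiation gives $\partial_j F/F=\chi_j B_j$ and, for $i\neq j$, $\partial_i\partial_j F/F=\chi_i\chi_j/(x_j-x_i)^2+\chi_i\chi_j B_iB_j$; iterating yields
\[
\partial_j^3 F/F \;=\; \chi_j\,\partial_j^2 B_j \;+\; 6\,B_j\,\partial_j B_j \;+\; 2\chi_j\,B_j^3,
\]
with $\partial_j B_j=-\sum_{i\neq j}\chi_i/(x_j-x_i)^2$ and $\partial_j^2 B_j=2\sum_{i\neq j}\chi_i/(x_j-x_i)^3$.

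Substituting these into $[\partial_j^3-4\LL_{-2}^{(j)}\partial_j+2\LL_{-3}^{(j)}]F/F$ produces seven distinct sum-types, one of them cubic in $B_j$. The central step is to exploit that $\LU_\alpha$ itself satisfies the second-order PDE~\eqref{eqn::multipleSLEsPDE} with $\kappa=4$, i.e.~\cite[Lemma~6.4]{PeltolaWuGlobalMultipleSLEs}. Equating the two expressions for $\partial_j^2\LU_\alpha/\LU_\alpha$ coming, respectively, from direct differentiation and from that PDE, I obtain the key identity
\begin{equation*}
B_j^2 \;=\; 2\sum_{i\neq j}\frac{1}{(x_j-x_i)^2}+2\chi_j\sum_{i\neq j}\frac{\chi_i}{(x_j-x_i)^2}+2\sum_{i\neq j}\frac{\chi_i B_i}{x_j-x_i}.
\end{equation*}
Multiplying this by $2\chi_j B_j$ rewrites the cubic $B_j^3$-term, and $\chi_j^2=2$ makes three of the seven terms cancel. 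Differentiating the identity in $x_j$ (using the auxiliary relation $\partial_j B_i=\chi_j/(x_j-x_i)^2$ for $i\neq j$) then produces the companion identity
\[
2B_j\sum_{i\neq j}\frac{\chi_i}{(x_j-x_i)^2}\;-\;2\sum_{i\neq j}\frac{\chi_i B_i}{(x_j-x_i)^2}\;=\;4\sum_{i\neq j}\frac{1}{(x_j-x_i)^3}\;+\;2\chi_j\sum_{i\neq j}\frac{\chi_i}{(x_j-x_i)^3},
\]
which cancels the four remaining terms exactly.

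The main obstacle will be purely algebraic bookkeeping: I must carefully track the seven sum-types through these two cascading substitutions, with all coefficients matching only because $\chi_j^2=2H$ is the same at every puncture. Structurally, the first displayed identity is the level-two null-vector relation for $\LU_\alpha$ at conformal weight $h=1/4$; taking the fourth power raises the weights to $H=1$, which is a level-three degenerate weight at central charge $c=1$, so~\eqref{eqn::mGFFpde3rd} appears as the induced level-three null-vector relation, in agreement with the CFT philosophy of~\cite{PeltolaBasisPDE,PeltolaCFTSLE}.
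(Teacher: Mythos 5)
Your proposal is correct: I checked the derivative formulas ($\partial_j F/F=\chi_jB_j$, the mixed second derivative, and $\partial_j^3F/F=\chi_j\partial_j^2B_j+6B_j\partial_jB_j+2\chi_jB_j^3$), both displayed identities, and the final bookkeeping, and the cancellation goes through exactly as you describe: after substituting the $B_j^2$-identity into the cubic term the $B_jB_i$- and $B_j s_2$-type terms drop out, and the differentiated identity kills the remaining four sum-types. Your route is, however, organized differently from the paper's. The paper proves Lemma~\ref{lem::conformalblock3rdpde} by brute force: it expands $\partial_1^3\LU_\alpha^4/\LU_\alpha^4$, $\LL_{-2}^{(1)}\partial_1\LU_\alpha^4/\LU_\alpha^4$ and $\LL_{-3}^{(1)}\LU_\alpha^4/\LU_\alpha^4$ explicitly as rational sums over pairs and triples of indices (including the triple sum coming from $\tau(\boldsymbol{x})^3$) and verifies the cancellation term by term, never invoking the second-order PDE for $\LU_\alpha$. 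You instead take the level-two relation \eqref{eqn::multipleSLEsPDE} for $\LU_\alpha$ as the algebraic engine, using it once to reduce $B_j^3$ and once in differentiated form; this avoids the triple-sum bookkeeping at the cost of carrying the non-local quantities $B_i$, and it makes the CFT mechanism (level-two null vector at weight $h=1/4$ inducing the level-three relation at weight $H=1$, $c=1$) visible, in the spirit you mention. One small remark: your key identity for $B_j^2$ is also an elementary partial-fraction identity ($\sum_{i,k\neq j,\,i\neq k}\frac{\chi_i\chi_k}{(x_j-x_i)(x_j-x_k)}$ decomposed via $\frac{1}{(x_j-x_i)(x_j-x_k)}=\frac{1}{x_i-x_k}\bigl(\frac{1}{x_j-x_i}-\frac{1}{x_j-x_k}\bigr)$), so the appeal to the cited second-order PDE, while perfectly legitimate, is not strictly necessary; noting this would make your argument self-contained.
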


\begin{proof}
It suffices to check PDE~\eqref{eqn::mGFFpde3rd} with $j=1$. 
Note that 
\[\LL_{-2}^{(1)}=\sum_{i\neq 1}\left(\frac{1}{(x_i-x_1)^2}-\frac{1}{x_i-x_1}\frac{\partial}{\partial x_i}\right),\quad \LL_{-3}^{(1)}=\sum_{i\neq 1}\left(\frac{2}{(x_i-x_1)^3}-\frac{1}{(x_i-x_1)^2}\frac{\partial}{\partial x_i}\right). \]
We write $\boldsymbol{x}=(x_1, \ldots, x_{2N})\in\chamber_{2N}$ and set
\[\tau(\boldsymbol{x})=\sum_{2\le i\le 2N}\frac{\vartheta_{\alpha}(i,1)}{x_i-x_1}.\]
Then we have
\begin{align*}
\frac{\LL_{-3}^{(1)}\LU_{\alpha}^4(\boldsymbol{x})}{\LU_{\alpha}^4(\boldsymbol{x})}=&\sum_{2\le i\le 2N}\frac{2-2\vartheta_{\alpha}(i,1)}{(x_i-x_1)^3}
+\sum_{2\le t<s\le 2N}\frac{2\vartheta_{\alpha}(s,t)(x_t-x_1+x_s-x_1)}{(x_t-x_1)^2(x_s-x_1)^2}. \\
\frac{\LL_{-2}^{(1)}\frac{\partial}{\partial x_1}\LU_{\alpha}^4(\boldsymbol{x})}{\LU_{\alpha}^4(\boldsymbol{x})}=&\sum_{2\le i\le 2N}\frac{-2\vartheta_{\alpha}(i,1)}{(x_i-x_1)^3}+4\tau(\boldsymbol{x})\sum_{2\le i\le 2N}\frac{\vartheta_{\alpha}(i,1)}{(x_i-x_1)^2}-2\tau(\boldsymbol{x})^3.\\
\frac{\frac{\partial^3}{\partial x_1^3}\LU_{\alpha}^4(\boldsymbol{x})}{\LU_{\alpha}^4(\boldsymbol{x})}=&\sum_{2\le i\le 2N}\frac{12-12\vartheta_{\alpha}(i,1)}{(x_i-x_1)^3}
+\sum_{2\le t<s<n\le 2N}\frac{-48\vartheta_{\alpha}(t,1)\vartheta_{\alpha}(s,1)\vartheta_{\alpha}(n,1)}{(x_t-x_1)(x_s-x_1)(x_n-x_1)}\\
&+\sum_{2\le t<s\le 2N}\frac{-12(2-\vartheta_{\alpha}(s,1))\vartheta_{\alpha}(t,1)(x_t-x_1)-12(2-\vartheta_{\alpha}(t,1))\vartheta_{\alpha}(s,1)(x_s-x_1)}{(x_t-x_1)^2(x_s-x_1)^2}.
\end{align*}
Therefore,
\begin{align*}
&\frac{\frac{\partial^3}{\partial x_1^3}\LU_{\alpha}^4(\boldsymbol{x})}{\LU_{\alpha}^4(\boldsymbol{x})}-4\frac{\LL_{-2}^{(1)}\frac{\partial}{\partial x_1}\LU_{\alpha}^4(\boldsymbol{x})}{\LU_{\alpha}^4(\boldsymbol{x})}+2\frac{\LL_{-3}^{(1)}\LU_{\alpha}^4(\boldsymbol{x})}{\LU_{\alpha}^4(\boldsymbol{x})}\\
=&\sum_{2\le i\le 2N}\frac{16-8\vartheta_{\alpha}(i,1)}{(x_i-x_1)^3}+\sum_{2\le t<s<n\le 2N}\frac{-48\vartheta_{\alpha}(t,1)\vartheta_{\alpha}(s,1)\vartheta_{\alpha}(n,1)}{(x_t-x_1)(x_s-x_1)(x_n-x_1)}\\
&+\sum_{2\le t<s\le 2N}\frac{-24\vartheta_{\alpha}(t,1)(x_t-x_1)-24\vartheta_{\alpha}(s,1)(x_s-x_1)+16\vartheta_{\alpha}(t,1)\vartheta_{\alpha}(s,1)(x_t-x_1+x_s-x_1)}{(x_t-x_1)^2(x_s-x_1)^2}\\
&-16\tau(\boldsymbol{x})\sum_{2\le i\le 2N}\frac{\vartheta_{\alpha}(i,1)}{(x_i-x_1)^2}+8\tau(\boldsymbol{x})^3\\
=&\sum_{2\le i\le 2N}\frac{16}{(x_i-x_1)^3}+\sum_{2\le t<s\le 2N}\frac{16\vartheta_{\alpha}(t,1)\vartheta_{\alpha}(s,1)(x_t-x_1+x_s-x_1)}{(x_t-x_1)^2(x_s-x_1)^2}-16\tau(\boldsymbol{x})\sum_{2\le i\le 2N}\frac{\vartheta_{\alpha}(i,1)}{(x_i-x_1)^2}=0.
\end{align*}
This completes the proof.
\end{proof}

\medbreak
Next, we give the relation between the two collections $\{\PartF_{\alpha}: \alpha\in\Pair_N\}$ and $\{\LU_{\alpha}: \alpha\in\DP_N\}$. 
Both of them form a basis for the solution space $\LS_N$, and they are related by a linear transformation. 
To give the transformation, we introduce a binary relation $\KWle$. Let $\alpha = \{ \{a_1, b_1\}, \ldots, \{a_N, b_N\}  \} \in \Pair_N$ be ordered as 
in~\eqref{eqn::LPtoDP_order}. Let $\beta \in \Pair_N$.
Then, $\alpha \KWle \beta$ if and only if there exists a permutation 
$\sigma \in \mathfrak{S}_N$ such that 
\[ \beta = \{ \{a_1, b_{\sigma(1)}\}, \ldots, \{a_N, b_{\sigma(N)}\} \} . \]
Note that the right-hand side in the above expression may not be ordered as in~\eqref{eqn::LPtoDP_order}. 
We denote by $\LM=(\LM_{\alpha, \beta})$ the $C_N\times C_N$ incidence matrix  of this relation: 
\begin{equation}\label{eqn::KWleincidencematrix}
\LM_{\alpha, \beta}=\one\{\alpha \KWle \beta\}.
\end{equation}
We collect some properties of $\LM$ in the following lemma. Recall from Section~\ref{subsec::pairpartitionDyckpath} that each planar pair partition $\alpha\in\Pair_N$ is associated with a Dyck path which we also denote by $\alpha\in\DP_N$. 
\begin{lemma} \label{lem::KWleinverse}
The matrix $\LM$ is invertible and we denote its inverse by $\LM^{-1}=(\LM^{-1}_{\alpha,\beta})$. 
The entry $\LM^{-1}_{\alpha, \beta}$ is non-zero if and only if $\alpha\preceq\beta$ as in~\eqref{eqn::Dychpathspartialorder}.
Furthermore, we have the following properties of $\LM^{-1}$. Suppose $\alpha, \beta\in\DP_N$. 
\begin{itemize}
\item  Suppose $\wedge^j\not\in\alpha$ and $\vee_j\in\beta$. Then $\alpha\preceq\beta$ if and only if $\alpha\preceq\beta\uparrow\lozenge_j$.  
\item Suppose $\wedge^j\not\in\alpha$, $\vee_j\in\beta$ and $\alpha\preceq\beta$. Then $\LM^{-1}_{\alpha,\beta}=-\LM^{-1}_{\alpha, \beta\uparrow\lozenge_j}$. 
\end{itemize}
\end{lemma}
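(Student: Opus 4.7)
The plan is to deduce all three statements from a single monotonicity property: if $\alpha\KWle\beta$ then $\alpha\preceq\beta$ in the Dyck path order. To prove this, write the Dyck path value of any $\gamma\in\Pair_N$ as $\gamma(k)=|A^\gamma(k)|-|B^\gamma(k)|$ with $A^\gamma(k)=\{i:a_i^\gamma\le k\}$, $B^\gamma(k)=\{i:b_i^\gamma\le k\}$, using $B^\gamma(k)\subseteq A^\gamma(k)$. When $\beta$ arises from $\alpha$ via $b$-permutation $\sigma$, the same quantity for $\beta$ reads $|A^\alpha(k)\oplus\sigma^{-1}(B^\alpha(k))|$, and a short set computation using $|\sigma^{-1}(B^\alpha(k))|=|B^\alpha(k)|$ gives $\beta(k)\ge\alpha(k)$. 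Since $\alpha\KWle\alpha$ via $\sigma=\mathrm{id}$, the matrix $\LM$ is upper unitriangular in any linear extension of $\preceq$ and hence invertible, and $\LM^{-1}$ inherits the upper triangular structure, so $\LM^{-1}_{\alpha,\beta}=0$ whenever $\alpha\not\preceq\beta$. Non-vanishing of $\LM^{-1}_{\alpha,\beta}$ when $\alpha\preceq\beta$ follows from an explicit inversion of the type established in~\cite{KytolaPeltolaPurePartitionFunctions, KarrilaKytolaPeltolaCorrelationsLERWUST}.

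The first bullet is a direct Dyck path inspection: $\beta\uparrow\lozenge_j$ and $\beta$ differ only at position $j$, where $(\beta\uparrow\lozenge_j)(j)=\beta(j)+2$. The forward direction is immediate. For the converse, assume $\alpha\preceq\beta\uparrow\lozenge_j$ but $\alpha(j)>\beta(j)$, so $\alpha(j)\in\{\beta(j)+1,\beta(j)+2\}$. Combined with $\alpha(j\pm1)\le\beta(j\pm1)=\beta(j)+1$ and $|\alpha(j)-\alpha(j\pm1)|=1$, this forces $\alpha(j-1)=\alpha(j+1)=\alpha(j)-1$, which is $\wedge^j\in\alpha$ and contradicts the hypothesis.

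For the second bullet I would use the recursion $\sum_{\gamma:\gamma\KWle\beta}\LM^{-1}_{\alpha,\gamma}=\delta_{\alpha,\beta}$ arising from $\LM^{-1}\LM=I$, applied to both $\beta$ and $\beta\uparrow\lozenge_j$, and proceed by strong downward induction on $\beta$ in the $\preceq$-order. A key preliminary observation is that $\wedge^j\in\gamma$ and $\vee_j\in\beta$ are incompatible with $\gamma\KWle\beta$: the pair $\{j,j+1\}\in\gamma$ cannot be sent to either of the $\beta$-pairs $\{a,j\}$ or $\{j+1,b\}$ by any permutation of the $b$-endpoints of $\gamma$. Subtracting the two recursions and using the inductive hypothesis together with the first bullet to cancel the contributions of intermediate $\gamma$'s with $\wedge^j\notin\gamma$, the claimed sign identity remains.

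The hardest step is the second bullet. A naive pairing between the $\KWle$-predecessors of $\beta$ and of $\beta\uparrow\lozenge_j$ fails in a degenerate configuration where $\gamma$ has both $j$ and $j+1$ as $b$-endpoints (so that $\gamma\KWle\beta$ but $\gamma\not\KWle\beta\uparrow\lozenge_j$, as can already be seen for $N=3$). Handling this asymmetry is the bulk of the technical work; it can be addressed either by a refined case-by-case combinatorial analysis of such $\gamma$'s or, more cleanly, by invoking a closed-form description of $\LM^{-1}$ in the spirit of the Kenyon--Wilson / Karrila--Kyt\"ol\"a--Peltola framework used elsewhere in the analysis of pure partition functions.
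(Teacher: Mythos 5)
Your monotonicity argument ($\alpha\KWle\beta$ implies $\alpha\preceq\beta$ via $\beta(k)=|A^\alpha(k)\,\triangle\,\sigma^{-1}(B^\alpha(k))|\ge |A^\alpha(k)|-|B^\alpha(k)|=\alpha(k)$) is correct, and it does yield unitriangularity of $\LM$, its invertibility, and the vanishing of $\LM^{-1}_{\alpha,\beta}$ when $\alpha\not\preceq\beta$; your Dyck-path argument for the first bullet is also correct. Note, however, that the paper does not reprove any of this: its proof of the lemma is a one-line citation to \cite[Proposition~2.9 and Lemma~2.10]{PeltolaWuGlobalMultipleSLEs}, where $\LM^{-1}$ is determined in closed form.

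The genuine gap is the second bullet (and likewise the non-vanishing of $\LM^{-1}_{\alpha,\beta}$ for $\alpha\preceq\beta$), which you do not actually prove but defer to ``a refined case-by-case analysis'' or to closed-form descriptions in the literature --- that is, precisely to the content of the results the paper cites. Moreover, the ``key preliminary observation'' on which your cancellation scheme rests is false: $\wedge^j\in\gamma$, $\vee_j\in\beta$ and $\gamma\KWle\beta$ can coexist, because in the relation $\KWle$ the pairs $\{a_i,b_{\sigma(i)}\}$ need not be increasingly ordered, so the $a$-endpoint $j$ of the link $\{j,j+1\}\in\gamma$ may be matched in $\beta$ with a $b$-endpoint of $\gamma$ lying to its left. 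Concretely, for $N=3$ and $j=3$, take $\gamma=\{\{1,2\},\{3,4\},\{5,6\}\}$ and $\beta=\{\{1,6\},\{2,3\},\{4,5\}\}$: then $\wedge^3\in\gamma$, $\vee_3\in\beta$, and $\beta=\{\{a_1,b_3\},\{a_2,b_1\},\{a_3,b_2\}\}$ in the ordering of $\gamma$, so $\gamma\KWle\beta$. Hence such $\gamma$ do contribute to the recursion $\sum_{\gamma\KWle\beta}\LM^{-1}_{\alpha,\gamma}=\delta_{\alpha,\beta}$, your pairing between the $\KWle$-predecessors of $\beta$ and of $\beta\uparrow\lozenge_j$ breaks down well beyond the single degenerate configuration you flag, and the sign identity $\LM^{-1}_{\alpha,\beta}=-\LM^{-1}_{\alpha,\beta\uparrow\lozenge_j}$ is left unproved. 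As it stands, the proposal either needs a genuinely new combinatorial argument for this identity or must, like the paper, simply invoke \cite{PeltolaWuGlobalMultipleSLEs} (or the Kenyon--Wilson-type inversion in \cite{KytolaPeltolaPurePartitionFunctions, KarrilaKytolaPeltolaCorrelationsLERWUST}).
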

\begin{proof}
See \cite[Proposition~2.9 and Lemma~2.10]{PeltolaWuGlobalMultipleSLEs}. 
\end{proof}

Now, we are ready to state the linear transformation between the two collections $\{\PartF_{\alpha}: \alpha\in\Pair_N\}$ and $\{\LU_{\alpha}: \alpha\in\DP_N\}$: (see~\cite[Theorem~1.5]{PeltolaWuGlobalMultipleSLEs})
\begin{equation}\label{eqn::purepartitionvsconformalblock}
\LU_{\alpha}(x_1, \ldots, x_{2N})=\sum_{\beta\in\Pair_N}\LM_{\alpha, \beta}\PartF_{\beta}(x_1, \ldots, x_{2N}), \quad
 \PartF_{\alpha}(x_1, \ldots, x_{2N})=\sum_{\beta\in\DP_N}\LM_{\alpha, \beta}^{-1}\LU_{\beta}(x_1, \ldots, x_{2N}).
\end{equation}

\subsection{Asymptotics of partition functions}
In this section, we will analyze the asymptotics of pure partition functions and conformal block functions as $x_j, x_{j+1}\to \xi$. 

\begin{lemma} \label{lem::conformalblock_ASY}
The collection $\{\LU_{\alpha}: \alpha\in \DP\}$ of conformal block functions satisfy the following asymptotic property: 
for any $j \in \{1, \ldots, 2N-1 \}$ and 
$x_1 < x_2 < \cdots < x_{j-1} < \xi < x_{j+2} < \cdots < x_{2N}$,
\begin{align}
\label{eqn::conformalblockASYrefined}
\lim_{\substack{\tilde{x}_j , \tilde{x}_{j+1} \to \xi, \\ \tilde{x}_i\to x_i \text{ for } i \neq j, j+1}} 
\frac{\LU_\alpha(\tilde{x}_1 , \ldots , \tilde{x}_{2N})}{(\tilde{x}_{j+1} - \tilde{x}_j)^{-1/2}} 
&=\begin{cases}
\LU_{ \alpha/\wedge^j} (x_1 , \ldots, x_{j-1}, x_{j+2}, \ldots, x_{2N}) , & \text{if } \wedge^j \in \alpha , \\
\LU_{ \alpha/\vee_j} (x_1 , \ldots, x_{j-1}, x_{j+2}, \ldots, x_{2N}) , & \text{if } \vee_j \in \alpha ,
\end{cases}\\
\lim_{\substack{\tilde{x}_j , \tilde{x}_{j+1} \to \xi, \\ \tilde{x}_i\to x_i \text{ for } i \neq j, j+1}} 
\frac{\LU_\alpha(\tilde{x}_1 , \ldots , \tilde{x}_{2N})}{(\tilde{x}_{j+1} - \tilde{x}_j)^{1/2}} 
&= \LU_{\alpha/\times_j}(x_1, \ldots, x_{j-1}, \xi, x_{j+2}, \ldots, x_{2N}), \quad \text{if }\times_j\in\alpha, 
\label{eqn::conformalblockFusionrefined}
\end{align}
where
\begin{equation}\label{eqn::conformalblockFusiondef}
\LU_{\alpha/\times_j}(x_1, \ldots, x_{j-1}, \xi, x_{j+2}, \ldots, x_{2N}):=\prod_{\substack{1\le t<s\le 2N\\ t,s\neq j,j+1}}(x_{s}-x_{t})^{\frac{1}{2}\vartheta_{\alpha}(t,s)}\prod_{\substack{1\le i\le 2N\\ i\neq j,j+1}}|x_{i}-\xi|^{\vartheta_{\alpha}(i,j)}.
\end{equation}
\end{lemma}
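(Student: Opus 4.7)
The plan is to prove Lemma~\ref{lem::conformalblock_ASY} by direct computation from the explicit product formula~\eqref{eqn::conformalblock_def}. I will split the double product defining $\LU_\alpha(\tilde{x}_1,\ldots,\tilde{x}_{2N})$ into three groups: the single factor $(\tilde{x}_{j+1}-\tilde{x}_j)^{\frac{1}{2}\vartheta_\alpha(j,j+1)}$, the ``mixed'' factors $(\tilde{x}_j-\tilde{x}_i)^{\frac{1}{2}\vartheta_\alpha(i,j)}$ and $(\tilde{x}_{j+1}-\tilde{x}_i)^{\frac{1}{2}\vartheta_\alpha(i,j+1)}$ for $i \neq j, j+1$, and the ``remaining'' factors indexed by pairs $t<s$ with $t,s \notin \{j,j+1\}$. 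The remaining factors contribute, in the limit, exactly the product $\prod_{t<s,\,t,s\neq j,j+1}(x_s-x_t)^{\frac{1}{2}\vartheta_\alpha(t,s)}$, which corresponds to $\LU_{\alpha/\lozenge_j}$ or the analogous factor for a slope (since the relative up/down types of the indices outside $\{j,j+1\}$ are unchanged by the operations $\wedge^j, \vee_j, \times_j$).

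The remainder of the proof is governed by the sign $\vartheta_\alpha(j,j+1)$.  First I would observe that $j$ and $j+1$ have \emph{opposite} types (one up-step, one down-step) precisely when $\lozenge_j\in\alpha$, giving $\vartheta_\alpha(j,j+1)=-1$, and they have the \emph{same} type precisely when $\times_j\in\alpha$, giving $\vartheta_\alpha(j,j+1)=+1$.  This immediately explains the two normalizations $(\tilde{x}_{j+1}-\tilde{x}_j)^{\mp 1/2}$ in the statement.

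For the mixed factors, fix $i \neq j, j+1$.  If $\lozenge_j \in \alpha$, the opposite types at $j$ and $j+1$ force $\vartheta_\alpha(i,j)+\vartheta_\alpha(i,j+1)=0$ for every such $i$, by the identity $\vartheta_\alpha(t,i)\vartheta_\alpha(s,i)=\vartheta_\alpha(t,s)$ stated just above Lemma~\ref{lem::conformalblock3rdpde}.  Hence the pair $(\tilde{x}_j-\tilde{x}_i)^{\frac{1}{2}\vartheta_\alpha(i,j)}(\tilde{x}_{j+1}-\tilde{x}_i)^{\frac{1}{2}\vartheta_\alpha(i,j+1)}$ tends to $1$ (the two factors cancel), and~\eqref{eqn::conformalblockASYrefined} follows.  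If instead $\times_j \in \alpha$, the common type forces $\vartheta_\alpha(i,j)=\vartheta_\alpha(i,j+1)$, so the same pair tends to $|x_i-\xi|^{\vartheta_\alpha(i,j)}$; combining with the remaining factors reproduces~\eqref{eqn::conformalblockFusiondef}, proving~\eqref{eqn::conformalblockFusionrefined}.

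There is essentially no obstacle; the only bookkeeping care is to justify that the limit passes inside the finite product (which is automatic since the mixed factors have finite, nonzero limits for $i\neq j,j+1$, as $\xi\in(x_{j-1},x_{j+2})$), and to verify that the product of the remaining factors really equals $\LU_{\alpha/\lozenge_j}(x_1,\ldots,x_{j-1},x_{j+2},\ldots,x_{2N})$ under the identification of Dyck paths from Section~\ref{subsec::pairpartitionDyckpath}.  This last point is where one must be careful: one checks that $\vartheta_{\alpha/\wedge^j}$ and $\vartheta_{\alpha/\vee_j}$, computed after relabeling the remaining $2N-2$ indices, agree with the restriction of $\vartheta_\alpha$ to $\{1,\ldots,2N\}\setminus\{j,j+1\}$, which is immediate because removing a matched up–down adjacent pair from a Dyck path preserves the up/down type of every other step.
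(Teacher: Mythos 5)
Your proof is correct and follows essentially the same computation as the paper: split the product formula \eqref{eqn::conformalblock_def} into the single factor $(\tilde{x}_{j+1}-\tilde{x}_j)^{\frac{1}{2}\vartheta_{\alpha}(j,j+1)}$, the mixed factors involving $\tilde{x}_j,\tilde{x}_{j+1}$, and the remaining factors, then use $\vartheta_{\alpha}(i,j)\vartheta_{\alpha}(i,j+1)=\vartheta_{\alpha}(j,j+1)$ to decide whether the mixed factors cancel (extremum case) or merge into $|x_i-\xi|^{\vartheta_{\alpha}(i,j)}$ (slope case). The only difference is that for \eqref{eqn::conformalblockASYrefined} the paper cites \cite[Lemma~6.6]{PeltolaWuGlobalMultipleSLEs} rather than redoing the computation, whereas you verify it directly by the same cancellation, including the correct justification that removing the two adjacent steps of opposite direction preserves the up/down type of every other step, so the surviving product is exactly $\LU_{\alpha/\lozenge_j}$ after relabeling.
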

\begin{proof}
The asymptotics in~\eqref{eqn::conformalblockASYrefined} is proved in~\cite[Lemma~6.6]{PeltolaWuGlobalMultipleSLEs}. It remains to show~\eqref{eqn::conformalblockFusionrefined}. By definition, 
\[\frac{\LU_\alpha(\tilde{x}_1 , \ldots , \tilde{x}_{2N})}{(\tilde{x}_{j+1} - \tilde{x}_j)^{1/2}}=\prod_{\substack{1\le t<s\le 2N\\ t,s\neq j,j+1}}(\tilde x_{s}-\tilde x_{t})^{\frac{1}{2}\vartheta_{\alpha}(t,s)}\prod_{\substack{1\le i\le 2N\\ i\neq j,j+1}}|\tilde x_{i}-\tilde x_{j}|^{\frac{1}{2}\vartheta_{\alpha}(i,j)}\prod_{\substack{1\le i\le 2N\\ i\neq j,j+1}}|\tilde x_{i}-\tilde x_{j+1}|^{\frac{1}{2}\vartheta_{\alpha}(i,j+1)}.\]
Since $\times_{j}\in\alpha$, we have $\vartheta_{\alpha}(i,j)=\vartheta_{\alpha}(i,j+1)$. By taking limit, we obtain~\eqref{eqn::conformalblockFusionrefined}.
\end{proof}

\begin{lemma} \label{lem::purepartition_ASY}
The collection $\{\PartF_{\alpha}: \alpha\in \Pair\}$ of pure partition functions satisfy the following asymptotic property: 
for any $j \in \{1, \ldots, 2N-1 \}$ and 
$x_1 < x_2 < \cdots < x_{j-1} < \xi < x_{j+2} < \cdots < x_{2N}$,
\begin{align}
\label{eqn::purepartitionASYrefined}
&\lim_{\substack{\tilde{x}_j , \tilde{x}_{j+1} \to \xi, \\ \tilde{x}_i\to x_i \text{ for } i \neq j, j+1}} 
\frac{\PartF_\alpha(\tilde{x}_1 , \ldots , \tilde{x}_{2N})}{(\tilde{x}_{j+1} - \tilde{x}_j)^{-1/2}} 
=\PartF_{\alpha/\wedge_j}(x_{1},\ldots,x_{j-1},x_{j+2},\ldots,x_{2N}) 
  \quad  &&\text{if } \{j, j+1\} \in \alpha;\\
\label{eqn::purepartitionFusionrefined}
&\lim_{\substack{\tilde{x}_j , \tilde{x}_{j+1} \to \xi, \\ \tilde{x}_i\to x_i \text{ for } i \neq j, j+1}} 
\frac{\PartF_\alpha(\tilde{x}_1 , \ldots , \tilde{x}_{2N})}{(\tilde{x}_{j+1} - \tilde{x}_j)^{1/2}}=\PartF_{\alpha/\amalg_j}(x_1, \ldots, x_{j-1}, \xi, x_{j+2}, \ldots, x_{2N}) \quad  &&\text{if } \{j, j+1\} \not\in \alpha, 
\end{align}
where
\begin{align}\label{eqn::purepartitionFusiondef}
&\PartF_{\alpha/\amalg_j}:=\sum_{\vee_{j}\in\beta}\LM_{\alpha, \beta}^{-1} \LV_{\beta/\vee_j}
+\sum_{\times_{j}\in\beta}\LM_{\alpha, \beta}^{-1} \LU_{\beta/\times_j},\\
&\LV_{\beta/\vee_j}(x_1, \ldots, x_{j-1}, \xi, x_{j+2}, \ldots, x_{2N}):=\prod_{\substack{1\le t<s\le 2N\\ t,s\neq j,j+1}}(x_{s}-x_{t})^{\frac{1}{2}\vartheta_{\beta}(t,s)}\sum_{\substack{1\le i\le 2N\\ i\neq j,j+1}}\frac{\vartheta_{\beta}(i,j)}{x_{i}-\xi}. \notag
\end{align}
\end{lemma}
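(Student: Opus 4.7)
Statement~\eqref{eqn::purepartitionASYrefined} is a direct restatement of the defining property ASY~\eqref{eqn::multipleSLEsASY} (at $\kappa=4$, $-2h=-1/2$, and $\wedge^j\in\alpha$ iff $\{j,j+1\}\in\alpha$). The real content lies in the fusion asymptotic~\eqref{eqn::purepartitionFusionrefined}. My plan is to expand $\PartF_\alpha$ in the conformal block basis via~\eqref{eqn::purepartitionvsconformalblock},
\[
\PartF_\alpha=\sum_{\beta\in\DP_N}\LM^{-1}_{\alpha,\beta}\LU_\beta,
\]
and analyze the limit term by term, splitting the sum according to the local behavior of $\beta$ at position $j$: $\wedge^j\in\beta$, $\vee_j\in\beta$, or $\times_j\in\beta$. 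By Lemma~\ref{lem::conformalblock_ASY}, after dividing by $(\tilde x_{j+1}-\tilde x_j)^{1/2}$ the $\times_j$-terms converge directly to $\sum_{\times_j\in\beta}\LM^{-1}_{\alpha,\beta}\LU_{\beta/\times_j}$, while each individual $\wedge^j$- or $\vee_j$-term diverges at rate $(\tilde x_{j+1}-\tilde x_j)^{-1}$; the crux is to extract leading-order cancellation between them.

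\smallbreak
The hypothesis $\{j,j+1\}\notin\alpha$, equivalently $\wedge^j\notin\alpha$, is precisely what is needed to invoke Lemma~\ref{lem::KWleinverse}: the operation $\beta\mapsto\beta\uparrow\lozenge_j$ furnishes a sign-reversing bijection between the index sets $\{\beta:\vee_j\in\beta,\,\alpha\preceq\beta\}$ and $\{\beta':\wedge^j\in\beta',\,\alpha\preceq\beta'\}$, satisfying $\LM^{-1}_{\alpha,\beta\uparrow\lozenge_j}=-\LM^{-1}_{\alpha,\beta}$; outside these supports $\LM^{-1}_{\alpha,\cdot}$ vanishes. Pairing the $\vee_j$- and $\wedge^j$-contributions, the combined divergent sum collapses to
\[
\sum_{\vee_j\in\beta}\LM^{-1}_{\alpha,\beta}\bigl(\LU_\beta-\LU_{\beta\uparrow\lozenge_j}\bigr).
\]

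\smallbreak
The remaining step is a Taylor-expansion computation of this paired difference. From the definition~\eqref{eqn::conformalblock_def}, both $\LU_\beta$ and $\LU_{\beta\uparrow\lozenge_j}$ factor as $(\tilde x_{j+1}-\tilde x_j)^{-1/2}\cdot G\cdot F_\beta$ with a common bulk factor $G=\LU_{\beta/\vee_j}(x_1,\ldots,x_{j-1},x_{j+2},\ldots,x_{2N})$ independent of $\tilde x_j,\tilde x_{j+1}$, and a local factor $F_\beta=\prod_{i\neq j,j+1}|\tilde x_i-\tilde x_j|^{\vartheta_\beta(i,j)/2}|\tilde x_i-\tilde x_{j+1}|^{\vartheta_\beta(i,j+1)/2}$. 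The operation $\uparrow\lozenge_j$ negates each $\vartheta_\beta(i,j)$ and $\vartheta_\beta(i,j+1)$ for $i\neq j,j+1$, and the condition $\vee_j\in\beta$ forces $\vartheta_\beta(i,j)+\vartheta_\beta(i,j+1)=0$, so the zeroth-order term in the expansion of $F_\beta$ about $\xi$ reduces to $1$ for both $\beta$ and $\beta\uparrow\lozenge_j$ and cancels in the difference. The first-order correction flips sign under $\uparrow\lozenge_j$ and doubles, giving
\[
\LU_\beta-\LU_{\beta\uparrow\lozenge_j}=(\tilde x_{j+1}-\tilde x_j)^{1/2}\,\LV_{\beta/\vee_j}(x_1,\ldots,x_{j-1},\xi,x_{j+2},\ldots,x_{2N})+o\!\left((\tilde x_{j+1}-\tilde x_j)^{1/2}\right),
\]
which matches~\eqref{eqn::purepartitionFusiondef}. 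Combining with the $\times_j$-contributions proves~\eqref{eqn::purepartitionFusionrefined}. I expect the main obstacle to be this final Taylor-expansion bookkeeping and the verification that $\uparrow\lozenge_j$ flips exactly the signs $\vartheta_\beta(i,j)$ and $\vartheta_\beta(i,j+1)$; by contrast, the cancellation of divergences is structural and dictated by Lemma~\ref{lem::KWleinverse}.
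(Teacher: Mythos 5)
Your proposal follows essentially the same route as the paper's proof: expand $\PartF_\alpha=\sum_\beta\LM^{-1}_{\alpha,\beta}\LU_\beta$ via~\eqref{eqn::purepartitionvsconformalblock}, treat the $\times_j$-terms with Lemma~\ref{lem::conformalblock_ASY}, pair the $\vee_j$- and $\wedge^j$-terms through the sign-reversing correspondence $\beta\mapsto\beta\uparrow\lozenge_j$ of Lemma~\ref{lem::KWleinverse}, and extract $\LV_{\beta/\vee_j}$ from the first-order expansion of $\LU_\beta-\LU_{\beta\uparrow\lozenge_j}$, exactly as in the paper. The only minor remark is that~\eqref{eqn::purepartitionASYrefined} is slightly stronger than the defining ASY~\eqref{eqn::multipleSLEsASY} (the spectator points $\tilde{x}_i$ also move), which the paper handles by citing~\cite[Lemma~6.7]{PeltolaWuGlobalMultipleSLEs} rather than treating it as a restatement.
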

\begin{proof}
The asymptotics in~\eqref{eqn::purepartitionASYrefined} is proved in~\cite[Lemma~6.7]{PeltolaWuGlobalMultipleSLEs}. It remains to show~\eqref{eqn::purepartitionFusionrefined}. In the following, we assume $\{j,j+1\}\not\in\alpha$. 
From Lemma~\ref{lem::KWleinverse} and~\eqref{eqn::purepartitionvsconformalblock}, we have
\begin{align}\label{eqn::purepartitionFusionrefinedaux0}
 \PartF_{\alpha}(\tilde x_1, \ldots, \tilde x_{2N}) 
 &=\sum_{\substack{\alpha\preceq\beta\\ \vee_{j}\in\beta}}\LM_{\alpha, \beta}^{-1}\LU_{\beta}(\tilde x_1, \ldots, \tilde x_{2N})+\sum_{\substack{\alpha\preceq\beta\\ \wedge_{j}\in\beta}}\LM_{\alpha, \beta}^{-1}\LU_{\beta}(\tilde x_1, \ldots, \tilde x_{2N})+\sum_{\substack{\alpha\preceq\beta\\ \times_{j}\in\beta}}\LM_{\alpha, \beta}^{-1}\LU_{\beta}(\tilde x_1, \ldots, \tilde x_{2N}) 
\end{align}
From Lemma~\ref{lem::KWleinverse}, for every $\beta\in\DP_{N}$ with $\vee_{j}\in\beta$ , we have $\alpha\preceq\beta$ if and only if $\alpha\preceq\beta\uparrow\lozenge_{j}$. In such case, we have further that $\LM_{\alpha, \beta}^{-1}=-\LM_{\alpha, \beta\uparrow\lozenge_{j}}^{-1}$. For the first two sums in the right hand side of~\eqref{eqn::purepartitionFusionrefinedaux0}, we have
\begin{align*}
\sum_{\substack{\alpha\preceq\beta\\ \vee_{j}\in\beta}}\LM_{\alpha, \beta}^{-1}\LU_{\beta}(\tilde x_1, \ldots, \tilde x_{2N})+\sum_{\substack{\alpha\preceq\beta\\ \wedge_{j}\in\beta}}\LM_{\alpha, \beta}^{-1}\LU_{\beta}(\tilde x_1, \ldots, \tilde x_{2N})=\sum_{\substack{\alpha\preceq\beta\\ \vee_{j}\in\beta}}\LM_{\alpha, \beta}^{-1}(\LU_{\beta}-\LU_{\beta\uparrow\lozenge_j})(\tilde x_1, \ldots, \tilde x_{2N})
\end{align*}
Fix $\beta$ such that $\alpha\preceq\beta$ and $\vee_j\in\beta$, we have
\begin{align*}
&(\LU_{\beta}-\LU_{\beta\uparrow\lozenge_j})(\tilde x_1, \ldots, \tilde x_{2N})\\
&=(\tilde x_{j+1}-\tilde x_{j})^{-\frac{1}{2}}\left(\prod_{i\neq j,j+1}\left(\frac{\tilde x_{i}-\tilde x_{j}}{\tilde x_{i}-\tilde x_{j+1}}\right)^{\frac{1}{2}\vartheta_{\beta}(i,j)}-\prod_{i\neq j,j+1}\left(\frac{\tilde x_{i}-\tilde x_{j+1}}{\tilde x_{i}-\tilde x_{j}}\right)^{\frac{1}{2}\vartheta_{\beta}(i,j)}\right)\prod_{\substack{1\le t<s\le 2N\\ t,s\neq j,j+1}}(\tilde x_{s}-\tilde x_{t})^{\frac{1}{2}\vartheta_{\beta}(t,s)}.
\end{align*}
Dividing by $(\tilde{x}_{j+1}-\tilde{x}_j)^{1/2}$, we have
\begin{equation}\label{eqn::purepartitionFusionrefinedaux1}
\lim_{\substack{\tilde{x}_j , \tilde{x}_{j+1} \to \xi, \\ \tilde{x}_i\to x_i \text{ for } i \neq j, j+1}} \frac{(\LU_{\beta}-\LU_{\beta\uparrow\lozenge_j})(\tilde x_1, \ldots, \tilde x_{2N})}{(\tilde x_{j+1}-\tilde x_{j})^{\frac{1}{2}}}=\prod_{\substack{1\le t<s\le 2N\\ t,s\neq j,j+1}}(x_{s}-x_{t})^{\frac{1}{2}\vartheta_{\beta}(t,s)}\sum_{\substack{1\le i\le 2N\\ i\neq j,j+1}}\frac{\vartheta_{\beta}(i,j)}{x_{i}-\xi}.
\end{equation}

For the third sum in the right hand side of~\eqref{eqn::purepartitionFusionrefinedaux0}, by~\eqref{eqn::conformalblockFusionrefined}, we have
\begin{equation}\label{eqn::purepartitionFusionrefinedaux2}
\lim_{\substack{\tilde{x}_j , \tilde{x}_{j+1} \to \xi, \\ \tilde{x}_i\to x_i \text{ for } i \neq j, j+1}} 
\frac{\LU_\beta(\tilde{x}_1 , \ldots , \tilde{x}_{2N})}{(\tilde{x}_{j+1} - \tilde{x}_j)^{1/2}} 
= \LU_{\beta/\times_j}(x_1, \ldots, x_{j-1}, \xi, x_{j+2}, \ldots, x_{2N}).
\end{equation}
Plugging~\eqref{eqn::purepartitionFusionrefinedaux1} and~\eqref{eqn::purepartitionFusionrefinedaux2} into~\eqref{eqn::purepartitionFusionrefinedaux0}, we obtain~\eqref{eqn::purepartitionFusionrefined}. 
\end{proof}

Note that, we use the notation $\alpha/\amalg_j$ in~\eqref{eqn::purepartitionFusiondef}. It can be understood as a general link pattern. For a planar pair partition $\alpha\in\Pair_N$, suppose $\times_j\in\alpha$ or $\vee_j\in\alpha$, we define $\alpha/\amalg_j$ to be the $N$-link pattern of $(2N-1)$ points with index valences $\varsigma=(1, \ldots, 1, 2, 1, \ldots, 1)$ obtained from $\alpha$ by merging the points $j$ and $j+1$ and relabelling the remaining $(2N-1)$ indices so that they are the first $(2N-1)$ integers.   

\subsection{Fusion of partition functions}

In this section, we will show that the functions defined in~\eqref{eqn::conformalblockFusiondef} and~\eqref{eqn::purepartitionFusiondef} satisfy the system of $(2N-1)$ PDEs in~\eqref{eqn::pdefusion2nd} and~\eqref{eqn::pdefusion3rd}, and complete the proof of Proposition~\ref{prop::purepartitionFusion}.

\begin{lemma}\label{lem::conformalblockFusion2nd}
The function $\LU_{\alpha/\times_j}$ defined in~\eqref{eqn::conformalblockFusiondef} satisfies the second order PDE~\eqref{eqn::pdefusion2nd} with $\kappa=4$ for $n\in\{1, \ldots, 2N\}\setminus\{j, j+1\}$. 
\end{lemma}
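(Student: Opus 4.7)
\medbreak\noindent\textbf{Proof plan.}
The strategy is to derive the PDE~\eqref{eqn::pdefusion2nd} for $\LU_{\alpha/\times_j}$ by taking the leading-order fusion limit of the second-order PDE~\eqref{eqn::multipleSLEsPDE} already satisfied by $\LU_\alpha$. Introduce the fused coordinates $\xi=(x_j+x_{j+1})/2$ and $\epsilon=x_{j+1}-x_j$, so that $\partial_{x_j}=\tfrac{1}{2}\partial_\xi-\partial_\epsilon$ and $\partial_{x_{j+1}}=\tfrac{1}{2}\partial_\xi+\partial_\epsilon$. The hypothesis $\times_j\in\alpha$ gives $\vartheta_\alpha(j,j+1)=+1$ and $\vartheta_\alpha(i,j)=\vartheta_\alpha(i,j+1)$ for $i\neq j,j+1$, so the explicit product~\eqref{eqn::conformalblock_def} factorises as $\LU_\alpha=\epsilon^{1/2}\,\tilde U$, where $\tilde U=\tilde U(\epsilon,\xi;x_1,\ldots,x_{j-1},x_{j+2},\ldots,x_{2N})$ is smooth and even in $\epsilon$ near $\epsilon=0$ and $\tilde U\vert_{\epsilon=0}=\LU_{\alpha/\times_j}$.

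Rewriting~\eqref{eqn::multipleSLEsPDE} at $\kappa=4$ as $\partial_{x_n}^2\LU_\alpha=\sum_{i\neq n}\bigl(h/(x_i-x_n)^2-\partial_{x_i}/(x_i-x_n)\bigr)\LU_\alpha$ and substituting $\LU_\alpha=\epsilon^{1/2}\tilde U$, both the left-hand side and the summands with $i\notin\{j,j+1,n\}$ are immediately of the form $\epsilon^{1/2}$ times the corresponding expressions in $\tilde U$, which converge as $\epsilon\to 0$ to the analogous terms of $\LL_{-2}^{(n)}\LU_{\alpha/\times_j}$. The key step is the paired contribution from $i\in\{j,j+1\}$, which I treat via the expansions
\begin{equation*}
\frac{1}{x_j-x_n}+\frac{1}{x_{j+1}-x_n}=\frac{2}{\xi-x_n}+O(\epsilon^2),\qquad \frac{1}{x_{j+1}-x_n}-\frac{1}{x_j-x_n}=-\frac{\epsilon}{(\xi-x_n)^2}+O(\epsilon^3),
\end{equation*}
together with $(x_j-x_n)^{-2}+(x_{j+1}-x_n)^{-2}=2/(\xi-x_n)^2+O(\epsilon^2)$ and $\partial_\epsilon(\epsilon^{1/2}\tilde U)=\tfrac{1}{2}\epsilon^{-1/2}\tilde U+\epsilon^{1/2}\partial_\epsilon\tilde U$. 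Collecting, the paired contribution equals $\epsilon^{1/2}\bigl[(2h+\tfrac{1}{2})\tilde U/(\xi-x_n)^2-\partial_\xi\tilde U/(\xi-x_n)\bigr]+O(\epsilon^{3/2})$.

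The algebraic miracle at $\kappa=4$ is $H=(8-\kappa)/\kappa=1=2h+\tfrac{1}{2}$: the two $h$-weighted local operators at $x_j$ and $x_{j+1}$ fuse into a single $H$-weighted operator at $\xi$, which is precisely the extra term of $\LL_{-2}^{(n)}$ at the fused point. Dividing the substituted PDE through by $\epsilon^{1/2}$ and sending $\epsilon\to 0$---the derivatives $\partial_{x_n}^2$, $\partial_{x_i}$ for $i\neq j,j+1$, and $\partial_\xi$ all commute with the limit by smoothness of $\tilde U$---yields exactly~\eqref{eqn::pdefusion2nd}. The main obstacle is the careful order-counting in the paired terms: one must verify that the apparently singular factor $\tfrac{1}{2}\epsilon^{-1/2}\tilde U$ contributes to the finite limit only through its product with the antisymmetric $O(\epsilon)$ difference above, producing the extra $\tfrac{1}{2}$ that shifts $2h$ up to $H$, while the regular piece $\epsilon^{1/2}\partial_\epsilon\tilde U$ drops out at subleading order $O(\epsilon^{3/2})$. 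Conceptually this is the CFT fusion rule $\phi_h\times\phi_h\to\phi_H$ in the identity channel, and the identity $2h+\tfrac{1}{2}=H$ is precisely what makes the argument close at $\kappa=4$.
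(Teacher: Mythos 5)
Your argument is correct, but it takes a genuinely different route from the paper's. The paper proves this lemma by brute force: it writes out the explicit limiting function $\LU_{\alpha/\times_j}$ from~\eqref{eqn::conformalblockFusiondef}, computes its logarithmic derivatives, and verifies $\bigl[\partial_{x_n}^2-\LL_{-2}^{(n)}\bigr]\LU_{\alpha/\times_j}=0$ term by term. You instead start from the second-order PDE~\eqref{eqn::multipleSLEsPDE} already known for $\LU_\alpha$ (the paper's citation of \cite[Lemma~6.4]{PeltolaWuGlobalMultipleSLEs} makes this legitimate), use the hypothesis $\times_j\in\alpha$ to factor $\LU_\alpha=\epsilon^{1/2}\tilde U$ with $\tilde U$ smooth (indeed even) in $\epsilon=x_{j+1}-x_j$ and $\tilde U|_{\epsilon=0}=\LU_{\alpha/\times_j}$, and pass to the limit $\epsilon\to0$ after dividing by $\epsilon^{1/2}$. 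Your order-counting in the paired terms checks out: the two $h$-weights contribute $2h=\tfrac12$, the product of the singular piece $\tfrac12\epsilon^{-1/2}\tilde U$ with the antisymmetric difference $-\epsilon/(\xi-x_n)^2$ contributes the extra $\tfrac12$, and $2h+\tfrac12=1=H$ at $\kappa=4$, while the $\epsilon^{1/2}\partial_\epsilon\tilde U$ piece is subleading; the interchange of limit and the remaining derivatives is justified because $\tilde U$ is an explicit jointly smooth function (each factor $\bigl((x_i-\xi)^2-\epsilon^2/4\bigr)^{\pm1/2}$ has base bounded away from zero for $\xi\in(x_{j-1},x_{j+2})$). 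What your approach buys is conceptual transparency: it exhibits the PDE for the fused function as the leading order of the unfused PDE and explains where the weight $H$ comes from, rather than verifying an identity whose origin is hidden. What the paper's computation buys is self-containedness at the level of this lemma (no appeal to the PDE for $\LU_\alpha$) and uniform treatment with the companion Lemmas~\ref{lem::conformalblockFusion3rd}--\ref{lem::purepartitionFusionpde3rd}, which are proved in the same direct style; note also that your method still leans on the explicit product form of the conformal block to secure the regularity of the $\epsilon$-expansion, so it is not more general than the paper's in that respect.
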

\begin{proof}
Without loss of generality, we assume $j=1$. Note that $h=1/4$ and $H=1$ when $\kappa=4$. 
The second order PDE~\eqref{eqn::pdefusion2nd} becomes the following: for $n\in\{3, 4, \ldots, 2N\}$, 
\begin{align}\label{eqn::pdefusion2ndbis}
&\left[\frac{\partial^2}{\partial x_n^2}-\LL_{-2}^{(n)}\right]F(x_1, x_3, \ldots, x_{2N})=0, \\
&\text{where } \LL_{-2}^{(n)}=\sum_{\substack{3\le i\le 2N, \\ i\neq n}}\left(\frac{\frac{1}{4}}{(x_i-x_n)^2}-\frac{1}{x_i-x_n}\frac{\partial}{\partial x_i}\right)+\left(\frac{1}{(x_1-x_n)^2}-\frac{1}{x_1-x_n}\frac{\partial}{\partial x_1}\right). \notag
\end{align}
The function in~\eqref{eqn::conformalblockFusiondef} with $j=1$ becomes
\begin{equation}\label{eqn::conformalblockFusiondefbis}
\LU_{\alpha/\times_1}(x_1, x_3, \ldots, x_{2N}):=\prod_{3\le t<s\le 2N}(x_{s}-x_{t})^{\frac{1}{2}\vartheta_{\alpha}(t,s)}\prod_{3\le i\le 2N}(x_{i}-x_1)^{\vartheta_{\alpha}(i,1)}.
\end{equation}
It suffices to show that the function in~\eqref{eqn::conformalblockFusiondefbis} solves the second order PDE~\eqref{eqn::pdefusion2ndbis}. 

We write $\boldsymbol{x}=(x_1, x_3, \ldots, x_{2N})$. 
We have, for $i\in\{3, 4, \ldots, 2N\}$, 
\begin{align*}
\frac{\frac{\partial}{\partial x_i}\LU_{\alpha/\times_1}(\boldsymbol{x})}{\LU_{\alpha/\times_1}(\boldsymbol{x})}=\sum_{\substack{3\le s\le 2N, \\ s\neq i}}\frac{\frac{1}{2}\vartheta_{\alpha}(s,i)}{x_i-x_s}+\frac{\vartheta_{\alpha}(i, 1)}{x_i-x_1}; \qquad \frac{\frac{\partial}{\partial x_1}\LU_{\alpha/\times_1}(\boldsymbol{x})}{\LU_{\alpha/\times_1}(\boldsymbol{x})}=\sum_{3\le s\le 2N}\frac{-\vartheta_{\alpha}(s, 1)}{x_s-x_1}. 
\end{align*}
Then, we have
\begin{align*}
\frac{\LL_{-2}^{(n)}\LU_{\alpha/\times_1}(\boldsymbol{x})}{\LU_{\alpha/\times_1}(\boldsymbol{x})}=
&\sum_{\substack{3\le i\le 2N, \\ i\neq n}}\frac{\frac{1}{4}-\frac{1}{2}\vartheta_{\alpha}(n,i)}{(x_n-x_i)^2}+\frac{1-\vartheta_{\alpha}(n,1)}{(x_n-x_1)^2}\\
&+\sum_{\substack{3\le t<s\le 2N,\\ t, s\neq n}}\frac{\frac{1}{2}\vartheta_{\alpha}(t,s)}{(x_n-x_t)(x_n-x_s)}+\sum_{\substack{3\le i\le 2N, \\ i\neq n}}\frac{\vartheta_{\alpha}(i, 1)}{(x_n-x_i)(x_n-x_1)}. \\
\frac{\frac{\partial^2}{\partial x_n^2}\LU_{\alpha/\times_1}(\boldsymbol{x})}{\LU_{\alpha/\times_1}(\boldsymbol{x})}=
&\sum_{\substack{3\le i\le 2N, \\ i\neq n}}\frac{\frac{1}{4}-\frac{1}{2}\vartheta_{\alpha}(n,i)}{(x_n-x_i)^2}
+\frac{1-\vartheta_{\alpha}(n,1)}{(x_n-x_1)^2}\\
&+\sum_{\substack{3\le t<s\le 2N,\\ t, s\neq n}}\frac{\frac{1}{2}\vartheta_{\alpha}(s,t)}{(x_n-x_t)(x_n-x_s)}+\sum_{\substack{3\le i\le 2N, \\ i\neq n}}\frac{\vartheta_{\alpha}(i,1)}{(x_n-x_i)(x_n-x_1)}. 
\end{align*}
These give $\left[\frac{\partial^2}{\partial x_n^2}-\LL_{-2}^{(n)}\right]\LU_{\alpha/\times_1}=0$ as desired. 
\end{proof}

\begin{lemma}\label{lem::conformalblockFusion3rd}
The function $\LU_{\alpha/\times_j}$ defined in~\eqref{eqn::conformalblockFusiondef} satisfies the third order PDE~\eqref{eqn::pdefusion3rd} with $\kappa=4$. 
\end{lemma}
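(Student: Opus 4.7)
The plan is to adapt the direct computation in the proof of Lemma~\ref{lem::conformalblock3rdpde} to the function $\LU_{\alpha/\times_j}$. By the symmetric role of the regular points in~\eqref{eqn::conformalblockFusiondef}, it suffices to treat $j=1$, so the function in question is the one displayed in~\eqref{eqn::conformalblockFusiondefbis}. With $\kappa=4$ we have $h=1/4$ and $H=1$, and~\eqref{eqn::pdefusion3rd} reduces to
\[
\Bigl[\tfrac{\partial^3}{\partial x_1^3}-4\,\LL_{-2}^{(1)}\tfrac{\partial}{\partial x_1}+2\,\LL_{-3}^{(1)}\Bigr]\LU_{\alpha/\times_1}=0,
\]
where $\LL_{-2}^{(1)}$ and $\LL_{-3}^{(1)}$ are summed only over $i\in\{3,\ldots,2N\}$ and each such point carries weight $h=1/4$.

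The first step is to record the logarithmic derivatives $\partial_{x_1}\LU_{\alpha/\times_1}/\LU_{\alpha/\times_1}$ and $\partial_{x_n}\LU_{\alpha/\times_1}/\LU_{\alpha/\times_1}$ for $n\ge 3$, both of which are linear combinations of $\vartheta_\alpha(\cdot,\cdot)$-weighted simple poles, exactly of the same form as those already computed in Lemmas~\ref{lem::conformalblock3rdpde} and~\ref{lem::conformalblockFusion2nd}. Iterated differentiation then expresses each of $\partial_{x_1}^3\LU_{\alpha/\times_1}/\LU_{\alpha/\times_1}$, $\LL_{-2}^{(1)}\partial_{x_1}\LU_{\alpha/\times_1}/\LU_{\alpha/\times_1}$ and $\LL_{-3}^{(1)}\LU_{\alpha/\times_1}/\LU_{\alpha/\times_1}$ as a sum organised by denominator type: singletons $1/(x_i-x_1)^3$, pairs $1/((x_t-x_1)^a(x_s-x_1)^b)$, and triples $1/((x_t-x_1)(x_s-x_1)(x_n-x_1))$ with indices in $\{3,\ldots,2N\}$.

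Next, I would collect the resulting expression $[\partial_{x_1}^3-4\LL_{-2}^{(1)}\partial_{x_1}+2\LL_{-3}^{(1)}]\LU_{\alpha/\times_1}/\LU_{\alpha/\times_1}$ according to these three groups and verify cancellation by the same mechanism as in the final display of Lemma~\ref{lem::conformalblock3rdpde}, using only the elementary identities $\vartheta_\alpha(t,s)^2=1$ and $\vartheta_\alpha(t,i)\vartheta_\alpha(s,i)=\vartheta_\alpha(t,s)$. As there, the triple sums cancel after symmetrising over the three orderings of $(t,s,n)$; the pair sums cancel after combining the $\partial_{x_1}^3$ and $\LL_{-2}^{(1)}\partial_{x_1}$ contributions against the mixed terms coming from $\tau(\boldsymbol{x}):=\sum_{i\ge 3}\vartheta_\alpha(i,1)/(x_i-x_1)$; and the singleton sums cancel using $\vartheta_\alpha(i,1)^2=1$.

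The main obstacle is arithmetic bookkeeping rather than conceptual difficulty: the exponent of $(x_i-x_1)$ is now $\vartheta_\alpha(i,1)$ and \emph{not} $2\vartheta_\alpha(i,1)$ as was the case for $\LU_\alpha^4$ in Lemma~\ref{lem::conformalblock3rdpde}, so every numerical coefficient changes and each cancellation must be rechecked. As a sanity check I would first verify the identity for $N=1$ and $N=2$ by explicit computation, confirming that the constants $-4$ and $+2$ in front of $\LL_{-2}^{(1)}\partial_{x_1}$ and $\LL_{-3}^{(1)}$ are exactly those that produce cancellation against the mixed weights $h=1/4$ at ordinary points and $H=1$ at the fused point $x_1$, before writing down the general calculation.
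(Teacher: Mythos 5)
Your plan is essentially the paper's own proof: reduce to $j=1$, compute the logarithmic derivatives of $\LU_{\alpha/\times_1}$ (with exponent $\vartheta_{\alpha}(i,1)$ at the fused point and $\tfrac{1}{2}\vartheta_{\alpha}(t,s)$ at the ordinary points), organise $\bigl[\partial_{x_1}^3-4\LL_{-2}^{(1)}\partial_{x_1}+2\LL_{-3}^{(1)}\bigr]\LU_{\alpha/\times_1}/\LU_{\alpha/\times_1}$ by singleton, pair and triple denominators, and check cancellation using $\vartheta_{\alpha}(t,s)^2=1$ and $\vartheta_{\alpha}(t,i)\vartheta_{\alpha}(s,i)=\vartheta_{\alpha}(t,s)$. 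The paper carries out exactly this bookkeeping (the triple sums from $\partial_{x_1}^3$ and $4\LL_{-2}^{(1)}\partial_{x_1}$ cancel outright, the singleton coefficients sum as $(3-3\vartheta)-(4-5\vartheta)+(1-2\vartheta)=0$, and the pair coefficients vanish term by term), so your proposal matches the paper's argument, modulo actually performing the computation you outline.
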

\begin{proof}
Without loss of generality, we assume $j=1$. The third order PDE~\eqref{eqn::pdefusion3rd} becomes the following: 
\begin{align}\label{eqn::pdefusion3rdbis}
&\left[\frac{\partial^3}{\partial x_1^3}-4\LL_{-2}^{(1)}\frac{\partial}{\partial x_1}+2\LL_{-3}^{(1)}\right]F(x_1, x_3, \ldots, x_{2N})=0, \\
&\text{where }\LL_{-2}^{(1)}=\sum_{3\le i\le 2N}\left(\frac{\frac{1}{4}}{(x_i-x_1)^2}-\frac{1}{(x_i-x_1)}\frac{\partial}{\partial x_i}\right), \quad 
\LL_{-3}^{(1)}=\sum_{3\le i\le 2N}\left(\frac{\frac{1}{2}}{(x_i-x_1)^3}-\frac{1}{(x_i-x_1)^2}\frac{\partial}{\partial x_i}\right). \notag
\end{align}
It suffices to show that the function in~\eqref{eqn::conformalblockFusiondefbis} solves the third order PDE~\eqref{eqn::pdefusion3rdbis}. 

We write $\boldsymbol{x}=(x_1, x_3, \ldots, x_{2N})$. We have
\begin{align*}
\frac{2\LL_{-3}^{(1)}\LU_{\alpha/\times_1}(\boldsymbol{x})}{\LU_{\alpha/\times_1}(\boldsymbol{x})}=&\sum_{3\le i\le 2N}\frac{1-2\vartheta_{\alpha}(i,1)}{(x_i-x_1)^3}\\
&+\sum_{3\le t<s\le 2N}\frac{\vartheta_{\alpha}(s,t)(x_t-x_1+x_s-x_1)}{(x_t-x_1)^2(x_s-x_1)^2}.\\
\frac{4\LL_{-2}^{(1)}\frac{\partial}{\partial x_1}\LU_{\alpha/\times_1}(\boldsymbol{x})}{\LU_{\alpha/\times_1}(\boldsymbol{x})}
=&\sum_{3\le i\le 2N}\frac{4-5\vartheta_{\alpha}(i,1)}{(x_i-x_1)^3}\\
&+\sum_{3\le t<s\le 2N}\frac{4\vartheta_{\alpha}(s,t)(x_t-x_1+x_s-x_1)-3\vartheta_{\alpha}(t,1)(x_t-x_1)-3\vartheta_{\alpha}(s,1)(x_s-x_1)}{(x_t-x_1)^2(x_s-x_1)^2}\\
&+\sum_{3\le t<s<n\le 2N}\frac{(-6)\vartheta_{\alpha}(t,1)\vartheta_{\alpha}(s,1)\vartheta_{\alpha}(n,1)}{(x_t-x_1)(x_s-x_1)(x_n-x_1)}. \\
\frac{\frac{\partial^3}{\partial x_1^3}\LU_{\alpha/\times_1}(\boldsymbol{x})}{\LU_{\alpha/\times_1}(\boldsymbol{x})}
=&\sum_{3\le i\le 2N}\frac{3-3\vartheta_{\alpha}(i,1)}{(x_i-x_1)^3}\\
&+\sum_{3\le t<s\le 2N}\frac{3(\vartheta_{\alpha}(s,t)-\vartheta_{\alpha}(t,1))(x_t-x_1)+3(\vartheta_{\alpha}(s,t)-\vartheta_{\alpha}(s,1))(x_s-x_1)}{(x_t-x_1)^2(x_s-x_1)^2}
\\
&+\sum_{3\le t<s<n\le 2N}\frac{(-6)\vartheta_{\alpha}(t,1)\vartheta_{\alpha}(s,1)\vartheta_{\alpha}(n,1)}{(x_t-x_1)(x_s-x_1)(x_n-x_1)}.
\end{align*}
These give $\left[\frac{\partial^3}{\partial x_1^3}-4\LL_{-2}^{(1)}\frac{\partial}{\partial x_1}+2\LL_{-3}^{(1)}\right]\LU_{\alpha/\times_1}=0$ as desired.  
\end{proof}

\begin{lemma}\label{lem::purepartitionFusionpde2nd}
The function $\PartF_{\alpha/\amalg_j}$ defined in~\eqref{eqn::purepartitionFusiondef} satisfies the second order PDE~\eqref{eqn::pdefusion2nd} with $\kappa=4$ for $n\in\{1, \ldots, 2N\}\setminus\{j, j+1\}$. 
\end{lemma}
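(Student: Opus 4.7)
The plan is to reduce the claim to an individual-term statement and then obtain the PDE for the remaining piece by a limit from the original PDE system. By linearity of~\eqref{eqn::pdefusion2nd} and the definition~\eqref{eqn::purepartitionFusiondef} of $\PartF_{\alpha/\amalg_j}$ as a sum over $\beta$ of terms $\LM_{\alpha,\beta}^{-1}\LV_{\beta/\vee_j}$ (with $\vee_j\in\beta$) and $\LM_{\alpha,\beta}^{-1}\LU_{\beta/\times_j}$ (with $\times_j\in\beta$), it suffices to show that each summand individually solves the second-order PDE. The case $\LU_{\beta/\times_j}$ is exactly Lemma~\ref{lem::conformalblockFusion2nd}, so only $\LV_{\beta/\vee_j}$ remains.

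For $\LV_{\beta/\vee_j}$, the idea is to exploit the fact that both $\LU_\beta$ and $\LU_{\beta\uparrow\lozenge_j}$ satisfy the original second-order PDE~\eqref{eqn::multipleSLEsPDE} (by Lemma~6.4 of~\cite{PeltolaWuGlobalMultipleSLEs}), hence so does their difference. Introduce the parametrization $\tilde x_j=\xi-\epsilon/2$ and $\tilde x_{j+1}=\xi+\epsilon/2$, so $\epsilon=\tilde x_{j+1}-\tilde x_j$. The computation in the proof of Lemma~\ref{lem::purepartition_ASY} (writing $\LU_\beta-\LU_{\beta\uparrow\lozenge_j}=\epsilon^{-1/2}P(Q-Q^{-1})$ with $Q-Q^{-1}$ an odd smooth function of $\epsilon$ around~$0$) gives the factorization
\[
\LU_\beta(\tilde x_1,\ldots,\tilde x_{2N})-\LU_{\beta\uparrow\lozenge_j}(\tilde x_1,\ldots,\tilde x_{2N})
=\epsilon^{1/2}\,\Phi_\beta(\xi,\epsilon;x_1,\ldots,x_{j-1},x_{j+2},\ldots,x_{2N}),
\]
where $\Phi_\beta$ is jointly smooth, even in $\epsilon$, and $\Phi_\beta|_{\epsilon=0}=\LV_{\beta/\vee_j}$.

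Now apply the original operator $\mathcal{D}_n^{\mathrm{orig}}:=\partial^2/\partial x_n^2-\LL_{-2}^{(n),\mathrm{orig}}$ (with $n\neq j,j+1$) to $\epsilon^{1/2}\Phi_\beta$, converting $\partial/\partial x_j$ and $\partial/\partial x_{j+1}$ to $(\xi,\epsilon)$-coordinates via $\partial/\partial x_j=\tfrac{1}{2}\partial_\xi-\partial_\epsilon$ and $\partial/\partial x_{j+1}=\tfrac{1}{2}\partial_\xi+\partial_\epsilon$. The two apparently singular $\epsilon^{-1/2}\Phi_\beta$ contributions created by differentiating $\epsilon^{1/2}$ carry opposite signs; multiplied by $-1/(x_j-x_n)$ and $-1/(x_{j+1}-x_n)$ and using the identity
\[
\frac{1}{x_j-x_n}-\frac{1}{x_{j+1}-x_n}=\frac{-\epsilon}{(x_j-x_n)(x_{j+1}-x_n)},
\]
they collapse into a finite $\epsilon^{1/2}$-order term contributing $\tfrac{1/2}{(\xi-x_n)^2}\LV_{\beta/\vee_j}$. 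Combined with the regular multiplication pieces $\tfrac{h}{(x_j-x_n)^2}+\tfrac{h}{(x_{j+1}-x_n)^2}\to\tfrac{2h}{(\xi-x_n)^2}$ and the identity $2h+\tfrac12=H$ valid at $\kappa=4$, the joint contribution from $i=j,j+1$ in $\LL_{-2}^{(n),\mathrm{orig}}$ converges, after division by $\epsilon^{1/2}$, to $\tfrac{H}{(\xi-x_n)^2}\LV_{\beta/\vee_j}-\tfrac{1}{\xi-x_n}\partial_\xi\LV_{\beta/\vee_j}$; the remaining terms $i\neq j,j+1,n$ converge directly to the corresponding terms of $\LL_{-2}^{(n)}$ in~\eqref{eqn::pdefusion2nd}. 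Hence $\mathcal{D}_n^{\mathrm{orig}}[\epsilon^{1/2}\Phi_\beta]=\epsilon^{1/2}\mathcal{D}_n[\LV_{\beta/\vee_j}]+O(\epsilon^{3/2})$ with $\mathcal{D}_n=\partial^2/\partial x_n^2-\LL_{-2}^{(n)}$; since the left side vanishes identically, dividing by $\epsilon^{1/2}$ and letting $\epsilon\to 0$ yields $\mathcal{D}_n\LV_{\beta/\vee_j}=0$.

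The main obstacle is the bookkeeping in the last step: one must verify that the formally divergent $\epsilon^{-1/2}$ pieces from differentiating $\epsilon^{1/2}\Phi_\beta$ really combine, via the $O(\epsilon)$ difference $\tfrac{1}{x_j-x_n}-\tfrac{1}{x_{j+1}-x_n}$, to supply the crucial $+\tfrac12$ that upgrades $2h$ into the conformal weight $H$ of the fused field at $\xi$. An alternative, more hands-on route is to write $\LV_{\beta/\vee_j}=G_\beta S_\beta$ with $G_\beta=\prod_{t<s,\,t,s\neq j,j+1}(x_s-x_t)^{(1/2)\vartheta_\beta(t,s)}$ and $S_\beta=\sum_{i\neq j,j+1}\vartheta_\beta(i,j)/(x_i-\xi)$, then apply $\partial^2/\partial x_n^2-\LL_{-2}^{(n)}$ termwise via Leibniz and simplify using $\vartheta_\beta(t,s)^2=1$ and $\vartheta_\beta(t,i)\vartheta_\beta(s,i)=\vartheta_\beta(t,s)$, in direct parallel with the computations in Lemmas~\ref{lem::conformalblockFusion2nd} and~\ref{lem::conformalblockFusion3rd}.
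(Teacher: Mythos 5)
Your argument is correct, and for the essential part it takes a genuinely different route from the paper. The reduction is identical: by linearity of~\eqref{eqn::pdefusion2nd} and the decomposition~\eqref{eqn::purepartitionFusiondef}, only the terms $\LV_{\beta/\vee_j}$ need to be treated after invoking Lemma~\ref{lem::conformalblockFusion2nd} for the $\LU_{\beta/\times_j}$ terms. For $\LV_{\beta/\vee_j}$, the paper simply verifies the PDE by a direct computation: it introduces $\sigma(\boldsymbol{x})=\sum_{i}\vartheta_{\beta}(i,1)/(x_i-x_1)$, computes the logarithmic derivatives of $\LV_{\beta/\vee_1}$, and checks that $\partial_n^2-\LL_{-2}^{(n)}$ annihilates it by explicit cancellation. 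You instead derive the fused PDE as a limit of the unfused one: since $\LU_{\beta}$ and $\LU_{\beta\uparrow\lozenge_j}$ both satisfy~\eqref{eqn::multipleSLEsPDE}, so does their difference, which by the computation in Lemma~\ref{lem::purepartition_ASY} factors as $\epsilon^{1/2}\Phi_\beta$ with $\Phi_\beta$ smooth (indeed real-analytic), even in $\epsilon$, and $\Phi_\beta|_{\epsilon=0}=\LV_{\beta/\vee_j}$; passing to $(\xi,\epsilon)$-coordinates, the formally singular $\tfrac12\epsilon^{-1/2}$ pieces pair with $\frac{1}{x_j-x_n}-\frac{1}{x_{j+1}-x_n}=O(\epsilon)$ to produce exactly the extra $\tfrac{1/2}{(\xi-x_n)^2}$ that, with $2h+\tfrac12=H$ at $\kappa=4$, upgrades the two weights $h$ to the single weight $H$, and dividing by $\epsilon^{1/2}$ and letting $\epsilon\to0$ gives $[\partial_n^2-\LL_{-2}^{(n)}]\LV_{\beta/\vee_j}=0$. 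I checked your sign and order bookkeeping (including the $O(\epsilon^{3/2})$ remainder from the $\partial_\epsilon\Phi_\beta$ term), and it is sound; the only ingredient you must make explicit is the joint smoothness and parity of $\Phi_\beta$, which follows from the oddness of $Q-Q^{-1}$ exactly as you indicate. The trade-off: the paper's computation is shorter and entirely self-contained, while your limit argument is more conceptual—it explains the appearance of the fused weight $H$ as a degeneration of two weight-$h$ points and requires no fresh algebra beyond the PDE already known for conformal blocks—at the cost of the coordinate-change and error-term bookkeeping. Your suggested ``more hands-on'' alternative at the end is essentially the paper's own proof.
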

\begin{proof}
Without loss of generality, we assume $j=1$. The function in~\eqref{eqn::purepartitionFusiondef} with $j=1$ becomes $\PartF_{\alpha/\amalg_1}=\sum_{\beta: \vee_1\in\beta}\LM_{\alpha, \beta}^{-1}\LV_{\beta/\vee_1}+\sum_{\beta: \times_1\in\beta}\LM_{\alpha, \beta}^{-1}\LU_{\beta/\times_1}$
where
\begin{equation}\label{eqn::purepartitionFusiondefbis}
\LV_{\beta/\vee_1}(x_1, x_3, \ldots, x_{2N}):=\prod_{3\le t<s\le 2N}(x_s-x_t)^{\frac{1}{2}\vartheta_{\beta}(t,s)}\sum_{3\le i\le 2N}\frac{\vartheta_{\beta}(i,1)}{x_i-x_1}. 
\end{equation}
We will show that $\PartF_{\alpha/\amalg_1}$ satisfies the second order PDE~\eqref{eqn::pdefusion2ndbis}. 
From Lemma~\ref{lem::conformalblockFusion2nd}, the function $\LU_{\beta/\times_1}$ satisfies PDE~\eqref{eqn::pdefusion2ndbis}. It suffcies to show that the function $\LV_{\beta/\vee_1}$ in~\eqref{eqn::purepartitionFusiondefbis} satisfies PDE~\eqref{eqn::pdefusion2ndbis}. 

We write $\boldsymbol{x}=(x_1, x_3, \ldots, x_{2N})$ and set
\begin{equation}\label{eqn::functionsigmaaux}
\sigma(\boldsymbol{x})=\sum_{3\le i\le 2N}\frac{\vartheta_{\beta}(i,1)}{x_i-x_1}. 
\end{equation}
We have, for $i\in\{3, 4, \ldots, 2N\}$, 
\begin{align*}
\frac{\frac{\partial}{\partial x_i}\LV_{\beta/\vee_1}(\boldsymbol{x})}{\LV_{\beta/\vee_1}(\boldsymbol{x})}
=\sum_{\substack{3\le s\le 2N, \\ s\neq i}}\frac{\frac{1}{2}\vartheta_{\beta}(i, s)}{x_i-x_s}
+\frac{-\vartheta_{\beta}(i,1)}{\sigma(\boldsymbol{x})(x_i-x_1)^2},\qquad 
\frac{\frac{\partial}{\partial x_1}\LV_{\beta/\vee_1}(\boldsymbol{x})}{\LV_{\beta/\vee_1}(\boldsymbol{x})}
=\sum_{3\le s\le 2N}\frac{\vartheta_{\beta}(s,1)}{\sigma(\boldsymbol{x})(x_s-x_1)^2}. 
\end{align*}
Then we obtain
\begin{align*}
\frac{\LL_{-2}^{(n)}\LV_{\beta/\vee_1}(\boldsymbol{x})}{\LV_{\beta/\vee_1}(\boldsymbol{x})}
=&\sum_{\substack{3\le i\le 2N,\\ i\neq n}}\frac{\frac{1}{4}-\frac{1}{2}\vartheta_{\beta}(i,n)}{(x_n-x_i)^2}+\frac{1}{(x_n-x_1)^2}+\frac{\vartheta_{\beta}(n, 1)}{\sigma(\boldsymbol{x})(x_n-x_1)^3}\\
&+\sum_{\substack{3\le t<s\le 2N,\\ t, s\neq n}}\frac{\frac{1}{2}\vartheta_{\beta}(t,s)}{(x_n-x_s)(x_n-x_t)}+\sum_{\substack{3\le i\le 2N,\\ i\neq n}}\frac{-\vartheta_{\beta}(i,1)}{\sigma(\boldsymbol{x})(x_n-x_i)(x_i-x_1)(x_n-x_1)}. \\
\frac{\frac{\partial^2}{\partial x_n^2}\LV_{\beta/\vee_1}(\boldsymbol{x})}{\LV_{\beta/\vee_1}(\boldsymbol{x})}
=&\sum_{\substack{3\le i\le 2N,\\ i\neq n}}\frac{\frac{1}{4}-\frac{1}{2}\vartheta_{\beta}(i,n)}{(x_n-x_i)^2}+\frac{2\vartheta_{\beta}(n, 1)}{\sigma(\boldsymbol{x})(x_n-x_1)^3}\\
&+\sum_{\substack{3\le t<s\le 2N,\\ t, s\neq n}}\frac{\frac{1}{2}\vartheta_{\beta}(t,s)}{(x_n-x_s)(x_n-x_t)}+\sum_{\substack{3\le i\le 2N,\\ i\neq n}}\frac{-\vartheta_{\beta}(i,1)}{\sigma(\boldsymbol{x})(x_n-x_i)(x_n-x_1)^2}. 
\end{align*}
Taking the difference, we have
\begin{align*}
\frac{\frac{\partial^2}{\partial x_n^2}\LV_{\beta/\vee_1}(\boldsymbol{x})}{\LV_{\beta/\vee_1}(\boldsymbol{x})}-\frac{\LL_{-2}^{(n)}\LV_{\beta/\vee_1}(\boldsymbol{x})}{\LV_{\beta/\vee_1}(\boldsymbol{x})}
=&\frac{-1}{(x_n-x_1)^2}+\frac{\vartheta_{\beta}(n, 1)}{\sigma(\boldsymbol{x})(x_n-x_1)^3}+\sum_{\substack{3\le i\le 2N,\\ i\neq n}}\frac{-\vartheta_{\beta}(i,1)}{\sigma(\boldsymbol{x})(x_n-x_i)(x_n-x_1)^2}\\
&+\sum_{\substack{3\le i\le 2N,\\ i\neq n}}\frac{\vartheta_{\beta}(i,1)}{\sigma(\boldsymbol{x})(x_n-x_i)(x_i-x_1)(x_n-x_1)}. \\
=&\frac{-1}{(x_n-x_1)^2}+\frac{\vartheta_{\beta}(n, 1)}{\sigma(\boldsymbol{x})(x_n-x_1)^3}+\sum_{\substack{3\le i\le 2N,\\ i\neq n}}\frac{\vartheta_{\beta}(i,1)}{\sigma(\boldsymbol{x})(x_i-x_1)(x_n-x_1)^2}\\
=&\frac{-1}{(x_n-x_1)^2}+\frac{\vartheta_{\beta}(n, 1)}{\sigma(\boldsymbol{x})(x_n-x_1)^3}
+\frac{\sigma(\boldsymbol{x})-\frac{\vartheta_{\beta}(n, 1)}{x_n-x_1}}{\sigma(\boldsymbol{x})(x_n-x_1)^2}=0.
\end{align*}
This completes the proof. 
\end{proof}
\begin{lemma}\label{lem::purepartitionFusionpde3rd}
The function $\PartF_{\alpha/\amalg_j}$ defined in~\eqref{eqn::purepartitionFusiondef} satisfies the third order PDE~\eqref{eqn::pdefusion3rd} with $\kappa=4$. 
\end{lemma}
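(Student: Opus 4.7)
By linearity of~\eqref{eqn::pdefusion3rd} and since the coefficients $\LM^{-1}_{\alpha,\beta}$ in~\eqref{eqn::purepartitionFusiondef} do not depend on $\boldsymbol{x}$, it suffices to verify the PDE separately for each summand. The $\LU_{\beta/\times_j}$ summands are already handled by Lemma~\ref{lem::conformalblockFusion3rd}, so the remaining task is to show that each $\LV_{\beta/\vee_j}$ satisfies~\eqref{eqn::pdefusion3rd}. Without loss of generality, we take $j=1$ and verify the reduced form~\eqref{eqn::pdefusion3rdbis}.

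The plan is a direct calculation following the template of Lemmas~\ref{lem::conformalblockFusion3rd} and~\ref{lem::purepartitionFusionpde2nd}. We factor $\LV_{\beta/\vee_1}(\boldsymbol{x}) = P(\boldsymbol{x})\sigma(\boldsymbol{x})$, where $P(\boldsymbol{x}) := \prod_{3 \le t < s \le 2N}(x_s-x_t)^{\frac{1}{2}\vartheta_\beta(t,s)}$ is independent of $x_1$ and $\sigma$ is the function defined in~\eqref{eqn::functionsigmaaux}. Because only $\sigma$ depends on $x_1$, we have the compact formula $\partial_{x_1}^n \LV_{\beta/\vee_1} = n!\,P(\boldsymbol{x})\sum_{3 \le i \le 2N} \vartheta_\beta(i,1)(x_i-x_1)^{-(n+1)}$. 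The mixed derivatives $\partial_{x_i}\LV_{\beta/\vee_1}$ and $\partial_{x_i}\partial_{x_1}\LV_{\beta/\vee_1}$ that enter $\LL_{-2}^{(1)}\partial_{x_1}$ and $\LL_{-3}^{(1)}$ are computed by the product rule, using $\partial_{x_i}\sigma = -\vartheta_\beta(i,1)/(x_i-x_1)^2$ and $\partial_{x_i}P/P = \sum_{3 \le s \le 2N,\, s \neq i}\vartheta_\beta(s,i)/[2(x_i-x_s)]$, both of which are already recorded in the proof of Lemma~\ref{lem::purepartitionFusionpde2nd}.

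Substituting these into $\partial_{x_1}^3\LV_{\beta/\vee_1}$, $\LL_{-2}^{(1)}\partial_{x_1}\LV_{\beta/\vee_1}$, and $\LL_{-3}^{(1)}\LV_{\beta/\vee_1}$, the three expressions decompose into rational sums organized by denominator type: $P\,(x_i-x_1)^{-4}$, mixed terms $P\,(x_s-x_1)^{-k}(x_t-x_1)^{-\ell}$, cross terms $P\,(x_s-x_t)^{-1}(x_i-x_1)^{-m}$ arising from $\partial_{x_i}P$, and triple-product terms $P\,\prod_{i\in\{t,s,n\}}(x_i-x_1)^{-1}$. Within each class, the cancellations are forced by the identities $\vartheta_\beta(i,j)^2=1$ and $\vartheta_\beta(t,i)\vartheta_\beta(s,i)=\vartheta_\beta(t,s)$, exactly as in the proof of Lemma~\ref{lem::conformalblockFusion3rd}. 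The main obstacle we anticipate is the bookkeeping for the mixed contributions where $\partial_{x_i}P$ meets the $x_1$-derivatives of $\sigma$: these couple an index of $P$ to an index of $\sigma$ and must be reorganized, typically by recognizing that $\sum_{3 \le i \le 2N} \vartheta_\beta(i,1)/(x_i-x_1) = \sigma$ can be extracted as a factor and absorbed into the pure-$\sigma$ contributions (the analogous recombination already appears at the end of the proof of Lemma~\ref{lem::purepartitionFusionpde2nd}). Once this accounting is complete, $[\partial_{x_1}^3 - 4\LL_{-2}^{(1)}\partial_{x_1}+2\LL_{-3}^{(1)}]\LV_{\beta/\vee_1}$ vanishes identically, completing the proof.
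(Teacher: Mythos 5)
Your proposal takes essentially the same route as the paper's proof: reduce by linearity to the $\LV_{\beta/\vee_1}$ summands (the $\LU_{\beta/\times_1}$ terms being covered by Lemma~\ref{lem::conformalblockFusion3rd}), set $j=1$, and verify~\eqref{eqn::pdefusion3rdbis} by direct computation using the factorization $\LV_{\beta/\vee_1}=P\,\sigma$ with $P$ independent of $x_1$, and all the derivative formulas you record are correct. The bookkeeping you defer is exactly what the paper writes out, and it closes as you anticipate: the $\vartheta_{\beta}(i,1)/\bigl(\sigma(\boldsymbol{x})(x_i-x_1)^4\bigr)$ contributions cancel with coefficients $6-8+2=0$, while the cross terms coming from $\partial_{x_i}P$ inside $4\LL_{-2}^{(1)}\partial_{x_1}$ recombine with the $1/(x_i-x_1)^2$ terms into $\sigma(\boldsymbol{x})^2$, so that piece equals $\sigma(\boldsymbol{x})\,\partial_{x_1}\sigma(\boldsymbol{x})$ and exactly matches the remaining sums produced by $\partial_{x_1}^3$ and $2\LL_{-3}^{(1)}$.
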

\begin{proof}
Without loss of generality, we assume $j=1$. From Lemma~\ref{lem::conformalblockFusion3rd}, the function $\LU_{\alpha/\times_1}$ satisfies PDE~\eqref{eqn::pdefusion3rdbis}. It suffices to show that the function $\LV_{\beta/\vee_1}$ in~\eqref{eqn::purepartitionFusiondefbis} satisfies the third order PDE~\eqref{eqn::pdefusion3rdbis}.
We write $\boldsymbol{x}=(x_1, x_3, \ldots, x_{2N})$ and set $\sigma(\boldsymbol{x})$ as in~\eqref{eqn::functionsigmaaux}. 
Then we have
\begin{align*}
\frac{2\LL_{-3}^{(1)}\LV_{\beta/\vee_1}(\boldsymbol{x})}{\LV_{\beta/\vee_1}(\boldsymbol{x})}
=&\sum_{3\le i\le 2N}\frac{1}{(x_i-x_1)^3}+\sum_{3\le i\le 2N}\frac{2\vartheta_{\beta}(i,1)}{\sigma(\boldsymbol{x})(x_i-x_1)^4}+\sum_{3\le t<s\le 2N}\frac{\vartheta_{\beta}(s,t)(x_t-x_1+x_s-x_1)}{(x_t-x_1)^2(x_s-x_1)^2}.\\
\frac{4\LL_{-2}^{(1)}\frac{\partial}{\partial x_1}\LV_{\beta/\vee_1}(\boldsymbol{x})}{\LV_{\beta/\vee_1}(\boldsymbol{x})}
=&\sum_{3\le i\le 2N}\frac{8\vartheta_{\beta}(i,1)}{\sigma(\boldsymbol{x})(x_i-x_1)^4}+\sigma(\boldsymbol{x})\frac{\partial \sigma(\boldsymbol{x})}{\partial x_1}. \\
\frac{\frac{\partial^3}{\partial x_1^3}\LV_{\beta/\vee_1}(\boldsymbol{x})}{\LV_{\beta/\vee_1}(\boldsymbol{x})}
=&\sum_{3\le i\le 2N}\frac{6\vartheta_{\beta}(i,1)}{\sigma(\boldsymbol{x})(x_i-x_1)^4}.
\end{align*}
Therefore,
\begin{align*}
&\frac{\frac{\partial^3}{\partial x_1^3}\LV_{\beta/\vee_1}(\boldsymbol{x})}{\LV_{\beta/\vee_1}(\boldsymbol{x})}-\frac{4\LL_{-2}^{(1)}\frac{\partial}{\partial x_1}\LV_{\beta/\vee_1}(\boldsymbol{x})}{\LV_{\beta/\vee_1}(\boldsymbol{x})}+\frac{2\LL_{-3}^{(1)}\LV_{\beta/\vee_1}(\boldsymbol{x})}{\LV_{\beta/\vee_1}(\boldsymbol{x})}\\
&=\sum_{3\le i\le 2N}\frac{1}{(x_i-x_1)^3}
+\sum_{3\le t<s\le 2N}\frac{\vartheta_{\beta}(s,t)(x_t-x_1+x_s-x_1)}{(x_t-x_1)^2(x_s-x_1)^2}-\sigma(\boldsymbol{x})\frac{\partial \sigma(\boldsymbol{x})}{\partial x_1}\\
&=\left(\sum_{3\le t\le 2N}\frac{\vartheta_{\beta}(t,1)}{x_t-x_1}\right)\left(\sum_{3\le s\le 2N}\frac{\vartheta_{\beta}(s,1)}{(x_s-x_1)^2}\right)-\sigma(\boldsymbol{x})\frac{\partial \sigma(\boldsymbol{x})}{\partial x_1}=0.
\end{align*}
This completes the proof. 
\end{proof}

\begin{proof}[Proof of Proposition~\ref{prop::purepartitionFusion}]
The existence of the limit~\eqref{eqn::purepartitionFusion} is a consequence of~\eqref{eqn::purepartitionFusionrefined}. 
The limiting function satisfies the PDE system due to Lemmas~\ref{lem::purepartitionFusionpde2nd} and~\ref{lem::purepartitionFusionpde3rd}. 
COV~\eqref{eqn::partitionFusionCOV} is a consequence of COV~\eqref{eqn::multipleSLEsCOV} and the existence of the limit~\eqref{eqn::purepartitionFusion}. 
\end{proof}

We end this section by a discussion on the solutions to the system of $(2N-1)$ PDEs in Proposition~\ref{prop::purepartitionFusion}. Consider the solution space of smooth functions $F:\chamber_{2N-1}\to \C$ satisfying PDEs~\eqref{eqn::pdefusion2nd} and~\eqref{eqn::pdefusion3rd}, COV~\eqref{eqn::partitionFusionCOV}, and a mild power bound. We believe that this solution space would have dimension $C_N-C_{N-1}$. Furthermore, we believe that the collection 
\[\{\LU_{\beta/\times_j}: \beta\in\DP_N\text{ with } \times_j\in\beta\}\cup\{\LV_{\beta/\vee_j}: \beta\in\DP_N\text{ with } \vee_j\in\beta\}\] 
gives a basis for this solution space, and that the collection 
\[\{\PartF_{\alpha/\amalg_j}: \alpha\in\Pair_N\text{ with }\wedge_j\not\in\alpha\}\] 
gives another basis for the solution space.

\section{Connection probabilities for level lines in GFF}
\label{sec::continuumGFF}
In this section, we first introduce continuum GFF and level lines in Section~\ref{subsec::gfflevellines}. Then we state the main conclusion of the section---Theorem~\ref{thm::GFFconnectionproba}---in Section~\ref{subsec::gffconnectionproba}. This theorem gives the connection probabilities for level lines of GFF in polygons with boundary data given by Dyck paths. The proof of Theorem~\ref{thm::GFFconnectionproba} involves several technical lemmas which we prove in Section~\ref{subsec::technicallemmas}. 

\subsection{Continuum GFF and level lines}
\label{subsec::gfflevellines}
In this section, we introduce the Gaussian free field and its level lines. 
We refer to the literature~\cite{SheffieldGFFMath, SchrammSheffieldContinuumGFF, MillerSheffieldIG1, WangWuLevellinesGFFI} for details. 
Let $\Omega\subsetneq \C$ be a non-empty domain.
We denote by $H_s(\Omega)$ the space of real-valued smooth functions which are compactly supported in $\Omega$. 
We equip the space with Dirichlet inner product
\[ (f,g)_{\nabla} := \frac{1}{2\pi} \int_\Omega \nabla f(z) \cdot \nabla g(z) d^2 z . \]
We denote by $H(\Omega)$ the Hilbert space completion of $H_s(\Omega)$
with respect to the Dirichlet inner product. A (zero-boundary) Gaussian free field (GFF) $\Gamma$ is an $H(\Omega)$-indexed linear space of random variables, denoted by $(\Gamma, f)_{\nabla}$ for each $f\in H(\Omega)$, such that the map $f\mapsto (\Gamma, f)_{\nabla}$ is linear and each $(\Gamma, f)_{\nabla}$ is a centered Gaussian with variance $(f, f)_{\nabla}$. 
In general, for any harmonic function $u$ on $\Omega$, we define the GFF with boundary data $u$ by  
$\Gamma +u$ where $\Gamma$ is the zero-boundary GFF on $\Omega$. 

\medbreak
Next, we introduce $\SLE$ with force points.  
We set
\[\underline{y}^L=(y^{L, l}<\cdots<y^{L, 1}\le 0)\quad \text{and}\quad\underline{y}^R=(0\le y^{R, 1}<\cdots<y^{R, r}),\] and 
\[\underline{\rho}^L=(\rho^{L, l},\cdots, \rho^{L, 1})\quad \text{and} \quad\underline{\rho}^R=(\rho^{R, 1},\cdots, \rho^{R, r}),\] 
where $\rho^{q, i}\in\R$, for $q\in\{L, R\}$ and $i \in \Z_{>0}$.
An $\SLE_{\kappa}(\underline{\rho}^L;\underline{\rho}^R)$ process with force points 
$(\underline{y}^L;\underline{y}^R)$ is the Loewner evolution driven by $W_t$ that solves
the following system of integrated SDEs: 
\begin{align}
\label{eqn::SLEkapparho}
\begin{split}
W_t =&\sqrt{\kappa} B_t + \int_0^t\frac{\rho^{L, i} ds}{W_s-V_s^{L, i}} + \sum_{i=1}^r 
\int_0^t\frac{\rho^{R, i} ds}{W_s-V_s^{R, i}} , \\
V^{q, i}_t =& y^{q, i} +  \int_0^t\frac{2ds}{V^{q, i}_s-W_s} , 
\quad \text{for }q\in\{L, R\}  \text{ and } i \in \Z_{>0},
\end{split}
\end{align}
where $B_t$ is the one-dimensional Brownian motion. Note that the process $V_t^{q, i}$ is 
the evolution of the point $y^{q, i}$, and we may write $g_t(y^{q, i})$ for $V_t^{q, i}$. 
We define the continuation threshold
of the $\SLE_{\kappa}(\underline{\rho}^L;\underline{\rho}^R)$ 
to be the infimum of the time $t$ for which 
\[ \text{either} \quad \sum_{i : V^{L, i}_t=W_t} \rho^{L, i}\le -2 , 
\quad \text{or} \quad \sum_{i : V^{R, i}_t=W_t} \rho^{R, i}\le -2 . \]
By~\cite{MillerSheffieldIG1}, 
the $\SLE_{\kappa}(\underline{\rho}^L;\underline{\rho}^R)$
process is well-defined up to the continuation threshold, and it is almost surely
generated by a continuous curve up to and including the continuation threshold.

\medbreak

Now, we are ready to introduce level lines of GFF.  
Let $K= (K_t, t\ge 0)$ be an $\SLE_{4}(\underline{\rho}^L;\underline{\rho}^R)$ process with force points 
$(\underline{y}^L;\underline{y}^R)$, with $W,V^{q, i}$ solving the SDE system~\eqref{eqn::SLEkapparho} with $\kappa=4$. 
Let $(g_t, t\ge 0)$ be the corresponding family of conformal maps and set $f_t := g_t - W_t$. 
Let $u_t^0$ be the harmonic function on $\HH$ with boundary data 
\begin{align*}
\begin{cases}
-\lambda(1+\sum_{i=0}^j\rho^{L, i}) , \quad & \text{if } x \in (  f_t(y^{L, j+1}), f_t(y^{L, j})) , \\
+\lambda(1+\sum_{i=0}^j \rho^{R, i}) , \quad & \text{if } x \in ( f_t(y^{R, j}), f_t(y^{R, j+1})),
\end{cases}
\end{align*}
where $\lambda=\pi/2$ and 
$\rho^{L, 0}=\rho^{R, 0}=0$, $y^{L, 0}=0^-$, $\; y^{L, l+1}=-\infty$,$y^{R, 0}=0^+$, and $y^{R, r+1}=\infty$ by convention. 
Define $u_t(z) := u_t^0(f_t(z))$.
By~\cite{DubedatSLEFreefield, SchrammSheffieldContinuumGFF}, 
there exists a coupling $(\Gamma, K)$, where $\Gamma$ is a zero-boundary GFF on $\HH$, 
such that the following is true.  Let $\tau$ be any $\eta$-stopping time before 
the continuation threshold. Then, the conditional law of $\Gamma+u_0$ restricted to $\HH\setminus K_{\tau}$ given 
$K_{\tau}$ is the same as the law of $\Gamma'\circ f_{\tau}+u_{\tau}$ where $\Gamma'$ is a zero-boundary GFF. 
Furthermore, in this coupling, the process 
$K$ is almost surely determined by $\Gamma$. We refer to the $\SLE_{4}(\underline{\rho}^L;\underline{\rho}^R)$ 
in this coupling as the level line of the field $\Gamma+u_0$.  
In general, for $a\in\R$, the level line of $\Gamma+u_0$ with height $a$ is the level line of $\Gamma+u_0-a$.

\subsection{Connection probabilities}
\label{subsec::gffconnectionproba}
For $\alpha\in\Pair_N$, recall from Section~\ref{subsec::pairpartitionDyckpath} that $\alpha$ also denotes the corresponding Dyck path in $\DP_N$. Let $u_{\alpha}$ be the harmonic function on $\HH$ with the following boundary data: ($x_0=-\infty$ and $x_{2N+1}=\infty$ by convention)
\begin{equation}\label{eqn::gffboudnarydata}
2\lambda(\alpha(k)-1)\text{ on }(x_k, x_{k+1}),\quad\text{for all }k\in\{0,1,2,\ldots, N\}. 
\end{equation}
With such choice, we see that $u_{\alpha}$ has boundary data $-2\lambda$ on $(-\infty, x_1)\cup (x_{2N}, \infty)$, and has boundary data $0$ on $(x_1, x_2)\cup(x_{2N-1}, x_{2N})$. Define 
\begin{equation}\label{eqn::levellineheight}
\LH_{\alpha}(k)=\lambda(\alpha(k-1)+\alpha(k)-2), \quad\text{for all }k\in\{1, ,2 \ldots, 2N\}. 
\end{equation}

We write $\alpha=\{\{a_1, b_1\}, \ldots, \{a_N, b_N\}\}$ as ordered in~\eqref{eqn::LPtoDP_order}. Suppose $\Gamma$ is zero-boundary GFF on $\HH$, and consider level lines of $\Gamma+u_{\alpha}$. Let $\eta_{a_i}$ be the level line of $\Gamma+u_{\alpha}$ starting from $x_{a_i}$ with height $\LH_{\alpha}(a_i)$.  With such choice, the boundary data to the left side of $\eta_{a_i}$ is $2\lambda(\alpha(k-1)-1)$ and the boundary data to the right side of $\eta_{a_i}$ is $2\lambda(\alpha(k)-1)$. 
Then the $N$ curves $\{\eta_{a_1}, \eta_{a_2}, \ldots, \eta_{a_N}\}$ are non-intersecting simple curves and their end points form a planar pair partition of the $2N$ boundary points. We denote this planar pair partition by $\LA=\LA(\eta_{a_1}, \ldots, \eta_{a_N})\in\Pair_N$. See Figures~\ref{fig::gffboundarydata8}--\ref{fig::gfflevellines8}. The goal of this section is to derive the probabilities for $\PP[\LA=\beta]$. 

\begin{theorem}\label{thm::GFFconnectionproba}
Fix $\alpha\in\Pair_N$. Let $\Gamma+u_{\alpha}$ be the GFF on $\HH$ with boundary data given by~\eqref{eqn::gffboudnarydata}. Consider the planar pair partition $\LA$ formed by its level lines described as above. Then we have 
 \begin{equation}
 \PP[\LA=\beta]=\LM_{\alpha, \beta}\frac{\PartF_{\beta}(x_1, \ldots, x_{2N})}{\LU_{\alpha}(x_1, \ldots, x_{2N})}, \quad \text{for all }\beta\in \Pair_{N},
\end{equation}  
where $\{\PartF_{\beta}: \beta\in\Pair_{N}\}$ are pure partition functions for multiple $\SLE_4$, $\{\LU_{\alpha}: \alpha\in\DP_N\}$ are conformal block functions defined in~\eqref{eqn::conformalblock_def}, and $\{\LM_{\gamma, \beta}: \gamma, \beta\in\Pair_{N}\}$ is the incidence matrix defined through~\eqref{eqn::KWleincidencematrix}. 
\end{theorem}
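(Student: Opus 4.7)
The plan is to prove Theorem~\ref{thm::GFFconnectionproba} via a martingale argument run along the Loewner chain of one of the level lines, together with induction on $N$. The conceptual picture is that the joint law of $(\eta_{a_1},\ldots,\eta_{a_N})$ is a global multiple $\SLE_4$ associated with the partition function $\LU_\alpha$, and the individual connection probabilities can be extracted from the decomposition~\eqref{eqn::purepartitionvsconformalblock} of $\LU_\alpha$ into the pure partition functions $\PartF_\beta$.

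First, I will identify the marginal law of $\eta_{a_1}$ in the GFF–SLE coupling of Section~\ref{subsec::gfflevellines}. Because the boundary data $u_\alpha$ in~\eqref{eqn::gffboudnarydata} jumps by $\pm 2\lambda$ at each marked point, the level line $\eta_{a_1}$ is an $\SLE_4(\underline{\rho})$ whose force points are the remaining $2N-1$ marked points with weights $\pm 2$ read off from the Dyck-path steps. A direct computation identifies these weights with $2\,\partial_i\log\LU_\alpha$, using that $\LU_\alpha$ satisfies PDE~\eqref{eqn::multipleSLEsPDE} and COV~\eqref{eqn::multipleSLEsCOV} (verified in~\cite[Lemma~6.4]{PeltolaWuGlobalMultipleSLEs}). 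Since $\PartF_\beta$ obeys the same PDE and COV, applying It\^o's formula shows that
\[
M_\beta(t) \;=\; \LM_{\alpha,\beta}\,\frac{\PartF_\beta(W_t,g_t(x_2),\ldots,g_t(x_{2N}))}{\LU_\alpha(W_t,g_t(x_2),\ldots,g_t(x_{2N}))}
\]
is a local martingale along the Loewner chain of $\eta_{a_1}$, the drift terms from the $\SLE_4(\underline{\rho})$ driver canceling exactly against those coming from $\LU_\alpha$. Note that when $\LM_{\alpha,\beta}=0$, the process is identically zero, which will correctly produce $\PP[\LA=\beta]=0$ for topologically forbidden patterns.

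Next, I will analyze the terminal value of $M_\beta$. When $\eta_{a_1}$ terminates by merging with an adjacent marked point $x_b$ (i.e., when the driving function swallows the force point at $x_b$), the asymptotics~\eqref{eqn::conformalblockASYrefined}--\eqref{eqn::purepartitionASYrefined}, combined with the properties of $\LM$ from Lemma~\ref{lem::KWleinverse}, give
\[
\lim_{t\to T} M_\beta(t) \;=\; \one\{\{a_1,b\}\in\beta\}\,\LM_{\alpha',\beta'}\,\frac{\PartF_{\beta'}(x_1,\ldots,\widehat{x_{a_1}},\widehat{x_b},\ldots,x_{2N})}{\LU_{\alpha'}(x_1,\ldots,\widehat{x_{a_1}},\widehat{x_b},\ldots,x_{2N})},
\]
where $\alpha'$ and $\beta'$ are obtained from $\alpha$ and $\beta$ by removing the link $\{a_1,b\}$ and relabelling. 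Conditional on $\eta_{a_1}$, the remaining level lines $(\eta_{a_2},\ldots,\eta_{a_N})$ form the level lines of a GFF on $\HH\setminus\eta_{a_1}$ with boundary data encoded by $\alpha'$, so induction on $N$ identifies the right-hand side with $\one\{\LA=\beta\}$. Optional stopping then gives $\PP[\LA=\beta]=M_\beta(0)$, which is the claim; summing over $\beta$ and invoking~\eqref{eqn::purepartitionvsconformalblock} recovers the consistency check $\sum_\beta\PP[\LA=\beta]=1$.

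The main obstacle will be verifying uniform integrability of $M_\beta$ and passing the limit through the optional stopping theorem at the continuation threshold. The sharp bound~\eqref{eqn::multipleSLEPLBoptimal} on $\PartF_\beta$ controls the numerator, while the explicit product formula~\eqref{eqn::conformalblock_def} for $\LU_\alpha$ gives precisely matching asymptotics for the denominator, so the ratio stays bounded as the driving point approaches the swallowed force point. Careful bookkeeping of these asymptotics, in particular in the endgame when the reduced configuration degenerates to a single curve, is the technical heart of the argument.
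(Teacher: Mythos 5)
Your overall strategy (a ratio martingale $\PartF_\beta/\LU_\alpha$ along one level line, optional stopping, and induction on $N$) is the same as the paper's, but the terminal-value analysis — which is where all the work lies — has a genuine gap. You run the martingale along $\eta_{a_1}=\eta_1$, the level line from $x_1$, for every target $\beta$. This curve terminates at a point $x_b$ which is in general \emph{not} adjacent to $x_1$ (any $b$ with $\alpha(b-1)=1$, $\alpha(b)=0$ is possible), and at the terminal time the Loewner images of \emph{all} the marked points enclosed by the curve collapse onto $W_T$ together with $g_T(x_b)$. The asymptotics \eqref{eqn::conformalblockASYrefined} and \eqref{eqn::purepartitionASYrefined} that you invoke are two-point asymptotics ($x_j,x_{j+1}\to\xi$ with the other points kept separated) and simply do not identify the limit of $M_\beta$ under this multi-point degeneration; so your claimed terminal value $\one\{\{a_1,b\}\in\beta\}\,\LM_{\alpha',\beta'}\PartF_{\beta'}/\LU_{\alpha'}$ is unjustified in exactly the cases that matter. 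The paper avoids this by choosing the martingale curve \emph{adapted to the target} $\beta$: it picks $j$ with $\{j,j+1\}\in\beta$ and runs the level line from $x_j$ (or $x_{j+1}$), so that the only event on which the limit is nonzero is a genuine two-point collapse covered by \eqref{eqn::purepartitionASYrefined}.

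Even with that choice, the second half of the endgame — showing that $M_\beta\to 0$ when the curve terminates at a non-adjacent point — is a real theorem, not a consequence of boundedness: this is the content of Lemma~\ref{lem::valuemart}, which rests on the deterministic conformal-map estimates of Lemmas~\ref{lem::valuemartaux} and~\ref{lem::crossbound} together with the sharp bound \eqref{eqn::multipleSLEPLBoptimal}. Your proposal only argues that the ratio ``stays bounded,'' which does not force the terminal value to vanish, and without that vanishing the optional-stopping identity does not yield the stated formula. Two smaller points: boundedness of the martingale is immediate from \eqref{eqn::purepartitionvsconformalblock} (the denominator is a positive linear combination containing the numerator whenever $\LM_{\alpha,\beta}=1$), so no delicate uniform-integrability bookkeeping is needed; and your remark that inserting the factor $\LM_{\alpha,\beta}=0$ ``produces'' $\PP[\LA=\beta]=0$ is circular — the correct route, as in the paper, is to prove the formula for all $\beta$ with $\alpha\KWle\beta$ and observe via \eqref{eqn::purepartitionvsconformalblock} that these probabilities already sum to one, forcing the remaining patterns to have probability zero.
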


Theorem~\ref{thm::GFFconnectionproba} is a generalization of~\cite[Theorem~1.4]{PeltolaWuGlobalMultipleSLEs} where the authors derive the connection probabilities for $\alpha=\{\{1,2\}, \{3, 4\}, \ldots, \{2N-1, 2N\}\}$.


\begin{figure}[ht!]
\begin{center}
\includegraphics[width=0.24\textwidth]{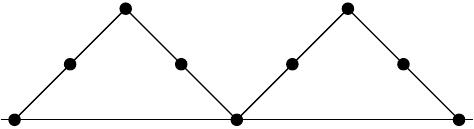}$\quad$
\includegraphics[width=0.24\textwidth]{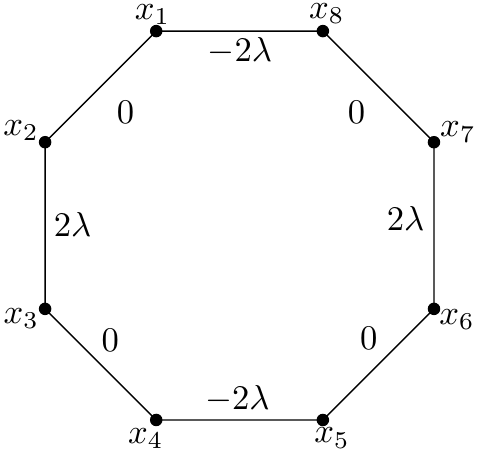}
\end{center}
\caption{\label{fig::gffboundarydata8} Illustration of the boundary data with $N=4$: the planar pair partition for boundary data is $\alpha=\{\{1, 4\}, \{2, 3\}, \{5, 8\}, \{6, 7\}\}$ as ordered in~\eqref{eqn::LPtoDP_order}. }
\end{figure}

 \begin{figure}[ht!]
\begin{center}
\includegraphics[width=0.24\textwidth]{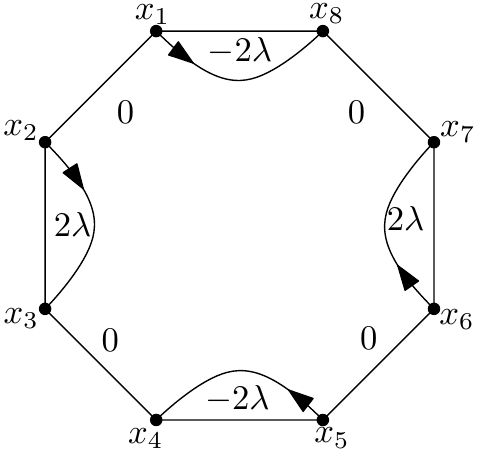}$\quad$
\includegraphics[width=0.24\textwidth]{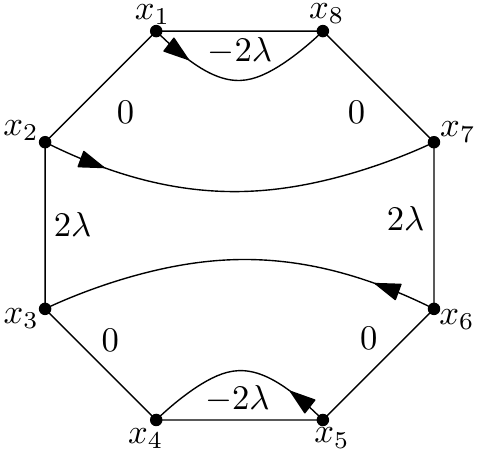}$\quad$
\includegraphics[width=0.24\textwidth]{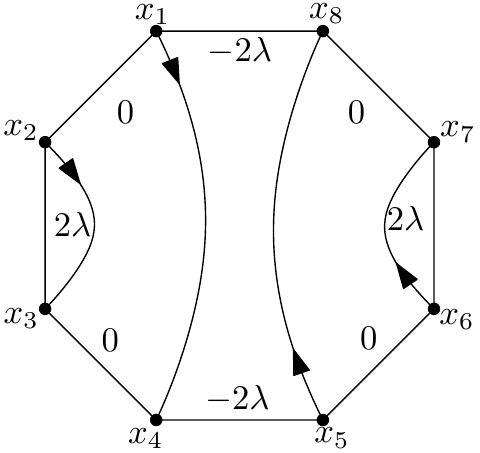}
\end{center}
\caption{\label{fig::gfflevellines8} Figure~\ref{fig::gffboundarydata8} continued: the curve $\eta_i$ is the level line starting from $x_i$ with height $-\lambda$ for $i=1,5$;  the curve $\eta_i$ is the level line starting from $x_i$ with height $\lambda$ for $i=2,6$. The four curves $\eta_1, \eta_2, \eta_5, \eta_6$ connect the eight boundary points. Their end points give a planar pair partition, and there are three possibilities as indicated in the figure. From left to right, the three planar pair partitions are $\beta_1=\{\{1, 8\}, \{2, 3\}, \{5, 4\}, \{6, 7\}\}$, $\beta_2=\{\{1, 8\}, \{2, 7\}, \{5, 4\}, \{6, 3\}\}$, $\beta_3=\{\{1, 4\}, \{2, 3\}, \{5, 8\}, \{6, 7\}\}$. Note that $\alpha\KWle\beta_i$ for $i=1,2,3$. }
\end{figure} 
 
 \begin{lemma}\label{lem::gffmart}
Let $\eta=\eta_1$ be the level line of $\Gamma+u_{\alpha}$ starting from $x_1$ with height $-\lambda$, let $(W_t, t\ge 0)$ be the driving function, and $(g_t, t\ge 0)$ be the corresponding conformal maps, and $T$ be the continuation threshold. For a smooth function $F: \chamber_{2N}\to \R$, the process
 \[M_t:=\frac{F(W_t, g_t(x_2), \ldots, g_t(x_{2N}))}{\LU_{\alpha}(W_t, g_t(x_2), \ldots, g_t(x_{2N}))}\]
 is a local martingale if and only if $F$ satisfies PDE~\eqref{eqn::multipleSLEsPDE} with $\kappa=4$ and $i=1$. 
 \end{lemma}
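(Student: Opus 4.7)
The plan is to apply It\^o's formula to $M_t$ and reduce the vanishing of its drift to PDE~\eqref{eqn::multipleSLEsPDE} using the fact, established in~\cite[Lemma~6.4]{PeltolaWuGlobalMultipleSLEs}, that $\LU_\alpha$ itself solves that PDE with $\kappa=4$.

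First I would identify the $\SLE_4(\underline{\rho}^R)$ description of $\eta=\eta_1$. The level line starts from $x_1$ with height $-\lambda$, so after reading off the boundary data in~\eqref{eqn::gffboudnarydata} (shifted by $+\lambda$) on each interval $(x_k,x_{k+1})$ and matching with the jump convention of~\eqref{eqn::SLEkapparho}, one finds force points at $x_2,\ldots,x_{2N}$ with weights $\rho^{R,k-1}=2(\alpha(k)-\alpha(k-1))$ for $k=2,\ldots,2N$. Since the Dyck path $\alpha$ ordered as in~\eqref{eqn::LPtoDP_order} starts with $1=a_1$, a short computation from~\eqref{eqn::conformalblock_def} gives $\vartheta_\alpha(1,k)=\alpha(k)-\alpha(k-1)$, and hence
\begin{equation*}
\kappa\,\frac{\partial_1 \LU_\alpha(y_1,\ldots,y_{2N})}{\LU_\alpha(y_1,\ldots,y_{2N})}=\sum_{k=2}^{2N}\frac{2\vartheta_\alpha(1,k)}{y_1-y_k}=\sum_{k=2}^{2N}\frac{\rho^{R,k-1}}{y_1-y_k}.
\end{equation*}
Thus~\eqref{eqn::SLEkapparho} with $\kappa=4$ takes the clean form $dW_t=2\,dB_t+4\,\partial_1\log\LU_\alpha(W_t,g_t(x_2),\ldots,g_t(x_{2N}))\,dt$ and $d\langle W\rangle_t=4\,dt$.

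Next, write $G:=\LU_\alpha$. By It\^o, the drift of $d\Phi(W_t,g_t(x_2),\ldots,g_t(x_{2N}))$ for any smooth $\Phi$ equals
\begin{equation*}
2\partial_1^2\Phi+4\,\frac{\partial_1 G}{G}\,\partial_1\Phi+\sum_{i\geq 2}\frac{2\,\partial_i\Phi}{g_t(x_i)-W_t}.
\end{equation*}
Since $F=MG$, the product rule gives $\mathrm{drift}(dF)=G\,\mathrm{drift}(dM)+M\,\mathrm{drift}(dG)+4\,\partial_1 M\,\partial_1 G$. I would then expand $\partial_1 M=\partial_1 F/G-F\,\partial_1 G/G^2$ and substitute the PDE~\eqref{eqn::multipleSLEsPDE} for $G$ with $\kappa=4,h=1/4$, namely $2\partial_1^2 G+\sum_{i\geq 2}2\partial_i G/(g_t(x_i)-W_t)=\sum_{i\geq 2}G/(2(g_t(x_i)-W_t)^2)$. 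The terms proportional to $\partial_1\log G\cdot\partial_1 F$ then cancel, and $\mathrm{drift}(dM)=0$ reduces precisely to
\begin{equation*}
2\partial_1^2 F+\sum_{i\geq 2}\frac{2\,\partial_i F}{g_t(x_i)-W_t}-\sum_{i\geq 2}\frac{F/2}{(g_t(x_i)-W_t)^2}=0,
\end{equation*}
which is PDE~\eqref{eqn::multipleSLEsPDE} with $\kappa=4$ and $j=1$. Since the configuration $(W_t,g_t(x_2),\ldots,g_t(x_{2N}))$ sweeps out a nonempty open subset of $\chamber_{2N}$ as $t$ varies, the vanishing of the drift as a function on the state space is equivalent to $F$ solving the PDE, yielding both directions of the equivalence.

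The main obstacle is the first step: correctly identifying the $\SLE_4(\underline{\rho}^R)$ description of $\eta_1$ from the Dyck-path boundary data~\eqref{eqn::gffboudnarydata} and matching the resulting drift with $\kappa\,\partial_1\log\LU_\alpha$. Once this identification is in place, the remaining work is a routine It\^o calculation combined with the PDE satisfied by $\LU_\alpha$.
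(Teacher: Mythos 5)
Your proposal is correct and is, at its core, the same It\^o-calculus drift computation as the paper's proof: both identify $\eta_1$ as the $\SLE_4(\rho_2,\ldots,\rho_{2N})$ process with force points $x_2,\ldots,x_{2N}$ and $\rho_i=2(\alpha(i)-\alpha(i-1))$, and both show that the drift of $M_t$ equals $\LD^{(1)}F(\boldsymbol{Y})/\LU_\alpha(\boldsymbol{Y})$, where $\LD^{(1)}$ is the operator of~\eqref{eqn::multipleSLEsPDE} with $\kappa=4$ in the first variable and $\boldsymbol{Y}=(W_t,g_t(x_2),\ldots,g_t(x_{2N}))$. The organization differs in one genuine way: the paper computes $d\LU_\alpha(\boldsymbol{Y})/\LU_\alpha(\boldsymbol{Y})$ explicitly from the product formula~\eqref{eqn::conformalblock_def}, using the identities $\vartheta_\alpha(1,i)\rho_i=2$ and $\vartheta_\alpha(1,j)\rho_i+\vartheta_\alpha(1,i)\rho_j=4\vartheta_\alpha(i,j)$, and checks the cancellation term by term; you instead observe that the driving drift equals $\kappa\,\partial_1\log\LU_\alpha$ (via $\vartheta_\alpha(1,k)=\alpha(k)-\alpha(k-1)$, which is correct since $1=a_1$) and then invoke the known null-vector PDE for $\LU_\alpha$ from \cite[Lemma~6.4]{PeltolaWuGlobalMultipleSLEs}, already quoted in Section~\ref{subsec::conformalblocks}, so that a product-rule computation cancels the $\partial_1\log\LU_\alpha\cdot\partial_1F$ and $(\partial_1\log\LU_\alpha)^2$ terms structurally. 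I checked your bookkeeping and it does reduce the drift to exactly $2\partial_1^2F+\sum_{i\ge2}\bigl(2\partial_iF/X_{i1}-F/(2X_{i1}^2)\bigr)$ with $X_{i1}=g_t(x_i)-W_t$, so both directions follow; your route is a bit cleaner and would apply verbatim with $\LU_\alpha$ replaced by any positive solution of the PDE, while the paper's is self-contained modulo the explicit formula for $\LU_\alpha$. One small imprecision in your last step: for a fixed realization the configuration $\boldsymbol{Y}$ traces a curve in $\chamber_{2N}$, not an open subset; but the ``only if'' direction only needs that the drift, continuous in $t$, vanishes at $t=0$, i.e.\ at the arbitrary initial configuration $(x_1,\ldots,x_{2N})$, which is also all the paper's argument uses.
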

 \begin{proof}
The level line of $\Gamma+u_{\alpha}$ starting from $x_1$ with height $-\lambda$ is the $\SLE_{4}(\rho_{2},\ldots,\rho_{2N})$ process with force points $(x_{2},\ldots,x_{2N})$ and $\rho_{i}=2(\alpha(i)-\alpha(i-1))$.
Recalling from~\eqref{eqn::SLEkapparho}, its driving function $W_{t}$ satisfies the following intergrated SDEs up to the continuation threshold $T$: 
\begin{align*}
W_t =2 B_t +x_{1}+ \sum_{i=2}^{2N} 
\int_0^t\frac{\rho_{i} ds}{W_s-g_s(x_{i})} , \qquad 
g_t(x_{i}) = x_{i} +  \int_0^t\frac{2ds}{g_s(x_{i})-W_s} , 
\qquad \text{for } 2\le i\le 2N.
\end{align*}
We denote $\boldsymbol{Y}=(W_{t},g_{t}(x_{2}),\ldots,g_{t}(x_{2N}))$ and $X_{i1}=g_{t}(x_{i})-W_{t}$ for $2\le i\le 2N$. 
In this proof, we write $\partial_i$ for $\frac{\partial}{\partial x_i}$ as there is no ambiguity.
We denote the differential operator in~\eqref{eqn::multipleSLEsPDE} with $\kappa=4$ and $i=1$ by 
\[\LD^{(1)}:=2\partial^{2}_{1}+\sum_{i=2}^{2N}\left(\frac{2\partial_{i}}{x_{i}-x_{1}}-\frac{1}{2(x_{i}-x_{1})^{2}}\right).\]
By It$\hat{\rm{o}}$'s formula, we have
\begin{align*}
dF(\boldsymbol{Y})&=2\partial_{1}F(\boldsymbol{Y}) dB_{t}+\left(2\partial_{1}^{2}+\sum_{i=2}^{2N}\left(\frac{2\partial_{i}}{X_{i1}}-\frac{\rho_{i}\partial_{1}}{X_{i1}}\right)\right)F(\boldsymbol{Y})dt,\\
 &=2\partial_{1}F(\boldsymbol{Y}) dB_{t}+\left(\LD^{(1)}+\sum_{i=2}^{2N}\left(\frac{1}{2X_{i1}^{2}}-\frac{\rho_{i}\partial_{1}}{X_{i1}}\right)\right)F(\boldsymbol{Y})dt.
\end{align*}
We also have
\begin{align*}
\frac{d\LU_{\alpha}(\boldsymbol{Y})}{\LU_{\alpha}(\boldsymbol{Y})}=-\sum_{i=2}^{2N}\frac{\vartheta_{\alpha}(1,i)}{X_{i1}}dB_{t}+\left(\sum_{i=2}^{2N}\frac{1+\vartheta_{\alpha}(1,i)\rho_{i}}{2X_{i1}^{2}}+\sum_{2\le i\neq j\le 2N}\frac{\frac{1}{2}(\vartheta_{\alpha}(1,j)\rho_{i}+\vartheta_{\alpha}(1,i)\rho_{j})}{2X_{i1}X_{j1}}\right)dt.
\end{align*}
By definition, we have $\vartheta_{\alpha}(1,i)\rho_{i}=2$ for $2\le i\le 2N$ and $\vartheta_{\alpha}(1,j)\rho_{i}+\vartheta_{\alpha}(1,i)\rho_{j}=4\vartheta_{\alpha}(i,j)$ for $i\neq j$. 
Thus 
\[\frac{d\LU_{\alpha}(\boldsymbol{Y})}{\LU_{\alpha}(\boldsymbol{Y})}=-\sum_{i=2}^{2N}\frac{\vartheta_{\alpha}(1,i)}{X_{i1}}dB_{t}+\left(\sum_{i=2}^{2N}\frac{3}{2X_{i1}^{2}}+\sum_{2\le i\neq j\le 2N}\frac{\vartheta_{\alpha}(i, j)}{X_{i1}X_{j1}}\right)dt.\]
Therefore, we have 
\begin{align*}
\frac{dM_{t}}{M_{t}}&=\frac{dF(\boldsymbol{Y})}{F(\boldsymbol{Y})}-\frac{d\LU_{\alpha}(\boldsymbol{Y})}{\LU_{\alpha}(\boldsymbol{Y})}+4\left(\frac{\partial_{1}\LU_{\alpha}(\boldsymbol{Y})}{\LU_{\alpha}(\boldsymbol{Y})}\right)^{2}dt-4\left(\frac{\partial_{1}\LU_{\alpha}(\boldsymbol{Y})}{\LU_{\alpha}(\boldsymbol{Y})}\right)\left(\frac{\partial_{1}F(\boldsymbol{Y})}{F(\boldsymbol{Y})}\right)dt
\\ &=\left(\frac{2\partial_{1}F(\boldsymbol{Y})}{F(\boldsymbol{Y})}-\frac{2\partial_{1}\LU_{\alpha}(\boldsymbol{Y})}{\LU_{\alpha}(\boldsymbol{Y})}\right)dB_{t}+\frac{\LD^{(1)}F(\boldsymbol{Y})}{F(\boldsymbol{Y})}dt.
\end{align*}
Thus $M_{t}$ is a local martingale if and only if $F$ satisfies PDE~\eqref{eqn::multipleSLEsPDE} with $\kappa=4$ and $i=1$. 
 \end{proof}
 
 \begin{proof}[Proof of Theorem~\ref{thm::GFFconnectionproba}] We prove by induction on $N$.  We write $\alpha=\{\{a_1, b_1\}, \ldots, \{a_N, b_N\}\}$ as ordered in~\eqref{eqn::LPtoDP_order}. It suffices to show the conclusion for $\beta\in\Pair_N$ such that $\alpha\KWle \beta$. 
Because the summation of probabilities with such $\beta$'s equals one, and this implies that the probabilities for other planar pair partitions are zero.  
Fix $\beta\in\Pair_N$ such that $\alpha\KWle\beta$. There exists $j\in\{1, ,2 \ldots, 2N-1\}$ such that $\{j, j+1\}\in\beta$. In this case, we have $\wedge^j\in\beta$ and $\lozenge_j\in\alpha$. If $\wedge^j\in\alpha$, we let $\eta=\eta_j$ be the level line of $\Gamma+u_{\alpha}$ starting from $x_j$ with height $\LH_{\alpha}(j)$. If $\vee_j\in\alpha$, we let $\eta=\eta_{j+1}$ be the level line of $\Gamma+u_{\alpha}$ starting from $x_{j+1}$ with height $\LH_{\alpha}(j+1)$. The second case can be proved in a similar way as the first case. So we only give proof for the first case: we may assume $\wedge^j\in\alpha$. 
Let $\eta=\eta_j$ be the level line of $\Gamma+u_{\alpha}$ starting from $x_j$ with height $\LH_{\alpha}(j)$. Let $(W_t, t\ge 0)$ be the driving function, and $(g_t, t\ge 0)$ be the corresponding conformal maps, and $T$ be the continuation threshold. 

Define
 \[M_t:=\frac{\PartF_{\beta}(g_t(x_1), \ldots, g_t(x_{j-1}), W_t, g_t(x_{j+1}), g_t(x_{2N}))}{\LU_{\alpha}(g_t(x_1), \ldots, g_t(x_{j-1}), W_t, g_t(x_{j+1}), g_t(x_{2N}))}. \]
From a similar calculation as in Lemma~\ref{lem::gffmart}, this is a local martingale. From~\eqref{eqn::purepartitionvsconformalblock}, this is a bounded martingale. Optional stopping theorem gives $M_0=\E[M_T]$. 
We will analyze the behavior of the process as $t\to T$. 
Consider the level line $\eta$, it will terminate at a point $x_{n}$ such that $\alpha(n-1)=\alpha(j)$ and $\alpha(n)=\alpha(j-1)$. 

If $\eta(T)=x_{j+1}$, from~\eqref{eqn::conformalblockASYrefined} and~\eqref{eqn::purepartitionASYrefined}, we have, as $t\to T$, almost surely, 
\begin{align*}
M_t&=\frac{(g_t(x_{j+1})-W_t)^{1/2}\PartF_{\beta}(g_t(x_1), \ldots, g_t(x_{j-1}), W_t, g_t(x_{j+1}), g_t(x_{2N}))}{(g_t(x_{j+1})-W_t)^{1/2}\LU_{\alpha}(g_t(x_1), \ldots, g_t(x_{j-1}), W_t, g_t(x_{j+1}), g_t(x_{2N}))}\\
&\to \frac{\PartF_{\beta/\wedge_j}(g_T(x_1), \ldots, g_T(x_{j-1}), g_T(x_{j+2}), \ldots, g_T(x_{2N}))}{\LU_{\alpha/\lozenge_j}(g_T(x_1), \ldots, g_T(x_{j-1}), g_T(x_{j+2}), \ldots, g_T(x_{2N}))}. 
\end{align*}
If $\eta(T)=x_{n}$ with $n\neq j+1$, from Lemma~\ref{lem::valuemart} and~\eqref{eqn::multipleSLEPLBoptimal}, we have $\lim_{t\to T}M_t=0$ almost surely. 
In summary, we have 
\begin{align*}
M_0=\E[M_T]=\E\left[\one_{\{\eta(T)=x_{j+1}\}}\frac{\PartF_{\beta/\wedge_j}(g_T(x_1), \ldots, g_T(x_{j-1}), g_T(x_{j+2}), \ldots, g_T(x_{2N}))}{\LU_{\alpha/\lozenge_j}(g_T(x_1), \ldots, g_T(x_{j-1}), g_T(x_{j+2}), \ldots, g_T(x_{2N}))}\right]. 
\end{align*}
By induction hypothesis, we have
\[\PP[\LA=\beta\cond \eta[0,T]]=\frac{\PartF_{\beta/\wedge_j}(g_T(x_1), \ldots, g_T(x_{j-1}), g_T(x_{j+2}), \ldots, g_T(x_{2N}))}{\LU_{\alpha/\lozenge_j}(g_T(x_1), \ldots, g_T(x_{j-1}), g_T(x_{j+2}), \ldots, g_T(x_{2N}))}. \]
Therefore, $M_0=\PP[\LA=\beta]$ as desired. 
\end{proof}

\subsection{Technical lemmas}
\label{subsec::technicallemmas}
The following three lemmas are technical. Lemmas~\ref{lem::valuemartaux} and~\ref{lem::crossbound} are needed in the proof of Lemma~\ref{lem::valuemart} which is essential in the proof of Theorem~\ref{thm::GFFconnectionproba}.

\begin{lemma}\label{lem::valuemartaux}
Let $x_{1}<x_{2}<x_{3}<x_{4}$. Suppose $\eta$ is a continuous simple curve in $\HH$ starting from $x_1$ and terminating at $x_4$ at time $T$. Assume $\eta$ hits $\R$ only at its two end points. 
Let $(W_t, 0\le t\le T)$ be its driving function and $(g_t, 0\le t\le T)$ be the corresponding family of conformal maps. Then 
\begin{align*}
\lim_{t\to T}\frac{(g_{t}(x_{3})-g_{t}(x_{2}))(g_{t}(x_{4})-W_{t})}{(g_{t}(x_{3})-W_{t})(g_{t}(x_{4})-g_{t}(x_{2}))}=0.
\end{align*}
\end{lemma}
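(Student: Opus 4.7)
The expression in the lemma is the cross-ratio of the four image points $W_t < g_t(x_2) < g_t(x_3) < g_t(x_4)$ in $\partial\HH$, and a short algebraic manipulation shows it equals $1 - q_t$, where
\[
q_t = \frac{(g_t(x_2)-W_t)(g_t(x_4)-g_t(x_3))}{(g_t(x_3)-W_t)(g_t(x_4)-g_t(x_2))}
\]
is the cross-ratio in the convention of~\eqref{eqn::crossratio}. Since $g_t$ is a conformal isomorphism from $\HH\setminus\eta[0,t]$ onto $\HH$ sending the prime end at $\eta(t)$ to $W_t$ and $x_j$ to $g_t(x_j)$, the quantity $q_t$ is the cross-ratio of the conformal quadrilateral $(\HH\setminus\eta[0,t]; \eta(t), x_2, x_3, x_4)$. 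The plan is therefore to show that $q_t \to 1$ as $t\to T$, equivalently that the conformal modulus of the curve family in $\HH\setminus\eta[0,t]$ connecting the arc $(\eta(t), x_2)$ to the arc $(x_3, x_4)$ tends to zero.

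The intuition is geometric. Set $\epsilon := |\eta(t) - x_4|$, so $\epsilon \to 0$ as $t\to T$. Because $\eta$ is a simple curve hitting $\R$ only at $x_1$ and (in the limit) at $x_4$, for $\epsilon$ small enough the portion of $\eta$ inside $B(x_4, 3\epsilon)$ is a single Jordan arc ending at the tip $\eta(t)$. The arc $(\eta(t), x_2)$ contains the east side of this arc, and the arc $(x_3, x_4)$ contains the segment $[x_4 - 2\epsilon, x_4] \subset \R$; both pieces lie inside $B(x_4, 2\epsilon)$. A curve in $\HH\setminus\eta[0,t]$ of Euclidean length $O(\epsilon)$ that loops around the tip of $\eta$ inside $B(x_4, 3\epsilon)\cap\HH$ connects these two pieces. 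The two boundary arcs of the quadrilateral thus accumulate at the common prime end $x_4 = \lim_{t\to T}\eta(t)$ of the limiting unbounded component.

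To implement this rigorously I would argue via the dual family, exploiting the topology of the pinch. Every curve from $A_2 = (x_2, x_3)$ to $A_4 = (x_4, \eta(t))$ (the arc going through $\infty$ and then up the west side of $\eta$) inside $\HH\setminus\eta[0,t]$ must exit the ``bay'' bounded by $\eta$ and $[x_1, x_4]\subset\R$; since the only passage between the bay and its exterior is the narrow channel near $\eta(t)$, such a curve must traverse the semi-annulus $\{\epsilon < |z - x_4| < R\} \cap (\HH\setminus\eta[0,t])$ for any fixed small $R < \min\{x_4-x_3, x_2-x_1\}$. The conformal modulus of this semi-annulus is $\frac{1}{\pi}\log(R/\epsilon) \to \infty$, so the extremal length from $A_2$ to $A_4$ diverges. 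By duality $\lambda(A_1,A_3)\cdot\lambda(A_2,A_4) = 1$ for a conformal quadrilateral, the extremal length from $A_1 = (\eta(t), x_2)$ to $A_3 = (x_3, x_4)$ tends to $0$, hence $q_t \to 1$ and the claimed limit is $0$.

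The main obstacle is the topological step: verifying that any curve in the dual family really must cross the semi-annulus around $x_4$. This requires identifying which side of $\eta$ contains the bay (the side adjacent to the boundary arc $(x_1,x_4)\subset\R$) and using the assumption that $\eta\cap\R = \{x_1\}$ for $t<T$ to rule out alternative routes. Once this topological separation is established, the modulus-of-annulus estimate (which is classical, cf.\ Ahlfors' \emph{Conformal Invariants}) does the rest. An alternative route, bypassing extremal length, is to write the quantity as $(1 - F_2/F_3)/(1 - F_2/F_4)$ with $F_i := g_t(x_i) - W_t$ and analyze the leading-order asymptotics of $F_i$ from the Loewner ODE $\dot F_i = 2/F_i - \dot W_t$, showing that the ``interior bay'' points $x_2,x_3$ have coinciding leading behavior for $F_i$ while the pinch-adjacent point $x_4$ has a distinct one, so that $F_2/F_3\to 1$ whereas $F_2/F_4$ stays bounded away from~$1$.
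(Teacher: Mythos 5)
Your argument is correct in substance, and it takes a genuinely different route from the paper: the paper does not prove this lemma at all, but simply cites \cite[Lemma~B.2]{PeltolaWuGlobalMultipleSLEs}, whereas you give a self-contained extremal-length argument. The skeleton is sound: the quantity is indeed $1-q_t$ with $q_t$ the cross-ratio of the quadrilateral $(\HH\setminus\eta[0,t];\eta(t),x_2,x_3,x_4)$; the reciprocity $\lambda(A_1,A_3)\,\lambda(A_2,A_4)=1$ is the classical conjugate-modulus identity; and the monotone correspondence between $q_t$ and $\lambda(A_1,A_3)$ gives $q_t\to 1$ once $\lambda(A_2,A_4)\to\infty$. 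The topological step you flag as the main obstacle does go through, and can be done more cleanly than by a local analysis at the tip: a curve of the dual family joins a prime end lying on the inside of the Jordan curve $J=\eta[0,T]\cup[x_1,x_4]$ (the points of $(x_2,x_3)$ are accessible only from the bay) to a prime end lying on the outside of $J$ (the west side of $\eta$ and $\R\setminus(x_1,x_4)$ bound the unbounded component), so it must meet $J$; since it stays in $\HH\setminus\eta[0,t]$, it can only meet $J$ along the open arc $\eta(t,T)$. This is where two small corrections are needed. First, your inner scale should not be $\epsilon=|\eta(t)-x_4|$ but rather $\epsilon_t:=\sup_{s\in[t,T]}|\eta(s)-x_4|$ (equivalently: for each fixed $\rho>0$, take $t$ close enough to $T$ that $\eta[t,T]\subset B(x_4,\rho)$, which holds by continuity of $\eta$ at $T$); the ``gate'' is the whole unexplored arc $\eta(t,T)$, not just the tip, and your claim that $\eta\cap B(x_4,3\epsilon)$ is a single Jordan arc need not hold, since earlier portions of $\eta[0,t]$ may re-enter any fixed ball around $x_4$ — fortunately the rigorous semi-annulus crossing argument never uses that claim. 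Second, the sentence in your first paragraph asserting that the modulus of the family connecting $(\eta(t),x_2)$ to $(x_3,x_4)$ tends to zero has the direction reversed (that modulus diverges; it is the extremal length of that family, i.e.\ the modulus of the quadrilateral, that tends to zero), but your subsequent duality computation states the correct version, so this is only a terminological slip. With the inner radius fixed as above, every dual curve starts on $(x_2,x_3)$, hence outside $B(x_4,x_4-x_3)$, and must visit $\eta(t,T)\subset B(x_4,\rho)$, so it crosses the half-annulus $\{\rho<|z-x_4|<R\}\cap\HH$; the overflowing principle then gives $\lambda(A_2,A_4)\ge\frac{1}{\pi}\log(R/\rho)\to\infty$, completing your argument. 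Compared with the harmonic-measure/Beurling-type estimates used for the neighbouring Lemma~\ref{lem::crossbound} (and in the cited reference), your route is more purely conformal-geometric and arguably more transparent about why the limit is $0$: the pinching of the bay at $x_4$ forces the dual family through an annulus of diverging modulus.
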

\begin{proof}
See~\cite[Lemma~B.2]{PeltolaWuGlobalMultipleSLEs}.
\end{proof}
\begin{lemma}\label{lem::crossbound}
Let $x_{0}<x_{1}<x_{2}<x_{3}<x_{4}$. Suppose $\eta$ is a continuous simple curve in $\HH$ starting from $x_0$ and terminating at $x_4$ at time $T$. Assume $\eta$ hits $\R$ only at its two end points. 
Let $(W_t, 0\le t\le T)$ be its driving function and $(g_t, 0\le t\le T)$ be the corresponding family of conformal maps. Then there exist $C_{1}, C_{2}>0$, which depend on $\eta[0,T]$, such that for all $t\in [0,T]$, 
\begin{align*}
C_{1}\le\left|\frac{(g_{t}(x_{2})-g_{t}(x_{1}))(g_{t}(x_{3})-W_{t})}{(g_{t}(x_{2})-W_{t})(g_{t}(x_{3})-g_{t}(x_{1}))}\right|\le C_{2}.
\end{align*}
\end{lemma}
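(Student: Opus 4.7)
The expression
\[
\Psi(t) := \frac{(g_t(x_2)-g_t(x_1))(g_t(x_3)-W_t)}{(g_t(x_2)-W_t)(g_t(x_3)-g_t(x_1))}
\]
is the cross-ratio on $\R$ of the four ordered points $W_t < g_t(x_1) < g_t(x_2) < g_t(x_3)$. Since cross-ratios are M\"obius-invariant and $g_t$ is conformal from $H_t := \HH \setminus \eta[0,t]$ onto $\HH$ sending the tip $\eta(t)$ to $W_t$, the quantity $\Psi(t)$ is the conformal cross-ratio of the four distinct boundary points $\eta(t), x_1, x_2, x_3$ of $H_t$. The ordering forces $\Psi(t) \in (0,1)$ for all $t \in [0, T)$. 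The plan is to show that $t \mapsto \Psi(t)$ extends continuously to the compact interval $[0,T]$ with values still in the open interval $(0,1)$, after which the constants $C_1 := \min_{[0,T]} \Psi$ and $C_2 := \max_{[0,T]} \Psi$ exist in $(0,1)$ and depend only on $\eta([0,T])$.

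Continuity on $[0,T)$ is immediate from the continuity of the Loewner chain together with the hypothesis that $\eta$ meets $\R$ only at the endpoints $x_0, x_4$, which keeps the four points distinct. The key step is the limit as $t \to T^-$. Here I would use that $\eta([0,T]) \cup [x_0, x_4]$ is a Jordan curve bounding a Jordan domain $D_1 \subset \HH$ whose boundary contains $x_0, x_1, x_2, x_3, x_4$ in counterclockwise order, and that $D_1 \subset H_t$ for all $t < T$. Let $\phi_T : D_1 \to \HH$ be the conformal map normalized by $\phi_T(x_1) = 0$, $\phi_T(x_3) = 1$, $\phi_T(x_4) = \infty$; by Carath\'eodory's extension theorem it extends continuously to $\overline{D_1}$, and the boundary ordering forces $\phi_T(x_2) \in (0,1)$. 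For $t < T$, the conformal map $\phi_t : H_t \to \HH$ normalized by $\phi_t(x_1) = 0$, $\phi_t(x_3) = 1$, $\phi_t(\eta(t)) = \infty$ differs from $g_t$ by a M\"obius post-composition, and a short calculation identifies $\Psi(t) = \phi_t(x_2)$.

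The final step---and the main technical obstacle---is to prove $\phi_t(x_2) \to \phi_T(x_2)$ as $t \to T^-$. This I would do by a Carath\'eodory kernel argument: based at any interior point $z_0 \in D_1$, the domains $H_t$ converge to $D_1$ in the Carath\'eodory sense as $t\to T^-$. Using interior-normalized conformal maps $\tilde \phi_t : H_t \to \HH$ with $\tilde \phi_t(z_0) = i$ and $\tilde \phi_t'(z_0) > 0$, the kernel theorem yields $\tilde \phi_t \to \tilde \phi_T$ locally uniformly on $D_1$; the boundary correspondence for the Jordan domain $D_1$ then gives $\tilde \phi_t(x_j) \to \tilde \phi_T(x_j)$ for $j = 1,2,3$ and, more delicately, $\tilde \phi_t(\eta(t)) \to \tilde \phi_T(x_4)$ (this is the subtle point, since the boundary point $\eta(t)$ itself varies and must be handled via continuity of the boundary parametrization). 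Writing $\phi_t = M_t \circ \tilde \phi_t$, where $M_t$ is the M\"obius self-map of $\HH$ sending $(\tilde \phi_t(\eta(t)), \tilde \phi_t(x_1), \tilde \phi_t(x_3))$ to $(\infty, 0, 1)$, one then has $M_t \to M_T$ and hence $\phi_t(x_2) = M_t(\tilde \phi_t(x_2)) \to M_T(\tilde \phi_T(x_2)) = \phi_T(x_2) \in (0,1)$. Compactness of $[0,T]$ together with the continuity established above then delivers the bounds $C_1, C_2$.
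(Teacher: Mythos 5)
Your reduction is correct as far as it goes: the quantity is the cross-ratio of the four prime ends $\eta(t),x_1,x_2,x_3$ of $H_t$, it equals $\phi_t(x_2)\in(0,1)$ for your boundary-normalized map, continuity on $[0,T)$ is clear, and kernel convergence of the decreasing domains $H_t$ to the Jordan domain $D_1$, combined with a Schwarz reflection argument near $x_1,x_2,x_3$ (where all the domains contain a common half-disk with real boundary), does give $\tilde\phi_t(x_j)\to\tilde\phi_T(x_j)$ for $j=1,2,3$. The genuine gap is the remaining claim $\tilde\phi_t(\eta(t))\to\tilde\phi_T(x_4)$, which you flag as subtle but propose to handle ``via continuity of the boundary parametrization''. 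There is no common boundary parametrization to appeal to: $\eta(t)$ is a prime end of the varying domain $H_t$, and it sits exactly where $H_t$ fails to converge to $D_1$ in any boundary-regular sense --- for $t<T$ the gap between the tip and $x_4$ opens onto the unbounded component of $\HH\setminus\eta[0,T]$, while $\partial D_1$ is closed there, so no localization or equicontinuity argument of the kind that works at $x_1,x_2,x_3$ is available. Locally uniform convergence in the interior gives no control over the image of this moving prime end; in fact the assertion $\tilde\phi_t(\eta(t))\to\tilde\phi_T(x_4)$ is essentially equivalent to showing that the harmonic measure in $H_t$, seen from the interior point $z_0\in D_1$, of the part of $\partial H_t$ beyond the gap (together with the contribution of the not-yet-drawn arc $\eta[t,T]$) tends to $0$, and that needs a quantitative input such as a Beurling estimate at $x_4$. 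Without it, your argument repackages the statement rather than proving it: all of the analytic content of the lemma is concentrated in that one limit.

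For comparison, the paper factors the cross-ratio as $\frac{g_t(x_2)-g_t(x_1)}{g_t(x_3)-g_t(x_1)}\cdot\frac{g_t(x_3)-W_t}{g_t(x_2)-W_t}$ and proves the two estimates \eqref{eqn::bounded} and \eqref{eqn::1} by harmonic-measure comparisons: an interior-normalized map onto $\U$ with uniform convergence on compacts away from the curve handles the first factor (this is the analogue of your kernel-plus-reflection step), while the second factor --- the one carrying the tip --- is controlled by a Beurling estimate near $x_4$; the same mechanism underlies Lemma~\ref{lem::valuemartaux}. To close your gap you would need an estimate of exactly this type, e.g.\ that a Brownian motion started at $z_0$ must approach $x_4$ to within distance $|\eta(t)-x_4|$ without hitting $[x_0,x_4]\subset\partial D_1$ in order to feel the region beyond the gap, so the relevant harmonic measures (and hence $M_t$) converge; once that is supplied, the rest of your outline goes through.
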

\begin{proof}
To prove the conclusion, we will show the following two estimates: First, we will show that there exist $C_{1}, C_{2}>0$, which only depend on $\eta[0,T]$, such that for all $t\in [0,T]$, 
\begin{align}\label{eqn::bounded}
C_{1}\le\frac{g_{t}(x_{2})-g_{t}(x_{1})}{g_{t}(x_{3})-g_{t}(x_{1})}\le C_{2}. 
\end{align}
Second, we will show 
\begin{align}\label{eqn::1}
\lim_{t\to T}\frac{g_{t}(x_{3})-W_{t}}{g_{t}(x_{2})-W_{t}}=1+\lim_{t\to T}\frac{g_{t}(x_{3})-g_{t}(x_{2})}{g_{t}(x_{2})-W_{t}}=1.
\end{align}

In this proof, we use $\asymp$ to simplify notations: for two functions $f$ and $g$, the notation $f\asymp g$ means that there exists a constant $C>0$ which only depends on $\eta[0,T]$ such that $C^{-1}\le f/g\le C$.

We first show~\eqref{eqn::bounded}. 
Note that for an interval $[a,b]$, we have $b-a=\lim_{y\to\infty}\pi y\PP^{iy}\left[\text{BM hits }\partial\HH\, \text{in }[a,b]\right]$, where $\text{BM}$ is the Brownian motion starts from $iy$. By conformal invariance of the Brownian motion, we have $b-a=\lim_{y\to\infty}\pi y\PP^{g_{t}^{-1}(iy)}\left[\text{BM hits } \partial\left(\HH\setminus\eta[0,t]\right) \text{in }g_{t}^{-1}([a,b])\right]$. 

We choose $\delta_{0}$ small enough, such that the $\delta_{0}$-neighborhood of the interval $\left[x_{1},x_{3}\right]$ does not intersect $\eta[0,T]$. We denote the boundary of this neighborhood in $\HH$ by $\gamma$, this is a simple curve. For the Brownian motion starting  from $g_{t}^{-1}(iy)$, let $\tau$ be the first time the Brownian motion hits $\gamma$. Consider the connected component $V$ of $\HH\setminus\eta[0,T]$ which contains $x_{1}$  on its boundary and choose a point $z\in V$. Suppose $\U$ is the unit disk, and $\phi_{t}:\HH\setminus\eta[0,t]\to \U$ is the conformal map with $\phi_{t}(z)=0$, $\phi_{t}'(z)>0$. Suppose $\phi_{T}: V\to \U$ is the conformal map with the same normalization. Then, for any compact set $K\subset\overline V$ which does not intersect $\eta[0,T]$, the conformal map $\phi_{t}$ converges to $\phi_{T}$ uniformly on $K$.

Note that
\[\PP^{g_{t}^{-1}(iy)}\left[\text{BM}\,\text{hits } \partial\left(\HH\setminus\eta[0,t]\right) \text{in  } [x_{1},x_{2}]\right]=\PP^{g_{t}^{-1}(iy)}\left[\one_{\{\tau<\infty\}}\PP^{B_{\tau}}\left[\text{BM}\, \text{hits } \partial\left(\HH\setminus\eta[0,t]\right) \text{in } [x_{1},x_{2}]\right]\right].\]
We will compare $\PP^{B_{\tau}}\left[\text{BM}\, \text{hits } \partial\left(\HH\setminus\eta[0,t]\right) \text{in } [x_{1},x_{2}]\right]$ and $\PP^{B_{\tau}}\left[\text{BM}\, \text{hits } \partial\left(\HH\setminus\eta[0,t]\right) \text{in } [x_{1},x_{3}]\right]$, in fact we can replace $B_{\tau}$ by a deterministic point on $\gamma$. For every $w\in\gamma$, we have 
\[\PP^{w}\left[\text{BM}\, \text{hits } \partial\left(\HH\setminus\eta[0,t]\right) \text{in } [x_{1},x_{2}]\right]=\PP^{\phi_{t}(w)}\left[\text{BM}\, \text{hits } \partial\U\ \text{in } [\phi_{t}(x_{1}),\phi_{t}(x_{2})]\right],\]
where $[\phi_{t}(x_{1}),\phi_{t}(x_{2})]$ is the conformal image of $[x_{1},x_{2}]$. By direct computation, the right hand-side equals 
\[\frac{1}{2\pi}\left(\arg \frac{\phi_{t}(x_{2})-\phi_{t}(w)}{1-\overline{\phi_{t}(w)}\phi_{t}(x_{2})}-\arg \frac{\phi_{t}(x_{1})-\phi_{t}(w)}{1-\overline{\phi_{t}(w)}\phi_{t}(x_{1})}\right),\]
where $\arg$ is the argument principal which takes value in $[0,2\pi)$. Note that there exists $\epsilon_{0}>0$ such that
 \[\frac{1}{2\pi}\left(\arg \frac{\phi_{t}(x_{2})-\phi_{t}(w)}{1-\overline{\phi_{t}(w)}\phi_{t}(x_{2})}-\arg \frac{\phi_{t}(x_{1})-\phi_{t}(w)}{1-\overline{\phi_{t}(w)}\phi_{t}(x_{1})}\right)\le 1-\epsilon_{0},\]
because $\gamma$ is bounded away from $[x_{1},x_{3}]$. Thus,
\begin{align*}
\PP^{\phi_{t}(w)}\left[\text{BM}\ \text{hits}\ \partial\U\ \text{in}\ [\phi_{t}(x_{1}),\phi_{t}(x_{2})]\right]&\asymp \left|\frac{\phi_{t}(x_{2})-\phi_{t}(w)}{1-\overline{\phi_{t}(w)}\phi_{t}(x_{2})}-\frac{\phi_{t}(x_{1})-\phi_{t}(w)}{1-\overline{\phi_{t}(w)}\phi_{t}(x_{1})}\right|\\ &= \frac{(1-|\phi_{t}(w)|^{2})|\phi_{t}(x_{2})-\phi_{t}(x_{1})|}{|1-\overline{\phi_{t}(w)}\phi_{t}(x_{2})||1-\overline{\phi_{t}(w)}\phi_{t}(x_{1})|}.
\end{align*}
Similarly, we have
\begin{align*}
\PP^{\phi_{t}(w)}\left[\text{BM}\, \text{hits}\ \partial\U\ \text{in}\ [\phi_{t}(x_{1}),\phi_{t}(x_{3})]\right]\asymp \frac{(1-|\phi_{t}(w)|^{2})|\phi_{t}(x_{3})-\phi_{t}(x_{1})|}{|1-\overline{\phi_{t}(w)}\phi_{t}(x_{3})||1-\overline{\phi_{t}(w)}\phi_{t}(x_{1})|}.
\end{align*}
Therefore,
\[\frac{\PP^{\phi_{t}(w)}\left[\text{BM}\, \text{hits } \partial\U\, \text{in } [\phi_{t}(x_{1}),\phi_{t}(x_{2})]\right]}{\PP^{\phi_{t}(w)}\left[\text{BM}\, \text{hits } \partial\U\, \text{in } [\phi_{t}(x_{1}),\phi_{t}(x_{3})]\right]}\asymp \frac{|\phi_{t}(x_{2})-\phi_{t}(x_{1})|}{|\phi_{t}(x_{3})-\phi_{t}(x_{1})|}\frac{|\phi_{t}(x_{3})-\phi_{t}(w)|}{|\phi_{t}(x_{2})-\phi_{t}(w)|}\asymp 1.\]
The last $\asymp$ is because of the uniform convergence of $\phi_{t}$. 
Thus, we have
\[\PP^{B_{\tau}}\left[\text{BM}\, \text{hits } \partial\left(\HH\setminus\eta[0,t]\right) \text{in } [x_{1},x_{2}]\right]\asymp\PP^{B_{\tau}}\left[\text{BM}\, \text{hits } \partial\left(\HH\setminus\eta[0,t]\right) \text{in } [x_{1},x_{3}]\right].\]
This implies~\eqref{eqn::bounded}.

Next, we show~\eqref{eqn::1}. 
Consider the Brownian motion starting from $g_{t}^{-1}(iy)$. We define $C(x_{4},\delta):=\{z\in\HH:d(z,x_{4})=\delta\}$. Let $\tau_{\delta}$ be the first time that it hits the connected component of half circle $C(x_{4},\delta)\cap\HH\setminus\eta[0,t]$ which contains $x_{4}-\delta$  on its boundary and we denote this connected component by $C_{\delta}$.
Then we have 
\begin{align*}
&\PP^{g_{t}^{-1}(iy)}\left[\text{BM}\, \text{hits } \partial\left(\HH\setminus\eta[0,t]\right) \text{in the right side of } \eta[0,t]\cup[x_{0},x_{2}]\right]\\&\ge\PP^{g_{t}^{-1}(iy)}\left[\one_{\{\tau_{\delta}<\infty\}}\PP^{B_{\tau_{\delta}}}\left[\text{BM}\, \text{hits } \partial\left(\HH\setminus\eta[0,t] \right) \text{in the right side of } \eta[0,t]\cup[x_{0},x_{2}]\right]\right],
\end{align*}
and 
\begin{align*}
\PP^{g_{t}^{-1}(iy)}\left[\text{BM hits } \partial\left(\HH\setminus\eta[0,t]\right) \text{in } [x_{2},x_{3}]\right]=\PP^{g_{t}^{-1}(iy)}\left[\one_{\{\tau_{\delta}<\infty\}}\PP^{B_{\tau_{\delta}}}\left[\text{BM}\, \text{hits } \partial\left(\HH\setminus\eta[0,t]\right) \text{in } [x_{2},x_{3}]\right]\right].
\end{align*}
By conformal invariance of the Brownian motion, we have
\begin{align*}
\PP^{B_{\tau_{\delta}}}\left[\text{BM hits } \partial\left(\HH\setminus\eta[0,t]\right) \text{in the right side of } \eta[0,t]\cup[x_{0},x_{2}]\right]=\PP^{\phi_{t}(B_{\tau_{\delta}})}\left[\text{BM hits } \partial\U\, \text{in } [\phi_{t}(\eta(t)),\phi_{t}(x_{2})]\right],
\end{align*}
where $[\phi_{t}(\eta(t)),\phi_{t}(x_{2})]$ is the conformal image of the right side of $\eta[0,t]\cup[x_{0}, x_{2}]$. Moreover
\begin{align*}
\PP^{B_{\tau_{\delta}}}\left[\text{BM hits } \partial\left(\HH\setminus\eta[0,t]\right) \text{in } [x_{2},x_{3}]\right]=\PP^{\phi_{t}(B_{\tau_{\delta}})}\left[\text{BM hits } \partial\U\, \text{in } [\phi_{t}(x_{2}),\phi_{t}(x_{3})]\right],
\end{align*}
where $[\phi_{t}(x_{2}),\phi_{t}(x_{3})]$ is the conformal image of $[x_{2},x_{3}]$. We replace $B_{\tau_{\delta}}$ by a deterministic point on $C_{\delta}$ for the same reason as in the proof of~\eqref{eqn::bounded}. For every $w\in C_{\delta}$, by Beurling estimate and conformal invariance, there exists $C>0$ such that 
\[\PP^{\phi_{t}(w)}\left[\text{BM hits } \partial\U\, \text{in } [\phi_{t}(x_{2}),\phi_{t}(x_{3})]\right]\le C\left(\frac{\delta}{x_{4}-x_{3}}\right)^{\frac{1}{2}}.\]
This implies that there exists $\epsilon_{0}>0$ such that 
\[\PP^{\phi_{t}(w)}\left[\text{BM hits } \partial\U\, \text{in } [\phi_{t}(x_{2}),\phi_{t}(x_{3})]\right]\le 1-\epsilon_{0}.\]
Thus, by the same method as in the proof of~\eqref{eqn::bounded}, we have
\[\PP^{\phi_{t}(w)}\left[\text{BM hits } \partial\U\, \text{in } [\phi_{t}(x_{2}),\phi_{t}(x_{3})]\right]\asymp \frac{(1-|\phi_{t}(w)|^{2})|\phi_{t}(x_{3})-\phi_{t}(x_{2})|}{|1-\overline{\phi_{t}(w)}\phi_{t}(x_{3})||1-\overline{\phi_{t}(w)}\phi_{t}(x_{2})|}.\] 
Moreover,
\begin{align*}
\PP^{\phi_{t}(w)}\left[\text{BM hits } \partial\U\, \text{in } [\phi_{t}(\eta(t)),\phi_{t}(x_{2})]\right]&=\frac{1}{2\pi}\left(\arg \frac{\phi_{t}(x_{2})-\phi_{t}(w)}{1-\overline{\phi_{t}(w)}\phi_{t}(x_{2})}-\arg \frac{\phi_{t}(\eta(t))-\phi_{t}(w)}{1-\overline{\phi_{t}(w)}\phi_{t}(\eta(t))}\right)\\
&\ge\frac{1}{2\pi}\left|\frac{\phi_{t}(x_{2})-\phi_{t}(w)}{1-\overline{\phi_{t}(w)}\phi_{t}(x_{2})}-\frac{\phi_{t}(\eta(t))-\phi_{t}(w)}{1-\overline{\phi_{t}(w)}\phi_{t}(\eta(t))}\right|\\
&=\frac{1}{2\pi}\frac{(1-|\phi_{t}(w)|^{2})|\phi_{t}(\eta(t))-\phi_{t}(x_{2})|}{|1-\overline{\phi_{t}(w)}\phi_{t}(\eta(t))||1-\overline{\phi_{t}(w)}\phi_{t}(x_{2})|}.
\end{align*}
Combining these two together, there exists $C>0$ such that
\[\frac{\PP^{\phi_{t}(w)}\left[\text{BM hits } \partial\U\, \text{in } [\phi_{t}(\eta(t)),\phi_{t}(x_{2})]\right]}{\PP^{\phi_{t}(w)}\left[\text{BM hits } \partial\U\, \text{in } [\phi_{t}(x_{2}),\phi_{t}(x_{3})]\right]}\ge C\frac{|\phi_{t}(\eta(t))-\phi_{t}(x_{2})|}{|\phi_{t}(x_{3})-\phi_{t}(x_{2})|}\frac{|\phi_{t}(x_{3})-\phi_{t}(w)|}{|\phi_{t}(\eta(t))-\phi_{t}(w)|}.\]
We denote the connected component of $\HH\setminus (\eta[0,t]\cup C_{\delta})$ which contains $\infty$ by $A$. By the relation between diameter and harmonic measure, there exists $C_{1}>0$, such that
\begin{align*}
\frac{1}{C_{1}}\diam\left(\phi_{t}(A)\right)\le \PP^{0}[\text{BM hits }\phi_{t}(C_{\delta})\, \text{before } \partial\U]&=\PP^{z}[\text{BM hits } C_{\delta}\, \text{before } \partial(\HH\setminus\eta[0,t])]\\&\le\PP^{z}[\text{BM hits } C_{\delta}\, \text{before } \partial\HH]\\&\le C_{1}\delta.
\end{align*}
Thus, we have 
\[|\phi_{t}(\eta(t))-\phi_{t}(w)|\le C_{1}^{2}\delta.\]
For $|\phi_{t}(\eta(t))-\phi_{t}(x_{2})|$ and $|\phi_{t}(x_{3})-\phi_{t}(w)|$, we have
\begin{align*}
|\phi_{t}(\eta(t))-\phi_{t}(x_{2})|&\ge \min\left\{\left|\phi_{t}\left(\frac{x_{4}+x_{3}}{2}\right)-\phi_{t}(x_{2})\right|,\left|\phi_{t}\left(\frac{x_{1}+x_{0}}{2}\right)-\phi_{t}(x_{2})\right|\right\},\\
|\phi_{t}(w)-\phi_{t}(x_{3})|&\ge |\phi_{t}(\eta(t))-\phi_{t}(x_{3})|-|\phi_{t}(\eta(t))-\phi_{t}(w)|\\ &\ge \min\left\{\left|\phi_{t}\left(\frac{x_{4}+x_{3}}{2}\right)-\phi_{t}(x_{3})\right|,\left|\phi_{t}\left(\frac{x_{1}+x_{0}}{2}\right)-\phi_{t}(x_{3})\right|\right\}-C_{1}^{2}\delta.
\end{align*}
Thus, by the uniform convergence, there exists $C_{2}>0$ such that 
\[\frac{|\phi_{t}(\eta(t))-\phi_{t}(x_{2})||\phi_{t}(x_{3})-\phi_{t}(w)|}{|\phi_{t}(x_{3})-\phi_{t}(x_{2})|}\ge C_{2}.\]
This implies that there exists $C>0$ such that
\[\frac{\PP^{\phi_{t}(w)}\left[\text{BM hits } \partial\U\, \text{in } [\phi_{t}(\eta(t)),\phi_{t}(x_{2})]\right]}{\PP^{\phi_{t}(w)}\left[\text{BM hits } \partial\U\, \text{in } [\phi_{t}(x_{2}),\phi_{t}(x_{3})]\right]}\ge C\frac{1}{\delta}.\]
Therefore, we have~\eqref{eqn::1}. Combining~\eqref{eqn::bounded} and~\eqref{eqn::1}, we obtain the conclusion. 
\end{proof}

We set $\LB_{\emptyset}=1$, and for $\alpha, \beta\in\Pair_N$ and $x_1<\cdots<x_{2N}$, we define
\begin{equation*}
\LB_{\beta}(x_1, \ldots, x_{2N}):=\prod_{\{a,b\}\in\beta}|x_a-x_b|^{-1}, \qquad F_{\alpha, \beta}(x_1, \ldots, x_{2N}):=\frac{\LB_{\beta}(x_1, \ldots, x_{2N})}{\LU_{\alpha}(x_1, \ldots, x_{2N})^2}. 
\end{equation*}

\begin{lemma}\label{lem::valuemart}
Fix $\alpha, \beta\in\Pair_N$ such that $\alpha\KWle \beta$. Fix $j\in\{1, 2, \ldots, 2N-1\}$, we assume that $\wedge_j\in\alpha$ and $\wedge_j\in\beta$. 
Fix $n\in \{1, \ldots, j-1, j+2, \ldots, 2N\}$ such that $\alpha(n-1)=\alpha(j)$ and $\alpha(n)=\alpha(j-1)$. Fix $x_1<\cdots<x_{2N}$. 
Suppose $\eta$ is a continuous simple curve in $\HH$ starting from $x_j$ and terminating at $x_n$ at time $T$. Assume $\eta$ hits $\R$ only at its two end points. 
Let $(W_t, 0\le t\le T)$ be its driving function and $(g_t, 0\le t\le T)$ be the corresponding family of conformal maps. Then 
\[\lim_{t\to T}F_{\alpha, \beta}(g_t(x_1), \ldots, g_t(x_{j-1}), W_t, g_t(x_{j+1}), g_t(x_{2N}))=0.\]
\end{lemma}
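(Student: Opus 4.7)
The plan is to express $F_{\alpha, \beta}$ as a product of four-point cross-ratios, extract one factor that matches the configuration of Lemma~\ref{lem::valuemartaux} (so that it tends to $0$), and bound the remaining factors using Lemma~\ref{lem::crossbound}. Writing $F_{\alpha, \beta}(\boldsymbol{y}) = \prod_{i < k}(y_k - y_i)^{a_{ik}}$ with $a_{ik} = -\vartheta_\alpha(i, k) - \one\{\{i, k\} \in \beta\}$, the hypothesis $\alpha \KWle \beta$ forces every $\beta$-link to pair an $\alpha$-up-index with an $\alpha$-down-index, so $a_{ik} \in \{-1, 0, 1\}$. A direct count gives $\sum_k \vartheta_\alpha(i, k) = -1$ (there being $N$ up- and $N$ down-indices), and each $i$ lies in a unique $\beta$-link, so every row sum $\sum_{k\neq i} a_{ik}$ vanishes; this guarantees a decomposition of $F_{\alpha,\beta}$ as a product of cross-ratios on quadruples.

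WLOG assume $n > j$. The constraint $n \neq j+1$ combined with the Dyck-path height conditions ($\alpha(n-1) = \alpha(j)$ and $\alpha(n) = \alpha(j-1)$) forces $n - j$ to be odd and $n \ge j+3$, so there exist indices $p, q$ with $j < p < q < n$. The crux of the argument is to select the cross-ratio decomposition so that it contains the factor
\[C_0 := \frac{(y_q - y_p)(y_n - y_j)}{(y_q - y_j)(y_n - y_p)}.\]
By Lemma~\ref{lem::valuemartaux} applied to the four points $x_j < x_p < x_q < x_n$ (with $\eta$ playing the role of the curve from $x_1$ to $x_4$), the factor $C_0$ tends to $0$ as $t \to T$.

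For the remaining cross-ratio factors, I would argue case by case: (a) quadruples entirely outside $I := \{j, \ldots, n\}$ have all four coordinates converging to distinct finite limits, so such factors converge to finite positive limits; (b) factors of the form $\frac{(y_c - y_b)(y_d - y_j)}{(y_c - y_j)(y_d - y_b)}$ for $j < b < c < d < n$ (and their reciprocals) are $\asymp 1$ by Lemma~\ref{lem::crossbound} applied to the five-point configuration $x_j < x_b < x_c < x_d < x_n$; (c) a cross-ratio on four indices $j < a < b < c < d < n$ strictly inside is handled via the algebraic identity
\[\frac{(y_c - y_b)(y_d - y_a)}{(y_c - y_a)(y_d - y_b)} = \frac{(y_c - y_b)(y_d - y_j)}{(y_c - y_j)(y_d - y_b)} \cdot \Bigl[\frac{(y_c - y_a)(y_d - y_j)}{(y_c - y_j)(y_d - y_a)}\Bigr]^{-1},\]
which rewrites it as a ratio of two type (b) factors and hence is $\asymp 1$; (d) mixed inside/outside cross-ratios are bounded by combining (a)--(c) with the fact that inside-outside differences stay bounded away from zero.

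The main obstacle lies in residual cross-ratios where one of the four indices equals the terminal point $n$, since the identity in (c) then produces additional Lemma~\ref{lem::valuemartaux}-type factors rather than Lemma~\ref{lem::crossbound}-type ones. For these residuals, one must pair them with a suitable copy of a Lemma~\ref{lem::valuemartaux}-quadruple (adjusting the choice of $(p, q)$ in $C_0$, or combining the residual with a second Lemma~\ref{lem::valuemartaux}-factor) so that the combined product remains bounded. Once every remaining factor has been shown to be $O(1)$, multiplying by $C_0 \to 0$ yields $F_{\alpha, \beta} \to 0$.
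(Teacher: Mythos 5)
Your overall strategy is the same as the paper's: isolate one cross-ratio of the form handled by Lemma~\ref{lem::valuemartaux} (four points consisting of the tip, two marked points strictly between $x_j$ and $x_n$, and $x_n$) to produce the vanishing factor, and control everything else by Lemma~\ref{lem::crossbound}. Your preliminary observations are also correct: the exponents $a_{ik}=-\vartheta_\alpha(i,k)-\one\{\{i,k\}\in\beta\}$ lie in $\{-1,0,1\}$, the row sums vanish, $n-j$ is odd so $n\ge j+3$, and your identities in (b)--(c) are valid applications of Lemma~\ref{lem::crossbound}.

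However, there is a genuine gap, and it sits exactly where the real work of the lemma is. You never construct the decomposition of $F_{\alpha,\beta}$ into quadruple cross-ratios with the properties you need: zero row sums of $(a_{ik})$ do not by themselves yield a factorization in which one factor is your $C_0$ (with exponent $+1$, not $-1$) and every other factor stays bounded along the curve. The delicate factors are precisely those involving the points $x_{j+1},\ldots,x_{n-1}$ and the endpoint $x_n$: as $t\to T$ all of $g_t(x_{j+1})-W_t,\ldots,g_t(x_{n-1})-W_t$, $g_t(x_n)-W_t$ and their mutual differences tend to $0$, so individual cross-ratios in a naive grouping can blow up, and the $\beta$-links joining two such points contribute factors $|g_t(x_a)-g_t(x_b)|^{-1}\to\infty$ that must be matched against decaying terms. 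Your case (d) and your final paragraph simply defer this ("one must pair them\ldots so that the combined product remains bounded") without showing such a pairing exists; but whether it exists depends nontrivially on how the links of $\beta$ sit relative to the interval $(j,n)$. The paper resolves this by first factoring $F_{\alpha,\beta}=\prod_{i\neq j,j+1}\bigl|\tfrac{x_i-x_{j+1}}{x_i-x_j}\bigr|^{\vartheta_\alpha(i,j)}\cdot F_{\alpha/\wedge_j,\beta/\wedge_j}$, pairing the up- and down-indices of $\alpha$ in $(j+1,n]$ via the increasing bijection $\xi$ (Lemma~\ref{lem::crossbound} for each pair, Lemma~\ref{lem::valuemartaux} only for the pair ending at $n$, giving~\eqref{eqn::prodzero}), and then proving separately the uniform bound~\eqref{eqn::prodfinite} on $F_{\alpha/\wedge_j,\beta/\wedge_j}$ along the curve; that last step needs the additional estimates~\eqref{eqn::bounded} and the analogue of~\eqref{eqn::1} at $x_n$, plus a case analysis on the links of $\beta/\wedge_j$ between the index sets $A$ and $B$, in which the leftover quantity $S_t$ is $\asymp 1$ in one case and tends to $0$ in the other --- showing that the remaining factors are not simply $O(1)$ term by term, which is why your "every remaining factor is $O(1)$" plan cannot be carried out as stated. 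Until this combinatorial pairing and boundedness argument is supplied, the proof is incomplete.
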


\begin{proof}
We may assume $j+1<n$. The other case can be proved similarly. 
By definition, we have
\[F_{\alpha,\beta}(x_{1},\ldots,x_{2N})=\prod_{\substack{1\le i\le 2N\\ i\neq j,j+1}}\left|\frac{x_{i}-x_{j+1}}{x_{i}-x_{j}}\right|^{\vartheta_{\alpha}(i,j)}F_{\alpha/\wedge_{j},\beta/\wedge_{j}}(x_{1},\ldots,x_{j-1},x_{j+2},\ldots,x_{2N}).\]
To get the conclusion, we will prove the following two estimates: 
\begin{align}\label{eqn::prodzero}
\lim_{t\to T}\prod_{\substack{1\le i\le 2N\\ i\neq j,j+1}}\left|\frac{g_{t}(x_{i})-g_{t}(x_{j+1})}{g_{t}(x_{i})-W_{t}}\right|^{\vartheta_{\alpha}(i,j)}=0, 
\end{align}
and 
\begin{align}\label{eqn::prodfinite}
\sup_{0\le t\le T}F_{\alpha/\wedge_{j},\beta/\wedge_{j}}(g_{t}(x_{1}),\ldots,g_{t}(x_{j-1}),g_{t}(x_{j+2}),\ldots,g_{t}(x_{2N}))<\infty. 
\end{align}

Suppose $\alpha=\{\{a_1, b_1\}, \ldots, \{a_N, b_N\}\}$ is ordered as in~\eqref{eqn::LPtoDP_order}. The number of elements in two sets of indexes $A=\{i:j+1<i\le n, i\in\{a_{1},\ldots,a_{N}\}\}$ and $B=\{i:j+1<i\le n, i\in\{b_{1},\ldots,b_{N}\}\}$ are equal. Note that $n\in B$. We choose the increasing bijection $\xi: A\to B$ and suppose $\xi(i_{0})=n$.

We first show~\eqref{eqn::prodzero}. We write
\begin{align*}
\prod_{\substack{1\le i\le 2N\\ i\neq j,j+1}}\left|\frac{g_{t}(x_{i})-g_{t}(x_{j+1})}{g_{t}(x_{i})-W_{t}}\right|^{\vartheta_{\alpha}(i,j)}=&\prod_{i<j\,\text{or }i>n}\left|\frac{g_{t}(x_{i})-g_{t}(x_{j+1})}{g_{t}(x_{i})-W_{t}}\right|^{\vartheta_{\alpha}(i,j)}\\
&\times\prod_{\substack{i\in A\\i\neq i_{0}}}\left(\left|\frac{g_{t}(x_{i})-g_{t}(x_{j+1})}{g_{t}(x_{i})-W_{t}}\right|\left|\frac{g_{t}(x_{\xi(i)})-W_{t}}{g_{t}(x_{\xi(i)})-g_{t}(x_{j+1})}\right|\right)\\ 
&\times\left|\frac{g_{t}(x_{i_{0}})-g_{t}(x_{j+1})}{g_{t}(x_{i_{0}})-W_{t}}\right|\left|\frac{g_{t}(x_{n})-W_{t}}{g_{t}(x_{n})-g_{t}(x_{j+1})}\right|
\end{align*}
By Lemma~\ref{lem::valuemartaux}, we have 
\begin{align}\label{eqn::vanishterm}
\lim_{t\to T}\left|\frac{g_{t}(x_{i_{0}})-g_{t}(x_{j+1})}{g_{t}(x_{i_{0}})-W_{t}}\right|\left|\frac{g_{t}(x_{n})-W_{t}}{g_{t}(x_{n})-g_{t}(x_{j+1})}\right|= 0.
\end{align}
By Lemma~\ref{lem::crossbound}, there exist $C_{1}$,$C_{2}>0$,  which only depend on $\eta[0,T]$, such that for any $i\in A$ with $\xi(i)\neq n$, we have for all $t\in [0,T]$, 
\begin{align}\label{eqn::crossingratio}
C_{1}\le\left|\frac{g_{t}(x_{i})-g_{t}(x_{j+1})}{g_{t}(x_{i})-W_{t}}\right|\left|\frac{g_{t}(x_{\xi(i)})-W_{t}}{g_{t}(x_{\xi(i)})-g_{t}(x_{j+1})}\right|\le C_{2}.
\end{align}
For $i\notin A\cup B$, 
\[\lim_{t\to T}\frac{g_{t}(x_{i})-g_{t}(x_{j+1})}{g_{t}(x_{i})-W_{t}}= \frac{g_{T}(x_{i})-g_{T}(x_{j+1})}{g_{T}(x_{i})-W_{T}}.\]
Combining with~\eqref{eqn::crossingratio} and~\eqref{eqn::vanishterm}, we obtain~\eqref{eqn::prodzero}.

Next, we prove~\eqref{eqn::prodfinite}. We write
\begin{align*}
F_{\alpha/\wedge_{j},\beta/\wedge_{j}}(g_{t}(x_{1}),\ldots,g_{t}(x_{j-1}),g_{t}(x_{j+2}),\ldots,g_{t}(x_{2N}))=\frac{\prod_{\substack{\{a_{i},b_{i}\}\in\beta/\wedge_{j} \\ a_{i}\notin A\ \text{or}\ b_{i}\notin B}}\left(g_{t}(x_{b_{i}})-g_{t}(x_{a_{i}})\right)^{-1}}{\prod_{\substack{i\notin A\cup B\\ \text{or}\ k\notin A\cup B}}|g_{t}(x_{k})-g_{t}(x_{i})|^{\vartheta_{\alpha}(i,k)}}\times S_{t},
\end{align*}
where 
\[S_{t}=\frac{\prod_{\substack{\{a_{i},b_{i}\}\in\beta/\wedge_{j} \\ a_{i}\in A\ \text{and}\ b_{i}\in B}}\left(g_{t}(x_{b_{i}})-g_{t}(x_{a_{i}})\right)^{-1}}{\prod_{\substack{i\in A\cup B\setminus\{n\}\\ \text{and}\ k\in A\cup B\setminus\{n\}}}|g_{t}(x_{k})-g_{t}(x_{i})|^{\vartheta_{\alpha}(i,k)}\prod\limits_{i\in A\cup B\setminus\{n\}}|g_{t}(x_{n})-g_{t}(x_{i})|^{\vartheta_{\alpha}(i,n)}}.\]
In this decomposition, we have 
\[\frac{\prod_{\substack{\{a_{i},b_{i}\}\in\beta/\wedge_{j} \\ a_{i}\notin A\ \text{or}\ b_{i}\notin B}}\left(g_{t}(x_{b_{i}})-g_{t}(x_{a_{i}})\right)^{-1}}{\prod_{\substack{i\notin A\cup B\\ \text{or}\ k\notin A\cup B}}|g_{t}(x_{k})-g_{t}(x_{i})|^{\vartheta_{\alpha}(i,k)}}\asymp 1,\]
because both the numerator and the denominator converge to a bounded and nonzero quantity as $t\to T$. 
Here the notation $\asymp$ is defined in the same way as in the proof of Lemma~\ref{lem::crossbound}. By~\eqref{eqn::bounded}, for distinct $i, k\in A\cup B\setminus\{n\}$, we have
\[|g_{t}(x_{k})-g_{t}(x_{i})|\asymp g_{t}(x_{j+2})-g_{t}(x_{j+1}).\]
By the same method as in the proof of~\eqref{eqn::1}, for $i\in A\cup B\setminus\{n\}$, we have
\[\lim_{t\to T}\frac{g_{t}(x_{n})-g_{t}(x_{i})}{g_{t}(x_{n})-g_{t}(x_{n-1})}=1.\]
Thus we have 
\[S_{t}\asymp \prod_{\substack{\{a_{i},b_{i}\}\in\beta/\wedge_{j} \\ a_{i}\in A\ \text{and}\ b_{i}\in B}}\left(g_{t}(x_{b_{i}})-g_{t}(x_{a_{i}})\right)^{-1}(g_{t}(x_{j+2})-g_{t}(x_{j+1}))^{\#A-1}(g_{t}(x_{n})-g_{t}(x_{n-1})).\]
When there is $a\in A$ such that $\{a,n\}\in\beta/\wedge_{j}$, and $\#A-1$ pairs $\{a_{i},b_{i}\}\in\beta/\wedge_{j}$ such that $a_{i}\in A\ \text{and}\ b_{i}\in B$, we have $S_{t}\asymp 1$. Otherwise, we have $\lim_{t\to T}S_{t}=0$.
This gives~\eqref{eqn::prodfinite} and completes the proof. 
\end{proof}

\section{Metric graph GFF and first passage sets}
\label{sec::mgfffps}
In this section, we first introduce discrete GFF and metric graph GFF in Section~\ref{subsec::dGFFmGFF}, and then we introduce first passage set in Section~\ref{subsec::fps}. In Section~\ref{subsec::mgffcvg}, we show that the crossing probabilities in metric graph GFF converges to the probability of certain connection probabilities in continuum GFF, see Proposition~\ref{prop::mgffconvergence}. This gives the first half of the proof of Theorem~\ref{thm::main}. In order to calculate the desired connection probabilities in continuum GFF, we use Theorem~\ref{thm::GFFconnectionproba} and a result about asymptotics of pure partition functions---Proposition~\ref{prop::purepartitionfusionall}. Section~\ref{subsec::mgffasy} proves Proposition~\ref{prop::purepartitionfusionall} and Proposition~\ref{prop::mGFFpdecov}, and it is quite independent of the rest of the section. Finally, we complete the proof of Theorem~\ref{thm::main} in Section~\ref{subsec::mgfffinal}.

\subsection{Discrete GFF and metric graph GFF}
\label{subsec::dGFFmGFF}

In this section, we review basic definition and properties of discerte GFF and metric graph GFF. We refer to~\cite{SchrammSheffieldDiscreteGFF, AruLupuSepulvedaFPSGFFCVGISO} for details. 
Suppose $\LG=(V,E)$ is a connected planar graph, and $\partial\LG$ is a given subset of $V$ which we call the boundary of $\LG$. 
We equip each edge $e=\{x,y\}$ with conductance $C(e)=C(x,y)>0$. 
Let $\Delta$ be the discrete Laplacian on $\LG$:
\[(\Delta f)(x)=\sum_{y\sim x}C(x,y)(f(y)-f(x)),\, \forall x\in V\setminus \partial \LG.\]
The discrete Green's function $G_{\LG}$ is the inverse of $-\Delta$ with zero-boundary condition on $\partial \LG$. The discrete GFF is the centered Gaussian process $\left(\Gamma^{\LG}(v): v\in V\right)$ with covariance given by Green's function: 
\[\E\left[\Gamma^{\LG}(x)\Gamma^{\LG}(y)\right]=G_{\LG}(x,y),\quad \forall x,y\in V.\]

\medbreak
Suppose $\LG=(V,E)$ is a connected planar graph with boundary $\partial\LG$ and conductance $(C(e), e\in E)$. For each $e\in E$, we view it as a line segment in the plane, and for every $x',y'\in e$, we define\footnote{Here we use the normalization in~\cite{AruLupuSepulvedaFPSGFFCVGISO} which is distinct from the one in~\cite{LupuFreeField}.} 
\[m([x',y'])=\frac{1}{C(e)}\frac{|x'-y'|}{|x-y|}.\] 
This defines a length measure $dm$ on $\LG$. We call $(\LG,dm)$ metric graph of $\LG$ and we denote it by $\tilde{\LG}$. 

The metric graph GFF $\Gamma^{\tilde\LG}$ can be constructed as follows, see~\cite{LupuFreeField}.
First, we sample the discrete GFF $\left(\Gamma^{\LG}(v): v\in V\right)$. Then, conditional on $\left(\Gamma^{\LG}(v): v\in V\right)$, for each $e=\{x,y\}\in E$, we sample an independent Brownian bridge with length $m([x,y])$ and two terminal values $\Gamma^{\LG}(x)$ and $\Gamma^{\LG}(y)$. This defines the metric graph GFF with zero-boundary condition and we denote it by $\left(\Gamma^{\tilde\LG}(z): z\in \tilde\LG\right)$. Given a function $u:\partial \LG\to \R$, we choose the discrete harmonic extension of $u$ to $V\setminus \partial \LG$ and then extend it inside each edge by linear interpolation. We still denote this function by $u$ and view it as the harmonic function on the metric graph. We call $\Gamma^{\tilde\LG}+u$ the metric graph GFF with boundary data $u$.

\subsection{First passage sets}
\label{subsec::fps}

In this section, we introduce first passage sets for metric graph GFF. Suppose $\Gamma^{\tilde\LG}+u$ is the metric graph GFF with boundary data $u$. For every $a\in \R$, the first passage set above $-a$ is defined by 
\[\tilde{\A}^{u}_{-a}:=\{x\in \tilde\LG\,|\,\exists \text{ a continous path } \gamma \text{ from } x \text{ to }\partial\LG \text{ in }\tilde\LG \text{ such that } \Gamma^{\tilde\LG}+u\ge -a \text{ along }\gamma\}.\]
Note that, conditional on $\tilde{\A}^{u}_{-a}$, the closure of $\tilde\LG\setminus\tilde{\A}^{u}_{-a}$ is also a metric graph with length measure inherited from $\tilde\LG$. According to~\cite[Proposition 2.1]{AruLupuSepulvedaFPSGFFCVGISO}, metric graph GFF satisfies the following space Markov property: 
\[\Gamma^{\tilde\LG}=\Gamma^{\tilde\LG}_{\tilde{\A}^{u}_{-a}}+\Gamma^{{\tilde\LG},\tilde{\A}^{u}_{-a}},\]
where $\Gamma^{{\tilde\LG},\tilde{\A}^{u}_{-a}}$ is the metric graph GFF with zero-boundary condition on the closure of $\tilde\LG\setminus\tilde{\A}^{u}_{-a}$ conditional on $\tilde{\A}^{u}_{-a}$, and $\Gamma^{\tilde\LG}_{\tilde{\A}^{u}_{-a}}$ is defined as follows: it is $\Gamma^{\tilde\LG}$ on $\tilde{\A}^{u}_{-a}$ and it is the harmonic function with boundary value given by $\Gamma^{\tilde\LG}$ on $\tilde\LG\setminus\tilde{\A}^{u}_{-a}$.

We also need the following description of first passage set by clusters of loops and excursions. The Brownian loop measure and Brownian excursion measure are conformally invariant measures on Brownian paths in the plane. In this article, we do not need the precise definition of these measures, so we content ourselves with referring their definition to~\cite[Section~2.2]{AruLupuSepulvedaFPSGFFCVGISO}. 
We denote by $\mu^{\tilde\LG}_{\text{loop}}$ the Brownian loop measure on $\tilde{\LG}$. 
Suppose $u$ is non-negative, and we denote by $\mu_{\text{exc}}^{\tilde\LG,u}$ the Brownian excursion measure on $\tilde{\LG}$ with boundary data $u$. 
We sample Poisson point process with intensity measure $\frac{1}{2}\mu^{\tilde\LG}_{\text{loop}}$ , and denote it by $\LL^{\tilde\LG}_{1/2}$. We sample an independent Poisson point process with intensity measure $\mu_{\text{exc}}^{\tilde\LG,u}$ and denote it by $\Xi^{\tilde\LG}_{u}$. We denote by $\tilde\LA(\LL^{\tilde\LG}_{1/2}, \Xi^{\tilde\LG}_{u})$ the closure of union of clusters formed by loops and excursions that contain at least one excursion connected to $\partial\LG$. As shown in~\cite[Proposition~2.5]{AruLupuSepulvedaFPSGFFCVGISO}, the set $\tilde\LA(\LL^{\tilde\LG}_{1/2}, \Xi^{\tilde\LG}_{u})$ has the same law as the first passage set $\tilde{\A}^{u}_{0}$.

\medbreak

Next, we introduce the first passage set for continuum GFF. To this end, we first introduce local set. Suppose $\Omega\subsetneq\C$ is a simply connected domain and let $\Gamma$ be a continuum GFF on $\Omega$ with zero-boundary condition. We call a random closed set $A\subset \overline\Omega$ is a local set of $\Gamma$, if $\Gamma=\Gamma_{A}+\Gamma^{A}$, where $\Gamma_{A}$ and $\Gamma^{A}$ are two random distributions such that $\Gamma_{A}$ is harmonic in $\Omega\setminus A$ and, conditional on $(A, \Gamma_{A})$, the function $\Gamma^{A}$ is the GFF with zero-boundary condition in $\Omega\setminus A$. Suppose $h_{A}$ is defined as follows: it is $\Gamma_{A}$ on $\Omega\setminus A$ and it is $0$ on $A$.  Then we have the following description of the first passage set.
\begin{theorem}\label{thm::description} 
Suppose $\Omega\subsetneq\C$ is a simply connected domain and let $\Gamma$ be a continuum GFF on $\Omega$ with zero-boundary condition. Suppose $u$ is a bounded harmonic function with piecewise constant boundary data.\footnote{Throughout the article, by piecewise constant boundary data, we mean that the boundary data is piecewise constant and it changes only finitely many times.}
The first passage set $\A_{-a}^{u}$ is the local set of $\Gamma$ containing $\partial\Omega$ with the following two properties:
\begin{itemize}
\item 
The function $h_{\A^{u}_{-a}}+u$ is harmonic in $\Omega\setminus \A^{u}_{-a}$ such that it equals $-a$ on $\partial\A^{u}_{-a}\setminus \partial\Omega$ and it equals $u$ on $\partial(\Omega\setminus\A^{u}_{-a})\cap\partial\Omega$. Moreover, $h_{\A^{u}_{-a}}+u\le -a$.
\item
We have $\Gamma_{\A^{u}_{-a}}-h_{\A^{u}_{-a}}\ge0$. I.e. for any positive smooth function $f$ with compact support, we have $(\Gamma_{\A^{u}_{-a}}-h_{\A^{u}_{-a}},f)\ge0$.
\end{itemize}
For all $a\ge0$, the first passage set $\A_{-a}^{u}$ exists. Moreover, the set $\A_{-a}^{u}$ is the unique local set which satisfies the above two properties and is measurable with respect to $\Gamma$. 
\end{theorem}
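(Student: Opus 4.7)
The plan is standard for first passage sets of the continuum GFF: establish existence by taking a scaling limit of the metric graph first passage sets described in Section~\ref{subsec::fps}, and establish uniqueness by a local-set comparison argument using the two characterizing properties. Throughout, the $\Gamma$-measurability hypothesis ensures we are comparing deterministic functionals of the same field, ruling out coupling ambiguities.

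For existence, I would approximate $\Omega$ by metric graphs $\tilde\LG_n$ of mesh $\delta_n\to 0$ with boundary data $u_n$ approximating $u$, and consider the metric graph first passage sets $\tilde{\A}_n:=\tilde{\A}_{-a}^{u_n}$. Three ingredients combine: (i) the metric graph GFF $\Gamma^{\tilde\LG_n}+u_n$ converges in distribution to $\Gamma+u$; (ii) the closed sets $\tilde{\A}_n$ are tight in Hausdorff topology on $\overline\Omega$, so extraction of a subsequence yields a limit $\A$; and (iii) the loop-soup-plus-excursion description $\tilde{\A}_n=\tilde\LA(\LL^{\tilde\LG_n}_{1/2},\Xi^{\tilde\LG_n}_{u_n})$ recalled from \cite[Proposition~2.5]{AruLupuSepulvedaFPSGFFCVGISO} couples naturally with a continuum Brownian loop soup of intensity $1/2$ and the continuum excursion measure attached to $\partial\Omega$, so the whole coupling passes to a joint limit $(\Gamma,\A,h_\A,\Gamma_\A)$. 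One then verifies in the limit that $\A$ is a local set of $\Gamma$, that $h_\A+u$ is harmonic in $\Omega\setminus\A$ with the stated boundary values and the global upper bound $-a$, and that $\Gamma_\A-h_\A\ge 0$ as a distribution. Each of these follows from the corresponding discrete statement for $\tilde{\A}_n$ together with the appropriate mode of convergence (uniform-on-compacts for the harmonic extension and weak convergence of distributions for the non-negative excess).

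For uniqueness, suppose $A_1$ and $A_2$ are two local sets of $\Gamma$, both measurable with respect to $\Gamma$ and both satisfying the two stated properties. By the standard fact that the union of two local sets is local, $A:=A_1\cup A_2$ is again a local set. Writing $h_i:=h_{A_i}$ and $h:=h_A$, the conditional expectation identities $\E[\Gamma_{A_i}\mid A]=h$ on $\Omega\setminus A$ combined with the harmonicity of each $h_i+u$ in $\Omega\setminus A\subset \Omega\setminus A_i$, the common boundary value $u$ on $\partial\Omega$, and the uniform upper bound $h_i+u\le -a$, give $h_1+u=h_2+u=h+u$ on $\Omega\setminus A$ by a maximum-principle comparison. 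The positivity $\Gamma_{A_i}-h_i\ge 0$ then characterizes $A_i$ as the smallest local set on which $\Gamma$ realizes the prescribed boundary value $-a$ with a non-negative excess; since this characterization is applied to the same field $\Gamma$, it forces $A_1=A_2$.

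The main obstacle is the existence direction, in particular the joint convergence of $\tilde{\A}_n$, $h_{\tilde{\A}_n}$, and $\Gamma_{\tilde{\A}_n}-h_{\tilde{\A}_n}$. The cleanest route is the metric graph realization of the GFF via a Brownian loop soup of intensity $1/2$ and boundary excursions, which identifies $\tilde{\A}_n$ with the cluster attached to $\partial\LG_n$ and reduces the problem to the well-studied scaling limits of these Brownian gases in simply connected domains. This is precisely the strategy of~\cite{AruLupuSepulvedaFPSGFFCVGISO}; the statement of Theorem~\ref{thm::description} under our piecewise-constant boundary-data assumption is a specialization of their main existence/uniqueness theorem, and in the final writeup one may simply cite that result after checking that the present boundary data meets their hypotheses.
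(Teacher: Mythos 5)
Your proposal ends exactly where the paper does: the paper's entire proof of Theorem~\ref{thm::description} is a citation to \cite[Theorem~3.5]{AruLupuSepulvedaFPSGFFCVGISO}, and your closing recommendation to simply cite that result (after checking that piecewise-constant boundary data satisfies its hypotheses) is the same approach. The preceding sketch of existence via metric graph approximation with the loop/excursion description and uniqueness via a local-set comparison is a fair high-level account of how the cited theorem is established, but none of it is carried out in the paper itself.
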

\begin{proof}
See~\cite[Theorem~3.5]{AruLupuSepulvedaFPSGFFCVGISO}.
\end{proof}

\medbreak
Now, we are ready to state the convergence of the first passage set of the metric graph GFF to the first passage set of the continuum GFF.
Fix a bounded simply connected  domain $\Omega$ such that $\Omega\subset [-C, C]^2$ for some $C>0$. Suppose $\{\Omega^{\delta}\}_{\delta>0}$ is a sequence of simply connected domains such that $\Omega^{\delta}\subset [-C, C]^2$ for all $\delta>0$. Suppose $\Omega^{\delta}$ converges to $\Omega$ as $\delta\to 0$ in the following sense :
\[[-C, C]^{2}\setminus\Omega^{\delta} \text{ converges to } [-C, C]^{2}\setminus\Omega \text{ in Hausdorff metric}.\]
We equip the edge $e$ of the graph $\delta\Z^{2}$ with $C(e)=1$ and denote by $\tilde{\delta\Z^{2}}$ the corresponding metric graph. We define $\tilde\Omega^{\delta}$ to be the closure of $\Omega^{\delta}\cap\tilde{\delta\Z^{2}}$. It is also a metric graph with metric inherited from $\tilde{\delta\Z^{2}}$. We define its boundary by $\partial\tilde\Omega^{\delta}:=\tilde\Omega^{\delta}\cap\partial\Omega^{\delta}$. We have the following setup for the convergence. 
\begin{itemize}
\item Suppose $\Gamma$ is the continuum GFF on $\Omega$ and $\tilde\Gamma^{\delta}$ is the metric grpah GFF on $\tilde\Omega^{\delta}$ with zero-boundary condition. We extend $\Gamma$ to $(-C, C)^2$ such that it is zero outside $\Omega$, and we still denote the extension by $\Gamma$. 
We define $\hat{\Gamma}^{\delta}$ on $(-C, C)^2$ as follows: it equals $\tilde\Gamma^{\delta}$ on $\tilde\Omega^{\delta}$ and it is harmonic in $(-C, C)^2\setminus\tilde{\Omega}^{\delta}$ which equals zero along $\partial (-C, C)^2$. 
\item Suppose $u$ is a harmonic function on $\Omega$ with piecewise constant boundary data and $u^{\delta}$ is a harmonic function on $\tilde{\Omega}^\delta$ for every $\delta>0$ such that $u^{\delta}$ converges to $u$ uniformly as $\delta\to 0$.
\item For $a\in \R$, suppose $\A^{u}_{-a}$ is the first passage set of $\Gamma$ on $\Omega$ and $\tilde{\A}^{u^{\delta}}_{-a}$ is the first passage set of $\tilde\Gamma^{\delta}$ on $\tilde\Omega^{\delta}$. We extend $\Gamma_{\A^u_{-a}}$ to $(-C, C)^2$ such that it is zero outside $\Omega$, and we still denote the extension by $\Gamma_{\A^u_{-a}}$. We define $\hat{\Gamma}^{\delta}_{\tilde{\A}_{-a}^{u^{\delta}}}$ on $(-C, C)^2$ as follows: it equals $\tilde{\Gamma}^{\delta}_{\tilde\A^{u^{\delta}}_{-a}}$ on $\tilde\Omega^{\delta}$ and it is harmonic in $(-C, C)^2\setminus\tilde{\Omega}^{\delta}$ which equals zero along $\partial (-C, C)^2$. 
\end{itemize}

\begin{proposition}\label{prop::mgffconvergence}
We have the following convergence in law : 
\[\left(\hat\Gamma^{\delta},\hat{\Gamma}^{\delta}_{\tilde\A^{u^{\delta}}_{-a}}, \tilde\A^{u^{\delta}}_{-a}\right)\to \left(\Gamma, \Gamma_{\A^{u}_{-a}}, \overline{\A^{u}_{-a}\cap\Omega}\right), \quad \text{as }\delta\to 0. \]
Furthermore, if we couple $\{\hat\Gamma^{\delta}\}_{\delta>0}$ and $\Gamma$ together such that $\hat\Gamma^{\delta}\to\Gamma$ in probability as distributions on $[-C, C]^{2}$, then $(\hat\Gamma^{\delta}, \tilde\A^{u^{\delta}}_{-a})\to (\Gamma,\overline{\A^{u}_{-a}\cap\Omega})$ in probability. 
\end{proposition}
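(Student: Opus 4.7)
The plan is to adapt the strategy of Aru--Lupu--Sep\'ulveda for the convergence of first passage sets, accommodating the varying-domain setup $\Omega^\delta\to\Omega$ prescribed by Hausdorff convergence of the complements in $[-C,C]^2$. I would prove the convergence in law first, and then deduce the ``in probability'' statement through a Skorokhod coupling combined with the fact that $\overline{\A^u_{-a}\cap\Omega}$ is measurable with respect to $\Gamma$ (from Theorem~\ref{thm::description}).

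First I would establish the marginal convergence $\hat\Gamma^\delta\to\Gamma$ in distribution on $[-C,C]^2$. This reduces to showing convergence of Green's functions: the discrete Green's function on $\Omega^\delta\cap\delta\Z^2$, restricted to interior points, converges uniformly on compacts of $\Omega$ to the continuum Green's function of $\Omega$, which is classical once $[-C,C]^2\setminus\Omega^\delta\to[-C,C]^2\setminus\Omega$ in Hausdorff metric and $\Omega$ is regular for Brownian motion (and interior points of $\Omega$ are eventually interior to $\Omega^\delta$). The metric-graph interpolation adds independent Brownian bridges of vanishing length on each edge, which contribute negligibly in the $H^{-1}$-type topology.

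Next I would show joint tightness of $(\hat\Gamma^\delta,\hat\Gamma^\delta_{\tilde\A^{u^\delta}_{-a}},\tilde\A^{u^\delta}_{-a})$. Tightness of the first component is handled above. Tightness of the last component is automatic in the Hausdorff topology on closed subsets of $[-C,C]^2$. For the middle component, I would use that $\hat\Gamma^\delta_{\tilde\A^{u^\delta}_{-a}}+u^\delta$ is harmonic on the complement of $\tilde\A^{u^\delta}_{-a}$ and bounded above by $-a$ there by the metric-graph analogue of the first property in Theorem~\ref{thm::description}, together with integrability bounds coming from the Brownian loop/excursion representation of $\tilde\A^{u^\delta}_{-a}$ recalled in Section~\ref{subsec::fps}. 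Let $(\Gamma,\Gamma^\star,A^\star)$ be any subsequential limit.

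Now I would identify the limit: $A^\star=\overline{\A^u_{-a}\cap\Omega}$ and $\Gamma^\star=\Gamma_{\A^u_{-a}}$. The strategy is to verify that $A^\star$ is a local set of $\Gamma$ with $\Gamma^\star=\Gamma_{A^\star}$, and that it satisfies the two characterizing properties in Theorem~\ref{thm::description}; uniqueness then forces $A^\star=\overline{\A^u_{-a}\cap\Omega}$ and $\Gamma^\star=\Gamma_{\A^u_{-a}}$. Locality and the identification of $\Gamma^\star$ with $\Gamma_{A^\star}$ pass to the limit from the metric-graph Markov property $\Gamma^{\tilde\LG}=\Gamma^{\tilde\LG}_{\tilde\A}+\Gamma^{\tilde\LG,\tilde\A}$. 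The harmonicity and correct boundary values of $h_{A^\star}+u$, together with the upper bound $h_{A^\star}+u\le -a$, follow by passing to the limit in the discrete identities, using the uniform convergence $u^\delta\to u$ and the convergence of harmonic functions on the complements. The positivity $\Gamma_{A^\star}-h_{A^\star}\ge 0$ (in the distributional sense) follows from the fact that, at the discrete level, $\Gamma^{\tilde\LG}+a\ge 0$ on $\tilde\A^{u^\delta}_{-a}$ by construction of the first passage set, combined with a positivity-preserving limit argument for test functions supported on sets that intersect the limit $A^\star$ in controlled ways.

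The main obstacle I expect is the distributional inequality $\Gamma_{A^\star}-h_{A^\star}\ge 0$, since this requires passing to the limit a pointwise-on-$\tilde\A^{u^\delta}_{-a}$ bound into a distributional inequality on $A^\star$, while the shape of $\tilde\A^{u^\delta}_{-a}$ fluctuates and the extension of $\hat\Gamma^\delta_{\tilde\A^{u^\delta}_{-a}}$ to $(-C,C)^2\setminus\tilde\Omega^\delta$ is by harmonic extension rather than by the metric-graph field. I would handle this by working with a countable dense family of non-negative test functions, using the Brownian loop/excursion representation of $\tilde\A^{u^\delta}_{-a}$ to write $\Gamma^{\tilde\LG}_{\tilde\A^{u^\delta}_{-a}}-h_{\tilde\A^{u^\delta}_{-a}}$ as a sum of non-negative contributions from loops and excursions inside the clusters, and invoking the convergence of these Poisson point processes together with the uniform bound on $u^\delta$ to transfer positivity to the limit.
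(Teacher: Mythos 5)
Your outline is sound, but note that the paper does not prove this statement at all: its ``proof'' is a direct citation of \cite[Proposition~4.7 and Lemma~4.9]{AruLupuSepulvedaFPSGFFCVGISO}, so what you have written is essentially a reconstruction of the Aru--Lupu--Sep\'ulveda argument rather than an alternative to anything in this article. As a reconstruction it follows the right skeleton: marginal convergence of the fields, tightness of the triple, identification of any subsequential limit through the characterization of the first passage set in Theorem~\ref{thm::description} (locality, harmonicity of $h_{A^\star}+u$ with the correct boundary values and the bound $\le -a$, and the distributional positivity $\Gamma_{A^\star}-h_{A^\star}\ge 0$), and then the ``in probability'' statement from the fact that $\overline{\A^u_{-a}\cap\Omega}$ is a measurable function of $\Gamma$. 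Be aware, though, that several steps you state in one line are exactly the nontrivial content of the cited results: (i) passing locality and the identification $\Gamma^\star=\Gamma_{A^\star}$ to the limit is not automatic from the metric-graph Markov property alone --- one needs a criterion for limits of local sets (uniform control of the harmonic parts, as in ALS), and your tightness argument for the middle coordinate should be made quantitative there; (ii) the positivity transfer you flag as the main obstacle is indeed handled in ALS via convergence of the loop/excursion clusters, so your plan is consistent with theirs, but the ``positivity-preserving limit argument for test functions'' needs the cluster representation at the continuum level, not just at the discrete level; and (iii) the final deduction of convergence in probability from convergence in law plus measurability of the limit with respect to $\Gamma$ is precisely ALS Lemma~4.9 and deserves its own short argument (it uses that the limit is a deterministic functional of the first coordinate, which itself converges in probability). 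With those points spelled out, your proposal amounts to reproving the quoted results, which the paper deliberately avoids.
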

\begin{proof}
See~\cite[Proposition~4.7 and Lemma~4.9]{AruLupuSepulvedaFPSGFFCVGISO}. 
\end{proof}

\subsection{Convergence of the connection probability}
\label{subsec::mgffcvg}
Fix a bounded polygon $(\Omega; y_1, \ldots, y_{2N})$ and suppose $\left(\Omega^{\delta};y_{1}^{\delta},\ldots,y_{2N}^{\delta}\right)$ converges to $\left(\Omega;y_{1},\ldots,y_{2N}\right)$ as $\delta\to 0$ in the sense of~\eqref{eqn::topology}. We have the following setup. 
\begin{itemize}
\item Suppose $\tilde{\Gamma}^{\delta}$ is the zero-boundary metric graph GFF on $\tilde{\Omega}^{\delta}$ and let $u^{\delta}$ be the harmonic function with boundary data~\eqref{eqn::mgff_boundaryconditions}. Suppose $\Gamma$ is zero-boundary GFF on $\Omega$ and let $u$ be the harmonic function with the same boundary data. 
\item We call the first passage set above $0$ of $\tilde\Gamma^{\delta}+u^{\delta}$ the positive first passage set and we denote it by $\tilde{\A}^{\delta}$. We call the first passage set above $0$ of $-(\tilde\Gamma^{\delta}+u^{\delta})$ the negative first passage set and we denote it by $\tilde{\AB}^{\delta}$. Similarly, we can also define the positive first passage set and the negative first passage set for the continuum GFF in $\Omega$, and we denote them by $\A$ and $\AB$ respectively. 
\end{itemize}
Note that the frontier of these first passage sets is a collection of $2N$ curves connecting the $2N$ boundary points so that their end points form a planar $2N$-link pattern of $2N$ points with index valences $\varsigma=(2, \ldots, 2)$, see Figure~\ref{fig::mgfflinkpatterns}. We denote the link pattern by $\LA^{\delta}$ for metric graph GFF and by $\LA$ for cotinuum GFF. The goal of this section is to prove the following convergence. 
\begin{proposition}\label{prop::convergence}
Fix $N\ge 1$ and $\varsigma=(2, \ldots, 2)$ of length $2N$. 
For all $\hat{\alpha}\in\LP_{\varsigma}$, we have 
\[\lim_{\delta\to 0}\PP[\LA^{\delta}=\hat{\alpha}]=\PP[\LA=\hat{\alpha}].\]
\end{proposition}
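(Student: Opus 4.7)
My plan is to deduce Proposition~\ref{prop::convergence} from Proposition~\ref{prop::mgffconvergence} applied jointly to the positive and negative first passage sets, combined with an almost-sure continuity argument for the functional that extracts the planar link pattern from the pair of FPS.

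First, I would apply Proposition~\ref{prop::mgffconvergence} with threshold $a=0$ and harmonic extension of the boundary data~\eqref{eqn::mgff_boundaryconditions}, both to $\tilde\Gamma^\delta$ (which produces $\tilde\A^\delta$) and to $-\tilde\Gamma^\delta$ (which produces $\tilde\AB^\delta$, by symmetry of the metric-graph GFF). Using the ``convergence in probability'' version of Proposition~\ref{prop::mgffconvergence}, one can couple $\{\hat\Gamma^\delta\}_{\delta>0}$ together with $\Gamma$ so that $\hat\Gamma^\delta \to \Gamma$ in probability in the space of distributions on $[-C,C]^2$; along this coupling, both $\tilde\A^\delta$ and $\tilde\AB^\delta$ converge in probability, in the Hausdorff metric on $[-C,C]^2$, to the limits $\overline{\A \cap \Omega}$ and $\overline{\AB \cap \Omega}$, where $\A$ and $\AB$ are the positive and negative continuum FPS of $\Gamma+u$ in $\Omega$.

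Second, I would observe that the link pattern $\LA^\delta \in \LP_\varsigma$ with $\varsigma=(2,\ldots,2)$ is a purely topological functional of the disjoint pair $(\tilde\A^\delta, \tilde\AB^\delta)$: the frontier of $\tilde\A^\delta \cup \tilde\AB^\delta$ inside $\overline{\Omega^\delta}$ decomposes into $2N$ arcs, each with both endpoints on $\partial\Omega^\delta$ among the marked points $\{y^\delta_i\}$, and $\LA^\delta$ records the resulting pairing. The same procedure defines a continuum link pattern $\LA$ from $(\A, \AB)$. To transfer convergence in probability of the sets to convergence of the discrete random variables $\LA^\delta$, I need continuity of this functional at $(\A,\AB)$ almost surely. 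Invoking Theorem~\ref{thm::description} together with the description of the frontier of a continuum FPS via $\SLE_4$-type level lines (started from the marked points with heights $\pm\lambda$ as in Section~\ref{subsec::gfflevellines}), I would establish that almost surely: (i) the frontier of $\A \cup \AB$ meets $\partial\Omega$ only at $y_1,\ldots, y_{2N}$, each with multiplicity exactly two; (ii) the $2N$ frontier arcs are simple and pairwise disjoint in $\Omega$, with no triple contact points; and (iii) distinct arcs are separated by positive distance on every compact subset of $\Omega$. Properties (i)--(iii) imply that on an event of full probability, Hausdorff convergence $(\tilde\A^\delta, \tilde\AB^\delta) \to (\A, \AB)$ forces $\LA^\delta = \LA$ for all $\delta$ sufficiently small.

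Granted (i)--(iii), one has $\one_{\{\LA^\delta = \hat\alpha\}} \to \one_{\{\LA = \hat\alpha\}}$ almost surely along the coupling, and bounded convergence yields $\PP[\LA^\delta = \hat\alpha] \to \PP[\LA = \hat\alpha]$, which is the statement of Proposition~\ref{prop::convergence}. The main obstacle is the almost-sure regularity of the continuum frontier asserted in (i)--(iii), in particular ruling out triple points and additional contacts of the frontier with $\partial\Omega$ beyond the prescribed marked points. I expect this to follow from the fact that each frontier arc is locally an $\SLE_4(\underline\rho^L;\underline\rho^R)$ trace for appropriate force points, so that two distinct arcs started from different boundary points can meet only at isolated points where the relevant force-point configurations force a termination, and similarly no three arcs can meet at a common interior point.
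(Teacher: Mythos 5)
Your first step is fine and matches the paper's starting point: via Proposition~\ref{prop::mgffconvergence} (applied to $\pm\tilde\Gamma^{\delta}$) and a Skorokhod-type coupling one may assume that $\tilde\A^{\delta}$ and $\tilde\AB^{\delta}$ converge almost surely in the Hausdorff metric to $\overline{\A\cap\Omega}$ and $\overline{\AB\cap\Omega}$. The gap is in the second step, where you assert that Hausdorff convergence together with a.s. regularity of the continuum frontier forces $\LA^{\delta}=\LA$ for small $\delta$. The pattern $\LA^{\delta}$ is a function of the \emph{discrete connectivity}: which marked points $y_j^{\delta}$ actually lie on which connected component of $\tilde\A^{\delta}\cup\bigl(\cup_j (y_{2j-1}^{\delta},y_{2j}^{\delta})\bigr)$, and similarly for $\tilde\AB^{\delta}$. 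Hausdorff convergence of the sets does not control this. Two scenarios are compatible with Hausdorff convergence and with any amount of niceness of the limiting arcs: (a) a discrete component comes Hausdorff-close to a marked point $y_j$ lying on the boundary of the limiting component without ever attaching to $y_j^{\delta}$, so a link present in the continuum is absent in the discrete pattern; (b) the limits of two \emph{distinct} discrete components touch each other, so that the continuum FPS component containing one boundary arc also contains another, while the discrete components remain separate. Your properties (i)--(iii) do not exclude either scenario (and as stated they are also not quite right: the $2N$ arcs share endpoints at the marked points, and their mutual avoidance in the interior is itself a statement requiring proof, not something that follows softly from ``local $\SLE_4(\rho)$ behaviour''). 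So the claimed continuity of the link-pattern functional under Hausdorff convergence is precisely the hard point, and as written the conclusion does not follow.

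This is exactly what the paper's Lemma~\ref{lem::convergence} (used together with Lemma~\ref{lem::construction}) supplies. There one tracks each connected component $\tilde R_i^{\delta_n}$, $\tilde S_i^{\delta_n}$ separately, shows the marked points on their boundaries stabilize, and rules out the two bad events above (the events $F_j$ and $F_{i,k}$ in the proof). The inputs are substantial: the level-line construction of $\A$ and $\AB$ (Lemma~\ref{lem::construction}, which also shows that the continuum frontier does form a pattern in $\LP_{\varsigma}$ and, e.g., that $\AB$ avoids the arcs $(y_{2i-1},y_{2i})$), Carath\'{e}odory convergence of the complementary domains, the spatial Markov property of the metric-graph FPS and its description by loop/excursion clusters, and crucially \cite[Corollary~4.12]{AruLupuSepulvedaFPSGFFCVGISO}, which guarantees that the limiting FPS in the relevant complementary domain almost surely does not touch the boundary arcs carrying the new zero boundary data. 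To salvage your route you would have to supply an argument of comparable strength for the connectivity transfer (for instance a metric-graph duality argument showing that loss of a discrete connection forces a crossing of the opposite sign whose Hausdorff limit contradicts the continuum picture), rather than treating it as a soft continuity statement.
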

To prove Proposition~\ref{prop::convergence}, we will give an explicit construction of $\A$ and $\AB$ in Lemma~\ref{lem::construction}. This construction indicates that the frontier of $\A$ and of $\AB$ forms a planar link pattern in $\LP_{\varsigma}$. Then, we prove Lemma~\ref{lem::convergence} which indicates that for any subsequence $\delta_n\to 0$ as $n\to\infty$, there exists a coupling such that the frontier of $\tilde\A^{\delta_{n}}$ and of $\tilde\AB^{\delta_{n}}$ converges to the frontier of $\A$ and of $\AB$ almost surely in Hausdorff metric. This indicates the proposition.

\begin{lemma}\label{lem::construction}
The frontier of $\A$ is the union of level lines of the continuum GFF $\Gamma+u$ starting from $y_{2j-1}$ with height $\lambda$ for $1\le j\le N$, the frontier of $\AB$ is the union of level lines of $\Gamma+u$ starting from $y_{2j-1}$ with height $-\lambda$ for $1\le j\le N$. 
\end{lemma}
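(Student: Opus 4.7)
The plan is to identify the frontier of the positive first passage set $\A$ with the union of the prescribed level lines by appealing to the uniqueness characterization of $\A$ in Theorem~\ref{thm::description}. The argument for $\AB$ is symmetric, applied either directly to $-(\Gamma+u)$ or to $\Gamma+u$ at height $-\lambda$. In what follows I focus on $\A$.

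Using Miller--Sheffield's theory of interacting level lines for the GFF $\Gamma + u$ with piecewise constant boundary data~\eqref{eqn::mgff_boundaryconditions}, I jointly couple the $N$ simple, non-crossing curves $\eta_1, \eta_3, \ldots, \eta_{2N-1}$, where $\eta_{2j-1}$ is the level line of $\Gamma+u$ starting at $y_{2j-1}$ at height $\lambda$. I then define $\LK$ to be the closure of the union of (i) the boundary arcs $(y_{2j-1}y_{2j})$ for $j=1,\ldots,N$, (ii) the level lines $\eta_{2j-1}$, and (iii) those connected components of $\Omega\setminus\bigcup_j\eta_{2j-1}$ whose closure contains at least one such arc. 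Each $\eta_{2j-1}$ is a local set for $\Gamma$ and finite unions of local sets are local, so $\LK$ is a local set containing $\partial\Omega$. The goal is to show that $\LK$ satisfies both bullets of Theorem~\ref{thm::description} with $a=0$; uniqueness then yields $\LK=\A$, and the frontier of $\A$ is $\bigcup_j\eta_{2j-1}$ by construction.

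The harmonic condition is direct from the level line coupling: the conditional boundary value of $\Gamma+u$ just to the left of each $\eta_{2j-1}$ (the side lying in $\Omega\setminus\LK$) equals $\lambda-\lambda=0$, while on $\partial\Omega\cap\partial(\Omega\setminus\LK)=\bigcup_j(y_{2j}y_{2j+1})$ one has $u=-2\lambda$. Hence $h_\LK+u$ is harmonic in $\Omega\setminus\LK$ with non-positive continuous boundary data ($0$ on $\partial\LK\setminus\partial\Omega$ and $-2\lambda$ on the remaining boundary), and the maximum principle gives $h_\LK+u\le 0$, matching the first bullet of Theorem~\ref{thm::description}.

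The main obstacle is the distributional positivity $\Gamma_\LK-h_\LK\ge 0$. The level line coupling also gives that the right side of each $\eta_{2j-1}$ carries conditional boundary value $\lambda+\lambda=2\lambda$, which matches $u=2\lambda$ on the adjacent arc $(y_{2j-1}y_{2j})$. Consequently, on each component of $\inte(\LK)$ the conditional harmonic part of $\Gamma+u$ equals the constant $2\lambda$, so the conditional harmonic part of $\Gamma$ jumps across $\partial\LK\setminus\partial\Omega$ by a non-negative amount (specifically $2\lambda-u$ from inside versus $-u$ from outside, where $u$ is itself harmonic across the interface). Interpreting this jump distributionally, exactly as in the proof of~\cite[Theorem~3.5]{AruLupuSepulvedaFPSGFFCVGISO}, gives $\Gamma_\LK-h_\LK\ge 0$. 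It is precisely the choice of height $\lambda$ that makes the right-side boundary value $2\lambda$ of each level line agree with the original boundary datum $+2\lambda$ on the adjacent arc, so the conditional harmonic function in $\inte(\LK)$ is globally constant and the jump analysis is clean. The same procedure with height $-\lambda$ identifies $\AB$, completing the proof.
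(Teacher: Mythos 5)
There is a genuine gap, and it lies in the choice of the candidate set. Your $\LK$ is the \emph{filled} positive region: the level lines together with the entire connected components of $\Omega\setminus\bigcup_j\eta_{2j-1}$ adjacent to the positive arcs. But the positive first passage set $\A$ is almost surely a strict subset of this filled region: conditionally on the level lines, the field in each positive-side component $S_i$ is a GFF with boundary data $2\lambda$, and its first passage set above $0$ in $S_i$ has a non-empty complement (the ``holes'' where the conditional field is a zero-boundary GFF). Concretely, your verification of the second bullet of Theorem~\ref{thm::description} fails: for a local set $A$ one has $\Gamma_A=\Gamma$ on the interior of $A$, so $\Gamma_{\LK}-h_{\LK}$ restricted to $\mathrm{int}(\LK)=\bigcup_i S_i$ is the \emph{full} GFF on a non-empty open set, not merely the ``jump of the harmonic part across $\partial\LK$''. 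A GFF restricted to an open set is almost surely not a non-negative distribution (test it against a positive bump supported in $S_i$: conditionally it is Gaussian with mean $2\lambda\int f$ minus the $u$-contribution and positive variance, hence negative with positive probability), so the positivity requirement $\Gamma_{\LK}-h_{\LK}\ge 0$ is violated and uniqueness cannot be invoked. The true $\A$ can satisfy this bullet only because it has empty interior despite having positive Lebesgue measure, which is exactly why it is smaller than $\LK$.

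The paper's proof repairs precisely this point: the candidate is $A=(\bigcup_j\eta_j)\cup(\bigcup_i\A_{0,i}^{2\lambda})\cup\partial\HH$, where, conditionally on the level lines, $\A_{0,i}^{2\lambda}$ is the first passage set above $0$ of the conditional field in each $S_i$. Locality and the first bullet are checked as you do, but the positivity bullet is split into two parts: the level lines are thin local sets, giving \eqref{eqn::thin}, and the positivity \eqref{eqn::passage} of the field on each $\A_{0,i}^{2\lambda}$ is inherited from the metric-graph approximation via Proposition~\ref{prop::mgffconvergence}, where it is manifest. The statement about the frontier then follows because the extra sets $\A_{0,i}^{2\lambda}$ attach only to the positive arcs and the level lines, so the interfaces separating $\A$ from the components adjacent to the negative arcs are exactly $\bigcup_j\eta_{2j-1}$; with your $\LK$ this conclusion is immediate but rests on the false identity $\LK=\A$. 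Your reduction of $\AB$ to the same argument for $-(\Gamma+u)$ is fine, as is your computation of the boundary values $0$ and $2\lambda$ on the two sides of the height-$\lambda$ level lines.
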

\begin{proof}
We will prove the conclusion for $\A$, and the proof for $\AB$ is similar. By the conformal invariance of GFF, we may assume $\Omega=\HH$ and $y_1<\cdots<y_{2N}$. 
We will argue that the first passage set $\A$ can be constructed as follows. Let $\eta_j$ be the level line of $\Gamma+u$ starting from $y_{2j-1}$ with height $\lambda$ for $1\le j\le N$. 
Suppose $S_{1},\ldots, S_{r}$ are the different connected components of $\HH\setminus\ (\cup_{1\le j\le N}\eta_{j})$ which have $(y_{2j-1},y_{2j})$ on their boundary for some $1\le j\le N$. Note that $(\Gamma+u)|_{S_i}$ has boundary data $2\lambda$ along $\partial S_i$. Conditional on $\cup_{1\le j\le N}\eta_{j}$, we sample the first passage set above zero of $(\Gamma+u)|_{S_{i}}$ in each $S_{i}$, and we denote it by $\A_{0,i}^{2\lambda}$ for $1\le i\le r$. We will show that the union $A:=(\cup_{1\le j\le N}\eta_{j})\cup(\cup_{1\le i\le r}\A_{0,i}^{2\lambda})\cup \partial\HH$ has the same law as $\A$. 

First, we prove that $A$ is a local set. By construction, 
\begin{align*}
\Gamma&= \Gamma_{\cup_{1\le j\le N}\eta_{j}}+\Gamma^{\cup_{1\le j\le N}\eta_{j}}\\
&= \Gamma_{\cup_{1\le j\le N}\eta_{j}}+\sum_{i=1}^{r}\Gamma^{\cup_{1\le j\le N}\eta_{j}}|_{S_{i}}\\
&= \Gamma_{\cup_{1\le j\le N}\eta_{j}}+\sum_{i=1}^{r}\left(\Gamma^{\cup_{1\le j\le N}\eta_{j}}|_{S_{i}}\right)_{\A_{0,i}^{2\lambda}}+\sum_{i=1}^{r}\left(\Gamma^{\cup_{1\le j\le N}\eta_{j}}|_{S_{i}}\right)^{\A_{0,i}^{2\lambda}}.
\end{align*}
Note that $\Gamma_{A}:=\Gamma_{\cup_{1\le j\le N}\eta_{j}}+\sum_{i=1}^{r}\left(\Gamma^{\cup_{1\le j\le N}\eta_{j}}|_{S_{i}}\right)_{\A_{0,i}^{2\lambda}}$ is harmonic in $\HH\setminus A$. Conditional on $(A, \Gamma_{A})$, the function $\Gamma^{A}=\sum_{i=1}^{r}\left(\Gamma^{\cup_{1\le j\le N}\eta_{j}}|_{S_{i}}\right)^{\A_{0,i}^{2\lambda}}$ is the continuum GFF with zero-boundary condition in $\HH\setminus A$. This implies that $A$ is a local set.

Next, we check the two properties in Theorem~\ref{thm::description}. The first one is obvious by construction. For the second one, suppose $f$ is a positive smooth function with compact support in $\HH$, it suffices to prove
\begin{align}\label{eqn::thin}
(\Gamma_{\cup_{1\le j\le N}\eta_{j}},f)=(h_{\cup_{1\le j\le N}\eta_{j}}, f) 
\end{align}
and
\begin{align}\label{eqn::passage}
\left(\left(\Gamma^{\cup_{1\le j\le N}\eta_{j}}|_{S_{i}}\right)_{\A_{0,i}^{2\lambda}}+2\lambda \one_{S_{i}},f\right)\ge 0.
\end{align}
Eq.~\eqref{eqn::thin} is a consequence of properties of level lines of GFF. 
For~\eqref{eqn::passage}, consider the metric graph $\tilde S_{i}^{\delta}=S_{i}\cap\delta\tilde\Z^{2}$, we denote  by $\tilde\Gamma^{\delta}|_{\tilde S_{i}^{\delta}}$ the metric graph GFF with zero-boundary condition on $\tilde S_{i}^{\delta}$. Then by Proposition~\ref{prop::mgffconvergence}, we can couple $\left\{\left(\hat\Gamma^{\delta}|_{\tilde S_{i}^{\delta}}\right)_{\tilde\A_{0}^{2\lambda}}\right\}_{\delta>0}$ and $\left(\Gamma^{\cup_{1\le j\le N}\eta_{j}}|_{S_{i}}\right)_{\A_{0,i}^{2\lambda}}$ together such that 
\begin{align*}
\left(\left(\Gamma^{\cup_{1\le j\le N}\eta_{j}}|_{S_{i}}\right)_{\A_{0,i}^{2\lambda}}+2\lambda \one_{S_{i}},f\right)&=\left(\left(\Gamma^{\cup_{1\le j\le N}\eta_{j}}|_{S_{i}}\right)_{\A_{0,i}^{2\lambda}}+2\lambda \one_{S_{i}},f\one_{S_{i}}\right)\\&=\lim_{\delta\to 0}\left(\left(\hat{\Gamma}^{\delta}|_{\tilde S_{i}^{\delta}}\right)_{\tilde\A_{0}^{2\lambda}}+2\lambda\one_{S_{i}},f\one_{S_{i}}\right)\ge 0. 
\end{align*}
This gives~\eqref{eqn::passage}. Combining with~\eqref{eqn::thin} and Theorem~\ref{thm::description}, we see that $A$ has the same law as $\A$, and this completes the proof. 
\end{proof}

\begin{lemma}\label{lem::convergence} Suppose $\delta_{n}\to 0$ as $n\to\infty$. 
\begin{itemize}
\item Suppose $\left(\A_{1},\ldots,\A_{r}\right)$ are  different connected components of $\overline{\A\cap\Omega}$ and $\left(y_{i,1}\ldots,y_{i,k_{i}}\right)$ are the marked points on the boundary of $\A_{i}$ for each $1\le i\le r$; and suppose $\left(\AB_{1},\ldots,\AB_{s}\right)$ are  different connected components of $\overline{\AB\cap\Omega}$ and $\left(y_{j,1},\ldots,y_{j,l_{j}}\right)$ are the marked points on the boundary of $\AB_{j}$ for each $1\le j\le s$. 
\item Suppose $\left(\tilde\A^{\delta_{n}}_{1},\ldots,\tilde\A^{\delta_{n}}_{r_{n}}\right)$ are the connected components of $\tilde\A^{\delta_{n}}\cup\left(\cup_{1\le l\le N}\left(y^{\delta_{n}}_{2l-1},y^{\delta_{n}}_{2l}\right)\right)$; and suppose $\left(\tilde\AB^{\delta_{n}}_{1},\ldots,\tilde\AB^{\delta_{n}}_{s_{n}}\right)$ are the connected components of $\tilde\AB^{\delta_{n}}\cup\left(\cup_{1\le l\le N}\left(y^{\delta_{n}}_{2l},y^{\delta_{n}}_{2l+1}\right)\right)$.
\end{itemize}
Then there exists a coupling of $\{(\tilde\A^{\delta_{n}}, \tilde\AB^{\delta_{n}})\}_{n\ge 1}$ and $(\A,\AB)$ such that the following holds almost surely. 
\begin{itemize}
\item For $n$ large enough,  we have $r_{n}=r$ and $s_{n}=s$.
\item Moreover, we can reorder $\left(\tilde\A^{\delta_{n}}_{1},\ldots,\tilde\A^{\delta_{n}}_{r}\right)$ and we still denote them by $\left(\tilde\A^{\delta_{n}}_{1},\ldots,\tilde\A^{\delta_{n}}_{r}\right)$, such that $\left(y^{\delta_{n}}_{i,1},\ldots,y^{\delta_{n}}_{i,k_{i}}\right)$ are the marked points on the boundary of $\tilde\A^{\delta_{n}}_{i}$ for $1\le i\le r$. Similarly, we can reorder $\left(\tilde\AB^{\delta_{n}}_{1},\ldots,\tilde\AB^{\delta_{n}}_{s}\right)$ and we still denote them by $\left(\tilde\AB^{\delta_{n}}_{1},\ldots,\tilde\AB^{\delta_{n}}_{s}\right)$ such that $\left(y^{\delta_{n}}_{j,1},\ldots,y^{\delta_{n}}_{j,l_{j}}\right)$ are the marked points on the boundary of $\tilde\AB^{\delta_{n}}_{j}$ for $1\le j\le s$.
\item Furthermore, we have that $\tilde\A^{\delta_{n}}_{i}$ converges to $\A_{i}$ for each $1\le i\le r$ and $\tilde\AB^{\delta_{n}}_{j}$ converges to $\AB_{j}$ for each $1\le j\le s$ in Hausdorff metric.
\end{itemize}
  
\end{lemma}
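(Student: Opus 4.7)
The plan is to combine the Hausdorff-type convergence of first passage sets (Proposition~\ref{prop::mgffconvergence}) with the geometric description of $\A, \AB$ from Lemma~\ref{lem::construction}, identifying discrete components of $\tilde\A^{\delta_n}$ (resp.\ $\tilde\AB^{\delta_n}$) one-by-one with the continuum components. The first step is to extract an almost-surely convergent coupling: Proposition~\ref{prop::mgffconvergence} gives joint convergence in law of $(\hat\Gamma^{\delta_n}, \tilde\A^{\delta_n}) \to (\Gamma, \overline{\A \cap \Omega})$, and applying the same result to the reflected field $-\tilde\Gamma^{\delta_n} - u^{\delta_n}$ yields the analogous statement for $\tilde\AB^{\delta_n}$. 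Since $(\tilde\A^{\delta_n}, \tilde\AB^{\delta_n})$ is measurable with respect to $\tilde\Gamma^{\delta_n}$, we get joint convergence of this pair to $(\overline{\A \cap \Omega}, \overline{\AB \cap \Omega})$ in Hausdorff metric; applying Skorokhod's representation theorem then produces the desired coupling with almost sure convergence.

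Next, I would extract from Lemma~\ref{lem::construction} the key structural properties. Almost surely, the continuum components $\A_1, \ldots, \A_r$ and $\AB_1, \ldots, \AB_s$ are path-connected, pairwise disjoint closed subsets of $\overline\Omega$, with frontiers built from the $2N$ simple level-line curves of $\Gamma + u$. By compactness, $4\epsilon_0 := \min\{\dist(\A_i, \A_{i'}), \dist(\AB_j, \AB_{j'}) : i \ne i', \ j \ne j'\} > 0$ almost surely. Each $\A_i$ automatically contains at least one positive boundary arc $(y_{2l-1}, y_{2l})$ (since $u = 2\lambda > 0$ there), and dually for $\AB_j$. Moreover, from the combinatorics of the planar link pattern formed by the $2N$ level lines, between any two distinct continuum positive components sits a continuum negative component of macroscopic size, which will serve as a ``barrier''.

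On the almost sure convergence event, for $n$ large enough, $\tilde\A^{\delta_n}$ lies in the disjoint $\epsilon_0$-neighborhoods $N_i := (\A_i)_{\epsilon_0}$. I would set $\tilde\A^{\delta_n}_i := \tilde\A^{\delta_n} \cap N_i$, and use $y^{\delta_n}_{\cdot} \to y_{\cdot}$ to align the marked boundary arcs with the corresponding $\A_i$, yielding the candidate reordering. Each $\tilde\A^{\delta_n}_i$ (together with its attached boundary arcs) forms a single connected piece, since any two points in $\A_i$ can be joined by a continuous path within $\A_i$ and then approximated by a discrete path in $\tilde\A^{\delta_n} \cap N_i$ via Hausdorff closeness. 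Crucially, distinct pieces $\tilde\A^{\delta_n}_i, \tilde\A^{\delta_n}_{i'}$ cannot be joined within $\tilde\A^{\delta_n}$: a joining discrete path would have to cross the corridor between $N_i$ and $N_{i'}$, which by the previous paragraph contains a continuum component of $\AB$. Hausdorff convergence of $\tilde\AB^{\delta_n}$ then forces a macroscopic piece of $\tilde\AB^{\delta_n}$ to sit in this corridor, and since $\tilde\Gamma^{\delta_n} + u^{\delta_n}$ is $\le 0$ on $\tilde\AB^{\delta_n}$ while required to be $\ge 0$ along the joining path, the path is obstructed. This establishes $r_n = r$ with the correct matching of marked boundary points; the same argument with positive and negative interchanged gives $s_n = s$ and the matching for $\{\tilde\AB^{\delta_n}_j\}$. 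Hausdorff convergence $\tilde\A^{\delta_n}_i \to \A_i$ then follows from $\tilde\A^{\delta_n}_i \subset N_i$ together with $\overline{\A \cap \Omega} \cap N_i = \A_i$.

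The main obstacle is the barrier argument above: showing that the continuum negative component sitting in the corridor between two positive components actually obstructs a discrete positive path for all large $n$. The subtlety is that Hausdorff convergence of first passage sets alone does not immediately yield pointwise disjointness of a putative discrete path from the discrete negative FPS. The resolution combines the simultaneous convergence of $\tilde\A^{\delta_n}$ and $\tilde\AB^{\delta_n}$ with the structural fact that, in the metric graph GFF, $\tilde\A^{\delta_n}$ and $\tilde\AB^{\delta_n}$ can meet only on the (generically empty) level set $\{\tilde\Gamma^{\delta_n} + u^{\delta_n} = 0\}$, so that a macroscopic piece of $\tilde\AB^{\delta_n}$ genuinely blocks any path in $\tilde\A^{\delta_n}$ attempting to traverse it.
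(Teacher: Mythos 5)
Your coupling step (Skorokhod after the joint convergence supplied by Proposition~\ref{prop::mgffconvergence}) is fine and matches the paper, but the core of the lemma is not proved. The genuinely hard direction is the ``no splitting'' statement: if two positive arcs lie in the \emph{same} connected component $\A_i$ of $\overline{\A\cap\Omega}$, then for large $n$ the corresponding discrete components of $\tilde\A^{\delta_n}$ must actually merge. Your argument for this --- take a continuum path inside $\A_i$ and ``approximate it by a discrete path in $\tilde\A^{\delta_n}\cap N_i$ via Hausdorff closeness'' --- is invalid: Hausdorff convergence carries no connectivity information, and a priori the discrete FPS near $\A_i$ could consist of several components, attached to different boundary arcs, that only coalesce in the limit (so $r_n>r$ infinitely often). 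Ruling this out is exactly where the paper invests its work: using the loop/excursion-cluster description, the positive FPS restricted to a complementary domain (the component $D_{i,k}$ of $D_i\setminus R_i$) is again an FPS with boundary datum $0$ on the removed parts, and then \cite[Corollary~4.12]{AruLupuSepulvedaFPSGFFCVGISO} shows that its limit almost surely does not touch those zero-boundary pieces; hence limits of distinct discrete components cannot even touch (the events $F_{i,k}$ have probability zero), and a covering/connectedness argument upgrades this to ``each limit is a full component of $\overline{\A\cap\Omega}$''. The same input is needed to match the marked points (the events $F_j$ in the paper's first step: the limit must not acquire spurious touchings of boundary points $y_j$), which your alignment ``via $y_j^{\delta_n}\to y_j$'' does not address. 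Your proposal contains no substitute for this FPS-Markov-property-plus-boundary-estimate mechanism.

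The barrier argument you rely on instead is also based on a false structural claim: between two distinct components of $\A$ there need not be a component of $\AB$. In the configuration of the left panel of Figure~\ref{fig::mgfflinkpatterns} both the positive and the negative crossings fail, and the two positive components are separated only by a neutral region belonging to neither first passage set, so no macroscopic piece of $\tilde\AB^{\delta_n}$ obstructs the corridor. Moreover the assertion that $\tilde\A^{\delta_n}$ and $\tilde\AB^{\delta_n}$ meet only on a ``generically empty'' level set is unjustified (the zero set of the metric graph GFF is certainly nonempty, and disjointness of the two FPS would itself require proof). Note that the direction the barrier was meant to serve --- no spurious merging of discrete components lying near distinct continuum components --- is in fact the soft one and needs no barrier at all: a merged discrete component is connected, so its Hausdorff limit is a connected subset of $\overline{\A\cap\Omega}$ (by Proposition~\ref{prop::mgffconvergence}) containing two distinct components of that set, a contradiction. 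What remains missing in your proposal is precisely the hard half described above.
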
 
\begin{proof}
We denote by $\tilde R^{\delta_{n}}_{i}$ the connected component of $\tilde\A^{\delta_{n}}\cup\left(\cup_{1\le l\le N}\left(y^{\delta_{n}}_{2l-1},y^{\delta_{n}}_{2l}\right)\right)$ which contains $\left(y^{\delta_{n}}_{2i-1},y^{\delta_{n}}_{2i}\right)$ on its boundary for $1\le i\le N$; and we denote by $\tilde S^{\delta_{n}}_{i}$ the connected component of $\tilde\AB^{\delta_{n}}\cup\left(\cup_{1\le l\le N}\left(y^{\delta_{n}}_{2l},y^{\delta_{n}}_{2l+1}\right)\right)$ which contains $(y^{\delta_{n}}_{2i},y^{\delta_{n}}_{2i+1})$ on its boundary for $1\le i\le N$. 
Note that the sequence
\[\left\{\left(\tilde R^{\delta_{n}}_{1},\ldots,\tilde R^{\delta_{n}}_{N},\tilde\A^{\delta_{n}},\tilde S^{\delta_{n}}_{1},\ldots,\tilde S^{\delta_{n}}_{N}, \tilde\AB^{\delta_{n}}\right)\right\}_{n\ge 1}\] 
is tight. Thus, it suffices to prove that Lemma~\ref{lem::convergence} holds for any convergent subsequence. Given any convergent subsequence, we still denote it by $\left\{\left(\tilde R^{\delta_{n}}_{1},\ldots,\tilde R^{\delta_{n}}_{N},\tilde\A^{\delta_{n}},\tilde S^{\delta_{n}}_{1},\ldots,\tilde S^{\delta_{n}}_{N}, \tilde\AB^{\delta_{n}}\right)\right\}_{n\ge 1}$. By Skorokhod representation theorem, we can couple them on the same probability space such that there is almost sure convergence. We denote the probability measure of this coupling by $\PP$, and we denote its limit by 
$(R_{1},\ldots,R_{N},R,S_{1},\ldots,S_{N},S)$.

By Proposition~\ref{prop::mgffconvergence}, we know that $R=\overline{\A\cap\Omega}$ and $S=\overline{\AB\cap\Omega}$. We will prove that $R_{i}$ is the connected component of $\A$ which contains $(y_{2i-1},y_{2i})$ on its boundary and $S_{i}$ is the connected component of $\AB$ which contains $(y_{2i},y_{2i+1})$ on its boundary. Moreover, we will show that Lemma~\ref{lem::convergence} holds in this coupling. We only give proof for the positive first passage set, as the proof for the negative first passage set is similar. The proof is divided into two steps. First, suppose $\left(y_{i_{1}},\ldots,y_{i_k}\right)$ are the marked points on the boundary of $R_{i}$. We will prove that $\left(y^{\delta_{n}}_{i_{1}},\ldots,y^{\delta_{n}}_{i_{k}}\right)$ are the marked points on the boundary of $\tilde R^{\delta_{n}}_{i}$ for $n$ large enough. Then we will prove that $R_{i}$ is the connected component of $R$ which contains $(y_{2i-1},y_{2i})$ on its boundary for each $1\le i\le N$.

For the first step, it is clear that if $y_{j}\notin R_{i}$ for some $1\le j\le 2N$, we have $y_{j}^{\delta_{n}}\notin \tilde R^{\delta_{n}}_{i}$ for $n$ large enough by the almost sure convergence. We define the event $F_{j}:=\left\{y_{j}\in R_{i}, \text{ but } y^{\delta_{n}}_{j}\notin \tilde R^{\delta_{n}}_{i}\text{ except for finite } n\right\}$ 
for $1\le j\le 2N$. It suffices to prove $\PP[F_{j}]=0$.
By Lemma~\ref{lem::construction}, we have $S\cap (y_{2i-1},y_{2i})=\emptyset$ for $1\le i\le N$. We denote by $D_{i}$ the connected component of $\Omega\setminus S$ which contains $(y_{2i-1},y_{2i})$ and we denote by $D^{\delta_{n}}_{i}$ the connected component of $\Omega^{\delta_{n}}\setminus \tilde\AB^{\delta_{n}}$ which contains $\left(y^{\delta_{n}}_{2i-1},y^{\delta_{n}}_{2i}\right)$. By Carath\'{e}odory kernel theorem, the domain $D^{\delta_{n}}_{i}$ converges to $D_{i}$ in Carath\'{e}odory topology as $n\to\infty$. Note that $\tilde \A^{\delta_{n}}\cap \tilde D^{\delta_{n}}_{i}$ is the first passage set $\tilde\A_{0}^{v^{\delta_{n}}}$ of the metric graph GFF on $\tilde D^{\delta_{n}}_{i}$ with boundary data given by $v^{\delta_{n}}$ which is defined as follows: $v^{\delta_{n}}$ equals $0$ on $\tilde \AB^{\delta_{n}} \cap\partial\tilde D^{\delta_{n}}_{i}$ and $v^{\delta_{n}}$ equals $2\lambda$ on $\partial\tilde\Omega^{\delta_{n}}\cap\partial\tilde D^{\delta_{n}}_{i}$. We may assume $j$ is odd. On the event $F_{j}$, we have $(y_{j},y_{j+1})\subset \partial D_{i}$. Thus for $n$ large enough, we have $(y^{\delta_{n}}_{j},y^{\delta_{n}}_{j+1})\subset \partial D^{\delta_{n}}_{i}$. In such case, we define the harmonic function $v_{1}^{\delta_{n}}$ on $\tilde D_{i}^{\delta_{n}}$ as follows: it equals $0$ on $\left(y^{\delta_{n}}_{j},y^{\delta_{n}}_{j+1}\right)$ and it equals $v^{\delta_{n}}$ on $\partial \tilde D_{i}^{\delta_{n}}\setminus\left(y^{\delta_{n}}_{j},y^{\delta_{n}}_{j+1}\right)$. Then, in the construction of $\tilde\A_{0}^{v^{\delta_{n}}}$ by loops and excursions, we can divide the excursions into two independent parts: the excursions connecting to $\left(y^{\delta_{n}}_{j},y^{\delta_{n}}_{j+1}\right)$ and the excursions which do not intersect $\left(y^{\delta_{n}}_{j},y^{\delta_{n}}_{j+1}\right)$.  Note that the excursions connecting to $\left(y^{\delta_{n}}_{j},y^{\delta_{n}}_{j+1}\right)$ correspond to the Poisson point process with intensity measure $\mu_{\text{exc}}^{\tilde D_{i}^{\delta_{n}},v^{\delta_{n}}}-\mu_{\text{exc}}^{\tilde D_{i}^{\delta_{n}},v_{1}^{\delta_{n}}}$ and that the excursions which do not intersect $\left(y^{\delta_{n}}_{j},y^{\delta_{n}}_{j+1}\right)$ correspond to the Poisson point process with intensity measure $\mu_{\text{exc}}^{\tilde D_{i}^{\delta_{n}},v_{1}^{\delta_{n}}}$. Thus, we have $\tilde R^{\delta_{n}}_{i}\subset\tilde\LA\left(\LL^{\tilde D_{i}^{\delta_{n}}}_{1/2}, \Xi^{\tilde D_{i}^{\delta_{n}}}_{v_{1}^{\delta_{n}}}\right)$ if $y^{\delta_{n}}_{j}\notin \tilde R^{\delta_{n}}_{i}$. Note that $\tilde\A_{0}^{v_{1}^{\delta_{n}}}$ has the same law as $\tilde\LA\left(\LL^{\tilde D_{i}^{\delta_{n}}}_{1/2}, \Xi^{\tilde D_{i}^{\delta_{n}}}_{v_{1}^{\delta_{n}}}\right)$. According to~\cite[Corollary~4.12]{AruLupuSepulvedaFPSGFFCVGISO}, the limit of $\overline{\tilde\A_{0}^{v_{1}^{\delta_{n}}}\cap D_{i}}$ does not intersect $(y_{j},y_{j+1})$ almost surely. This implies $\PP[F_{j}]=0$.

For the second step, we define the event $F_{i,k}:=\{R_{i}\neq R_{k},\text{ but } R_{i}\cap R_{k}\neq\emptyset\}$ for $1\le i< k\le N$. It suffices to prove that $\PP[F_{i,k}]=0$. Note that on the event $F_{i,k}$, we have $(y_{2k-1},y_{2k})\subset D_{i,k}$. This implies $(y^{\delta_{n}}_{2k-1},y^{\delta_{n}}_{2k})\subset D^{\delta_{n}}_{i,k}$ for $n$ large enough. Moreover, we have $\tilde R^{\delta_{n}}_{i}\cap \tilde R^{\delta_{n}}_{k}=\emptyset$ for $n$ large enough. This implies $\tilde R^{\delta_{n}}_{i}\cap (y^{\delta_{n}}_{2k-1},y^{\delta_{n}}_{2k})=\emptyset$. We denote by $D_{i,k}$ the connected component of $D_{i}\setminus R_{i}$ with $(y_{2k-1},y_{2k})$ on its boundary and we denote by $D_{i,k}^{\delta_{n}}$ the connected component of $D_{i}^{\delta_{n}}\setminus \tilde R_{i}^{\delta_{n}}$ with $\left(y_{2k-1}^{\delta_{n}},y_{2k}^{\delta_{n}}\right)$ on its boundary. Then by Carath\'{e}odory kernel theorem, the domain $D_{i,k}^{\delta_{n}}$ converges to $D_{i,k}$ as $n\to\infty$ in Carath\'{e}odory topology. Note that $\tilde\A^{\delta_{n}}\cap \tilde D_{i,k}^{\delta_{n}}$ is the first passage set of the metric graph GFF with boundary data $w^{\delta_{n}}$ on $\tilde D_{i,k}^{\delta_{n}}$, where $w^{\delta_{n}}$ is defined as follows: $w^{\delta_{n}}$ equals $0$ on $\left(\tilde \AB^{\delta_{n}}\cup R_{i}^{\delta_{n}}\right)\cap\partial\tilde D_{i,k}^{\delta_{n}}$ and $w^{\delta_{n}}$ equals $2\lambda$ on $\partial\tilde\Omega^{\delta_{n}}\cap\ \partial\tilde D^{\delta_{n}}_{i,k}$. According to~\cite[Corollary~4.12]{AruLupuSepulvedaFPSGFFCVGISO}, the limit of $\overline{\tilde\A_{0}^{w^{\delta_{n}}}\cap D_{i,k}}$ does not intersect $(y_{2i-2},y_{2i-1})\cup \partial R_{i}\cup(y_{2i},y_{2i+1})$. This implies $\PP[F_{i,k}]=0$. It completes the proof.
\end{proof}

\subsection{Asymptotics of partition functions and proof of Proposition~\ref{prop::mGFFpdecov}}
\label{subsec::mgffasy}
The goal of this subsection is the following asymptotics of pure partition functions. The purpose of this proposition will be clear in the proof of Theorem~\ref{thm::main}. This subsection is independent of the rest of Section~\ref{sec::mgfffps}, and we suggest readers to first read Section~\ref{subsec::mgfffinal} and then come back to this subsection. 

\begin{proposition}\label{prop::purepartitionfusionall}
Fix $\kappa=4$. Fix $N\ge 1$ and the index valences $\varsigma=(2, \ldots, 2)$ of length $2N$. 
For each $\hat{\alpha}\in\LP_{\varsigma}$, let $\alpha:=\tau(\hat{\alpha})\in\Pair_{2N}$ be the associated planar pair partition as defined in Section~\ref{subsec::linkpatterntopairpartition}, and $\PartF_{\alpha}$ be the pure partition function associated to $\alpha$. Then, the following limit exists: for $y_1<\cdots<y_{2N}$ and $x_1<\cdots<x_{4N}$, 
\begin{equation}\label{eqn::purepartitionfusionall}
\PartF_{\hat{\alpha}}(y_1, \ldots, y_{2N}):=\lim_{\substack{x_{2j-1}, x_{2j}\to y_j, \\\small{\forall 1\le j\le 2N}}}\frac{\PartF_{\alpha}(x_1, \ldots, x_{4N})}{\prod_{j=1}^{2N}(x_{2j}-x_{2j-1})^{1/2}}. 
\end{equation}
\end{proposition}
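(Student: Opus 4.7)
The plan is to prove the existence of the $2N$-fold simultaneous limit by iterating the one-pair fusion result of Proposition~\ref{prop::purepartitionFusion}. The key structural input is that, by construction, the map $\tau$ splits each valence-$2$ index of $\hat{\alpha}$ into two adjacent valence-$1$ indices carrying the two original links, so the pair $\{2j-1,2j\}$ is never a link of $\alpha = \tau(\hat{\alpha})$. Consequently, \eqref{eqn::purepartitionFusionrefined} applies at $j=1$ and produces the first fusion limit $\PartF_{\alpha/\amalg_1}$ given explicitly by~\eqref{eqn::purepartitionFusiondef} as a linear combination of the functions $\LU_{\beta/\times_1}$ and $\LV_{\beta/\vee_1}$.

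To handle the remaining $2N-1$ fusions, the cleanest route is to work directly with the conformal-block expansion $\PartF_\alpha = \sum_{\beta \in \DP_{2N}} \LM^{-1}_{\alpha,\beta} \LU_\beta$ from~\eqref{eqn::purepartitionvsconformalblock}. Each $\LU_\beta$ is the explicit product~\eqref{eqn::conformalblock_def}; dividing by $\prod_{j=1}^{2N}(x_{2j}-x_{2j-1})^{1/2}$ yields a finite limit whenever $\vartheta_\beta(2j-1,2j) = +1$ for every $j$, i.e., $\times_{2j-1} \in \beta$ at every pair. The potentially singular contributions, coming from those $\beta$'s with $\lozenge_{2j-1} \in \beta$ at one or more pairs, are controlled by the pairing $\beta \leftrightarrow \beta \uparrow \lozenge_{2j-1}$ supplied by Lemma~\ref{lem::KWleinverse}: the sign relation $\LM^{-1}_{\alpha,\beta} = -\LM^{-1}_{\alpha,\beta\uparrow\lozenge_{2j-1}}$ together with $\alpha \preceq \beta \Leftrightarrow \alpha \preceq \beta \uparrow \lozenge_{2j-1}$ forces the $(x_{2j}-x_{2j-1})^{-1/2}$ leading divergences of $\LU_\beta$ and $\LU_{\beta \uparrow \lozenge_{2j-1}}$ to cancel in the sum, leaving a finite $(x_{2j}-x_{2j-1})^{+1/2}$ remainder, exactly as in the calculation~\eqref{eqn::purepartitionFusionrefinedaux1}.

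The main obstacle is coordinating these pairings at all $2N$ positions simultaneously: the involutions $\beta \mapsto \beta \uparrow \lozenge_{2j-1}$ for different $j$ need not commute, and a single $\beta$ may need re-pairing at several pairs with subtle sign interactions. I plan to handle this by induction on the number of fused pairs: after $k$ fusions, I would verify that the partially fused function remains a linear combination of functions of the same structural type as the right-hand side of~\eqref{eqn::purepartitionFusiondef} — products of the conformal-block type times rational sums with appropriate $\vartheta_\beta$ signs — to which the same $\vee \leftrightarrow \wedge$ pairing argument applies at step $k+1$. Closing the induction after $2N$ steps yields the finite limit $\PartF_{\hat{\alpha}}$. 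A more conceptual but longer alternative would be to prove an extended version of Proposition~\ref{prop::purepartitionFusion} for partition functions of mixed index valences $(1,\ldots,1,2,1,\ldots,1,2,\ldots)$, following the PDE derivations in Lemmas~\ref{lem::purepartitionFusionpde2nd}--\ref{lem::purepartitionFusionpde3rd} and iterating $2N$ times; the combinatorial grouping of $\beta$'s by their pattern $(\wedge, \vee, \times^{+}, \times^{-})$ at each pair is the price to pay for the direct route.
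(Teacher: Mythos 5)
Your overall strategy coincides with the paper's: expand $\PartF_{\alpha}=\sum_{\beta}\LM^{-1}_{\alpha,\beta}\LU_{\beta}$, observe that $\{2j-1,2j\}\notin\alpha$ so the blocks with $\times_{2j-1}\in\beta$ converge directly, and cancel the divergent blocks in pairs $\beta\leftrightarrow\beta\uparrow\lozenge_{2j-1}$ using Lemma~\ref{lem::KWleinverse}, then induct on the number of fused pairs. However, the inductive invariant you propose --- that after $k$ fusions the partially fused function is again a linear combination of objects ``of the same structural type as the right-hand side of~\eqref{eqn::purepartitionFusiondef}'', i.e.\ conformal-block prefactors times a single rational sum --- is not correct, and this is exactly where the real difficulty sits. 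When you fuse a second pair at which the Dyck path has a local extremum, the cancellation is between two $\LV$-type functions, and the difference of their rational sums produces, after dividing by $(x_{2j}-x_{2j-1})^{1/2}$, new terms of the form $2/(y_{j}-y_{j'})^{2}$ multiplying shorter products of $Z$-factors; see the derivative-of-$\sigma$ terms implicit in~\eqref{eqn::purepartitionFusionrefinedaux1}. So the structural class is not stable: after $n$ extremal fusions the limit is a conformal-block prefactor times a sum over \emph{partial matchings} of the $n$ fused points, with weight $2^{m}$, factors $(y_{j}-y_{j'})^{-2}$ for matched pairs and $Z$-factors for unmatched points. This is the content of the paper's closed-form induction hypothesis~\eqref{eqn::purepartitionfusionallaux2}, indexed by the permutation classes $\mathfrak{J}_n^m$.

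Consequently your induction does not close as stated: to make it work you must first guess this richer closed form and then verify the recursion it satisfies, which reduces to the combinatorial identity~\eqref{eqn::purepartitionfusionallaux3}; the paper proves that identity by constructing explicit bijections between $\{\tau\in\mathfrak{J}_{i+1}^m:\tau_{i+1}<i+1\}$ and $\mathfrak{J}_i^{m-1}\times\{2m-1,\ldots,i\}$ (and a companion bijection in the odd case), with a separate treatment depending on the parity of the number of already-fused points. None of this bookkeeping is optional: without it the ``subtle sign interactions'' you flag remain exactly the unproved step. A secondary point: the statement asks for the simultaneous limit in all $2N$ pairs, while your scheme (like the paper's computation) produces iterated limits, so you also need the observation that the grouped sums converge in a way independent of the order, which should be recorded rather than assumed.
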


We will show Proposition~\ref{prop::purepartitionfusionall} by the explicit expression for $\PartF_{\alpha}$ from~\eqref{eqn::purepartitionvsconformalblock}: 
\[\PartF_{\alpha}(x_1, \ldots, x_{4N})=\sum_{\beta\in\DP_{2N}}\LM_{\alpha, \beta}^{-1}\LU_{\beta}(x_1, \ldots, x_{4N}). \]
For $\beta\in\DP_{2N}$ such that $\times_{2j-1}\in\beta$ for all $1\le j\le 2N$, it is easy to see that $\LU_{\beta}$ admits a limit when normalized by $\prod_{j}(x_{2j}-x_{2j-1})^{1/2}$, see Lemma~\ref{lem::conformalblockfusionall}. However, for other $\beta$, the conformal block $\LU_{\beta}$ explodes when normalized by $\prod_{j}(x_{2j}-x_{2j-1})^{1/2}$. In order to derive the existence of the limit, we need to group distinct $\beta$'s properly so that the explosion cancels. 
The proof of Proposition~\ref{prop::purepartitionfusionall} involves heavy notation which we find unavoidable. We suggest readers to first read the proof of Corollary~\ref{cor::crossingproba_mgff} where we give the proof for Proposition~\ref{prop::purepartitionfusionall} when $N=2$. 

\smallbreak
\begin{lemma}\label{lem::conformalblockfusionall}
Fix $\kappa=4$. Fix $N\ge 1$. Given a Dyck path $\beta\in\DP_{2N}$ of length $4N$ such that $\times_{2j-1}\in\beta$ for all $1\le j\le 2N$, define $(\beta)_2\in\DP_N$ by 
\begin{equation*}
(\beta)_2(k)=\frac{1}{2}\beta(2k),\quad 1\le k\le 2N.
\end{equation*}
One may check that this is a well-defined Dyck path of length $2N$.   
Then,  for $y_1<\cdots<y_{2N}$ and $x_1<\cdots<x_{4N}$, we have 
\begin{equation}
\lim_{\substack{x_{2j-1}, x_{2j}\to y_j, \\\small{\forall 1\le j\le 2N}}}\frac{\LU_{\beta}(x_1, \ldots, x_{4N})}{\prod_{j=1}^{2N}(x_{2j}-x_{2j-1})^{1/2}}=\LU_{(\beta)_2}^4(y_1, \ldots, y_{2N}). 
\end{equation}
\end{lemma}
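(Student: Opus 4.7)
\medbreak
\noindent\textbf{Proof proposal.} The plan is a direct computation using the explicit product form \eqref{eqn::conformalblock_def} together with a careful bookkeeping of the signs $\vartheta_\beta$ under the hypothesis $\times_{2j-1}\in\beta$ for all $j$. First I will observe what the condition $\times_{2j-1}\in\beta$ says about the combinatorics of $\beta$: it forces the step at position $2j-1$ and the step at position $2j$ to have the same direction (both up or both down). Consequently, the indices $2j-1$ and $2j$ belong to the same side in the partition of $\{1,\ldots,4N\}$ into up-step indices $\{a_1,\ldots,a_{2N}\}$ and down-step indices $\{b_1,\ldots,b_{2N}\}$. This gives two identities that I will use throughout: $\vartheta_\beta(2j-1,2j)=+1$ for every $j$, and, for any $k\ne l$ in $\{1,\ldots,2N\}$,
\begin{equation*}
\vartheta_\beta(2k-1,2l-1)=\vartheta_\beta(2k-1,2l)=\vartheta_\beta(2k,2l-1)=\vartheta_\beta(2k,2l)=\vartheta_{(\beta)_2}(k,l),
\end{equation*}
where the last equality uses that $(\beta)_2\in\DP_N$ just records the common direction of the pair of steps $(2k-1,2k)$, and $\vartheta_{(\beta)_2}$ detects whether pair $k$ and pair $l$ are of the same type (both up-type pairs or both down-type pairs).

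Next I will split the product defining $\LU_\beta$ into three groups: (i) the factors $(x_{2j}-x_{2j-1})^{\vartheta_\beta(2j-1,2j)/2}$ indexed by intra-pair pairs $\{2j-1,2j\}$; (ii) the factors indexed by pairs $\{a,b\}$ with $a\in\{2k-1,2k\}$, $b\in\{2l-1,2l\}$, and $k<l$; and by construction there are no other factors. By the first identity above, group (i) is exactly $\prod_{j=1}^{2N}(x_{2j}-x_{2j-1})^{1/2}$, which cancels with the normalizing denominator. For group (ii), the second identity shows that for each ordered pair $k<l$ the four factors carry the same exponent $\tfrac12\vartheta_{(\beta)_2}(k,l)$; passing to the limit $x_{2j-1},x_{2j}\to y_j$ each of the four factors tends to $y_l-y_k$, so group (ii) converges to
\begin{equation*}
\prod_{1\le k<l\le 2N}(y_l-y_k)^{2\vartheta_{(\beta)_2}(k,l)}=\LU_{(\beta)_2}(y_1,\ldots,y_{2N})^4,
\end{equation*}
as desired.

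I do not anticipate a serious obstacle: the lemma is a purely combinatorial/algebraic fact once one unwinds the definitions. The only point requiring care is the verification that $(\beta)_2$ is a well-defined Dyck path (which follows from the $\times_{2j-1}$ hypothesis forcing $\beta(2k)-\beta(2k-2)\in\{-2,+2\}$, hence $(\beta)_2$ has unit increments and is nonnegative because $\beta$ is), together with checking that $\vartheta_\beta$ restricted to pair-type indices matches $\vartheta_{(\beta)_2}$; both are straightforward from the identification of up-step versus down-step indices.
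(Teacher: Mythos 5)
Your proposal is correct and follows essentially the same route as the paper: a direct computation from the definition \eqref{eqn::conformalblock_def}, using that $\times_{2j-1}\in\beta$ forces $\vartheta_\beta(2j-1,2j)=+1$ (so the intra-pair factors cancel the normalization) and that $\vartheta_\beta$ takes a common value on the four index pairs drawn from $\{2k-1,2k\}\times\{2l-1,2l\}$, equal to $\vartheta_{(\beta)_2}(k,l)$, after which the limit is immediate. The paper's proof is just a more compact version of the same bookkeeping.
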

\begin{proof}
By the definition~\eqref{eqn::conformalblock_def}, we have 
\[\frac{\LU_{\beta}(x_1, \ldots, x_{4N})}{\prod_{j=1}^{2N}(x_{2j}-x_{2j-1})^{1/2}}=\prod_{1\le s<t\le 2N}\left((x_{2t}-x_{2s})(x_{2t}-x_{2s-1})(x_{2t-1}-x_{2s})(x_{2t-1}-x_{2s-1})\right)^{\frac{1}{2}\vartheta_{\beta}(2s,2t)}.\]
Thus 
\[\lim_{\substack{x_{2j-1}, x_{2j}\to y_j, \\\small{\forall 1\le j\le 2N}}}\frac{\LU_{\beta}(x_1, \ldots, x_{4N})}{\prod_{j=1}^{2N}(x_{2j}-x_{2j-1})^{1/2}}=\prod_{1\le s<t\le 2N}(y_{t}-y_{s})^{2\vartheta_{\beta}(2t,2s)}=\LU_{(\beta)_2}^4(y_1, \ldots, y_{2N}).\]
\end{proof}

\begin{proof}[Proof of Proposition~\ref{prop::purepartitionfusionall}]
From~\eqref{eqn::purepartitionvsconformalblock}, we have
\[\PartF_{\alpha}(x_1, \ldots, x_{4N})=\sum_{\beta\in\DP_{2N}}\LM_{\alpha, \beta}^{-1}\LU_{\beta}(x_1, \ldots, x_{4N}). \]
For $\beta\in\DP_{2N}$, there exists $J\subset\{1, 2, \ldots, 2N\}$ such that 
\[\lozenge_{2j-1}\in\beta\text{ for all }j\in J, \text{ and }\times_{2j-1}\in\beta\text{ for all }j\in \{1, \ldots, 2N\}\setminus J.\]
Then, from the definition~\eqref{eqn::conformalblock_def}, the following limit exists:  
\[\lim_{\substack{x_{2j-1}, x_{2j}\to y_j, \\\small{\forall j\not\in J}}}\frac{\LU_{\beta}(x_1, \ldots, x_{4N})}{\prod_{j\not\in J}(x_{2j}-x_{2j-1})^{1/2}}. 
\]To obtain the desired limit, we need to group distinct $\beta$'s according to the location of their local extremes. 

\begin{landscape}
Let $J$ be any subset of $\{1, 2, \ldots, 2N\}$, and define 
\[\mathcal{P}_J^{\alpha}=\left\{\beta\in\DP_{2N}: \beta\succeq\alpha, \text{ and }\lozenge_{2j-1}\in\beta\text{ for all }j\in J, \text{ and }\times_{2j-1}\in\beta\text{ for all }j\in \{1, 2, \ldots, 2N\}\setminus J\right\}.\]
It suffices to show that the following limit exists for all possible $J$: 
\begin{equation*}
\lim_{\substack{x_{2j-1}, x_{2j}\to y_j, \\\small{\forall j\in J}}}\frac{\sum_{\beta\in\mathcal{P}_J^{\alpha}}\LM^{-1}_{\alpha,\beta}\LU_{\beta}(x_1, \ldots, x_{4N})}{\prod_{j\in J}(x_{2j}-x_{2j-1})^{1/2}}.
\end{equation*} 
Suppose $n=\#J\ge 1$. 
For some $\beta_0\in\mathcal{P}_J^{\alpha}$ such that $\vee_{2j-1}\in\beta_0$ for all $j\in J$, we define
\[\mathcal{P}_{J}^{\alpha, \beta_0}=\{\beta\in\DP_{2N}: \exists \{i_1, \ldots, i_k\}\subset J \text{ such that }\beta=\beta_0\uparrow\lozenge_{2i_1-1}\cdots\uparrow\lozenge_{2i_k-1}\}.\]
It is clear that $\#\mathcal{P}_{J}^{\alpha,\beta_0}=2^n$. Furthermore, for distinct $\beta_0,\beta_0'\in\mathcal{P}_J^{\alpha}$ such that $\vee_{2j-1}\in\beta_0$ and $\vee_{2j-1}\in\beta_0'$ for all $j\in J$, we see that $\mathcal{P}_{J}^{\alpha, \beta_0}\cap \mathcal{P}_{J}^{\alpha,\beta_0'}=\emptyset$. Thus $\{\mathcal{P}_{J}^{\alpha, \beta_0}: \beta_0\in\mathcal{P}_J^{\alpha} \text{ with } \vee_{2j-1}\in\beta_0 \,\forall\, j\in J\}$ gives a disjoint partition of $\mathcal{P}_J^{\alpha}$. Therefore, it suffices to show that the following limit exists for each such $\beta_0$: 
\begin{equation}\label{eqn::purepartitionfusionallaux1}
\lim_{\substack{x_{2j-1}, x_{2j}\to y_j, \\\forall j\in J}}\frac{\sum_{\beta\in\mathcal{P}_{J}^{\alpha, \beta_0}}\LM^{-1}_{\alpha,\beta}\LU_{\beta}(x_1, \ldots, x_{4N})}{\prod_{j\in J}(x_{2j}-x_{2j-1})^{1/2}}.
\end{equation}

To derive~\eqref{eqn::purepartitionfusionallaux1}, we will show a more general conclusion. Suppose $K\subset J$ and suppose $\gamma_0\in\mathcal{P}_J^{\alpha}$ such that $\vee_{2j-1}\in \gamma_0$ for all $j\in K$. We define
\[\mathcal{P}^{\alpha, \gamma_0}_{J, K}=\{\gamma\in\DP_{2N}: \exists\{i_1, \ldots, i_k\}\subset K \text{ such that }\gamma=\gamma_0\uparrow\lozenge_{2i_1-1}\cdots\uparrow\lozenge_{2i_k-1}\}.\] 
We denote $R_{K}:=\{2j-1,2j:j\in K\}$, and we denote 
\[Z^{\gamma}_{K,j}:=\sum_{l\notin R_{K}}\frac{\vartheta_{\gamma}(l,2j-1)}{x_{l}-y_{j}}.\]
We denote by $\mathfrak{S}_n$ the set of permutations of $\{1,2, \ldots, n\}$. Suppose $K=\{j_1, \ldots, j_n\}$. For any $\gamma_0\in\mathcal{P}_{J}^{\alpha}$ such that $\vee_{2j-1}\in\gamma_0$ for all $j\in K$, we claim that
\begin{align}\label{eqn::purepartitionfusionallaux2}
\lim_{\substack{x_{2j-1}, x_{2j}\to y_j, \\\forall j\in K}}\frac{\sum_{\gamma\in\mathcal{P}_{J, K}^{\alpha, \gamma_0}}\LM^{-1}_{\alpha,\gamma}\LU_{\gamma}(x_1, \ldots, x_{4N})}{\prod_{j\in K}(x_{2j}-x_{2j-1})^{1/2}}
=&\LM^{-1}_{\alpha,\gamma_0}\prod_{\substack{1\le t<s\le 4N\\ t,s\notin R_{K}}}(x_s-x_t)^{\frac{1}{2}\vartheta_{\gamma_0}(t,s)}\times\left(\sum_{m=0}^{\lfloor\frac{n}{2}\rfloor}\sum_{\sigma\in \mathfrak{J}_n^m}
\frac{2^{m}Z^{\gamma_0}_{K,j_{\sigma_{2m+1}}}\cdots Z^{\gamma_0}_{K,j_{\sigma_{n}}}}{(y_{j_{\sigma_{1}}}-y_{j_{\sigma_{2}}})^{2}\times\cdots\times (y_{j_{\sigma_{2m-1}}}-y_{j_{\sigma_{2m}}})^{2}}
\right),
\end{align}
where $\mathfrak{J}_n^m$ is a subset of $\mathfrak{S}_n$: 
\[\mathfrak{J}_n^m=\{\sigma\in \mathfrak{S}_{n}: \\ \sigma_1<\sigma_3<\cdots<\sigma_{2m-1}, \text{ and }\sigma_{2j-1}<\sigma_{2j}\text{ for } j\le m, \text{ and }\sigma_{2m+1}<\cdots<\sigma_{n}\}. \]

Fix $\alpha$ and $J$, we will show~\eqref{eqn::purepartitionfusionallaux2} by induction on $n=\#K$. It is true for $K=\emptyset$ as it is the same as the definition of $\LU_{\gamma_0}$. Suppose~\eqref{eqn::purepartitionfusionallaux2} holds for $\#K=i$. We need to show it for $\#K=i+1$. 
Suppose $K=\{j_{1},\ldots,j_{i+1}\}$. 
We will take the limit in the left hand side of~\eqref{eqn::purepartitionfusionallaux2} in a particular order: we first let $x_{2j-1}, x_{2j}\to y_j$ with $j\in K\setminus\{j_{i+1}\}$ and then let $x_{2j_{i+1}-1}, x_{2j_{i+1}}\to y_{j_{i+1}}$. 
It will be clear from the calculation that the limit in~\eqref{eqn::purepartitionfusionallaux2} for $\#K=i+1$ does not depend on the order of taking limits. 

For any $\gamma_0\in\mathcal{P}_{J}^{\alpha}$ such that $\vee_{2j_{1}-1},\ldots,\vee_{2j_{i+1}-1}\in \gamma_0$, we have the decomposition \[\mathcal{P}_{J, K}^{\alpha, \gamma_0}=\mathcal{P}_{J, K\setminus\{j_{i+1}\}}^{\alpha, \gamma_0}\bigsqcup \mathcal{P}_{J, K\setminus\{j_{i+1}\}}^{\alpha, \gamma_0\uparrow\lozenge_{2j_{i+1}-1}}.\]
Denote by $\gamma_1=\gamma_0\uparrow\lozenge_{2j_{i+1}-1}$ and $K_1=\{j_1, \ldots, j_i\}=K\setminus\{j_{i+1}\}$. Then we have
\begin{align}\label{eqn::purepartitionfusionallinduction}
&\lim_{\substack{x_{2j-1}, x_{2j}\to y_j, \\\forall j\in K}}\frac{\sum_{\gamma\in\mathcal{P}_{J, K}^{\alpha, \gamma_0}}\LM^{-1}_{\alpha,\gamma}\LU_{\gamma}(x_1, \ldots, x_{4N})}{\prod_{j\in K}(x_{2j}-x_{2j-1})^{1/2}}\\
=&\lim_{x_{2j_{i+1}-1},x_{2j_{i+1}}\to y_{j_{{i+1}}}}\frac{1}{(x_{2j_{i+1}}-x_{2j_{i+1}-1})^{1/2}}\notag\times \left(\lim_{\substack{x_{2j-1}, x_{2j}\to y_j\\ \forall j\in K_1}}\frac{\sum_{\gamma\in\mathcal{P}_{J, K_1}^{\alpha, \gamma_0}}\LM^{-1}_{\alpha,\gamma}\LU_{\gamma}(x_1, \ldots, x_{4N})}{\prod_{j\in K_1}(x_{2j}-x_{2j-1})^{1/2}}+\lim_{\substack{x_{2j-1}, x_{2j}\to y_j\\ \forall j\in K_1}}\frac{\sum_{\gamma\in\mathcal{P}_{J, K_1}^{\alpha, \gamma_1}}\LM^{-1}_{\alpha,\gamma}\LU_{\gamma}(x_1, \ldots, x_{4N})}{\prod_{j\in K_1}(x_{2j}-x_{2j-1})^{1/2}}\right).\notag
\end{align}
By the induction hypothesis, we have 
\begin{align*}
\lim_{\substack{x_{2j-1}, x_{2j}\to y_j\\ \forall j\in K_1}}\frac{\sum_{\gamma\in\mathcal{P}_{J, K_1}^{\alpha, \gamma_0}}\LM^{-1}_{\alpha,\gamma}\LU_{\gamma}(x_1, \ldots, x_{4N})}{\prod_{j\in K_1}(x_{2j}-x_{2j-1})^{1/2}}
=&\LM^{-1}_{\alpha, \gamma_0}\prod_{\substack{1\le t<s\le 4N\\ t,s\not\in R_{K_1}}}(x_s-x_t)^{\frac{1}{2}\vartheta_{\gamma_0}(t,s)}\times S_0,\\
\lim_{\substack{x_{2j-1}, x_{2j}\to y_j\\ \forall j\in K_1}}\frac{\sum_{\gamma\in\mathcal{P}_{J, K_1}^{\alpha, \gamma_1}}\LM^{-1}_{\alpha,\gamma}\LU_{\gamma}(x_1, \ldots, x_{4N})}{\prod_{j\in K_1}(x_{2j}-x_{2j-1})^{1/2}}
=&\LM^{-1}_{\alpha,\gamma_1}\prod_{\substack{1\le t<s\le 4N\\ t,s \not\in R_{K_1}}}(x_s-x_t)^{\frac{1}{2}\vartheta_{\gamma_1}(t,s)}\times S_1, 
\end{align*}
where 
\[S_u=\sum_{m=0}^{\lfloor\frac{i}{2}\rfloor}\sum_{\sigma\in \mathfrak{J}_i^m}
\frac{2^{m}Z^{\gamma_u}_{K_1,j_{\sigma_{2m+1}}}\cdots Z^{\gamma_u}_{K_1,j_{\sigma_{i}}}}{(y_{j_{\sigma_{1}}}-y_{j_{\sigma_{2}}})^{2}\times\cdots\times (y_{j_{\sigma_{2m-1}}}-y_{j_{\sigma_{2m}}})^{2}},\quad\text{for }u=0, 1. \]
Comparing the two expressions in the right hand side, we have $\LM^{-1}_{\alpha,\gamma_1}=-\LM^{-1}_{\alpha,\gamma_0}$, and
\[\prod_{\substack{1\le t<s\le 4N\\ t,s\not\in R_{K_1}}}(x_s-x_t)^{\frac{1}{2}\vartheta_{\gamma_0}(t,s)}
=\prod_{\substack{1\le t<s\le 4N\\ t,s\not\in R_K}}(x_s-x_t)^{\frac{1}{2}\vartheta_{\gamma_0}(t,s)}
\times\prod_{\substack{1\le n\le 4N\\ n\not\in R_K}}\left|\frac{x_n-x_{2j_{i+1}-1}}{x_n-x_{2j_{i+1}}}\right|^{\frac{1}{2}\vartheta_{\gamma_0}(n, 2j_{i+1}-1)}\times(x_{2j_{i+1}}-x_{2j_{i+1}-1})^{-\frac{1}{2}}.\]
\[\prod_{\substack{1\le t<s\le 4N\\ t,s \not\in R_{K_1}}}(x_s-x_t)^{\frac{1}{2}\vartheta_{\gamma_1}(t,s)}
=\prod_{\substack{1\le t<s\le 4N\\ t,s\not\in R_K}}(x_s-x_t)^{\frac{1}{2}\vartheta_{\gamma_0}(t,s)}\times \prod_{\substack{1\le n\le 4N\\ n\not\in R_K}}\left|\frac{x_n-x_{2j_{i+1}-1}}{x_n-x_{2j_{i+1}}}\right|^{-\frac{1}{2}\vartheta_{\gamma_0}(n, 2j_{i+1}-1)}\times(x_{2j_{i+1}}-x_{2j_{i+1}-1})^{-\frac{1}{2}}.\]
Plugging into~\eqref{eqn::purepartitionfusionallinduction} and denoting $\delta=x_{2j_{i+1}}-x_{2j_{i+1}-1}$, we have
\begin{align}\label{eqn::purepartitionfusionallinduction2}
&\lim_{\substack{x_{2j-1}, x_{2j}\to y_j, \\\forall j\in K}}\frac{\sum_{\gamma\in\mathcal{P}_{J, K}^{\alpha, \gamma_0}}\LM^{-1}_{\alpha,\gamma}\LU_{\gamma}(x_1, \ldots, x_{4N})}{\prod_{j\in K}(x_{2j}-x_{2j-1})^{1/2}}\\
=&\LM_{\alpha,\gamma_0}^{-1}\prod_{\substack{1\le t<s\le 4N\\ t,s\not\in R_K}}(x_s-x_t)^{\frac{1}{2}\vartheta_{\gamma_0}(t,s)}\times\lim_{\substack{\delta\to 0\\x_{2j_{i+1}-1}\to y_{j_{{i+1}}}}}\frac{1}{\delta}\left(\left(\prod_{\substack{1\le n\le 4N\\ n\not\in R_K}}\left|\frac{x_n-x_{2j_{i+1}-1}}{x_n-x_{2j_{i+1}}}\right|^{\vartheta_{\gamma_0}(n, 2j_{i+1}-1)}-1\right)\times S_0+S_0-S_1\right). \notag
\end{align}
Note that
\begin{align*}
Z^{\gamma_0}_{K_1,j_{\sigma_{2m+1}}}\cdots Z^{\gamma_0}_{K_1,j_{\sigma_{i}}}=&\left(Z^{\gamma_0}_{K,j_{\sigma_{2m+1}}}+\frac{\delta}{\left(x_{2j_{i+1}-1}-y_{j_{\sigma_{2m+1}}}\right)\left(x_{2j_{i+1}}-y_{j_{\sigma_{2m+1}}}\right)}\right)\cdots \left(Z^{\gamma_0}_{K,j_{\sigma_{i}}}+\frac{\delta}{\left(x_{2j_{i+1}-1}-y_{j_{i}}\right)\left(x_{2j_{i+1}}-y_{j_{i}}\right)}\right).
\end{align*}
\begin{align*}
Z^{\gamma_1}_{K_1,j_{\sigma_{2m+1}}}\cdots Z^{\gamma_1}_{K_1,j_{\sigma_{i}}}=&\left(Z^{\gamma_0}_{K,j_{\sigma_{2m+1}}}-\frac{\delta}{\left(x_{2j_{i+1}-1}-y_{j_{\sigma_{2m+1}}}\right)\left(x_{2j_{i+1}}-y_{j_{\sigma_{2m+1}}}\right)}\right)\cdots \left(Z^{\gamma_0}_{K,j_{\sigma_{i}}}-\frac{\delta}{\left(x_{2j_{i+1}-1}-y_{j_{i}}\right)\left(x_{2j_{i+1}}-y_{j_{i}}\right)}\right).
\end{align*}
Plugging into $S_0$ and $S_1$, we have
\begin{align*}
&\lim_{\substack{\delta\to 0\\x_{2j_{i+1}-1}\to y_{j_{{i+1}}}}}\frac{1}{\delta}\left(\prod_{\substack{1\le n\le 4N\\ n\not\in R_K}}\left|\frac{x_n-x_{2j_{i+1}-1}}{x_n-x_{2j_{i+1}}}\right|^{\vartheta_{\gamma_0}(n, 2j_{i+1}-1)}-1\right)\times S_0=Z_{K, j_{i+1}}^{\gamma_0}\left(\sum_{m=0}^{\lfloor\frac{i}{2}\rfloor}\sum_{\sigma\in\mathfrak{J}_i^m}
\frac{2^{m}Z^{\gamma_0}_{K,j_{\sigma_{2m+1}}}\cdots Z^{\gamma_0}_{K,j_{\sigma_{i}}}}{(y_{j_{\sigma_{1}}}-y_{j_{\sigma_{2}}})^{2}\times\cdots\times (y_{j_{\sigma_{2m-1}}}-y_{j_{\sigma_{2m}}})^{2}}
\right)
\end{align*}
\begin{align*}
\lim_{\substack{\delta\to 0\\x_{2j_{i+1}-1}\to y_{j_{{i+1}}}}}\frac{1}{\delta}(S_0-S_1)=\sum_{m=0}^{\lfloor\frac{i}{2}\rfloor}\sum_{\sigma\in\mathfrak{J}_i^m}
\sum_{r=2m+1}^i
\frac{2^{m+1}Z^{\gamma_0}_{K, j_{\sigma_{2m+1}}}\cdots Z^{\gamma_0}_{K, j_{\sigma_{r-1}}}Z^{\gamma_0}_{K, j_{\sigma_{r+1}}}\cdots Z^{\gamma_0}_{K, j_{\sigma_{i}}}}{(y_{j_{\sigma_{1}}}-y_{j_{\sigma_{2}}})^{2}\times\cdots\times (y_{j_{\sigma_{2m-1}}}-y_{j_{\sigma_{2m}}})^{2}\times(y_{j_{i+1}}-y_{j_{\sigma_r}})^2}
\end{align*}
Plugging into~\eqref{eqn::purepartitionfusionallinduction2}, we see that it remains to show 
\begin{align}\label{eqn::purepartitionfusionallaux3}
&\sum_{m=0}^{\lfloor\frac{i+1}{2}\rfloor}\sum_{\tau\in\mathfrak{J}_{i+1}^m}
\frac{2^{m}Z^{\gamma_0}_{K,j_{\tau_{2m+1}}}\cdots Z^{\gamma_0}_{K,j_{\tau_{i+1}}}}{(y_{j_{\tau_{1}}}-y_{j_{\tau_{2}}})^{2}\times\cdots\times (y_{j_{\tau_{2m-1}}}-y_{j_{\tau_{2m}}})^{2}}\\
&=\sum_{m=0}^{\lfloor\frac{i}{2}\rfloor}\sum_{\sigma\in\mathfrak{J}_i^m}
\left(\sum_{r=2m+1}^i
\frac{2^{m+1}Z^{\gamma_0}_{K, j_{\sigma_{2m+1}}}\cdots Z^{\gamma_0}_{K, j_{\sigma_{r-1}}}Z^{\gamma_0}_{K, j_{\sigma_{r+1}}}\cdots Z^{\gamma_0}_{K, j_{\sigma_{i}}}}{(y_{j_{\sigma_{1}}}-y_{j_{\sigma_{2}}})^{2}\times\cdots\times (y_{j_{\sigma_{2m-1}}}-y_{j_{\sigma_{2m}}})^{2}\times(y_{j_{i+1}}-y_{j_{\sigma_r}})^2}+\frac{2^{m}Z^{\gamma_0}_{K,j_{\sigma_{2m+1}}}\cdots Z^{\gamma_0}_{K,j_{\sigma_{i}}}Z^{\gamma_0}_{K,j_{i+1}}}{(y_{j_{\sigma_{1}}}-y_{j_{\sigma_{2}}})^{2}\times\cdots\times (y_{j_{\sigma_{2m-1}}}-y_{j_{\sigma_{2m}}})^{2}}\right).\notag
\end{align}

For $\tau\in\mathfrak{J}_{i+1}^m$, let us consider the location of $i+1$ in $\tau$. If $\tau_{i+1}=i+1$, we define $\sigma_j=\tau_j$ for $1\le j\le i$, then $\sigma\in \mathfrak{J}_i^m$. Thus, for $0\le m\le \lfloor\frac{i}{2}\rfloor$, we have
\begin{align}\label{eqn::purepartitionfusionallaux4}
\sum_{\substack{\tau\in \mathfrak{J}_{i+1}^m: \\ \tau_{i+1}=i+1}}
\frac{2^{m}Z^{\gamma_0}_{K,j_{\tau_{2m+1}}}\cdots Z^{\gamma_0}_{K,j_{\tau_{i+1}}}}{(y_{j_{\tau_{1}}}-y_{j_{\tau_{2}}})^{2}\times\cdots\times (y_{j_{\tau_{2m-1}}}-y_{j_{\tau_{2m}}})^{2}}
=\sum_{\sigma\in\mathfrak{J}_i^m}
\frac{2^{m}Z^{\gamma_0}_{K,j_{\sigma_{2m+1}}}\cdots Z^{\gamma_0}_{K,j_{\sigma_{i}}}Z_{K, j_{i+1}}^{\gamma_0}}{(y_{j_{\sigma_{1}}}-y_{j_{\sigma_{2}}})^{2}\times\cdots\times (y_{j_{\sigma_{2m-1}}}-y_{j_{\sigma_{2m}}})^{2}}.
\end{align}
If $\tau_{i+1}<i+1$, we define a mapping for each $m\in \{1, 2, \ldots, \lfloor\frac{i}{2}\rfloor\}$: 
\begin{align*}
T_{m}\, :\, \{\tau\in \mathfrak{J}_{i+1}^m: \tau_{i+1}<i+1\} \longrightarrow \mathfrak{J}_i^{m-1}\times\{2m-1,\ldots,i\}, \quad \tau\mapsto (\sigma, r)
\end{align*}
in the following way. For $\tau\in\mathfrak{J}_{i+1}^m$ and $\tau_{i+1}<i+1$, we must have $\tau_{2k}=i+1$ for some $1\le k\le m$. We set $\sigma_{j}=\tau_{j}$, for $j\le 2k-2$; we set $\sigma_{j}=\tau_{j+2}$, for $2k-1\le j\le 2m-2$; and we set $\{\sigma_{2m-1},\ldots,\sigma_{i}\}=\{\tau_{2k-1},\tau_{2m+1},\ldots,\tau_{i+1}\}$ such that $\sigma_{2m-1}<\ldots<\sigma_{i}$. Suppose $\sigma_{r}=\tau_{2k-1}$ for some $r\in\{2m-1,\ldots,i\}$. This defines the map $T_{m}(\tau)=(\sigma, r)$. We argue that $T_m$ is a bijection. For any $(\sigma,r)\in\frak{J}_i^{m-1}\times\{2m-1,\ldots,i\}$, we can define $\tau$ as follows: $\{\{\tau_{1},\tau_{2}\},\ldots,\{\tau_{2m-1},\tau_{2m}\}\}=\{\{\sigma_{1},\sigma_{2}\},\ldots,\{\sigma_{2m-3},\sigma_{2m-2}\},\{\sigma_{r},i+1\}\}$ and $\{\tau_{2m+1},\ldots,\tau_{i+1}\}=\{\sigma_{2m-1},\ldots,\sigma_{r-1},\sigma_{r+1},\ldots,\sigma_{i}\}$, such that $\tau_1<\tau_3<\cdots<\tau_{2m-1}$, $\tau_{2j-1}<\tau_{2j}$ for $j\le m$ and $\tau_{2m+1}<\cdots<\tau_{i+1}$. Then we have $\tau\in \mathfrak{J}_{i+1}^m$ and $\tau_{i+1}<i+1$. This implies $T_{m}$ is a bijection. Thus, we have 
\begin{align}\label{eqn::purepartitionfusionallaux5}
\sum_{m=1}^{\lfloor\frac{i}{2}\rfloor}\sum_{\substack{\tau\in \mathfrak{J}_{i+1}^m: \\ \tau_{i+1}<i+1}}
\frac{2^{m}Z^{\gamma_0}_{K,j_{\tau_{2m+1}}}\cdots Z^{\gamma_0}_{K,j_{\tau_{i+1}}}}{(y_{j_{\tau_{1}}}-y_{j_{\tau_{2}}})^{2}\times\cdots\times (y_{j_{\tau_{2m-1}}}-y_{j_{\tau_{2m}}})^{2}}
&=\sum_{m=1}^{\lfloor\frac{i}{2}\rfloor}\sum_{\sigma\in\mathfrak{J}_i^{m-1}}
\sum_{r=2m-1}^i
\frac{2^{m}Z^{\gamma_0}_{K, j_{\sigma_{2m-1}}}\cdots Z^{\gamma_0}_{K, j_{\sigma_{r-1}}}Z^{\gamma_0}_{K, j_{\sigma_{r+1}}}\cdots Z^{\gamma_0}_{K, j_{\sigma_{i}}}}{(y_{j_{\sigma_{1}}}-y_{j_{\sigma_{2}}})^{2}\times\cdots\times (y_{j_{\sigma_{2m-3}}}-y_{j_{\sigma_{2m-2}}})^{2}\times(y_{j_{i+1}}-y_{j_{\sigma_r}})^2}\notag\\
&=\sum_{m=0}^{\lfloor\frac{i}{2}\rfloor-1}\sum_{\sigma\in\mathfrak{J}_i^m}
\sum_{r=2m+1}^i
\frac{2^{m+1}Z^{\gamma_0}_{K, j_{\sigma_{2m+1}}}\cdots Z^{\gamma_0}_{K, j_{\sigma_{r-1}}}Z^{\gamma_0}_{K, j_{\sigma_{r+1}}}\cdots Z^{\gamma_0}_{K, j_{\sigma_{i}}}}{(y_{j_{\sigma_{1}}}-y_{j_{\sigma_{2}}})^{2}\times\cdots\times (y_{j_{\sigma_{2m-1}}}-y_{j_{\sigma_{2m}}})^{2}\times(y_{j_{i+1}}-y_{j_{\sigma_r}})^2}
\end{align}

Combining~\eqref{eqn::purepartitionfusionallaux4} and~\eqref{eqn::purepartitionfusionallaux5}, we obtain~\eqref{eqn::purepartitionfusionallaux3} for even $i$. 
Next, suppose $i$ is odd and denote $\ell=\frac{i+1}{2}$. By~\eqref{eqn::purepartitionfusionallaux4} and~\eqref{eqn::purepartitionfusionallaux5}, we have
\begin{align*}
&\sum_{m=0}^{\ell}\sum_{\substack{\tau\in\mathfrak{J}_{i+1}^m: \\\tau_{i+1}=i+1}}
\frac{2^{m}Z^{\gamma_0}_{K,j_{\tau_{2m+1}}}\cdots Z^{\gamma_0}_{K,j_{\tau_{i+1}}}}{(y_{j_{\tau_{1}}}-y_{j_{\tau_{2}}})^{2}\times\cdots\times (y_{j_{\tau_{2m-1}}}-y_{j_{\tau_{2m}}})^{2}}\\
&=\sum_{m=0}^{\lfloor\frac{i}{2}\rfloor}\sum_{\sigma\in\mathfrak{J}_i^m}
\frac{2^{m}Z^{\gamma_0}_{K,j_{\sigma_{2m+1}}}\cdots Z^{\gamma_0}_{K,j_{\sigma_{i}}}Z_{K, j_{i+1}}^{\gamma_0}}{(y_{j_{\sigma_{1}}}-y_{j_{\sigma_{2}}})^{2}\times\cdots\times (y_{j_{\sigma_{2m-1}}}-y_{j_{\sigma_{2m}}})^{2}}
+\sum_{\substack{\tau\in \mathfrak{J}_{i+1}^{\ell}: \\\tau_{i+1}=i+1}}
\frac{2^{\ell}}{(y_{j_{\tau_{1}}}-y_{j_{\tau_{2}}})^{2}\times\cdots\times (y_{j_{\tau_{i}}}-y_{j_{\tau_{i+1}}})^{2}}. \\
&\sum_{m=0}^{\ell}\sum_{\substack{\tau\in\mathfrak{J}_{i+1}^m: \\\tau_{i+1}<i+1}}
\frac{2^{m}Z^{\gamma_0}_{K,j_{\tau_{2m+1}}}\cdots Z^{\gamma_0}_{K,j_{\tau_{i+1}}}}{(y_{j_{\tau_{1}}}-y_{j_{\tau_{2}}})^{2}\times\cdots\times (y_{j_{\tau_{2m-1}}}-y_{j_{\tau_{2m}}})^{2}}\\
&=\sum_{m=1}^{\lfloor\frac{i}{2}\rfloor}\sum_{\substack{\tau\in \mathfrak{J}_{i+1}^m: \\ \tau_{i+1}<i+1}}
\frac{2^{m}Z^{\gamma_0}_{K,j_{\tau_{2m+1}}}\cdots Z^{\gamma_0}_{K,j_{\tau_{i+1}}}}{(y_{j_{\tau_{1}}}-y_{j_{\tau_{2}}})^{2}\times\cdots\times (y_{j_{\tau_{2m-1}}}-y_{j_{\tau_{2m}}})^{2}}+
\sum_{\substack{\tau\in \mathfrak{J}_{i+1}^{\ell}: \\\tau_{i+1}<i+1}}
\frac{2^{\ell}}{(y_{j_{\tau_{1}}}-y_{j_{\tau_{2}}})^{2}\times\cdots\times (y_{j_{\tau_{i}}}-y_{j_{\tau_{i+1}}})^{2}}\\
&=\sum_{m=0}^{\lfloor\frac{i}{2}\rfloor-1}\sum_{\sigma\in\mathfrak{J}_i^m}
\sum_{r=2m+1}^i
\frac{2^{m+1}Z^{\gamma_0}_{K, j_{\sigma_{2m+1}}}\cdots Z^{\gamma_0}_{K, j_{\sigma_{r-1}}}Z^{\gamma_0}_{K, j_{\sigma_{r+1}}}\cdots Z^{\gamma_0}_{K, j_{\sigma_{i}}}}{(y_{j_{\sigma_{1}}}-y_{j_{\sigma_{2}}})^{2}\times\cdots\times (y_{j_{\sigma_{2m-1}}}-y_{j_{\sigma_{2m}}})^{2}\times(y_{j_{i+1}}-y_{j_{\sigma_r}})^2}+
\sum_{\substack{\tau\in \mathfrak{J}_{i+1}^{\ell}: \\\tau_{i+1}<i+1}}
\frac{2^{\ell}}{(y_{j_{\tau_{1}}}-y_{j_{\tau_{2}}})^{2}\times\cdots\times (y_{j_{\tau_{i}}}-y_{j_{\tau_{i+1}}})^{2}}\\
\end{align*}
Combining these two, in order to get~\eqref{eqn::purepartitionfusionallaux3}, it remains to show 
\begin{equation}\label{eqn::purepartitionfusionallaux6}
\sum_{\tau\in \mathfrak{J}_{i+1}^{\ell}}
\frac{2^{\ell}}{(y_{j_{\tau_{1}}}-y_{j_{\tau_{2}}})^{2}\times\cdots\times (y_{j_{\tau_{i}}}-y_{j_{\tau_{i+1}}})^{2}}
=\sum_{\sigma\in\mathfrak{J}_i^{\ell-1}}
\frac{2^{\ell}}{(y_{j_{\sigma_1}}-y_{j_{\sigma_2}})^2\times\cdots\times(y_{j_{\sigma_{i-2}}}-y_{j_{\sigma_{i-1}}})^2\times (y_{j_{i+1}}-y_{j_{\sigma_i}})^2}.
\end{equation}

To derive~\eqref{eqn::purepartitionfusionallaux6}, we define
$T\, :\, \mathfrak{J}_{i+1}^{\ell}\longrightarrow \mathfrak{J}_i^{\ell-1}$
in the following way. For $\tau\in\mathfrak{J}_{i+1}^{\ell}$, we must have $\tau_{2k}=i+1$ for some $1\le k\le \ell$. We set $\sigma_{j}=\tau_{j}$, for $j\le 2k-2$; we set $\sigma_{j}=\tau_{j+2}$, for $2k-1\le j\le 2\ell-2$; and we set $\sigma_{i}=\tau_{2k-1}$.  This defines the map $T(\tau)=\sigma$. We argue that $T$ is a bijection. For any $\sigma \in\frak{J}_i^{\ell-1}$, we can define $\tau$ as follows: $\{\{\tau_{1},\tau_{2}\},\ldots,\{\tau_{i},\tau_{i+1}\}\}=\{\{\sigma_{1},\sigma_{2}\},\ldots,\{\sigma_{i-2},\sigma_{i-1}\},\{\sigma_{i},i+1\}\}$ such that $\tau_1<\tau_3<\cdots<\tau_{i}$, $\tau_{2j-1}<\tau_{2j}$ for $j\le \ell$ . Then we have $\tau\in \mathfrak{J}_{i+1}^{\ell}$. This implies $T$ is a bijection, and gives~\eqref{eqn::purepartitionfusionallaux6}. Hence, we complete the proof of~\eqref{eqn::purepartitionfusionallaux3} for odd $i$, and complete the proof of~\eqref{eqn::purepartitionfusionallaux3}. 

Eq.~\eqref{eqn::purepartitionfusionallaux3} gives~\eqref{eqn::purepartitionfusionallaux2} for $\#K=i+1$ and completes the induction. Hence, Eq.~\eqref{eqn::purepartitionfusionallaux2} holds for all $K\subset J$. 
Taking $K=J$ in~\eqref{eqn::purepartitionfusionallaux2}, we obtain~\eqref{eqn::purepartitionfusionallaux1}. This completes the proof. 
\end{landscape}
\end{proof}

\begin{proof}[Proof of Proposition~\ref{prop::mGFFpdecov}]
The conformal covariance~\eqref{eqn::mGFFcov} is a consequence of~\eqref{eqn::multipleSLEsCOV} and the existence of the limit~\eqref{eqn::purepartitionfusionall}. It remains to show the third order PDE~\eqref{eqn::mGFFpde3rd}. We will show~\eqref{eqn::mGFFpde3rd} with $j=1$, and the other cases can be proved similarly. For $y_1<\cdots<y_{2N}$, we denote $y_{kl}=y_k-y_l$ for $k\neq l$. For $x_1<\cdots<x_{4N}$, we denote $x_{kl}=x_k-x_l$ for $k\neq l$. 

Fix $N\ge 1$ and the index valences $\varsigma=(2, \ldots, 2)$ of length $2N$. 
Fix $\hat{\alpha}\in\LP_{\varsigma}$ and let $\alpha=\tau(\hat{\alpha})\in\Pair_{2N}$ be the associated planar pair partition as defined in Section~\ref{subsec::linkpatterntopairpartition}. 
We set $F_0(x_1, \ldots, x_{4N})=\PartF_{\alpha}(x_1, \ldots, x_{4N})$ for $x_1<\cdots<x_{4N}$. We define $F_j$ by induction on $j$. 
Fix $j\in\{1,2, \ldots, 2N\}$ and suppose $F_{j-1}$ is defined. For $y_1<\cdots<y_j<x_{2j+1}<\cdots<x_{4N}$ and $y_{j-1}<x_{2j-1}<x_{2j}<x_{2j+1}$, we define
\begin{equation*}
F_j(y_1, \ldots, y_j, x_{2j+1}, \ldots, x_{4N}):=\lim_{x_{2j-1}, x_{2j}\to y_j}\frac{F_{j-1}(y_1, \ldots, y_{j-1}, x_{2j-1}, x_{2j}, \ldots, x_{4N})}{(x_{2j}-x_{2j-1})^{1/2}}. 
\end{equation*}
From Proposition~\ref{prop::purepartitionfusionall}, we see that $F_1, \ldots, F_{2N}$ are well-defined and $F_{2N}=\PartF_{\hat{\alpha}}$. We will show the following PDE by induction on $j\in\{1,2,\ldots, 2N\}$: 
\begin{align}\label{eqn::pde3rdintermediate}
&\LD_j F_j(y_1, \ldots, y_j, x_{2j+1}, \ldots, x_{4N})=0, 
\end{align}where 
\begin{align*}
\LD_j=\frac{\partial^3}{\partial y_1^3}
&-4\left(\sum_{2\le i\le j}\left(\frac{1}{y_{i1}^2}-\frac{1}{y_{i1}}\frac{\partial}{\partial y_i}\right)+\sum_{2j+1\le i\le 4N}\left(\frac{1/4}{(x_i-y_1)^2}-\frac{1}{(x_i-y_1)}\frac{\partial}{\partial x_i}\right)\right)\frac{\partial}{\partial y_1}\\
&+2\left(\sum_{2\le i\le j}\left(\frac{2}{y_{i1}^3}-\frac{1}{y_{i1}^2}\frac{\partial}{\partial y_i}\right)+\sum_{2j+1\le i\le 4N}\left(\frac{1/2}{(x_i-y_1)^3}-\frac{1}{(x_i-y_1)^2}\frac{\partial}{\partial x_i}\right)\right). 
\end{align*}

When $j=1$, PDE~\eqref{eqn::pde3rdintermediate} holds due to~\eqref{eqn::pdefusion3rd} with $j=1$. For $j\ge 2$, suppose~\eqref{eqn::pde3rdintermediate} holds for $j-1$, and we will show it for $j$. Comparing the two operators $\LD_{j-1}$ and $\LD_{j}$, we denote their overlap by
\begin{align*}
\LO_{j-1}=\frac{\partial^3}{\partial y_1^3}
&-4\left(\sum_{2\le i\le j-1}\left(\frac{1}{y_{i1}^2}-\frac{1}{y_{i1}}\frac{\partial}{\partial y_i}\right)+\sum_{2j+1\le i\le 4N}\left(\frac{1/4}{(x_i-y_1)^2}-\frac{1}{(x_i-y_1)}\frac{\partial}{\partial x_i}\right)\right)\frac{\partial}{\partial y_1}\\
&+2\left(\sum_{2\le i\le j-1}\left(\frac{2}{y_{i1}^3}-\frac{1}{y_{i1}^2}\frac{\partial}{\partial y_i}\right)+\sum_{2j+1\le i\le 4N}\left(\frac{1/2}{(x_i-y_1)^3}-\frac{1}{(x_i-y_1)^2}\frac{\partial}{\partial x_i}\right)\right). 
\end{align*}
Then, we have
\begin{align*}
\LD_{j-1}=\LO_{j-1}&+\left(\frac{-1}{(x_{2j-1}-y_1)^2}+\frac{4}{(x_{2j-1}-y_1)}\frac{\partial}{\partial x_{2j-1}}\right)\frac{\partial}{\partial y_1}
+\left(\frac{-1}{(x_{2j}-y_1)^2}+\frac{4}{(x_{2j}-y_1)}\frac{\partial}{\partial x_{2j}}\right)\frac{\partial}{\partial y_1}\\
&+\left(\frac{1}{(x_{2j-1}-y_1)^3}+\frac{-2}{(x_{2j-1}-y_1)^2}\frac{\partial}{\partial x_{2j-1}}\right)
+\left(\frac{1}{(x_{2j}-y_1)^3}+\frac{-2}{(x_{2j}-y_1)^2}\frac{\partial}{\partial x_{2j}}\right)\\
\LD_j=\LO_{j-1}&+\left(\frac{-4}{y_{j1}^2}+\frac{4}{y_{j1}}\frac{\partial}{\partial y_j}\right)\frac{\partial}{\partial y_1}
+\left(\frac{4}{y_{j1}^3}+\frac{-2}{y_{j1}^2}\frac{\partial}{\partial y_j}\right). 
\end{align*}
We set $G_{j-1}=(x_{2j}-x_{2j-1})^{-1/2}F_{j-1}$. From $\LD_{j-1}F_{j-1}=0$, we have
\begin{align}\label{eqn::3rdpdeaux1}
0=&\LO_{j-1}G_{j-1}
+\left(\frac{-1}{(x_{2j-1}-y_1)^2}+\frac{-1}{(x_{2j}-y_1)^2}+\frac{-2}{(x_{2j-1}-y_1)(x_{2j}-y_1)}\right)\frac{\partial}{\partial y_1}G_{j-1}\notag\\
&+\left(\frac{1}{(x_{2j-1}-y_1)^3}+\frac{1}{(x_{2j}-y_1)^3}+\frac{(x_{2j}-x_{2j-1})+2(x_{2j-1}-y_1)}{(x_{2j-1}-y_1)^2(x_{2j}-y_1)^2}\right)G_{j-1}\notag\\
&+\left(\frac{4}{(x_{2j-1}-y_1)}\frac{\partial}{\partial y_1}+\frac{-2}{(x_{2j-1}-y_1)^2}\right)\frac{\partial}{\partial x_{2j-1}}G_{j-1}+\left(\frac{4}{(x_{2j}-y_1)}\frac{\partial}{\partial y_1}+\frac{-2}{(x_{2j}-y_1)^2}\right)\frac{\partial}{\partial x_{2j}}G_{j-1}.
\end{align}
We will argue that
\begin{align}\label{eqn::3rdpdeaux2}
\LK G_{j-1}(y_1, \ldots, y_{j-1}, x_{2j-1}, \ldots, x_{4N})\to \LK F_j(y_1, \ldots, y_j, x_{2j+1}, \ldots, x_{4N}),\quad\text{as }x_{2j-1}, x_{2j}\to y_j, 
\end{align}
where 
\[\LK\in\left\{1, \, \frac{\partial}{\partial y_1},\,\frac{\partial^3}{\partial y_1^3}\right\}\cup\left\{\frac{\partial}{\partial y_i},\,\frac{\partial^2}{\partial y_i\partial y_1}: 2\le i\le j-1\right\}\cup\left\{\frac{\partial}{\partial x_n}, \,\frac{\partial^2}{\partial x_n\partial y_1}: 2j+1\le n\le 4N \right\};\]
and that
\begin{align}
&\left(\frac{1}{(x_{2j-1}-y_1)^2}\frac{\partial}{\partial x_{2j-1}}+\frac{1}{(x_{2j}-y_1)^2}\frac{\partial}{\partial x_{2j}}\right)G_{j-1}(y_1, \ldots, y_{j-1}, x_{2j-1}, \ldots, x_{4N})\notag\\
&\to \frac{1}{y_{j1}^2}\frac{\partial}{\partial y_j}F_j(y_1, \ldots, y_j, x_{2j+1}, \ldots, x_{4N}),\quad\text{as }x_{2j-1}, x_{2j}\to y_j,\label{eqn::3rdpdeaux3}\\
&\left(\frac{1}{(x_{2j-1}-y_1)}\frac{\partial}{\partial x_{2j-1}}+\frac{1}{(x_{2j}-y_1)}\frac{\partial}{\partial x_{2j}}\right)\frac{\partial}{\partial y_1}G_{j-1}(y_1, \ldots, y_{j-1}, x_{2j-1}, \ldots, x_{4N})\notag\\
&\to \frac{1}{y_{j1}}\frac{\partial^2}{\partial y_j\partial y_1}F_j(y_1, \ldots, y_j, x_{2j+1}, \ldots, x_{4N}),\quad\text{as }x_{2j-1}, x_{2j}\to y_j. \label{eqn::3rdpdeaux4}
\end{align}

From the proof of Proposition~\ref{prop::purepartitionfusionall}, Eq.~\eqref{eqn::3rdpdeaux2} holds for $\LK=1$. Furthermore, as $G_{j-1}$ is a finite linear combination of terms of the form 
$\prod (x_a-x_b)^{\pm 1/2}\times \prod (y_k-y_l)^{\pm 2}\times \prod (x_n-y_m)^{\pm 1}$, we may view $G_{j-1}$ as a function on distinct complex variables $(y_1, \ldots, y_{j-1}, x_{2j-1}, x_{2j}, \ldots, x_{4N})$. We fix arbitrarily distinct complex points $(y_1, \ldots, y_{j-1}, x_{2j+1}, \ldots, x_{4N})$  and denote $\boldsymbol{y}=(y_1, \ldots, y_{j-1})$ and $\boldsymbol{x}=(x_{2j+1}, \ldots, x_{4N})$. Then $G_{j-1}$ is a meromorphic function of $\eps=x_{2j}-x_{2j-1}$  and its Laurent series can be written as: 
\begin{align*}
H_{j-1}(\boldsymbol{y},x_{2j-1},\epsilon,\boldsymbol{x}):=&G_{j-1}(\boldsymbol{y}, x_{2j-1},x_{2j-1}+\epsilon,\boldsymbol{x})\\
=&F_j(\boldsymbol{y}, x_{2j-1}, \boldsymbol{x})+\sum_{n\ge 1}K_n(\boldsymbol{y}, x_{2j-1}, \boldsymbol{x})\eps^n,
\end{align*}
where $K_n$ is a finite linear combination of terms of the form $\prod (x_a-x_b)^{p/2}\times \prod (y_k-y_l)^{\pm 2}\times \prod (x_n-y_m)^{q}$ with $p,q\in\Z$. 

Now, we prove~\eqref{eqn::3rdpdeaux3}. We have
\begin{align*}
&\left(\frac{1}{(x_{2j-1}-y_1)^2}\frac{\partial}{\partial x_{2j-1}}+\frac{1}{(x_{2j}-y_1)^2}\frac{\partial}{\partial x_{2j}}\right)G_{j-1}(\boldsymbol{y}, x_{2j-1}, x_{2j}, \boldsymbol{x})\notag\\
&=\left(\frac{1}{(x_{2j-1}-y_1)^2}\frac{\partial}{\partial x_{2j-1}}+\left(\frac{1}{(x_{2j}-y_1)^2}-\frac{1}{(x_{2j-1}-y_1)^{2}}\right)\frac{\partial}{\partial \epsilon}\right)H_{j-1}(\boldsymbol{y},x_{2j-1},\epsilon,\boldsymbol{x}).
\end{align*}
Thus, it suffices to prove, as $x_{2j-1}\to y_j, \eps\to 0$, 
\begin{align}\label{partialx}
\frac{\partial}{\partial x_{2j-1}}H_{j-1}(\boldsymbol{y},x_{2j-1},\epsilon,\boldsymbol{x})\to \frac{\partial}{\partial y_j}F_j(\boldsymbol{y}, y_j, \boldsymbol{x}),\quad\frac{\partial}{\partial\epsilon} H_{j-1}(\boldsymbol{y},x_{2j-1},\epsilon,\boldsymbol{x})\to K_{1}(\boldsymbol{y},y_{j},\boldsymbol{x}). 
\end{align}

We define $\delta_{0}=\min\{\frac{x_{2j+1}-y_{j}}{4},\frac{y_{j}-y_{j-1}}{4}\}$ and 
$H_{j-1}(\boldsymbol{y},x_{2j-1},0,\boldsymbol{x}):=F_j(\boldsymbol{y},x_{2j-1}, \boldsymbol{x})$.
Then, the function $(x_{2j-1}, \eps)\mapsto H_{j-1}(\boldsymbol{y},x_{2j-1},\epsilon,\boldsymbol{x})$ is continuous on $[y_j-\delta_{0},y_j+\delta_{0}]\times B(0,\delta_{0})$ where $B(0,\delta_{0}):=\{z\in\C:d(z,0)<\delta_{0}\}$. Moreover, for every $x_{2j-1}\in [y_j-\delta_{0},y_j+\delta_{0}]$, the function $\eps\mapsto H_{j-1}(\boldsymbol{y},x_{2j-1},\epsilon,\boldsymbol{x})$ is holomorphic in $B(0,\delta_{0})\setminus\{0\}$. Thus, the function $\eps\mapsto H_{j-1}(\boldsymbol{y},x_{2j-1},\epsilon,\boldsymbol{x})$ is holomorphic in $B(0,\delta_{0})$. Then, we have
\[K_n(\boldsymbol{y}, x_{2j-1}, \boldsymbol{x})=\frac{1}{2\pi i}\int_{\partial B(0,\delta_{0})}\frac{H_{j-1}(\boldsymbol{y},x_{2j-1},z,\boldsymbol{x})}{z^{n+1}}dz.\]

Note that, there exists $M=M(\boldsymbol{y}, \boldsymbol{x}, y_j)>0$ such that, for all $x_{2j-1}\in [y_{j}-\delta_{0},y_{j}+\delta_{0}]$ and $z\in B(0,2\delta_{0})\setminus B(0,\frac{\delta_{0}}{2})$, 
\[|H_{j-1}(\boldsymbol{y},x_{2j-1},z,\boldsymbol{x})|\le M, \quad\text{and}\quad \left|\frac{\partial}{\partial x_{2j-1}}H_{j-1}(\boldsymbol{y},x_{2j-1},z,\boldsymbol{x})\right|\le M. \]
Thus, we have
\[|K_n(\boldsymbol{y}, x_{2j-1}, \boldsymbol{x})|\le \frac{M}{\delta_{0}^{n}}, \quad\text{and}\quad
\left|\frac{\partial}{\partial x_{2j-1}}K_n(\boldsymbol{y}, x_{2j-1}, \boldsymbol{x})\right|=\left|\frac{1}{2\pi i}\int_{\partial B(0,\delta_{0})}\frac{\frac{\partial}{\partial x_{2j-1}}H_{j-1}(\boldsymbol{y},x_{2j-1},z,\boldsymbol{x})}{z^{n+1}}dz\right|\le \frac{M}{\delta_{0}^{n}}.
\]
These imply that, for every $x_{2j-1}\in [y_{j}-\delta_{0},y_{j}+\delta_{0}]$ and $\eps\in B(0,\frac{\delta_{0}}{2})$, 
\begin{align*}
\frac{\partial}{\partial x_{2j-1}}H_{j-1}(\boldsymbol{y}, x_{2j-1},\epsilon, \boldsymbol{x})=&\frac{\partial}{\partial x_{2j-1}}F_j(\boldsymbol{y}, x_{2j-1}, \boldsymbol{x})+\sum_{n\ge 1}\frac{\partial}{\partial x_{2j-1}}K_n(\boldsymbol{y}, x_{2j-1}, \boldsymbol{x})\eps^n,
\end{align*}
and
\begin{align*}
\frac{\partial}{\partial \epsilon}H_{j-1}(\boldsymbol{y}, x_{2j-1},\epsilon, x_{2j+1} \ldots, x_{4N})=\sum_{n\ge 1}nK_n(\boldsymbol{y}, x_{2j-1}, \boldsymbol{x})\eps^{n-1}.
\end{align*}
These give~\eqref{partialx}, and complete the proof of~\eqref{eqn::3rdpdeaux3}. Eq.~\eqref{eqn::3rdpdeaux2} and~\eqref{eqn::3rdpdeaux4} can be proved in a similar way. 

Plugging~\eqref{eqn::3rdpdeaux2}-\eqref{eqn::3rdpdeaux4} into~\eqref{eqn::3rdpdeaux1}, and letting $x_{2j-1}, x_{2j}\to y_j$, we obtain $\LD_j F_j=0$. This completes the proof of~\eqref{eqn::pde3rdintermediate}. Taking $j=2N$ in~\eqref{eqn::pde3rdintermediate}, we obtain the third order PDE~\eqref{eqn::mGFFpde3rd} as desired. This completes the proof. 
\end{proof}

We end this subsection by a discussion on the solutions to the system of $2N$ PDEs in Proposition~\ref{prop::mGFFpdecov}. From the proof, the collection $\{\PartF_{\hat{\alpha}}: \hat{\alpha}\in\LP_{\varsigma}\}$ are solutions for the PDE system. From Lemma~\ref{lem::conformalblock3rdpde}, the collection $\{\LU_{\beta}^4: \beta\in\DP_N\}$ are also solutions for the PDE system. It is an interesting question to figure out the dimension of the solution space, and to find a basis for it. 

\subsection{Proof of Theorem~\ref{thm::main}}
\label{subsec::mgfffinal}

In this section, we complete the proof of Theorem~\ref{thm::main}. Before that, we first address the coefficient $\LM_{\omega,\tau(\hat{\alpha})}$ in the theorem. 
\begin{lemma}\label{lem::thmmaincoefficient} Fix $N\ge 1$ and the index valences $\varsigma=(2, \ldots, 2)$ of length $2N$. 
Define $\omega\in\DP_{2N}$ to be:
\[\omega(4j)=0, \quad \omega(4j+1)=1,\quad\omega(4j+2)=2,\quad\omega(4j+3)=1,\quad\omega(4j+4)=0,\quad\forall j\in\{0,1,\ldots, N-1\}.\]
For any $\hat{\alpha}\in\LP_{\varsigma}$, let $\tau(\hat{\alpha})\in \Pair_{2N}$ be the associated planar pair partition as defined in Section~\ref{subsec::linkpatterntopairpartition}. Recall the definition of the incidence matrix $\LM$ from~\eqref{eqn::KWleincidencematrix}. 
Then 
\begin{equation}\label{eqn::planarkwrelation}
\LM_{\omega, \beta}=1\quad\text{ implies }\quad \beta=\tau(\hat{\alpha}) \text{ for some }\hat{\alpha}\in\LP_{\varsigma}. 
\end{equation}
However, the converse does not hold in general. 
\end{lemma}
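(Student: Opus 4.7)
The plan is to identify both the image of $\tau$ and the set $\{\beta:\omega\KWle\beta\}$ explicitly, then compare them. First, from the definition of $\tau$ in Section~\ref{subsec::linkpatterntopairpartition}, splitting each original point $y_j$ of valence $2$ into two adjacent points (labelled $2j-1$ and $2j$) and attaching the two links meeting $y_j$ to these new endpoints produces a planar pair partition $\tau(\hat\alpha)\in\Pair_{2N}$ in which $\{2j-1,2j\}$ is never a link---otherwise $\hat\alpha$ would require a forbidden self-loop at $y_j$. Conversely, merging pairs of split points back recovers a unique $\hat\alpha\in\LP_\varsigma$ from any such $\beta$, so
\[
\tau(\LP_\varsigma)=\bigl\{\beta\in\Pair_{2N}:\{2j-1,2j\}\notin\beta\ \text{for all}\ j\in\{1,\ldots,2N\}\bigr\}.
\]

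Second, I read off from the definition of $\omega$ that its up-steps occur at positions $4k+1,4k+2$ and its down-steps at $4k+3,4k+4$ for $k=0,\ldots,N-1$. Under the correspondence between Dyck paths and planar pair partitions from Section~\ref{subsec::pairpartitionDyckpath}, the $a$-indices of $\omega$ therefore form $A:=\{n\in\{1,\ldots,4N\}:n\equiv 1,2\pmod 4\}$ and the $b$-indices form $B:=\{n\in\{1,\ldots,4N\}:n\equiv 3,0\pmod 4\}$. The key observation is that for every $j$ the pair $\{2j-1,2j\}$ lies entirely inside $A$ (when $j$ is odd) or entirely inside $B$ (when $j$ is even); it is never an $A$--$B$ pair. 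Since $\omega\KWle\beta$ forces every link of $\beta$ to connect an element of $A$ with one of $B$, no pair $\{2j-1,2j\}$ can appear in $\beta$, and the above characterization gives $\beta\in\tau(\LP_\varsigma)$. This proves~\eqref{eqn::planarkwrelation}.

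To see the converse fails in general, I exhibit a counterexample at $N=3$. Consider
\[
\beta=\bigl\{\{1,12\},\{2,9\},\{3,8\},\{4,5\},\{6,7\},\{10,11\}\bigr\},
\]
whose links are properly nested (so $\beta$ is planar) and which avoids $\{2j-1,2j\}$ for every $j\in\{1,\ldots,6\}$, giving $\beta\in\tau(\LP_\varsigma)$. However, the link $\{2,9\}$ has both endpoints in $A$ (since $2\equiv 2$ and $9\equiv 1\pmod 4$), hence $\omega\not\KWle\beta$. The only mildly delicate point in the argument above is the verification that $\tau$ restricts to a bijection onto the characterized set; this is immediate once one checks that the merging operation is a two-sided inverse, using that the planar cyclic order of links at each $y_j$ is preserved under splitting.
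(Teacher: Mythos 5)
Your proof is correct and takes essentially the same route as the paper: you reduce \eqref{eqn::planarkwrelation} to showing that no link $\{2j-1,2j\}$ can occur in $\beta$, and you deduce this from the fact that $\omega\KWle\beta$ forces every link of $\beta$ to pair an up-step index of $\omega$ (residues $1,2\bmod 4$) with a down-step index (residues $3,0\bmod 4$), whereas $\{2j-1,2j\}$ always has both endpoints of the same type. As a bonus, your explicit $N=3$ pattern $\{\{1,12\},\{2,9\},\{3,8\},\{4,5\},\{6,7\},\{10,11\}\}$ is a valid witness that the converse fails (the paper asserts this without exhibiting one): it is non-crossing, avoids every pair $\{2j-1,2j\}$, and contains the link $\{2,9\}$ whose endpoints are both $\equiv 1,2\pmod 4$.
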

\begin{proof}
It suffices to prove that $\wedge_{2j-1}\notin\beta$ for every $1\le j\le 2N$.
By definition, 
\[\omega=\{\{4j+1,4j+4\},\{4j+2,4j+3\}:1\le j\le N-1\}.\]
Note that $\LM_{\omega, \beta}=1$ implies there exists a $\sigma$ which is a permutation of $\{4j+3,4j+4:0\le j\le N-1\}$ such that 
\[ \beta = \{ \{4j+1, \sigma(4j+4)\}, \{4j+2, \sigma(4j+3)\}: 0\le j\le N-1 \}. \]
This implies $\wedge_{2j-1}\notin\beta$ for every $1\le j\le 2N$.
\end{proof}

\begin{figure}[ht!]
\begin{center}
\includegraphics[width=0.24\textwidth]{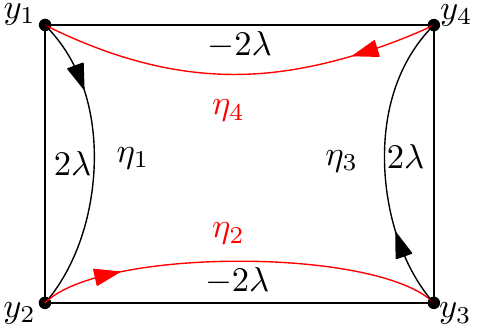}$\quad$
\includegraphics[width=0.24\textwidth]{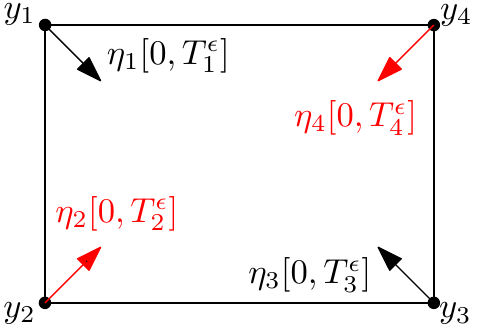}$\quad$
\includegraphics[width=0.24\textwidth]{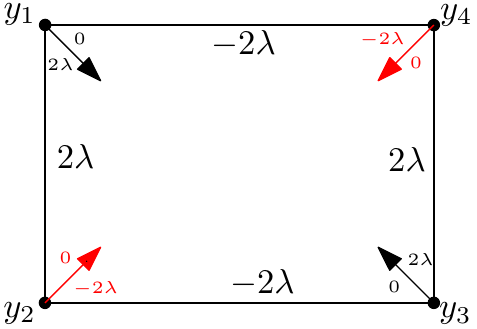}
\end{center}
\caption{\label{fig::mgffproof} Consider continuum GFF $\Gamma+u$ in rectangle with alternating boundary data. In the left panel, we have four level lines: Let $\eta_1$ (resp. $\eta_3$) be the level line of $\Gamma+u$ starting from $y_1$ (resp. from $y_3$) with height $\lambda$. These two curves are in black.Let $\eta_2$ (resp. $\eta_4$) be the level line of $-(\Gamma+u)$ starting from $y_2$ (resp. from $y_4$) with height $\lambda$. These two curves are in red. The middle panel indicates the domain $H_{\eps}$ which is obtained by removing from $\HH$ the four pieces $\eta_i[0,T_i^{\eps}]$ with $i=1,2, 3, 4$. 
In the right panel, we see that the boundary data of $\Gamma+u$ in $H_{\eps}$ is piecewise constant: $0, 2\lambda, 0, -2\lambda, 0, 2\lambda, 0, -2\lambda$.}
\end{figure}

\begin{proof}[Proof of Theorem~\ref{thm::main}]
We use the same notations as in Section~\ref{subsec::mgffcvg}.
By conformal invariance, we may assume $\Omega=\HH$ and $y_1<\cdots<y_{2N}$. Suppose $\Gamma$ is zero-boundary GFF on $\HH$ and let $u$ be the harmonic function with the boundary data~\eqref{eqn::mgff_boundaryconditions}.  
From Proposition~\ref{prop::convergence}, we have
\[\lim_{\delta\to 0}\PP[\LA^{\delta}=\hat{\alpha}]=\PP[\LA=\hat{\alpha}].\]
Let $\eta_{2j-1}$ be the level line of the continuum $\GFF$ $\Gamma+u$ starting from $y_{2j-1}$ with height $\lambda$ for $1\le j\le N$; and let $\eta_{2j}$ be the level line of $-(\Gamma+u)$ starting from $y_{2j}$ with height $\lambda$ for $1\le j\le N$. Note that the collection $\{\eta_2,\eta_4, \ldots, \eta_{2N}\}$ coincides with the collection of level lines $\Gamma+u$ starting from $y_{2j-1}$ with height $-\lambda$ for $1\le j\le N$.  See Figure~\ref{fig::mgffproof}. 
From Lemma~\ref{lem::construction}, the frontier of $\A$ and of $\AB$ has the same law as $\cup_{1\le j\le 2N}\eta_{j}$.  It suffices to prove 
\[\PP\left[\{\eta_1, \ldots, \eta_{2N}\}\text{ forms the planar link pattern }\hat\alpha\right]=\LM_{\omega, \tau(\hat{\alpha})}\frac{\PartF_{\hat{\alpha}}(y_1, \ldots, y_{2N})}{\PartF_{\mgff}^{(N)}(y_1, \ldots, y_{2N})},\]
where $\omega$ and $\LM_{\omega, \tau(\hat{\alpha})}$ are defined in Lemma~\ref{lem::thmmaincoefficient}. 

For $1\le j\le 2N$ and $\eps>0$ small, we denote 
$T_{j}^{\epsilon}=\inf\{t>0: d(\eta_{j}(t),y_{j})=\epsilon\}$.
We take $\phi^{\epsilon}$ to be the conformal map from 
\[H_{\eps}:=\HH\setminus \left(\cup_{1\le j\le 2N}\eta_{j}[0,T_{j}^{\epsilon}]\right)\] onto $\HH$ normalized at $\infty$. 
Then, we see that, given $H_{\eps}$, the event \[\{\{\eta_1, \ldots, \eta_{2N}\}\text{ forms the planar link pattern }\hat\alpha\}\] is the same as 
\[\{\{\phi^{\epsilon}(\eta_{1}), \ldots, \phi^{\epsilon}(\eta_{2N})\}\text{ forms the planar link pattern }\tau(\hat\alpha)\}\]
where $\tau(\hat\alpha)$ is defined in Section~\ref{subsec::linkpatterntopairpartition}. 

Now, let us consider the collection $\{\phi^{\epsilon}(\eta_{1}), \ldots, \phi^{\epsilon}(\eta_{2N})\}$. The conditional law of $\Gamma+u$ given $H_{\eps}$ is a GFF in $H_{\eps}$ with the following boundary data: for $1\le j\le 2N$, 
\begin{align*}
&2\lambda\text{ on }(y_{2j-1}^+, y_{2j}^-), \quad 0 \text{ along the left side of }\eta_{2j}[0, T_{2j}^{\eps}], \quad -2\lambda \text{ along the right side of }\eta_{2j}[0, T_{2j}^{\eps}],\\
-&2\lambda\text{ on }(y_{2j}^+, y_{2j+1}^-), \quad 0\text{ along the left side of }\eta_{2j+1}[0,T_{2j+1}^{\eps}],\quad 2\lambda\text{ along the right side of }\eta_{2j+1}[0,T_{2j+1}^{\eps}].
\end{align*}
See Figure~\ref{fig::mgffproof}.  
Then, we have 
\begin{align*}
&\PP\left[\{\eta_1, \ldots, \eta_{2N}\}\text{ forms the planar link pattern }\hat\alpha\right]\\
=&\E\left[\PP\left[\{\eta_1, \ldots, \eta_{2N}\}\text{ forms the planar link pattern }\hat\alpha\cond H_{\eps}\right]\right]\\
=&\E\left[\PP\left[\{\phi^{\epsilon}(\eta_{1}), \ldots, \phi^{\epsilon}(\eta_{2N})\}\text{ forms the planar link pattern }\tau(\hat\alpha)\cond H_{\eps}\right]\right]\\
=&\E\left[\LM_{\omega, \tau(\hat{\alpha})}\frac{\PartF_{\tau(\hat{\alpha})}(\phi^{\eps}(y_1^-), \phi^{\eps}(\eta_1(T_1^{\eps})), \phi^{\eps}(y_2^-), \phi^{\eps}(\eta_2(T_2^{\eps})), \ldots, \phi^{\eps}(y_{2N}^+), \phi^{\eps}(\eta_{2N}(T_{2N}^{\eps})))}{\LU_{\omega}(\phi^{\eps}(y_1^-), \phi^{\eps}(\eta_1(T_1^{\eps})), \phi^{\eps}(y_2^-), \phi^{\eps}(\eta_2(T_2^{\eps})), \ldots, \phi^{\eps}(y_{2N}^+), \phi^{\eps}(\eta_{2N}(T_{2N}^{\eps})))}\right]\\
=&\LM_{\omega, \tau(\hat{\alpha})}\frac{\PartF_{\hat{\alpha}}(y_1, \ldots, y_{2N})}{\LU_{(\omega)_2}^4(y_1, \ldots, y_{2N})}, 
\end{align*}
where $(\omega)_2$ is defined as in Lemma~\ref{lem::conformalblockfusionall}. 
In the second last equal sign, we use Theorem~\ref{thm::GFFconnectionproba}: consider the GFF in $H_{\eps}$, the collection $\{\phi^{\eps}(\eta_2), \phi^{\eps}(\eta_4), \ldots, \phi^{\eps}(\eta_{2N})\}$ coincides with the collection of level lines starting from $y_{2j-1}$ with height $-\lambda$. Therefore, the connection probability is given by $\LM_{\omega, \tau(\hat{\alpha})}\PartF_{\tau(\hat{\alpha})}/\LU_{\omega}$.
In the last equal sign, we let $\eps\to 0$. Combining Proposition~\ref{prop::purepartitionfusionall}, Lemma~\ref{lem::conformalblockfusionall}, and dominated convergence theorem, we obtain the conclusion. 

Finally, from the above analysis, we have 
\[\lim_{\delta\to 0}\PP[\LA^{\delta}=\hat{\alpha}]=\PP[\LA=\hat{\alpha}]=\LM_{\omega, \tau(\hat{\alpha})}\frac{\PartF_{\hat{\alpha}}(y_1, \ldots, y_{2N})}{\LU_{(\omega)_2}^4(y_1, \ldots, y_{2N})},\quad \text{for all }\hat{\alpha}\in\LP_{\varsigma}. \]
Furthermore, from~\eqref{eqn::purepartitionvsconformalblock} and~\eqref{eqn::planarkwrelation}, we have 
\[\sum_{\hat{\alpha}\in\LP_{\varsigma}}\LM_{\omega, \tau(\hat{\alpha})}\frac{\PartF_{\hat{\alpha}}(y_1, \ldots, y_{2N})}{\LU_{(\omega)_2}^4(y_1, \ldots, y_{2N})}=1. \]
Thus
\[\PartF_{\mgff}^{(N)}(y_1, \ldots, y_{2N}):=\sum_{\hat{\alpha}\in\LP_{\varsigma}}\LM_{\omega, \tau(\hat{\alpha})}\PartF_{\hat{\alpha}}(y_1, \ldots, y_{2N})=\LU_{(\omega)_2}^4(y_1, \ldots, y_{2N}). \]
This completes the proof of~\eqref{eqn::thmmainZtotal}. 
\end{proof}

\begin{figure}[ht!]
\begin{center}
\includegraphics[width=0.24\textwidth]{figures/gffboundarydataDP}$\quad$
\includegraphics[width=0.24\textwidth]{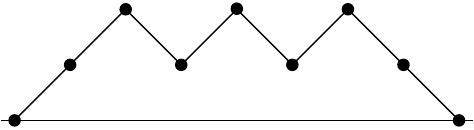}$\quad$
\includegraphics[width=0.24\textwidth]{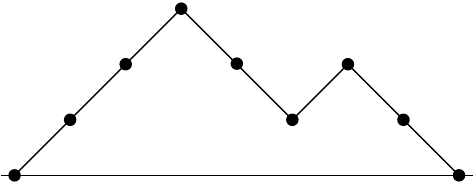}\\
\includegraphics[width=0.24\textwidth]{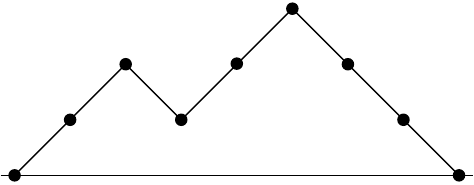}$\quad$
\includegraphics[width=0.24\textwidth]{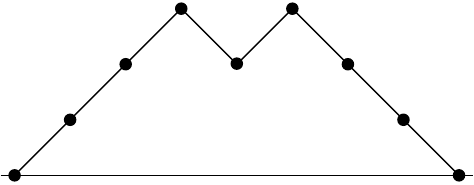}$\quad$
\includegraphics[width=0.24\textwidth]{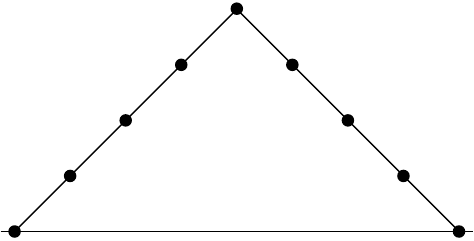}
\end{center}
\caption{\label{fig::gffboundarydata8solutions} There are six Dyck paths in this figure: in the first row, from left to right, we denote them by $\alpha_1, \alpha_2, \alpha_3$ respectively; in the second row, from left to right, we denote them by $\alpha_4, \alpha_5, \alpha_6$ respectively. We see that $\alpha_1\preceq\alpha_2\preceq\alpha_3, \alpha_4\preceq\alpha_5\preceq\alpha_6$. }
\end{figure}

\begin{corollary}\label{cor::crossingproba_mgff}
The conclusion in~\eqref{eqn::crossingproba_mgff} holds. 
\end{corollary}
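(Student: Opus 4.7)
The plan is to apply Theorem~\ref{thm::main} with $N=2$ and identify the horizontal crossing event with a specific element $\hat\alpha_{\text{hor}}\in\LP_{(2,2,2,2)}$. From Figure~\ref{fig::mgfflinkpatterns}, the event $\{(y_1^{\delta}y_2^{\delta})\overset{\tilde\Gamma^{\delta}\ge 0}{\longleftrightarrow}(y_3^{\delta}y_4^{\delta})\}$ is exactly the event that $\LA^{\delta}$ equals the middle link pattern (the one with positive horizontal crossing). First I would check carefully that this identification is correct by noting that the positive FPS attached to $(y_1^{\delta}y_2^{\delta})$ and $(y_3^{\delta}y_4^{\delta})$ merge into one cluster in exactly this case. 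Hence
\[
\lim_{\delta\to 0}\PP\left[(y_1^{\delta}y_2^{\delta})\overset{\tilde\Gamma^{\delta}\ge 0}{\longleftrightarrow}(y_3^{\delta}y_4^{\delta})\right]
=\LM_{\omega,\tau(\hat\alpha_{\text{hor}})}\frac{\PartF_{\hat\alpha_{\text{hor}}}(y_1,\ldots,y_4)}{\PartF_{\mgff}^{(2)}(y_1,\ldots,y_4)}.
\]

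Next, by M\"obius covariance (Proposition~\ref{prop::mGFFpdecov}) applied to both $\PartF_{\hat\alpha_{\text{hor}}}$ and $\PartF_{\mgff}^{(N)}$ with $H=1$, the ratio above is a conformal invariant. I would therefore work on the upper half plane with $y_1<y_2<y_3<y_4$, where~\eqref{eqn::thmmainZtotal} gives
\[
\PartF_{\mgff}^{(2)}(y_1,y_2,y_3,y_4)=\frac{(y_{31})^{2}(y_{42})^{2}}{(y_{21})^{2}(y_{32})^{2}(y_{41})^{2}(y_{43})^{2}},
\]
and the cross-ratio is $q=y_{21}y_{43}/(y_{31}y_{42})$. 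The target identity thus becomes
\[
\LM_{\omega,\tau(\hat\alpha_{\text{hor}})}\,\PartF_{\hat\alpha_{\text{hor}}}(y_1,\ldots,y_4)=\frac{(y_{21})^{2}(y_{43})^{2}}{(y_{31})^{2}(y_{42})^{2}(y_{32})^{2}(y_{41})^{2}}.
\]

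To compute the left-hand side, I would use Proposition~\ref{prop::purepartitionfusionall}: setting $\alpha:=\tau(\hat\alpha_{\text{hor}})\in\Pair_{4}$, the function $\PartF_{\hat\alpha_{\text{hor}}}$ equals the $\varsigma=(2,2,2,2)$-fusion limit of $\PartF_{\alpha}(x_1,\ldots,x_8)/\prod_{j=1}^{4}(x_{2j}-x_{2j-1})^{1/2}$ as $x_{2j-1},x_{2j}\to y_j$. I would plug in the decomposition $\PartF_{\alpha}=\sum_{\beta\in\DP_4}\LM^{-1}_{\alpha,\beta}\LU_{\beta}$ from~\eqref{eqn::purepartitionvsconformalblock} and combine this with Lemma~\ref{lem::conformalblockfusionall} and the grouping argument in the proof of Proposition~\ref{prop::purepartitionfusionall}. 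The convenient feature of $N=2$ is that $\Pair_4$ is small enough ($C_4=14$) that the sum, together with the cancellations among $\beta_0$ and $\beta_0\uparrow\lozenge_{2j-1}$, can be carried out explicitly in a few lines, yielding a closed-form for $\PartF_{\hat\alpha_{\text{hor}}}$.

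The main obstacle, and essentially the only real computation, is thus step~(3): identifying the correct pair partition $\tau(\hat\alpha_{\text{hor}})$ and carrying out the fusion limit to an explicit closed form. Once that form is obtained, the identity with $q^4\cdot\PartF_{\mgff}^{(2)}$ follows by algebraic simplification; M\"obius invariance of the ratio means the verification can even be done at a single convenient configuration (e.g.\ $y_1=0,y_2=t,y_3=1,y_4=\infty$, using the standard limit convention for $\infty$). Finally, since the three link patterns have probabilities summing to one and the other two can be read off by symmetry (reflection exchanging the two crossing events) combined with Theorem~\ref{thm::main}, this also provides a consistency check on the computation.
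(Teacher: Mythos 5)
Your outline follows the paper's own route (Theorem~\ref{thm::main} with $N=2$, identification of the crossing event with the middle link pattern of Figure~\ref{fig::mgfflinkpatterns}, then an explicit evaluation of the fused pure partition function), and the reductions you do carry out are correct: the formula for $\PartF_{\mgff}^{(2)}$ from~\eqref{eqn::thmmainZtotal}, the M\"obius-invariance of the ratio, and the target identity $\LM_{\omega,\tau(\hat\alpha_{\mathrm{hor}})}\PartF_{\hat\alpha_{\mathrm{hor}}}=y_{21}^2y_{43}^2/(y_{31}^2y_{41}^2y_{32}^2y_{42}^2)$ are all right. The gap is that the one step you yourself call ``essentially the only real computation'' is never performed: you do not identify $\tau(\hat\alpha_{\mathrm{hor}})$, you do not verify $\LM_{\omega,\tau(\hat\alpha_{\mathrm{hor}})}=1$, and you do not produce the closed form of $\PartF_{\hat\alpha_{\mathrm{hor}}}$, asserting instead that a $14$-term sum with cancellations ``can be carried out in a few lines.'' Since this computation is the entire content of the corollary's proof in the paper, the proposal as written proves nothing beyond the reduction.

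Moreover, the way you propose to do that computation is heavier than necessary, and spelling out the right structure is exactly what makes the argument short. The middle link pattern has $\tau(\hat\alpha_{\mathrm{hor}})=\{\{1,8\},\{2,7\},\{3,6\},\{4,5\}\}$, the rainbow pattern, which is maximal for the order~\eqref{eqn::Dychpathspartialorder}; hence in the expansion~\eqref{eqn::purepartitionvsconformalblock} only one term survives, $\PartF_{\tau(\hat\alpha_{\mathrm{hor}})}=\LU_{\alpha_6}$ in the notation of the paper's proof (\eqref{eqn::eightpointslineartransform}), a single conformal block with $\times_{2j-1}\in\alpha_6$ for all $j$. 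Thus no cancellation/grouping from the proof of Proposition~\ref{prop::purepartitionfusionall} is needed: Lemma~\ref{lem::conformalblockfusionall} alone gives the limit $\LU_{(\alpha_6)_2}^4$, and dividing by $\PartF_{\mgff}^{(2)}=\LU_{(\omega)_2}^4$ yields $q^4$ in one line (this is~\eqref{eqn::fusionalphasix} versus~\eqref{eqn::fusionalphaone}). The cancellation machinery you invoke is only relevant for the other two patterns ($\beta_1$, $\beta_3$ in the paper's notation), which the paper computes explicitly and which you propose to recover by the reflection symmetry $q\mapsto 1-q$ together with the probabilities summing to one; that is a legitimate consistency check (it reproduces $(1-q)^4$ and $2q(1-q)(2-q+q^2)$), but it does not substitute for the missing evaluation of $\PartF_{\hat\alpha_{\mathrm{hor}}}$ itself.
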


\begin{proof}
We define $\beta_1, \beta_2, \beta_3$ as in Figure~\ref{fig::gfflevellines8} and we define $\alpha_1, \ldots, \alpha_6$ as in Figure~\ref{fig::gffboundarydata8solutions}. From~\eqref{eqn::purepartitionvsconformalblock}, we have
\begin{align}\label{eqn::eightpointslineartransform}
\begin{split}
\PartF_{\beta_1}&=\LU_{\alpha_2}-\LU_{\alpha_3}-\LU_{\alpha_4}+\LU_{\alpha_5}-2\LU_{\alpha_6},\\
\PartF_{\beta_2}&=\LU_{\alpha_6},\\
\PartF_{\beta_3}&=\LU_{\alpha_1}-\LU_{\alpha_2}+\LU_{\alpha_3}+\LU_{\alpha_4}-\LU_{\alpha_5}+\LU_{\alpha_6}. 
\end{split}
\end{align}
Suppose $y_1<y_2<y_3<y_4$ and we need to derive the limits as $x_1, x_2\to y_1$, $x_3, x_4\to y_2$, $x_5, x_6\to y_3$ and $x_7, x_8\to y_4$. 
We denote $x_{ji}=x_j-x_i$ for $1\le i<j\le 8$ and $y_{ji}=y_j-y_i$ for $1\le i<j\le 4$. Furthermore, we denote the cross-ratio by $q=(y_{21}y_{43})/(y_{31}y_{42})$. 

First, for $\alpha_1$ and $\alpha_6$, we have
\begin{align}
\lim_{\substack{x_1, x_2\to y_1; x_3, x_4\to y_2;\\ x_5, x_6\to y_3; x_7, x_8\to y_4}}\frac{\LU_{\alpha_1}(x_1, \ldots, x_8)}{\sqrt{x_{21}x_{43}x_{65}x_{87}}}&=\left(\frac{y_{31}y_{42}}{y_{21}y_{41}y_{32}y_{43}}\right)^2=\frac{1}{q^2(1-q)^2y_{31}^2y_{42}^2},\label{eqn::fusionalphaone}\\
\lim_{\substack{x_1, x_2\to y_1; x_3, x_4\to y_2;\\ x_5, x_6\to y_3; x_7, x_8\to y_4}}\frac{\LU_{\alpha_6}(x_1, \ldots, x_8)}{\sqrt{x_{21}x_{43}x_{65}x_{87}}}&=
\left(\frac{y_{21}y_{43}}{y_{31}y_{41}y_{32}y_{42}}\right)^2=\frac{q^2}{(1-q)^2y_{31}^2y_{42}^2}.\label{eqn::fusionalphasix}
\end{align}

Second, for $n=2,3,4,5$, we have
\begin{align*}
\lim_{\substack{x_1, x_2\to y_1;\\ x_7, x_8\to y_4}}\frac{\LU_{\alpha_n}(x_1, \ldots, x_8)}{\sqrt{x_{21}x_{87}}}=y_{41}^{-2}\times \prod_{3\le i\le 6}(x_i-y_1)^{\vartheta_{\alpha_n}(i,1)}(y_4-x_i)^{\vartheta_{\alpha_n}(i,7)}\times\prod_{3\le i<j\le 6}x_{ji}^{\frac{1}{2}\vartheta_{\alpha_n}(i,j)}. 
\end{align*}
Taking the difference between $\LU_{\alpha_2}$ and $\LU_{\alpha_3}$ and the difference between $\LU_{\alpha_4}$ and $\LU_{\alpha_5}$, we have
\begin{align*}
\lim_{\substack{x_1, x_2\to y_1; x_3, x_4\to y_2;\\ x_7, x_8\to y_4}}\frac{(\LU_{\alpha_2}-\LU_{\alpha_3})(x_1, \ldots, x_8)}{\sqrt{x_{21}x_{43}x_{87}}}
&=y_{41}^{-2}\times\frac{(x_6-y_1)}{(x_5-y_1)}\frac{(y_4-x_5)}{(y_4-x_6)}\times\left( \frac{\sqrt{x_{65}}}{(x_5-y_2)(x_6-y_2)}+\frac{2y_{41}}{y_{21}y_{42}\sqrt{x_{65}}}\right),\\
\lim_{\substack{x_1, x_2\to y_1; x_3, x_4\to y_2;\\ x_7, x_8\to y_4}}\frac{(\LU_{\alpha_4}-\LU_{\alpha_5})(x_1, \ldots, x_8)}{\sqrt{x_{21}x_{43}x_{87}}}
&=y_{41}^{-2}\times \frac{(x_5-y_1)(y_4-x_6)}{(x_6-y_1)(y_4-x_5)}\times\left(\frac{-\sqrt{x_{65}}}{(x_5-y_2)(x_6-y_2)}+\frac{2y_{41}}{y_{21}y_{42}\sqrt{x_{65}}}\right).
\end{align*}
Taking the difference between these two, we have
\begin{align}\label{eqn::fusionalpharest}
\lim_{\substack{x_1, x_2\to y_1; x_3, x_4\to y_2;\\ x_5, x_6\to y_3; x_7, x_8\to y_4}}\frac{(\LU_{\alpha_2}-\LU_{\alpha_3}-\LU_{\alpha_4}+\LU_{\alpha_5})(x_1, \ldots, x_8)}{\sqrt{x_{21}x_{43}x_{65}x_{87}}}
&=\frac{2}{y_{41}^2y_{32}^2}+\frac{4}{y_{21}y_{31}y_{42}y_{43}}=\left(\frac{2}{(1-q)^2}+\frac{4}{q}\right)\frac{1}{y_{31}^2y_{42}^2}. 
\end{align}

Plugging~\eqref{eqn::fusionalphaone}, \eqref{eqn::fusionalphasix} and~\eqref{eqn::fusionalpharest} into~\eqref{eqn::eightpointslineartransform}, we have
\begin{align*}
\lim_{\substack{x_1, x_2\to y_1; x_3, x_4\to y_2;\\ x_5, x_6\to y_3; x_7, x_8\to y_4}}\frac{\PartF_{\beta_1}(x_1, \ldots, x_8)}{\LU_{\alpha_1}(x_1,\ldots, x_8)}
&=2q(1-q)(2-q+q^2),\\
\lim_{\substack{x_1, x_2\to y_1; x_3, x_4\to y_2;\\ x_5, x_6\to y_3; x_7, x_8\to y_4}}\frac{\PartF_{\beta_2}(x_1, \ldots, x_8)}{\LU_{\alpha_1}(x_1,\ldots, x_8)}
&=q^4,\\
\lim_{\substack{x_1, x_2\to y_1; x_3, x_4\to y_2;\\ x_5, x_6\to y_3; x_7, x_8\to y_4}}\frac{\PartF_{\beta_3}(x_1, \ldots, x_8)}{\LU_{\alpha_1}(x_1,\ldots, x_8)}
&=(1-q)^4. 
\end{align*}
The scaling limit of the crossing probability in~\eqref{eqn::crossingproba_mgff} corresponds to the limit of $\PartF_{\beta_2}/\LU_{\alpha_1}$, see Figure~\ref{fig::mgfflinkpatterns} and Figure~\ref{fig::gfflevellines8}. This completes the proof. 
\end{proof}

{\small}

\end{document}